\documentclass[a4paper,11pt]{amsart}
\usepackage[left=2.7cm,right=2.7cm,top=3.5cm,bottom=3cm]{geometry}

\usepackage{amsthm,amssymb,amsmath,amsfonts,mathrsfs,amscd,amsbsy,dsfont,verbatim}
\usepackage{stmaryrd,dsfont}
\usepackage[new]{old-arrows}
\usepackage[latin1]{inputenc}
\usepackage[all,cmtip]{xy}
\usepackage{latexsym}
\usepackage{longtable}
\usepackage{mathtools}
\usepackage{marginnote}
\usepackage{graphicx}
\newcommand*{\Scale}[2][4]{\scalebox{#1}{$#2$}}%

\newcounter{lettera}

\newtheorem{itheorem}[lettera]{Theorem}

\usepackage[pagebackref]{hyperref}

\mathtoolsset{showonlyrefs}

\usepackage{graphicx}

\numberwithin{equation}{section}

\newfont{\cyr}{wncyr10 scaled 1100}
\newfont{\cyrr}{wncyr9 scaled 1000}

\theoremstyle{plain}
\newtheorem{theorem}{Theorem}[section]

\newtheorem{proposition}[theorem]{Proposition}
\newtheorem{lemma}[theorem]{Lemma}
\newtheorem{corollary}[theorem]{Corollary}

\theoremstyle{definition}
\newtheorem{definition}[theorem]{Definition}
\newtheorem{assumption}[theorem]{Assumption}

\theoremstyle{remark}
\newtheorem{remark}[theorem]{Remark}

\newtheorem{remark/notation}[theorem]{Remark/Notation}
\newtheorem{notation/convention}[theorem]{Notation/Convention}

\newcommand{\Q}{\mathds Q}
\newcommand{\N}{\mathds N}
\newcommand{\Z}{\mathds Z}
\newcommand{\R}{\mathds R}
\newcommand{\C}{\mathds C}

\newcommand{\F}{\mathds F}
\newcommand{\T}{\mathds T}

\newcommand{\defeq}{\vcentcolon=}

\DeclareMathOperator{\Spec}{Spec}
\DeclareMathOperator{\Pic}{Pic}
\DeclareMathOperator{\End}{End}
\DeclareMathOperator{\Aut}{Aut}
\DeclareMathOperator{\Frob}{Frob}

\DeclareMathOperator{\Norm}{N}
\DeclareMathOperator{\Hom}{Hom}

\DeclareMathOperator{\Gal}{Gal}
\DeclareMathOperator{\GL}{GL}

\DeclareMathOperator{\SL}{SL}
\DeclareMathOperator{\Sel}{Sel}

\DeclareMathOperator{\M}{M}

\DeclareMathOperator{\CH}{CH}
\DeclareMathOperator{\AJ}{AJ}
\DeclareMathOperator{\Ind}{Ind}

\DeclareMathOperator{\Fil}{Fil}

\DeclareMathOperator{\im}{im}
\DeclareMathOperator{\coker}{coker}
\DeclareMathOperator{\corank}{corank}
\DeclareMathOperator{\rank}{rank}

\DeclareMathOperator{\length}{length}

\newcommand{\res}{\mathrm{res}}
\newcommand{\cores}{\mathrm{cores}}

\newcommand{\cyc}{{\rom{cyc}}}

\newcommand{\ord}{\mathrm{ord}}

\newcommand{\unr}{\mathrm{unr}}

\newcommand{\alg}{\mathrm{alg}}

\newcommand{\tor}{\mathrm{tors}}

\newcommand{\divv}{\mathrm{div}}

\newcommand{\loc}{\mathrm{loc}}

\newcommand{\cchar}{\mathrm{char}}

\newcommand{\Sha}{\mbox{\cyr{X}}}

\usepackage[usenames]{color}
\definecolor{Indigo}{rgb}{0.2,0.1,0.7}
\definecolor{Violet}{rgb}{0.5,0.1,0.7}
\definecolor{White}{rgb}{1,1,1}
\definecolor{Green}{rgb}{0.1,0.9,0.2}

\newcommand{\longmono}{\mbox{\;$\lhook\joinrel\longrightarrow$\;}}

\newcommand{\longepi}{\mbox{\;$\relbar\joinrel\twoheadrightarrow$\;}}

\newcommand{\smallmat}[4]{\bigl(\begin{smallmatrix}#1&#2\\#3&#4\end{smallmatrix}\bigr)}

\newcommand{\dirlim}{\mathop{\varinjlim}\limits}
\newcommand{\invlim}{\mathop{\varprojlim}\limits}

\newcommand{\Tr}{\operatorname{Tr}}

\newfont{\gotip}{eufb10 at 12pt}

\newcommand{\cO}{{\mathcal O}}

\newcommand{\p}{\mathfrak{p}}

\newcommand{\Sym}{\operatorname{Sym}}

\newcommand{\rom}{\mathrm}

\newcommand{\dR}{\mathrm{dR}}
\newcommand{\et}{\text{\'et}}

\makeatletter
\@namedef{subjclassname@2020}{%
  \textup{2020} Mathematics Subject Classification}
\makeatother

\begin{document}

\title[Anticyclotomic main conjectures for modular forms]{Anticyclotomic Iwasawa main conjectures\\for modular forms}
%\today
%\date{}
\author{Matteo Longo, Maria Rosaria Pati and Stefano Vigni}

\thanks{The authors are partially supported by PRIN 2022 ``The arithmetic of motives and $L$-functions'' and by the GNSAGA group of INdAM. The research by the second and third authors is partially supported by the MUR Excellence Department Project awarded to Dipartimento di Matematica, Universit\`a di Genova, CUP D33C23001110001.}

%\thanks{The authors are partially supported by PRIN 2017 ``Geometric, algebraic and analytic methods in arithmetic''.}

\begin{abstract}
Let $f$ be a newform of even weight at least $4$, level $N$ and trivial character. Let $p\nmid N$ be an odd prime number that is ordinary for $f$ and let $K$ be an imaginary quadratic field satisfying a generalized Heegner hypothesis relative to $N$. In this paper, we prove (under mild arithmetic assumptions) Iwasawa main conjectures for $f$ over the anticyclotomic $\Z_p$-extension of $K$ both in the definite setting and in the indefinite setting (in the second case, we prove a main conjecture \emph{\`a la} Perrin-Riou for modular forms). Our strategy of proof follows the approach of Bertolini--Darmon via congruences combined with our previous results on an analogue for $f$ of Kolyvagin's conjecture on the non-triviality of his $p$-adic system of derived Heegner points on elliptic curves. As a second contribution, when $p$ splits in $K$ we prove an Iwasawa--Greenberg main conjecture for the $p$-adic $L$-functions of Bertolini--Darmon--Prasanna and Brooks.
\end{abstract}

\include{thebibliography}

\address{Dipartimento di Matematica, Universit\`a di Padova, Via Trieste 63, 35121 Padova, Italy}
\email{mlongo@math.unipd.it}
\address{Dipartimento di Matematica, Universit\`a di Genova, Via Dodecaneso 35, 16146 Genova, Italy}
\email{mariarosaria.pati@unige.it}
\address{Dipartimento di Matematica, Universit\`a di Genova, Via Dodecaneso 35, 16146 Genova, Italy}
\email{stefano.vigni@unige.it}

\subjclass[2020]{11F11, 11R23}

\keywords{Modular forms, Heegner cycles, anticyclotomic Iwasawa main conjectures.}

\maketitle

%\tableofcontents

\section{Introduction} \label{Intro}

Let $p$ be a prime number. The present paper offers a proof (under standard arithmetic assumptions) of the Iwasawa main conjectures (IMC, for short) over anticyclotomic $\Z_p$-extensions of imaginary quadratic fields for even weight modular forms that are ordinary at $p$, both in the definite setting and in the indefinite setting. In our strategy, we follow the approach via congruences between modular forms that was originally proposed by Bertolini--Darmon in the context of elliptic curves (\cite{BD-IMC}); this method was later developed by, among others, Howard in the abstract setting of bipartite Euler systems (\cite{howard-bipartite}), W. Zhang in his proof of Kolyvagin's conjecture and the $p$-part of the Birch--Swinnerton-Dyer formula for elliptic curves (\cite{Zhang}) and, more recently, Bertolini--Longo--Venerucci (\cite{BLV}); see also work by Burungale--Castella--Kim (\cite{BCK}) and Burungale--B\"uy\"ukboduk--Lei (\cite{BBL}). The anticyclotomic Iwasawa theory of modular forms was previously considered by Chida--Hsieh in the definite case (\cite{ChHs2}) and by Longo--Vigni in the indefinite case (\cite{LV-kyoto}); in \cite{ChHs2} and \cite{LV-kyoto}, only one inclusion in the equality predicted by the main conjecture is proved (namely, the one that is obtained via Euler system arguments) and the purpose of this paper is to complete the proof in both cases. As we have already remarked, our approach is via congruences: it follows mainly \cite{BD-IMC}, \cite{BLV}, \cite{howard-bipartite} and reduces the missing divisibility to a non-triviality result for a certain system of Galois cohomology classes that is an analogue for modular forms of Kolyvagin's conjecture on the $p$-indivisibility of his system of derived Heegner points on elliptic curves: such a higher weight conjecture has been established (under analogous assumptions) in \cite{LPV}. In fact, this is essentially a by-product of converse theorems due to Skinner--Urban (\cite{SU}) and generalized by Fouquet--Wan (\cite{FW}). It is worth remarking that our underlying bipartite Euler system is constructed by means of a slight extension of results of Wang (\cite{wang}), which in turn generalize results in \cite{Zhang} to a higher weight setting.  

To describe our results, let $f\in S_k(\Gamma_0(N))$ be a newform of even weight $k\geq 4$ and square-free level $N$, whose $q$-expansion will be denoted by $f(q)=\sum_{n\geq1}a_n(f)q^n$. Let $K$ be an imaginary quadratic field, write $D_K$ for the discriminant of $K$ and consider a factorization $N=MD$ of $N$ into a product of positive integers $M$ and $D$ with the property that if a prime number divides $M$ (respectively, $D$), then it splits (respectively, is inert) in $K$. We say that we are in the \emph{definite} (respectively, \emph{indefinite}) case if $D$ is a product of an \emph{odd} (respectively, \emph{even}) number of primes (notice that $D=1$ is an admissible option). Now pick a prime number $p$ such that
\begin{itemize}
\item $p\nmid ND_K$;
\item $p$ is ordinary for $f$.
\end{itemize}
Let $F\defeq\Q\bigl(a_n(f)\mid n\geq1\bigr)$ be the Hecke field of $f$, which is a totally real number field. Write $\cO_F$ for the ring of integers of $F$ and let $\p$ be a prime ideal of $\cO_F$ above $p$. Denote by $F_\p$ the completion of $F$ at $\p$ and by $\cO_\p$ its valuation ring, which is equal to the completion of $\cO_F$ at $\p$. Let $T$ be the self-dual twist of the $\p$-adic Galois representation attached to $f$ by Deligne (\cite{Del-Bourbaki}): this is a free $\cO_\p$-module of rank $2$, equipped with an action of $\Gal(\overline{\Q}/\Q)$. Set
$A\defeq T\otimes_{\cO_\p}(F_\p/\cO_\p)$. Moreover, let $K_\infty$ be the anticyclotomic $\Z_p$-extension of $K$, let $G_\infty\defeq\Gal(K_\infty/K)$ and denote by $\Lambda\defeq\cO_\p[\![G_\infty]\!]$ the associated Iwasawa algebra. One may define Selmer groups $\Sel(K_\infty,T)$ and $\Sel(K_\infty,A)$: the former is a compact $\Lambda$-module obtained by taking inverse limits of Selmer groups over the finite layers of $K_\infty/K$ with respect to the corestriction maps in cohomology, whereas the latter is a discrete $\Lambda$-module defined by taking inverse limits of Selmer groups over the finite extensions of $K_\infty/K$ with respect to the restriction maps in cohomology. Let $\Sel(K_\infty,A)^\vee$ be the Pontryagin dual of $\Sel(K_\infty,A)$. These groups are described, from different perspectives, in \S\ref{Selmer-subsec}, \S\ref{selmerlambda} and \S\ref{selcondsec}. 

In the definite setting, following Bertolini--Darmon (\cite{BD96}, \cite{BD-IMC}) and Vatsal (\cite{Vatsal}), Chida--Hsieh studied interpolation properties of $p$-adic $L$-functions $\mathcal{L}_\mathfrak{p}(f)$ constructed from theta elements (\cite{ChHs2}, \cite{ChHs1}); see \S\ref{L-funct} and \S\ref{definite-subsec} for details. 

\begin{itheorem}[Definite IMC] \label{A-thm}
Suppose we are in the definite case and that Assumption $\ref{introass}$ holds. Then $\Sel(K_\infty,A)^\vee$ is a torsion $\Lambda$-module whose characteristic ideal coincides with the ideal generated by $\mathcal{L}_\p(f)$. 
\end{itheorem}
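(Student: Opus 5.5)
The plan is to combine the divisibility already available from Chida--Hsieh with a reverse divisibility obtained by the Bertolini--Darmon/Howard method of congruences and bipartite Euler systems. By \cite{ChHs2}, under Assumption \ref{introass}, $\Sel(K_\infty,A)^\vee$ is $\Lambda$-torsion and its characteristic ideal divides $(\mathcal{L}_\p(f))$; equivalently, $\cchar_\Lambda\bigl(\Sel(K_\infty,A)^\vee\bigr)\supseteq(\mathcal{L}_\p(f))$. Since $\mathcal{L}_\p(f)$ is nonzero (Vatsal, Chida--Hsieh), it suffices to show that $(\mathcal{L}_\p(f))\supseteq\cchar_\Lambda\bigl(\Sel(K_\infty,A)^\vee\bigr)$. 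Both ideals lie in the two-dimensional regular local ring $\Lambda$, so a standard reduction applies. It is enough to show that for every height one prime $P$ of $\Lambda$ the lengths at $P$ agree. Following \cite{BD-IMC} and \cite{howard-bipartite}, we may specialize along characters $\Lambda\to\cO$ (after a finite extension of $\cO_\p$) that avoid finitely many bad primes. We then compare the specialized $L$-value with the order of the specialized Selmer group. Control theorems for ordinary Selmer groups make this comparison possible.

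The core of the argument is to build a bipartite Euler system. The plan is to use $n$-admissible primes $\ell$ (inert in $K$, $p\nmid\ell^2-1$, $a_\ell(f)\equiv\pm(\ell+1)\ell^{k/2-1}$ modulo $\p^n$) and level raising, as extended from \cite{wang} in this paper. For squarefree products $S$ of admissible primes, we attach two kinds of object. When $DS$ has an odd number of prime factors (the definite side), we take theta elements $\lambda_S\in\Lambda/\p^n$ from a definite quaternion algebra. When it has an even number (the indefinite side), we take $\Lambda$-adic Heegner classes $\kappa_S\in\Sel_S(K_\infty,T/\p^n)$ built from Heegner cycles on Shimura curves. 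These are linked by the two explicit reciprocity laws. The first gives $\partial_\ell\kappa_{S\ell}=\lambda_S$ up to units. The second gives $v_\ell(\kappa_S)=\lambda_{S\ell}$ up to units. Howard's machinery then yields, for a $\Lambda$-torsion Selmer module, that $\cchar$ divides $\lambda_1^2$ with equality as soon as the system is \emph{non-trivial}, i.e.\ some $\lambda_S$ has nonzero image modulo $\mathfrak{m}$ (or modulo $P$ after specialization). Since $\lambda_1^2$ corresponds to $\mathcal{L}_\p(f)$, which is defined as the square of the theta element, we obtain $\cchar=(\mathcal{L}_\p(f))$ locally at each $P$.

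I expect the main obstacle to be this non-triviality (``freeness'') statement, which is the analogue of Kolyvagin's conjecture. I will deduce it from \cite{LPV}, which proves that the Kolyvagin system of derived Heegner cycle classes attached to $f$ is nonzero. The argument goes as follows. By the reciprocity laws, reduction of the bipartite system modulo $\mathfrak{m}_\Lambda$ recovers, for suitable $S$ of rank one, Heegner cycle classes over $K$ for level-raised forms $f_S$. Applying \cite{LPV} (whose proof goes through Skinner--Urban and Fouquet--Wan) to an $f_S$ in the indefinite setting with one-dimensional residual Selmer group, we get a nonzero class $\kappa_S \bmod \mathfrak{m}$. Choosing a further admissible $\ell$ at which $v_\ell(\kappa_S)\neq0$, which is possible by Chebotarev and the irreducibility in Assumption \ref{introass}, gives $\lambda_{S\ell}\not\equiv0$. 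Combined with a rigidity argument \`a la Howard (the parity and length of residual Selmer groups is stable along the system), this gives the required non-vanishing. The equality then holds at every height one prime, including those above $\p$, which are handled by the $\mu$-invariant part of the same congruence argument, and this proves the theorem.
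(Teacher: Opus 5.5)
Your overall architecture matches the paper's. It uses level raising at $n$-admissible primes, theta elements on definite levels and $\Lambda$-adic Heegner classes on indefinite ones, and the two reciprocity laws. It then applies Howard's Theorem 2.5.1 at each specialization $\mathfrak{P}$, the control theorem, and the Mazur--Rubin limit over $\mathfrak{P}_m=(F+\pi^m)$.

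Two small points first. The Chida--Hsieh divisibility is not needed: once $\delta_\mathfrak{P}=0$, Howard's theorem gives equality of lengths, and hence torsionness, directly. Also, $\mathcal{L}_\p(f)=\lambda\lambda^*$, not $\lambda^2$.

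The real gap is in the non-triviality step, which is the one new input.

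\textbf{Parity is missing.} An indefinite $S$ with $\dim\Sel_S(K,T_{\phi,1})=1$ exists only if $r=\dim\Sel(K,T_{\phi,1})$ is \emph{even} in the definite case. Each admissible prime changes this dimension by exactly $\pm1$ \cite[Corollary 2.2.10]{howard-bipartite} and switches definite/indefinite. So if $r$ were odd, every indefinite $S$ would have even residual Selmer dimension, and your rank-one $S$ would not exist. ``Stability of parity along the system'' only propagates parity; the base case has to be proved. It is also exactly what makes $\mathcal{P}^{\mathrm{indef}}$ coincide with Howard's $\mathfrak{N}^{\mathrm{odd}}$.

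\textbf{\cite{LPV} does not apply to $f_S$ as stated.} That result is for characteristic-zero newforms with big image and $p\nmid c_f$. Your $\phi_S$ is only a mod $\p^n$ eigensystem. A Deligne--Serre lift $h$ of level $NS$ is congruent to $f$ only mod $\p$, and nothing guarantees that $h$ has big image or that $p\nmid c_h$.

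\textbf{How the paper closes this.} In the definite case the paper proves non-triviality without \cite{LPV} and without Heegner classes.
\begin{enumerate}
\item Parity: if $r$ were odd, lowering the rank $r$ times (Lemma \ref{ranklowering}) gives $S$ with $\Sel(K,T_{\phi_S,1})=0$ but $\epsilon_K(NS)=-1$. This contradicts Skinner--Urban/Fouquet--Wan.
\item Lower the rank in pairs of $n$-admissible primes to reach $S\in\mathcal{P}_n^{\mathrm{def}}$ with $\Sel_S(K,T_{\phi,1})=0$; take $S=1$ if $r=0$.
\item Lift $\phi_S \bmod \p$ to $h\in S_k(NS)$. Skinner--Urban/Fouquet--Wan give $L(h/K,k/2)\neq0$, and \cite[Theorem 4.5]{wang} gives $v_\p(L^{\mathrm{alg}}(h/K))=0$.
\item The interpolation formula at the trivial character then shows $\lambda_\phi(S)\not\equiv0 \bmod \mathfrak{m}_\Lambda$.
\item Writing $\lambda_\phi(S)=a+Tb$ with $a\neq0$ gives non-vanishing in every $k_\mathfrak{P}$ and modulo $\p\Lambda$ (Proposition \ref{maximal2}, Corollary \ref{coro(c)}). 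Hence $\delta_\mathfrak{P}=0$ for all $\mathfrak{P}$.
\end{enumerate}
With this in place, your detour through $\kappa_S$ and the second reciprocity law is unnecessary. The rank-one non-vanishing in \cite{LPV} ultimately rests on the same Skinner--Urban/Fouquet--Wan input, applied here directly at a definite level.
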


This result corresponds to Theorem \ref{IMC-def}. As noted before, partial results in this direction were obtained in \cite{ChHs2} for one divisibility and in \cite{CKL} for the other divisibility, using a different approach. 

In the indefinite case, taking inverse limits (with respect to corestrictions) of (regularized) Heegner cycles of $p$-power conductor leads to the definition of an element $\kappa_\infty\in\Sel(K_\infty,T)$ (see \S\ref{reg-heeg}, \eqref{Heegnerclass}, \S\ref{indefinite-subsec}).     

\begin{itheorem}[Indefinite IMC] \label{ind-IMC-intro-thm}
Suppose we are in the indefinite case and that Assumption $\ref{introass}$ holds. Moreover, assume that $\kappa_\infty$ is non-torsion. Then $\Sel(K_\infty,A)^\vee$ is a $\Lambda$-module of rank $1$ and the characteristic ideal generated by its torsion $\Lambda$-submodule coincides with the characteristic ideal of $\Sel(K_\infty,T)/\Lambda\cdot\kappa_\infty$.   
\end{itheorem}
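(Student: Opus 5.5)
The plan splits the statement into the two divisibilities and treats them separately. The first divisibility is that $\Sel(K_\infty,A)^\vee$ has $\Lambda$-rank $1$ and that $\cchar_\Lambda\bigl(\Sel(K_\infty,A)^\vee_{\tor}\bigr)$ divides $\cchar_\Lambda\bigl(\Sel(K_\infty,T)/\Lambda\kappa_\infty\bigr)^2$ (in the natural ``squared'' normalization coming from the Cassels--Tate pairing). This is the Euler system half, and we will take it from \cite{LV-kyoto}. There, the $\Lambda$-adic Kolyvagin system built from Heegner cycles of conductor $np^m$ is fed into Howard's $\Lambda$-adic Kolyvagin system machinery, once $\kappa_\infty$ is non-torsion. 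Running the same machinery also shows that $\Sel(K_\infty,T)$ is torsion-free of rank $1$ and that there is a pseudo-isomorphism
\[
\Sel(K_\infty,A)^\vee\sim\Lambda\oplus M\oplus M,
\]
with $M$ a torsion $\Lambda$-module. This reduces the statement to proving the reverse divisibility $\cchar(M)^2\supseteq\cchar\bigl(\Sel(K_\infty,T)/\Lambda\kappa_\infty\bigr)^2$. Equivalently, we must show that for every height-one prime $\mathfrak P$ of $\Lambda$ the two lengths at $\mathfrak P$ agree.

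For the reverse divisibility we follow the bipartite Euler system approach of \cite{BD-IMC} and \cite{howard-bipartite}, as refined in \cite{BLV}. We first build the bipartite Euler system. We fix $n\geq1$ and work modulo $\p^n$, considering $n$-admissible primes $\ell$, meaning those inert in $K$ with $p\nmid\ell^2-1$ and $a_\ell(f)\equiv\pm(\ell+1)\ell^{k/2-1}$ modulo $\p^n$. For squarefree products $S$ of such primes, we use level-raising to produce the following data:
\begin{itemize}
\item when $D\cdot S$ has an odd number of prime factors, theta elements $\lambda_S\in\Lambda/\p^n$ attached to Gross points on definite quaternion algebras, via the definite $p$-adic $L$-functions of \S\ref{definite-subsec};
\item when $D\cdot S$ has an even number of prime factors, $\Lambda$-adic Heegner classes $\kappa_S\in\Sel_S(K_\infty,T/\p^n)$ coming from Shimura curves, via the extension of Wang's results \cite{wang} mentioned in the introduction.
\end{itemize}
These satisfy the first and second explicit reciprocity laws relating $\partial_\ell\kappa_S$ and $v_\ell\kappa_S$ to $\lambda_{S\ell}$ and $\lambda_{S/\ell}$.

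Howard's formalism then shows that equality in the main conjecture follows once the system is non-trivial in a suitable sense. Concretely, for each height-one prime $\mathfrak P$ we need some $S$ for which the image of $\lambda_S$ or $\kappa_S$ in $\Lambda/(\mathfrak P,\p^n)$ is nonzero with controlled valuation. We reduce this via the specialization argument of \cite{BLV}. Each height-one prime is approximated by the kernels of finite-order characters $\chi$ of $G_\infty$, so it suffices to work over the integral closure $\cO_\chi$ of $\Lambda/\ker\chi$. There the question becomes the non-vanishing, modulo $\p$, of some Kolyvagin-type derived class for the twist $T\otimes\chi$, which is exactly the higher weight Kolyvagin conjecture established in \cite{LPV} via Skinner--Urban \cite{SU} and Fouquet--Wan \cite{FW}.

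The main obstacle is this last step. The result of \cite{LPV} is stated for $T$ over $K$, whereas we need it uniformly for the twists $T\otimes\chi$ as $\chi$ varies over finite-order anticyclotomic characters, together with a bound on the valuation of the Kolyvagin classes that is independent of $\chi$. I would first try to deduce the twisted version by congruence: $T\otimes\chi\equiv T$ modulo the maximal ideal of $\cO_\chi$. Under the residual hypotheses of Assumption \ref{introass}, non-vanishing mod the maximal ideal transfers, and the resulting ``index'' can then be compared at $\mathfrak P$ through Howard's rigidity argument for bipartite systems. Combining this with the Euler system divisibility from \cite{LV-kyoto} gives equality of characteristic ideals, which is the statement of Theorem \ref{ind-IMC-intro-thm}.
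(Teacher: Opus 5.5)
\textbf{There is a genuine gap, in your final step.} Your skeleton matches the paper's: a bipartite Euler system built from level raising and the two reciprocity laws, fed into Howard's formalism, with non-triviality checked at the bottom of $\Lambda$ and transported to all height-one primes by a congruence. The problem is that the decisive input is misidentified and left open.

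What Howard's \cite[Theorem 2.5.1]{howard-bipartite} actually consumes is the invariant $\delta_{\mathfrak P}(k)$, the minimal index of the \emph{definite} theta elements $\lambda_\phi(S)$, $S\in\mathcal P^{\mathrm{def}}$, in $\cO_{\mathfrak P}/\p^k\cO_{\mathfrak P}$; this is how Proposition \ref{len-indef} is proved. The exact identity $\length\bigl(\Sel(K,A_{\mathfrak P})/\Sel(K,A_{\mathfrak P})_{\divv}\bigr)=2\cdot\length\bigl(\Sel(K,T_{\mathfrak P})/\cO_{\mathfrak P}\kappa_\infty\bigr)$ needs $\delta_{\mathfrak P}(k)=0$.

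Non-vanishing of Kolyvagin's derived classes for twists $T\otimes\chi$ is a statement about a different object: those classes are built from Kolyvagin primes, not admissible primes. You give no bridge from it to $\delta_{\mathfrak P}=0$. Two further points are off. First, the Mazur--Rubin argument approximates $\mathfrak P=(F)$ by $\mathfrak P_m=(F+\pi^m)$, which are not kernels of finite-order characters, so ``approximation by characters $\chi$'' is not the right reduction. Second, your ``reverse divisibility'' inclusion is written in the same direction as the Euler-system one.

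\textbf{The missing idea is Propositions \ref{maximal1} and \ref{maximal2}:} produce a \emph{single} $S\in\mathcal P_n^{\mathrm{def}}(\phi)$ with $\lambda_\phi(S)\notin\mathfrak m_\Lambda=\mathfrak P_0+\p\Lambda$. The paper does this as follows.
\begin{enumerate}
\item By the parity result \cite[Theorem 3.15]{LPV}, the residual Selmer rank is odd.
\item Rank lowering (Lemma \ref{ranklowering}, resting on \cite[Theorem 6.3]{ChHs2}) gives $S$ of the right parity with $\Sel(K,T_{\phi_S,1})=0$.
\item For a characteristic-zero lift of $\phi_S$, Skinner--Urban and Fouquet--Wan \cite{SU}, \cite{FW} give non-vanishing of the central value, and \cite[Theorem 4.5]{wang} shows its algebraic part is a $\p$-adic unit.
\item The interpolation formula at the trivial character then makes $\lambda_\phi(S)$ a unit at $\mathds 1$.
\end{enumerate}
Writing $\lambda_\phi(S)\equiv a+Tb\pmod{\p}$ with $a\in\F^\times$, its image in $k_{\mathfrak P}$ is $a\neq0$ for \emph{every} height-one $\mathfrak P$; this is Corollary \ref{corodelta}.

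This is your congruence idea, but applied to the theta elements rather than Kolyvagin classes. It makes your ``main obstacle'' disappear: no twisted Kolyvagin conjecture and no valuation bounds uniform in $\chi$ are needed. With $\delta_{\mathfrak P}=0$, Howard's theorem yields an exact length identity, so both divisibilities come out at once via $\mathfrak P_m$, Theorem \ref{control theorem} and \cite[Theorem 5.3.10]{MR}. The divisibility from \cite{LV-kyoto} is therefore redundant.
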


This result corresponds to Theorem \ref{IMC}; in line with terminology found in current literature for elliptic curves, we can refer to Theorem \ref{ind-IMC-intro-thm} as a \emph{Perrin-Riou main conjecture} for modular forms. We remark that the non-torsioness of $\kappa_\infty$ is known when $p$ splits in $K$ (\cite{Burungale}, see Proposition \ref{non zero}), so it can be removed from Theorem \ref{ind-IMC-intro-thm} in this case (as is done in Theorem \ref{IMC}). As mentioned earlier, partial results towards this conjecture were obtained (for one divisibility only) in \cite{LV-kyoto}. 

Finally, one can rephrase the indefinite IMC via the approach to $p$-adic $L$-functions due to Bertolini--Darmon--Prasanna. In what follows, we assume that $p$ splits in $K$ and write $p\cO_K=\p\bar{\p}$; unlike what happens with Theorem \ref{ind-IMC-intro-thm}, this splitting condition is essential for the definition of the objects involved, not only 
to ensure the non-torsioness of $\kappa_\infty$. Now set $\Lambda^\unr\defeq\Lambda\otimes_{\cO_\p}\cO_\p^\unr$, where $\cO_\p^\unr$ is the maximal unramified extension of $\cO_\p$; following \cite{BDP}, \cite{Brooks}, \cite{CH} and \cite{Magrone}, we may introduce a BDP $p$-adic $L$-function $\mathcal{L}_\p^\mathrm{BDP}\in\Lambda^\unr$ interpolating twists of $f$ by certain anticyclotomic Hecke characters (see \S\ref{BDP function}). This object is related to generalized Heegner cycles, which in turn can be explicitly linked to the class $\kappa_\infty$. Thus, a relation is established between the $\Lambda$-module $\Sel(K_\infty,T)/\Lambda\cdot\kappa_\infty$ appearing in the indefinite IMC and the BDP $p$-adic $L$-function $\mathcal{L}_\p^\mathrm{BDP}$. On the algebraic side, the $\Lambda$-torsion submodule of the Pontryagin dual of $\Sel(K_\infty,T)$ can be related to a 
\emph{modified} Selmer group $\Sel_{\emptyset,0}(K_\infty,T)$, which is a cotorsion 
$\Lambda$-module defined by imposing no local condition at $\p$ and the trivial local condition at $\bar{\p}$ (see \S\ref{modifiedsection}, where $\Sel_{\emptyset,0}(K_\infty,T)$ is denoted by $\Sel_{\emptyset,0}(K,\mathbf{T}_f)$).

The indefinite Iwasawa main conjecture can then be equivalently stated as follows.

\begin{itheorem}[Indefinite IMC] \label{C-thm}
Suppose we are in the indefinite case and that Assumption \ref{introass} holds. Moreover, assume that $p$ splits in $K$. Then $\Sel_{\emptyset,0}(K_\infty,T)$ is $\Lambda$-torsion and the ideal of $\Lambda^\unr$ generated by its characteristic power series coincides with the ideal of $\Lambda^\unr$ generated by $\mathcal{L}_\p^{\mathrm{BDP}}$. 
\end{itheorem}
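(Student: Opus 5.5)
We deduce Theorem \ref{C-thm} from Theorem \ref{ind-IMC-intro-thm} (in its form Theorem \ref{IMC}, where non-torsionness of $\kappa_\infty$ is automatic since $p$ splits in $K$, Proposition \ref{non zero}). The argument is the standard equivalence between the Perrin-Riou main conjecture and the Iwasawa--Greenberg main conjecture. It has two ingredients: an explicit reciprocity law and a Poitou--Tate comparison of Selmer structures.

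\emph{The reciprocity law.} We use the explicit reciprocity law relating generalized Heegner cycles to $\mathcal{L}_\p^{\mathrm{BDP}}$ (\cite{BDP}, \cite{Brooks}, \cite{CH}, \cite{Magrone}), together with the explicit link between generalized Heegner cycles and the class $\kappa_\infty$. Together these give a big logarithm (Perrin-Riou/Coleman) map
\[
\mathcal{L}_{\bar\p}\colon H^1(K_{\infty,\bar\p},\mathbf{T}_f)/\mathrm{(fin)}\longrightarrow\Lambda^\unr.
\]
It is defined on the singular quotient of local cohomology at $\bar\p$, has finite kernel and cokernel controlled by $a_p$-type terms, and sends $\mathrm{loc}_{\bar\p}(\kappa_\infty)$ to $\mathcal{L}_\p^{\mathrm{BDP}}$, up to a unit of $\Lambda^\unr$ and possibly the square. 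Normalizations must be matched with \S\ref{BDP function}: $\mathcal{L}_\p^{\mathrm{BDP}}$ is usually the image of the class, so no squaring is needed.

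\emph{Comparison of Selmer groups.} Let $\Sel(K_\infty,T)$ carry the ordinary (Greenberg) conditions at $\p$ and $\bar\p$. Since $p$ splits, these are the conditions given by the ordinary filtration at each prime. We compare it with $\Sel_{\emptyset,0}$ and with $\Sel_{0,\emptyset}$, obtained by relaxing at $\p$ and strictifying at $\bar\p$, via global duality. This yields exact sequences
\[
0\to \Sel(K_\infty,T)\xrightarrow{\mathrm{loc}_{\bar\p}} H^1_{\ord}(K_{\infty,\bar\p},T)\to \Sel_{\emptyset,0}(K_\infty,A)^\vee\to \Sel(K_\infty,A)^\vee\to\cdots
\]
and the analogous one with singular quotients. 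The key input is that $\mathrm{loc}_{\bar\p}(\kappa_\infty)$ is non-torsion, which follows from the reciprocity law and $\mathcal{L}_\p^{\mathrm{BDP}}\neq0$. Granting this, a rank count shows that $\Sel(K_\infty,T)$ has rank $1$ and that $\Sel_{\emptyset,0}$ is torsion. Taking characteristic ideals in the four-term sequence gives
\[
\cchar\bigl(\Sel_{\emptyset,0}\bigr)=\cchar\bigl(\Sel(K_\infty,A)^\vee_{\tor}\bigr)\cdot\cchar\bigl(\Sel(K_\infty,T)/\Lambda\kappa_\infty\bigr)^{-1}\cdot\cchar\bigl(H^1_{/}(K_{\infty,\bar\p},T)/\Lambda\,\mathrm{loc}_{\bar\p}\kappa_\infty\bigr).
\]
The last term equals $(\mathcal{L}_\p^{\mathrm{BDP}})$ in $\Lambda^\unr$ by the reciprocity law. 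This uses that the local module is free of rank one, as $H^0$ vanishes by a residual non-degeneracy hypothesis in Assumption \ref{introass}.

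\emph{Conclusion.} Plugging Theorem \ref{IMC}, i.e. $\cchar(\Sel(K_\infty,A)^\vee_{\tor})=\cchar(\Sel(K_\infty,T)/\Lambda\kappa_\infty)^2$, into this identity should yield $\cchar(\Sel_{\emptyset,0})\Lambda^\unr=(\mathcal{L}_\p^{\mathrm{BDP}})$. The precise power depends on whether the Perrin-Riou conjecture is stated with the square, and we fix the exponents using the structure theorem $\Sel(K_\infty,A)^\vee_{\tor}\sim M\oplus M$.

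\emph{Main obstacle.} The delicate step is bookkeeping the pseudo-null and finite error terms. These arise from the cokernel of the big logarithm, the Tamagawa factors at primes dividing $N$, and the torsion in local $H^1$. Each must contribute trivially to characteristic ideals. We first try to kill these via the vanishing of $H^0(K_{\bar\p},\bar{T})$ and the Tamagawa/big-image conditions of Assumption \ref{introass}, and to justify extending scalars to $\Lambda^\unr$ by faithful flatness.
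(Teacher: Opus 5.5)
Your overall strategy is the paper's: deduce the statement from Theorem~\ref{IMC}, the BDP reciprocity law and Poitou--Tate. However, two steps fail as written.

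\emph{The reciprocity law is applied to the wrong local object.} The class $\kappa_\infty\in\Sel(K,\mathbf{T}_f)$ satisfies the ordinary condition at \emph{both} primes above $p$. So its image in the singular quotient $H^1(K_{\bar\p},\mathbf{T}_f)/H^1_{\ord}(K_{\bar\p},\mathbf{T}_f)$ is zero, and no map defined on that quotient can send it to $\mathcal{L}_{\p}^{\mathrm{BDP}}\neq0$. What is actually available is \eqref{big-log}. There, the injective map $\mathrm{Log}_{\p}$ is defined on $H^1(K_{\p},\mathbf{T}_f^{(p)})=H^1_{\ord}(K_{\p},\mathbf{T}_f)$, the ordinary part at the prime where $\Sel_{\emptyset,0}$ is relaxed. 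It sends $\loc_{\p}(\kappa_\infty)$ to a generator of $(\mathcal{L}_{\p}^{\mathrm{BDP}})$, using Proposition~\ref{comparison} and the fact that $(2\sqrt{-D_K})^{(k-2)/2}$ is a $p$-adic unit.

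\emph{The real gap is that your four-term sequence does not contain $\Sel_{\emptyset,0}$.} That group and $\Sel$ differ at both $\p$ and $\bar\p$ and are not nested. Poitou--Tate applied to $\loc_{\bar\p}\colon\Sel(K,\mathbf{T}_f)\to H^1_{\ord}(K_{\bar\p},\mathbf{T}_f)$ produces instead the dual of $\Sel_{\ord,\emptyset}(K,\mathbf{A}_f)$ (relaxed at $\bar\p$), which has $\Lambda$-rank one.

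Your characteristic-ideal identity is, up to complex conjugation, the paper's \eqref{Cast-2.3} for the torsion of that rank-one module. Inserting Theorem~\ref{IMC} into it gives $\cchar\bigl(\Sel(K,\mathbf{T}_f)/\Lambda\kappa_\infty\bigr)\cdot(\mathcal{L}_{\p}^{\mathrm{BDP}})$. That is not the claim, and no normalization repairs it.

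The missing step is the passage from $\Sel_{\ord,\emptyset}$ to $\Sel_{\emptyset,0}$. It uses three inputs:
\begin{enumerate}
\item the sequence \eqref{duality-selmer-eq}, which runs through $\coker\bigl(\loc_{\p}|_{\Sel_{\ord,\emptyset}(K,\mathbf{T}_f)}\bigr)$;
\item the strict-versus-relaxed identity $\ell_{\mathfrak{P}}\bigl(\Sel_{\ord,0}(K,\mathbf{A}_f)^\vee\bigr)=\ell_{\mathfrak{P}}\bigl(\Sel_{\ord,\emptyset}(K,\mathbf{A}_f)^\vee_{\tor}\bigr)$ from \cite[Lemma 2.3, (3)]{Castella-London};
\item the equality $\Sel_{\ord,\emptyset}(K,\mathbf{T}_f)=\Sel(K,\mathbf{T}_f)$.
\end{enumerate}
This brings in a \emph{second} copy of $\coker\bigl(\loc_{\p}|_{\Sel(K,\mathbf{T}_f)}\bigr)$. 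Together with the injectivity of $\loc_{\p}$ on $\Sel(K,\mathbf{T}_f)$, one gets $\ell_{\mathfrak{P}}\bigl(\Sel_{\emptyset,0}(K,\mathbf{A}_f)^\vee\bigr)=\ell_{\mathfrak{P}}\bigl(\Sel(K,\mathbf{A}_f)^\vee_{\tor}\bigr)+2\,\ell_{\mathfrak{P}}(\coker)=2\,\ell_{\mathfrak{P}}\bigl(H^1_{\ord}(K_{\p},\mathbf{T}_f)/\Lambda\cdot\loc_{\p}(\kappa_\infty)\bigr)$. The exponent is forced by this bookkeeping, and the outcome is $(\mathcal{L}_{\p}^{\mathrm{BDP}})^2$, as in Theorem~\ref{IG-thm}.

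Likewise, torsionness of $\Sel_{\emptyset,0}$ does not follow from a rank count in a single sequence. One needs $\Sel_{\ord,0}(K,\mathbf{A}_f)^\vee$ to be torsion. The paper gets this from the corank-one computation \eqref{Lambdarank} and the decomposition $\Sel_{\ord,\emptyset}(K,A_{\mathfrak{P}})\simeq F_{\mathfrak{P}}/\cO_{\mathfrak{P}}\oplus\Sel_{\ord,0}(K,A_{\mathfrak{P}^\iota})$.
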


This result corresponds to Theorem \ref{IG-thm}. Similar alternative versions of the Perrin-Riou conjecture were studied by Kobayashi (\cite{Kobayashi-Que}; \emph{cf.} also \cite{KO2}, \cite{KO1}) and Castella (\cite{Castella-London}) in the case of elliptic curves. 

Let $\overline{T}=T\pmod{\p}$ denote the residual mod $\p$ representation associated with $T$ and set $p^*\defeq(-1)^{\frac{p-1}{2}}p$; moreover, let $c_f$ stand for the index in $\cO_F$ of the order generated by the Fourier coefficients of $f$. We conclude this introduction by remarking that our main results are established under the following list of technical conditions (\emph{cf.} Assumption \ref{ass1}).

\begin{assumption} \label{introass}  
\begin{enumerate}
\item $p\nmid ND_Kc_f$;
\item $T$ is ordinary at $p$ (\emph{cf.} \eqref{ordinaryfil}); 
\item $p>k+1$ and $\#(\mathbb{F}_p^\times)^{k-1}>5$;  
\item $p$ is non-anomalous (\emph{cf.} Definition \ref{anomalous});
\item the restriction of $\overline{T}$ to $\Gal\bigl(\bar\Q/\Q(\sqrt{p^*})\bigr)$ is absolutely irreducible;
\item $\overline{T}$ is ramified at all primes $q\,|\,M$ with $q\equiv 1\pmod{p}$;
\item $\overline{T}$ is ramified at all primes $q\,|\,D$ with $q\equiv\pm 1\pmod{p}$; 
\item there is a prime $q\,|\,N$ such that $\overline{T}$ is ramified at $q$;
\item the $p$-adic Galois representation attached to $f$ has big image (\emph{cf.} Section \ref{Iwasawa}).
\end{enumerate}
\end{assumption}

In principle, these assumptions could be relaxed; however, some of the results we use (especially those from \cite{wang}, which builds on \cite{ChHs2} and \cite{ChHs1}) are stated under conditions (2)--(8) (called ($\mathrm{CR}^\star$) in \cite[Assumption 1]{wang}), which is why we require them here. As for (1) and (9), the non-divisibility of $c_f$ by $p$ and the big image property are assumed in \cite{LPV}, some of whose results we exploit in this article, and, as a consequence, are imposed here too. 

\subsection*{Notation and conventions.} 

Fix throughout algebraic closures $\overline\Q$ and $\overline\Q_p$ and embeddings $\overline\Q\hookrightarrow \C$ and $\overline{\Q}\hookrightarrow\overline\Q_p$. 

Let $\widehat{\Z}$ denote the profinite completion of $\Z$ and, for any ring $A$, set $\widehat{A}\defeq A\otimes_\Z\widehat{\Z}$. For any $\Q$-algebra $A$, put $A_\infty\defeq A\otimes_\Q\R$ and, for any ring $A$ and any prime number $\ell$, set $A_\ell\defeq A\otimes_\Z\Z_\ell$. 

If $L$ is a number field, then we denote by $\cO_L$ the ring of integers of $L$ and for each place $v$ of $L$ we write $L_v$ for the completion of $L$ at $v$ and $\cO_{L_v}$ for the valuation ring of $L_v$; a similar convention applies to any local field. For a field $L$, we let $G_L\defeq\Gal(\overline{L}/L)$ be its absolute Galois group; if $L$ is a number field and $v$ a finite place of $L$, then $G_v\subset G_L$ is the decomposition group at $v$ and $I_v\subset G_v$ the inertia subgroup. 

%Let $\mathbb{A}$ denote the adele ring of $\Q$; we denote $\mathbb A_f$ or $\mathbb{A}^{(\infty)}$ the ring of finite adeles in $\mathbb{A}$ and $\mathbb{A}_\infty=\R$ the archimedean component of $\mathbb{A}$; for any $x\in\mathbb{A}$ we let $x_f\in\mathbb{A}^{(\infty)}$ and $x_\infty\in \mathbb{A}_\infty$ the projections to the finite and infinite summands, respectively; we write $a$ for the image of $a\in \Q$ in $\mathbb{A}$ via the diagonal embedding, and we still write $a$ for its finite and archimedean components $a_f$ or $a_\infty$, respectively. For a $\Q$-algebra $A$, we let $A_\mathbb A=A\otimes_\Q\mathbb A$, $A_\infty=A\otimes_\Q\R$, and $\widehat{A}=A\otimes_\Q\mathbb{A}^{(\infty)}$. We also denote $\widehat{\Z}$ the profinite completion of $\Z$ and for each ring $A$ we let $\widehat{A}=A\otimes_\Z\widehat{\Z}$. 

Given a variety $V$ defined over a number field $L$, we denote by $\CH^{i}(V/L)$ the abelian group of cycles of codimension $i$ in $V$, rational over $L$, with $\Z$-coefficients, modulo rational equivalence; we also write $\CH_0^{i}(V/L)$ for the submodule of homologically trivial cycles, \emph{i.e.}, the kernel of the cycle class map
\[ c\ell: \CH^{i}(V/L)\longrightarrow H^{2i}_\text{\'et}\bigl(\overline{V},\Q_q(i)\bigr)^{G_L}, \] 
where $\overline{V}\defeq V\otimes_L\overline{L}$ and $q$ is a prime number (this definition does not depend on $q$). Finally, for a ring $A$, let $\CH^{i}(V/L{)}_A\defeq\CH^{i}(V/L)\otimes_\Z A$ and $\CH^{i}_0(V/L{)}_A\defeq\CH^{i}_0(V/L)\otimes_\Z A$. 

\section{Preliminaries on quaternion algebras} \label{sec:preliminaries}

We collect some general background on quaternion algebras over $\Q$, in both the definite and the indefinite settings. 

\subsection{Quaternion algebras} \label{sec:DQA}

Let $M$ and $D$ be square-free coprime integers, and assume that $D$ is square-free. Let $B$ denote the quaternion algebra of discriminant $D$. Then $B$ is definite if $D$ is the product of an odd number of distinct primes, and indefinite otherwise. 
Fix an imaginary quadratic field $K$ of discriminant $-D_K$ such that all primes dividing $D$ are inert in $K$, and all primes dividing $M$ are split in $K$. We also fix a prime number $p\nmid MDD_K$. Fix an embedding of $\Q$-algebras $\psi_K:K\hookrightarrow B$ which allows us to identify $K$ with a subalgebra of $B$ writing $K\subset{B}$ instead of $\psi_K(K)\subset B$. Using the Skolen--Noether 
theorem as in \cite[\S2.2]{ChHs2}, we can choose $j\in B^\times$ satisfying the following conditions:  
\begin{itemize}
\item $B=K\oplus K j$ as $\Q$-vector spaces; 
\item $j^2=\beta$ with $\beta\in\Q^\times$ and $\beta<0$; 
\item $jt=\bar{t}j$ for all $t\in K$, where $t\mapsto \bar{t}$ is the main involution of $B$; 
\item $\beta\in (\Z_q^\times)^2$ for all $q\mid Mp$;
\item $\beta\in \Z_q^\times$ for all $q\mid D_K$.
\end{itemize}
Fix such a $j$ once and for all. Set $\delta_K\defeq\sqrt{-D_K}$ and $D'_K=D_K$ if $D_K$ is odd, $D'_K=D_K/2$ if $D_K$ is even. Define 
$\theta=\frac{D_K'+\delta_K}{2}$. Define the embedding $i_K:B\hookrightarrow \M_2(K)$ by $a+bj=\smallmat {a}{b\beta}{\bar b}{\bar a}$.
For each prime $q$ of $\Q$, let $B_q=B\otimes_\Q\Q_q$ and $R_q=R\otimes_\Z\Z_q$. For each prime $q\nmid D$ we fix isomorphisms
$i_q:B_q\simeq \M_2(\Q_q)$ as follows: 
\begin{itemize}
\item If $q\nmid Mp$, we require that $i_q(\mathcal{O}_K\otimes_\Z\Z_q)\subset \M_2(\Z_q)$; 
\item If $q\mid Mp$, we define $i_q$ by $i_q(\theta)=\smallmat {\theta+\bar\theta}{-\theta\bar\theta}{1}{0}$, 
$i_q(j)=\sqrt{\beta}\smallmat{-1}{\theta+\bar\theta}{0}{1}$. 
\end{itemize}
Let $R\subset B$ be the Eichler order of level $M$ with respect to the isomorphisms $i_q$. 
We denote $\widehat{R}=R\otimes_\Z\widehat{\Z}$ and $\widehat{B}=B\otimes_\Z\widehat{\Z}$, where $\widehat{Z}$ is the profinite completion of $\Z$. We denote $B_\infty=B\otimes_\Q\R$, which is either isomorphic to $\M_2(\R)$ (we say that $B$ is \emph{indefinite} in this case) or isomorphic to the Hamilton quaternions (we say that $B$ is
\emph{definite} in this case). We put $B_\mathbb{A}=\widehat{B}\times B_\infty$ 
and denote $x\mapsto x_f$ and $x\mapsto x_\infty$ the two projections $B_\mathbb{A}\rightarrow \widehat{B}$ and 
$B_\mathbb{A}\rightarrow B_\infty$. 

\subsection{Special elements}\label{sec special elements} 

The \emph{Atkin--Lehner involution} of $B$ of level $M$ is defined to be the element $\tau=(\tau_q)_q\in\widehat{B}^\times$ such that $\tau_q=1$ if $q\nmid MD$, $\tau_q=j$ if $q\mid D$ and $\tau_q=\smallmat 01{-M}0$ if $q\mid M$. We complete this definition with the archimedean place and define an element $\tau\in B_\mathbb{A}^\times$ by requiring $\tau_\infty=j$. The reader is referred, \emph{e.g.}, to \cite[Section 2]{Chida}, \cite[\S3.3]{ChHs1} and \cite[(2.5), (3.3)]{wang} for details. Now we introduce special elements $g_n\in\widehat{B}^\times$ that will be used to define Heegner (respectively, Gross) points on indefinite (respectively, definite) Shimura curves. These points are defined in many places (see, \emph{e.g.}, \cite{CKL}, \cite{CL}, \cite{ChHs2}, \cite{ChHs1}, \cite{LV-MM}) with different normalizations: here we follow the one in \cite[Section 2]{Chida} and \cite[(3.2)]{wang}. Let us write $M\cO_K=\mathfrak{M}\cdot\overline{\mathfrak{M}}$ and for every prime number $q\neq p$ 
define $\xi_q\in B_q^\times$ as follows:
\begin{itemize}
\item if $q\nmid Mp$, then $\xi_q\defeq1$; 
\item if $q\,|\,M$ and $q=\mathfrak{q}\cdot\bar{\mathfrak{q}}$ with $\mathfrak{q}\,|\,\mathfrak{M}$, then $\xi_q$ satisfies $i_q(\xi_q)=\delta_K^{-1}\smallmat \theta{\bar\theta}11$ (observe that $\smallmat\theta{\bar\theta}11\in \GL_2(K_\mathfrak{q})\simeq\GL_2(\Q_q)$ and recall that $\delta_K\in\Z_q^\times$).
\end{itemize}
Furthermore, for every integer $n\geq0$ we introduce $\xi_p^{(n)}\in B_p^\times$ as follows: 
\begin{itemize}
\item if $p$ splits in $K$, then $\xi_p^{(n)}$ satisfies $i_p(\xi_p^{(n)})=\smallmat{1}{p^{-n}}{0}{1}$;
\item if $p$ is inert in $K$, then $\xi_p^{(n)}$ satisfies $i_p(\xi_p^{(n)})=\smallmat{p^n}{0}{0}{1}$. 
\end{itemize}
Finally, set $g_n\defeq\xi_p^{(n)}\cdot\prod_{q\neq p}\xi_q\in \widehat{B}^\times$.

\subsection{Polynomial representations}

Let $k\geq 4$ be an even integer and $A$ be a ring. Put $r=k-2$ and 
$L_r(A)=\Sym^r(A).$
Let $\mathbf{v}_j=X^{r/2-j}Y^{r/2+j}$ for $-r/2\leq j\leq r/2$ be the standard basis elements of $L_r(A)$.
Denote by $\rho_k:\GL_2(A)\rightarrow\Aut_A(L_r(A))$ the representation defined by $\rho_k(g)(P)=\det^{-(r/2)}(g)(P|g)$ where if $P=P(X,Y)$ is a polynomial, then $P|g$ is the polynomial $P|g(X,Y)=P((X,Y)g)$, with $(X,Y)g$ indicating the left matrix multiplication. 
If $p>k-2$ is a prime number and $A$ a $\Z_{(p)}$-algebra, we may define a perfect pairing $\langle\cdot,\cdot\rangle_k:L_r(A)\times L_r(A)\rightarrow A$ by setting
\begin{equation}\label{pairing}
\bigg\langle \sum_{i=1}^ra_i\mathbf{v}_i,\sum_{j=1}^rb_j\mathbf{v}_j\bigg\rangle_{\!k}\defeq\sum_{-\frac{k-2}{2}\leq n\leq \frac{k-2}{2}}a_nb_{-n}(-1)^{r/2+n}\frac{\Gamma(k/2+n)\Gamma(k/2-n)}{\Gamma(k-1)}. \end{equation}
This pairing satisfies the rule 
$\langle\rho_k(g)P_1,\rho_k(g)P_2\rangle_k=\langle P_1,P_2\rangle_k$. 

Composing the embedding $i_K:B\hookrightarrow\M_2(K)$ with the fixed embedding $K\hookrightarrow\C$ 
gives an embbedding $B\hookrightarrow\M_2(\C)$; further composing with  
$\rho_k:\GL_2(\C)\rightarrow \Aut_\C(L_r(\C))$ we obtain a representation $\rho_{k,\infty}:B^\times\rightarrow\Aut_\C(L_r(\C))$. We still denote 
$\rho_{k,\infty}:B^\times_\infty\rightarrow \Aut_\C(L_r(\C))$ the representation obtained by scalar extension, where $B_\infty=B\otimes_\Q\R$. 
We also remark that $\C\cdot \mathbf{v}_m$ is the eigenspace where $\rho_{k,\infty}(t)$ acts via $(t/\bar{t})^m$, for all $t\in (K\otimes_\Q\R)^\times\simeq\C^\times$.

Let $q\nmid D$ be a prime number and let $\mathfrak{q}$ be the prime of $K$ above $q$ corresponding to a fixed embedding $\bar\Q\hookrightarrow\bar\Q_q$.
We may then consider the embedding $i_{K_\mathfrak{q}}:B\hookrightarrow \M_2(K_\mathfrak{q})$ obtained composing 
$i_K:B\hookrightarrow\M_2(K)$ with the embedding $K\hookrightarrow K_\mathfrak{q}$; further composing with  
$\rho_k:\GL_2(K_\mathfrak{q})\rightarrow \Aut_\C(L_r(K_\mathfrak{q}))$ we obtain a representation $\rho_{k,\mathfrak{q}}:B^\times\rightarrow\Aut_\C(L_r(K_\mathfrak{q}))$. We still denote 
$\rho_{k,\mathfrak{q}}:B^\times_q\rightarrow \Aut_\C(L_r(K_\mathfrak{q}))$ the representation obtained by scalar extension.   
If $q\mid Mp$ and $\gamma_\mathfrak{q}=\smallmat {\sqrt{\beta}}{-\sqrt{\beta}\bar\theta}{-1}{\theta}$ then $\rho_{k,\mathfrak{q}}(g)=\rho_k(\gamma_\mathfrak{q} i_q(g)\gamma_\mathfrak{q}^{-1})$ for all $g\in B^\times$, where $i_q$ is fixed for these primes as in \S\ref{sec:DQA}.
Moreover, $\rho_{k,\infty}(g)=\rho_{k,\mathfrak{q}}(g)$ for all $g\in B^\times$. See \cite[\S2.3, \S4.1]{ChHs1} for details. 

\section{Anticyclotomic $p$-adic $L$-functions}

Let us fix a \emph{definite} quaternion algebra $B$ over $\Q$ of discriminant $D$, an integer $M\geq1$ coprime with $D$, a prime number $p\nmid MD$ and an imaginary quadratic field $K$ together with an embedding $\psi_K:K\hookrightarrow B$; our general notation from Section \ref{sec:preliminaries} is in force.

\subsection{Shimura curves} \label{sets} 

Fix an Eichler order $R\subset B$ of level $M$. Let $\Hom(K,B)$ be the set of $\Q$-algebra embeddings $K\hookrightarrow B$. The \emph{Shimura curve} (or \emph{Gross curve}) attached to the data above is the double coset space
\[ X\defeq B^\times\big\backslash\widehat{B}^\times\times\Hom(K,B)\big/\widehat{R}^\times. \] 
We denote $X$ by $X_{M,D}$ if we need to stress dependence on $M$ and $D$. Fix, as before, an integer $c\geq1$ prime to $MD$. If a point $x\in X$ is represented by a pair $(g,\varphi)$ such that $\varphi(K)\cap g^{-1}\widehat{ R}^\times g=\varphi(\cO_{c})$, then we say that $\varphi$ is an \emph{optimal embedding} of $\cO_c$ into $R$ and that $x$ is a \emph{Gross point} of \emph{conductor $c$}. The set of Gross points of conductor $c$ is equipped with an action of $\Gal(K^\mathrm{ab}/K)$ factoring through $\Gal(H_{c}/K)$, which is defined as before: if $\sigma\in\Gal(K^\mathrm{ab}/K)$ is represented by $a\in\hat{K}^\times$ via the geometrically normalized Artin map and $x$ is represented by $(g,\varphi)$, then $x^\sigma\in\mathcal{S}_{M,D}(K)$ is represented by the pair $\bigl(\hat{\varphi}(a)g,\varphi\bigr)$, where the map $\hat\varphi:\hat K\rightarrow \hat{B}$ is obtained from $\varphi$ by extending scalars to $\hat{\Z}$. Then $x^\sigma$ is still a Gross point of conductor $c$. 

For any integer $n\geq0$, we let $x_n=\bigl[(g_n,\psi_K)\bigr]$ be a Gross point of conductor $p^n$. In order to lighten our notation, for $a\in \widehat{K}^\times$ and $\sigma\in\Gal(K^\mathrm{ab}/K)$ corresponding to $a$ under the Artin map, we abbreviate $x_n^\sigma=\bigl[\bigl(\widehat{\psi}_K(a)g_n,\psi_K\bigr)\bigl]$ as $x_n(a)=\bigl[(ag_n,\psi_K)\bigr]$. 

\subsection{Modular forms}\label{mod}

Let $U$ be a compact open subgroup of $\widehat{B}^\times$; in our arguments, $U$ will be either $\widehat{R}^\times$ or $\widehat{R(p)}^\times$, where $R(p)\subset R$ is the standard Eichler order of $B$ of level $p$. Now fix an $\cO_{K}$-algebra $A$ and denote by $\mathbf{S}_k^B(A,U)$ the $A$-module of functions $B^\times\backslash\widehat{B}^\times/U\rightarrow L_k(A)$. 
If $U=\widehat{R}^\times$ (respectively, $U=\widehat{R(p)}^\times$), then the double coset Hecke algebra $\T_{M,D}(A)$ of level $M$ (respectively, $\T_{Mp,D}(A)$ of level $Mp$), generated by operators $T_q$ for $q\nmid MD$ and $U_q$ for $q\,|\,MD$ (respectively, $T_q$ for $q\nmid MDp$ and $U_q$ for $q\,|\,MDp$), acts on $\mathbf{S}_k^B(A,U)$. Set 
\[ \mathrm{level}(U)\defeq\begin{cases}M & \text{if $U=\widehat{R}^\times$}\\[2mm] Mp & \text{if $U=\widehat{R(p)}^\times$}\end{cases} \]
and $\Gamma_g\defeq\Bigl(B^\times\cap (gUg^{-1}\widehat{\Q}^\times)\Bigr)\big/\Q^\times$. The $A$-module $\mathbf{S}_k^B(A,U)$ is equipped with a Petersson product 
\[ {\langle\cdot,\cdot\rangle}_B:\mathbf{S}_k^B(A,U)\times \mathbf{S}_k^B(A,U)\longrightarrow A \]
defined by the rule 
\[ {\langle \phi_1,\phi_2\rangle}_B\defeq\sum_{g\in B^\times\backslash\widehat{B}^\times/\widehat{R}^\times\widehat{\Q}^\times}|\Gamma_g|^{-1}\cdot\big\langle\phi_1(g),\phi_2(g\tau)\big\rangle_k, \]
where $\tau=(\tau)_q\in \widehat{B}^\times$ is the Atkin--Lehner involution of level $\mathrm{level}(U)$ (\emph{cf.} \S \ref{sec special elements}).

From here on, set 
\[ {S}_k(M,D;A)\defeq\mathbf{S}_k^B\bigl(A,\widehat{R}^\times\bigr),\quad{S}_k(Mp,D;A)\defeq\mathbf{S}_k^B\Bigl(A,\widehat{R(p)}^\times\Bigr).
\]
Fix an eigenform $\phi\in{S}_k(M,D;A)$ for all Hecke operators in $\T_{M,D}(A)$; assuming that $A$ contains all the eigenvalues of $\phi$, we shall adopt the same symbol to denote the $A$-algebra homomorphism $\T_{M,D}(A)\rightarrow A$ associated with $\phi$. Moreover, we assume that $\phi$ is \emph{$p$-adically normalized}, \emph{i.e.}, $\phi$ is not $\p$-divisible, and \emph{ordinary}, \emph{i.e.}, $\phi(T_p)$ is a unit in $A$. We denote by $\alpha_p$ the root of the Hecke polynomial $X^2-\phi(T_p)X-p^{k-1}$ that is a unit of $A$. 
Finally, we write $\phi^\sharp$ for the \emph{$p$-stabilization} of $\phi$, which is defined by the formula 
\[ \phi^\sharp(x)\defeq\phi(x)-\frac{p^\frac{k-2}{2}}{\alpha_p}\cdot\phi\biggl(x\begin{pmatrix}p^{-1}&0\\0&1\end{pmatrix}\!\biggr). \]
It turns out that $\phi^\sharp$ lies in $S_k(Mp,D;A)$; furthermore, $\phi^\sharp$ is an eigenform for all the Hecke operators in $\T_{Mp,D}(A)$ that has the same eigenvalues as $\phi$ for all operators different from $U_p$ and satisfies $U_p(\phi^\sharp)=\alpha_p\phi^\sharp$. 

\subsection{Theta elements} \label{theta}

Set $\mathcal{G}_n\defeq\Gal(H_{p^n}/K)$ and $\mathbf{v}_0^*\defeq D_K^\frac{k-2}{2}(XY)^{\frac{k-2}{2}}$. Moreover, for all integers $n\geq0$ put $\mathbf{w}_n\defeq p^\frac{n(k-2)}{2}\mathbf{v}_0^*$. The \emph{theta elements} attached to $\phi$ are 
\[ \tilde\theta_{n}(\phi)=\alpha_p^{-n}\cdot\sum_{\sigma\in\mathcal{G}_{n}}\big\langle
\rho_k^{-1}(\gamma_\p)(\mathbf{w}_n),\phi^\sharp\bigl(x_n(a)\bigr)\big\rangle_k\cdot\sigma\in A[\mathcal{G}_n]. \]
Let $\pi_n:A[\mathcal{G}_{n}]\rightarrow A[G_{n-1}]$ be the projector induced by the splitting $\mathcal{G}_n\simeq G_{n-1}\times\Delta$, where $\Delta=\Gal(H_1/K)$ has order coprime with $p$ by assumption, then define 
\[ \theta_{n-1}(\phi)\defeq\pi_n\bigl(\tilde\theta_{n}(\phi)\bigr)\in A[G_{n-1}] \] 
for all integers $n\geq 1$. The projected theta elements $\theta_n(\phi)$ thus introduced are norm-compatible and we may consider the Iwasawa element 
\begin{equation} \label{theta-infty-eq} 
\theta_\infty(\phi)\defeq \invlim_{n\geq 0}\theta_{n}(\phi)\in A[\![G_\infty]\!],
\end{equation}
which is called the \emph{square-root $p$-adic $L$-function of $\phi$}. When we want to stress dependence on $M$ and $D$, we write $\theta_{M,D,n}(\phi)$ and $\theta_{M,D,\infty}(\phi)$ for $\theta_n(\phi)$ and $\theta_\infty(\phi)$, respectively.

\begin{remark} 
Here we compare the definition of theta elements given above with the one in \cite{ChHs1} when $A=\C$. The reader is advised to keep a copy of \cite{ChHs1} at hand, bearing the following notational differences in mind: the symbol $A_p$ in \cite{ChHs1} corresponds to our $\alpha_p$, while $\alpha_p$ in \cite{ChHs1} corresponds to our $\alpha_pp^{-\frac{k-2}{2}}$. We write $\tilde\theta_n^\mathrm{CH}(\phi)$ for the theta elements in \cite[\S4.2]{ChHs1}: see, especially, \cite[Definition 4.1]{ChHs1}, where $\tilde\theta_n^\mathrm{CH}(\phi)$ is denoted by $\Theta_n^{[0]}\bigl(f_{\pi'}^\dagger\bigr)$. In \cite{ChHs1}, $p$-adic quaternionic modular forms are functions $\psi:B^\times\backslash\widehat{B}^\times\rightarrow L_k(\C_p)$ such that $\psi(gu)=\rho_k(u_p^{-1})\psi(g)$ for $u\in U$, where $U$ is a compact open subgroup of $\widehat{B}^\times$; one can check that the map $\psi\mapsto\phi_\psi$ with $\phi_\psi(g)\defeq\rho_k(g_p)\psi(g)$ establishes an isomorphism between the $\C_p$-vector space of $p$-adic quaternionic modular forms in \cite[\S4.1]{ChHs1} and the $\C_p$-vector space $\mathbf{S}_n(\C_p,U)$ in \S \ref{mod}. We want to show that $\tilde\theta_n^\mathrm{CH}(\psi)=\tilde\theta_n(\phi_\psi)$ for all $n\geq0$. Let $\psi^\sharp$ be the $p$-stabilization of $\psi$ (see, \emph{e.g.}, \cite[\S3.2]{ChHs1}). First of all, there are equalities  
\begin{equation} \label{CHtheta}
\tilde\theta_n^\mathrm{CH}(\psi)=\alpha_p^{-n}p^\frac{n(k-2)}{2}\cdot\sum_{\sigma\in\mathcal{G}_{n}}\big\langle \mathbf{v}_0^*,\psi^\sharp\bigl(x_n(a)\bigr)\big\rangle_k\cdot\sigma
=\alpha_p^{-n}\cdot\sum_{\sigma\in\mathcal{G}_{n}}\big\langle\mathbf{w}_n,\psi^\sharp\bigl(x_n(a)\bigr)\big\rangle_k\cdot\sigma.
\end{equation}
(\emph{cf.} the relevant definitions in \cite{ChHs1}). Now recall the definition of the $p$-adic avatar $\hat\psi^\sharp(g)=\rho_k^{-1}(\gamma_\p)\rho_{k,p}^{-1}(g_p)\psi^\sharp(g)$ in \cite[\S4.1]{ChHs1}, which is a $p$-adic quaternionic modular form in the sense of \cite{ChHs1}; using the relation $\rho_{k,p}(g)=\rho_k(\gamma_\p g_p\gamma_\p^{-1})$, we get 
\[ \psi^\sharp(g)=\rho_k(\gamma_\p)\rho_k(g_p)\hat\psi^\sharp(g)=\rho_k(\gamma_\p) \phi_\psi^\sharp(g). \]
Replacing in \eqref{CHtheta}, we get
\[ \tilde\theta_n^\mathrm{CH}(\psi)=\alpha_p^{-n}\cdot\sum_{\sigma\in\mathcal{G}_{n}}\big\langle\rho_k^{-1}(\gamma_\p) (\mathbf{w}_n),\phi_\psi^\sharp\bigl(x_n(a)\bigr)\big\rangle_k\cdot\sigma=\tilde\theta_n(\phi_\psi),
\]
as was to be shown. 
\end{remark}

\subsection{$p$-adic $L$-functions}\label{L-funct} 

We assume that $A$, here denoted by $\cO$, is the valuation ring of a finite extension of $\Q_p$; take a $p$-adically normalized newform $f\in S_k(\Gamma_0(MD))$ and let $\phi$ be the Jacquet--Langlands lift of $f$. The \emph{$p$-adic $L$-function} of $f$ is  
\[ \mathcal{L}(f)\defeq \theta_\infty(\phi)\cdot\theta_\infty(\phi)^*\in\cO[\![G_\infty]\!], \] 
where $\theta_\infty(\phi)$ is defined as in \eqref{theta-infty-eq} and $\lambda\rightarrow\lambda^*$ is the canonical involution of the algebra $\cO[\![G_\infty]\!]$ behaving like inversion on group-like elements. The reader is referred, \emph{e.g.}, to \cite[Theorem 2.4]{CL}, \cite[Theorem 4.6]{ChHs1} and \cite{Hung} for the interpolation properties of $\mathcal{L}(f)$. 

Now let $L_K(f,\chi,s)$ be the $L$-function of $f$ twisted by a finite order character $\chi$ of $G_\infty$. Let $\Omega_{f,D}$ be the \emph{Gross period} of $f$ (see, \emph{e.g.}, \cite[\S1]{ChHs1} and \cite[\S5.2]{KL1}); recall that $\Omega_{f,D}$ is related to the Petersson norm ${\|f\|}_{\Gamma_0(MD)}$ of $f$ by the formula  
\[ \Omega_{f,D}=\frac{(4\pi)^k\cdot{\|f\|}_{\Gamma_0(MD)}}{\langle\phi,\phi\rangle_B} \]
(\emph{cf.} \cite[(4.3)]{ChHs1}). Given a finite order character $\chi$ of $G_\infty$ factoring through $K_n$, there is an equality 
\[ \chi\bigl(\mathcal{L}(f)\bigr)=\mathrm{Const}\times\frac{L_K(f,\chi,k/2)}{\Omega_{f,D}}; \]
here the constant $\mathrm{Const}=C_nE_p^2$ is a non-zero algebraic number, while
\[ E_p\defeq\begin{cases} 1 & \text{if $n\geq1$}\\[2mm]
\biggl(1-\frac{\chi(\wp)p^\frac{k-2}{2}}{\alpha_p}\biggr)\cdot\biggl(1-\frac{\chi(\bar\wp)p^\frac{k-2}{2}}{\alpha_p}\biggr) & \text{if $n=0$ and $p$ splits in $K$ as $p\cO_K=\wp\cdot\bar{\wp}$}\\[4mm]
1-\frac{p^{k-2}}{\alpha_p^{2}} & \text{if $n=0$ and $p$ is inert in $K$}\end{cases} \]
and
\[ C_n\defeq\biggl(\frac{\#\cO^\times_K}{2}\biggr)^2\cdot\Gamma(k/2)^2\cdot\sqrt{D_K}\cdot D_K^{k-2}\cdot a_p(f)^{-2n}\cdot p^{n(k-1)}. \]
%In particular, note that for $n=1$ both $C_p$ and $E_p$ are $p$-adic units, so $\mathrm{Const}$ is a $p$-adic unit. 
In the interpolation formulas above, it is implicit that the quotient between the special value of the complex $L$-function and the Gross period is algebraic, so it may be compared via the embeddings $\iota_\infty:\bar\Q\hookrightarrow\C$ and $\iota_p:\bar\Q\hookrightarrow\bar\Q_p$ to the value of the theta series at $\chi$. 

\section{Heegner cycles in Iwasawa theory} \label{heegner-sec}

In this section, we fix an \emph{indefinite} quaternion algebra $B$ over $\Q$ of discriminant $D$, an integer $M\geq1$ coprime with $D$, a prime number $p\nmid MD$ and an imaginary quadratic field $K$ together with an embedding $\psi_K:K\hookrightarrow B$; our general notation from Section \ref{sec:preliminaries} is in force. We also set $B_\infty\defeq B\otimes_\Q\R$ and choose an $\R$-algebra isomorphism $i_\infty:B_\infty\xrightarrow\simeq\M_2(\R)$ such that $i_\infty(\theta)=\smallmat {\theta+\bar\theta}{-\theta\bar\theta}{1}{0}$, with $\theta$ as in \S \ref{sec:DQA}.

\subsection{Shimura curves} \label{curves} 

Assume that the square-free integer $D$ is the product of an \emph{even} number of primes; of course, the set of prime divisors of $D$ may be empty, in which case $D=1$ and it will not be restrictive to take $B=\M_2(\Q)$. Choose a maximal order $\cO_B$ in $B$ (with $\mathcal{O}_B=\M_2(\Z)$ when $D=1$) and an Eichler order $R\subset B$ of level $M$ (with $R=\bigl\{\smallmat abcd\in \SL_2(\Z)\mid M\,|\,c\bigr\}$ when $D=1$). We also fix an integer $d\geq 5$ coprime with $MD$. Denote by $\mathcal{H}$ the complex upper half-plane and set $\mathcal{H}^\pm\defeq\C\smallsetminus\R$; we identity $\mathcal H^\pm$ with the set $\Hom(\C,B_\infty)$ of $\R$-algebra homomorphisms $\C\hookrightarrow B_\infty$ by sending $\varphi\in\Hom(\C,B_\infty)$ to $i_\infty\bigl(\varphi(i)\bigr)(i)\in\C$. The \emph{Shimura curve} attached to these data is the double coset space 
\[ \mathrm{Sh}_R\defeq B^\times\big\backslash(\widehat{B}^\times\times\mathcal{H}^\pm)\big/\widehat{R}^\times, \]
where $B^\times$ acts by left translations on $\widehat{B}^\times$ and by fractional linear transformations via $i_\infty$ on $\mathcal{H}^\pm$, whereas $\widehat{R}^\times$ acts by right translations on $\widehat{B}^\times$ and trivially on $\mathcal{H}^\pm$. The set $\mathrm{Sh}_R$ is equipped with a canonical structure of Riemann surface, which is compact if $D>1$ and open (\emph{i.e.}, non-compact) if $D=1$. In what follows, we distinguish these two cases. 

Suppose that $D=1$. Denote by $\mathcal Y_0(M)=\mathcal Y_0(M,1)$ the open modular curve over $\Z[1/M]$, whose set of complex points is $\mathrm{Sh}_R=Y_0(M)(\C)=\Gamma_0(M)\backslash \mathcal{H}$; here $\Gamma_0(M)$ is the usual congruence subgroup of $\SL_2(\Z)$ consisting of matrices that are upper triangular modulo $M$. Furthermore, denote by $Y_0(M)$ the generic fiber of $\mathcal Y_0(M)$ and let $\mathcal X_0(M)=\mathcal X_0(M,1)$ be its canonical (Baily--Borel) compactification, still defined over $\Z[1/M]$; we write $X_0(M)$ for the generic fiber of $\mathcal X_0(M)$. It is well known that $\mathcal Y_0(M)$ coarsely represents the moduli problem that associates with each $\Z[1/M]$-scheme $S$ the set of isomorphism classes of pairs $(E,C)$ such that
\begin{itemize}
\item $E$ is an elliptic curve over $S$;
\item $C$ is a locally cyclic subgroup of $E[M]$ of order $M$. 
\end{itemize}
We also consider the moduli problem associating with each $\Z[1/Md]$-scheme $S$ the set of isomorphism classes of triples $(E,C,\nu_d)$, called \emph{test objects}, where the pair $(E,C)$ is as before and 
\begin{itemize}
\item $\nu_d:(\Z/d\Z)^2\overset\simeq\longrightarrow E[d]$ is an isomorphism of group schemes.
\end{itemize} 
Then this moduli problem is representable by a $\Z[1/Md]$-scheme $\mathcal Y_d(M)=\mathcal Y_d(M,1)$, which is smooth and admits a canonical smooth (Baily--Borel) compactification $\mathcal X_d(M)=\mathcal X_d(M,1)$; as before, write $Y_d(M,1)=Y_d(M)$ and $X_d(M,1)=X_d(M)$ for the generic fibers of $\mathcal Y_d(M)$ and $\mathcal X_d(M)$, respectively. We denote by $\pi_d(M):\boldsymbol{\mathcal{E}}_d(M)\rightarrow \mathcal Y_d(M)$ the universal object (omitting the explicit mention of the other structures) and, similarly, by $\pi_d(M):\boldsymbol{\overline{\mathcal{E}}}_d(M)\rightarrow\mathcal X_d(M)$ the universal generalized elliptic curve. Sometimes we also use the full notation
$\boldsymbol{\mathcal{E}}_d(M,1)=\boldsymbol{\mathcal{E}}_d(M)$, $\boldsymbol{\overline{\mathcal{E}}}_d(M,1)=\boldsymbol{\overline{\mathcal{E}}}_d(M)$, 
and analogously for $\pi_d(M,D)=\pi_d(M)$. Finally, 
$\pi_d(M):\mathcal{E}_d(M)\rightarrow  Y_d(M)$ and $\pi_d(M):\overline{\mathcal{E}}_d(M)\rightarrow  X_d(M)$ denote the generic fibers of 
$\boldsymbol{\mathcal{E}}_d(M)$ and $\boldsymbol{\overline{\mathcal{E}}}_d(M)$, respectively. 

When $D>1$, write $\mathcal X_0(M,D)$ for the compact modular curve over $\Z[1/M]$ whose complex points are $\mathrm{Sh}_R=X_0(M,D)(\C)=R_1^\times\backslash \mathcal{H}$, where $R_1^\times$ is the subgroup of $R^\times$ consisting of elements of norm $1$; the generic fiber of $\mathcal X_0(M,D)$ will be denoted by $X_0(M,D)$. Then $\mathcal X_0(M,D)$ coarsely represents the moduli problem attaching to each $\Z[1/MD]$-scheme $S$ the set of isomorphism classes of triples $(A,\iota,C)$ such that
\begin{itemize}
\item $A$ is a polarized abelian surface over $S$,
\item $\iota:\mathcal{O}_B\hookrightarrow \End_S(A)$ is an embedding,
\item $C$ is a $\mathcal{O}_\mathcal{B}$-stable locally cyclic subgroup of $A[M]$ of order $M^2$. 
\end{itemize} 
We also consider the moduli problem that associates with each $\Z[1/MDd]$-scheme $S$ the set of isomorphism classes of $4$-tuples $T=(A,\iota,C,\nu_d)$ (called again \emph{test objects}) where the triple $(A,\iota,C)$ is as before and 
\begin{itemize}
\item $\nu_d:(\cO_\mathcal{B}/d\cO_\mathcal{B}{)}_S\rightarrow A[d]$ is an isomorphism of $\mathcal{O}_\mathcal{B}$-stable group schemes.
\end{itemize}
This moduli problem is representable by a smooth $\Z[1/MDd]$-scheme $\mathcal X_d(M,D)$ whose generic fiber is denoted 
$X_d(M,D)$. As before, the universal object will be denoted by $\pi_d(M,D):\boldsymbol{\mathcal{A}}_d(M,D)\rightarrow \mathcal X_d(M,D)$ (omitting the other structures); finally, the generic fiber of $\boldsymbol{\mathcal{A}}_d(M,D)$ will be $\pi_d(M,D):\mathcal{A}_d(M,D)\rightarrow X_d(M,D)$. 

\subsection{Bad reduction of Shimura curves}\label{sec4.2}

Let $\mathcal{X}=\mathcal{X}_0(M\ell,D)$ or $\mathcal{X}=\mathcal{X}_d(M\ell,D)$ where $\ell\nmid MD$ is a prime, and $D\geq 1$ is an integer which is a product of an even number of distinct primes; we shall define an integral (non-smooth) model $\mathfrak{X}$ over $\Z_{\ell^2}$ of the $\Q_{\ell^2}$-scheme $\mathcal{X}\otimes\Q_{\ell^2}$ (where the tensor product is over $\Z[1/MDd\ell]$, $\Q_{\ell^2}$ is the quadratic unramified extension of $\Q_\ell$ and $\Z_{\ell^2}$ its valuation ring). For this, consider the moduli problem which associates to each $\Z_\ell$-scheme $S$ the set of isomorphism classes of \emph{refined} test objects $(T,\mu_\ell)$ where $T$ is a text object as before (a pair $(A,C)$ or a triple $(A,C,\nu_d)$ if $D=1$ and $\mathcal{X}=\mathcal{X}_0(M\ell,D)$ or $\mathcal{X}=\mathcal{X}_d(M\ell,D)$, respectively; a triple $(A,\iota,C)$ or a quadruplet $(A,\iota,C,\nu_d)$ if $D>1$ and $\mathcal{X}=\mathcal{X}_0(M\ell,D)$ or $\mathcal{X}=\mathcal{X}_d(M\ell, D)$, respectively) and 
\begin{itemize}
\item $\mu_\ell:A\rightarrow A'$ is an isogeny of degree $\ell$ that commutes with the action of $\mathcal{O}_B$.
\end{itemize} 
When $\mathcal{X}=\mathcal{X}_d(M\ell,D)$, 
this moduli problem is representable over $\Z_{\ell^2}$ by an affine scheme $\mathfrak{Y}_{d}(M\ell,D)$ if $B=\M_2(\Q)$ and 
by a projective scheme $\mathfrak{X}_{d}(M\ell,D)$ otherwise; in the former case we denote $\mathfrak{X}_d(M\ell,D)$ the canonical (Baily--Borel) compactification of $\mathfrak{Y}_d(M\ell,D)$.
The base change of 
$\mathfrak{X}_{d}(M\ell,D)$ to $\Q_\ell$ is canonically isomorphic to $\mathcal{X}_{d}(M\ell,D)\otimes\Q_\ell$ (the isomorphism is defined by the obvious forgetful map on the moduli space, and the tensor product is over $\Z[1/MDd\ell]$). The corrisponding universal object will be denoted by $\pi_d(M\ell,D):\mathfrak{A}_d(M\ell,D)\rightarrow \mathfrak{X}_d(M\ell,D)$, with the understanding that if $D=1$, then it is the generalized elliptic curve equipped with the extra structure at $\ell$ described before. When $\mathcal{X}=\mathcal{X}_0(M\ell,D)$, the moduli problem is coarsely representable over $\Z_{\ell^2}$ by an affine scheme $\mathfrak{Y}_0(M\ell,D)$ if $B=\M_2(\Q)$ and 
by a projective scheme $\mathfrak{X}_0(M\ell,D)$ otherwise; in the former case, we let $\mathfrak{X}_0(M\ell,D)$ be the canonical (Baily--Borel) compactification of $\mathfrak{Y}_d(M\ell,D)$. The base change of $\mathfrak{X}_0(M\ell,D)$ from $\Z[1/MDd\ell]$ to $\Q_{\ell^2}$ is canonically isomorphic to $\mathcal{X}_0(M\ell,D)\otimes\Q_{\ell^2}$, an isomorphism being induced by the obvious forgetful map on the moduli space. 

Fix a prime number $\ell\nmid MD$, then consider the $\Z_{\ell^2}$-schemes $\mathfrak{X}\defeq\mathcal{X}_0(M,D)\otimes\Z_{\ell^2}$ and $\mathfrak{X}_\ell\defeq\mathfrak{X}_0(M\ell,D)$, where the base change is taken over $\Z[1/MD]$. Let $\mathbf{X}\defeq\mathfrak{X}\otimes_{\Z_{\ell^2}}\F_{\ell^2}$ 
and $\mathbf{X}_\ell\defeq\mathfrak{X}_\ell\otimes_{\Z_{\ell^2}}\F_{\ell^2}$ be the special fibers of $\mathfrak{X}$ and $\mathfrak{X}_\ell$, respectively, where $\F_{\ell^2}$ is the field with $\ell^2$ elements. Then a theorem of Buzzard in the quaternionic case (\cite[Theorem 4.7]{Buzzard}), which extends a result of Deligne--Rapoport in the modular curve case (\cite{DR}), shows that $\mathbf{X}_\ell$ consists of two irreducible components, each isomorphic to $\mathbf{X}$, intersecting transversally at supersingular points of $\mathbf{X}$. Set $\overline{\mathbf{X}}\defeq\mathbf{X}\otimes_{\F_{\ell^2}}\overline{\F}_{\ell}$ and $\overline{\mathbf{X}}_\ell\defeq\mathbf{X}_\ell\otimes_{\F_{\ell^2}}\overline{\F}_{\ell}$. 
Adopting the notation in \cite[p. 2309]{wang}, which is used also in \cite{Liu}, we write $\overline{\mathbf{X}}^{(0)}_\ell$ for the disjoint union of connected components $C$ of $\overline{\mathbf{X}}_\ell$ and $\overline{\mathbf{X}}^{(1)}_\ell$ for the disjoint union of the intersection of two distinct connected components $C_1\cap C_2$ of $\overline{\mathbf{X}}_\ell$; let also $a_0:\overline{\mathbf{X}}^{(0)}_\ell\rightarrow \overline{\mathbf{X}}_\ell$ and $a_1:\overline{\mathbf{X}}^{(1)}_\ell\rightarrow \overline{\mathbf{X}}_\ell$
denote the canonical maps. 

Fix a prime number $\ell\nmid MD$ and let $D$ be a product of an odd number of distinct primes, so that $D\ell$ is a product of an even number of distinct primes (notice that the quaternion algebra of discriminant $D\ell$ is not a matrix algebra); again, we shall define an integral (non-smooth) model $\mathfrak{X}_d(M,D\ell)$ over the unramified quadratic extension $\Z_{\ell^2}$ of $\Z_\ell$ of the $\Q_\ell$-scheme $\mathcal{X}_d(M,D\ell)\otimes\Q_{\ell}$, where the base change is taken over $\Z[1/MDd\ell]$. To do this, consider the moduli problem associating with each $\Z_{\ell^2}$-scheme $S$ the set of isomorphism classes of \emph{refined} test objects $(T,\iota)$, where $T$ is now a triple $(A,C,\nu_d)$ as before and 
\begin{itemize}
\item $\iota:\cO_B\hookrightarrow \End_S(A)$ is a special embedding (we refer, \emph{e.g.}, to \cite[Ch. III, \S 3.1, D\'efinition]{BC} for a discussion of special quaternionic actions in the context of Drinfeld theory). 
\end{itemize} 
This moduli problem is representable over $\Z_{\ell^2}$ by a projective scheme $\mathfrak{X}_{d}(M,D\ell)$ whose generic fiber is isomorphic to $\mathcal{X}_d(M,D\ell)\otimes\Q_{\ell^2}$, where $\Q_{\ell^2}$ is the unramified quadratic extension of $\Q_\ell$ and the base change is taken over $\Z[1/MDd\ell]$ (the isomorphism is defined by forgetting that $\iota$ is special). Write $\pi_d(M\ell,D):\mathfrak{A}_d(M,D\ell)\rightarrow \mathfrak{X}_d(M,D\ell)$ for the corresponding universal object. Let $\mathbf{X}_d(M,D\ell)$ denote the special fiber of $\mathfrak{X}_d(M,D\ell)$. It is well known that the $\F_{\ell^2}$-scheme $\mathbf{X}_d(M,D\ell)$ is the (disjoint) union of two copies of a suitable $\mathds{P}^1$-bundle over $B_{D\ell}^\times\backslash\widehat{B}_{D\ell}^\times/\widehat{R}_M^\times$, where $B_{D\ell}$ is the (definite) quaternion algebra of discriminant $D\ell$ and $R_M$ is an Eichler order of $B_{D\ell}$ of level $M$. 

\subsection{Kuga--Sato varieties}

Now let $\mathscr{X}_d(M,D)$ denote one of the Shimura curves that are (compactifications of) the fine moduli spaces that we introduced above; furthermore, let $\mathscr{A}_d(M,D)\rightarrow \mathscr{X}_d(M,D)$ be the corresponding universal object. More precisely, this general notation refers to the following cases: 
\begin{itemize}
\item $\mathcal{A}_d(M,D)\rightarrow {X}_{d}(M,D)\rightarrow\Spec(\Q)$;
\item $\boldsymbol{\mathcal{A}}_d(M,D)\rightarrow\mathcal{X}_{d}(M,D)\rightarrow\Spec\bigl(\Z[1/MDd]\bigr)$;
\item $\mathfrak{A}_d(M,D)\rightarrow \mathfrak{X}_{d}(M,D)\rightarrow\Spec(\Z_{\ell^2})$ for $\ell\,|\,MD$.
\end{itemize}
For any integer $k\geq 4$, we denote by
$\pi_{k,d}:\mathscr W_{k,d}(M,D)\rightarrow \mathscr{X}_d(M,D)$ the Kuga--Sato variety of dimension $k-1$ over $\mathscr{X}_d$ defined as
\begin{itemize}
\item the desingularization of the $(k-2)$-fold product of $\mathscr{A}_d$ over $\mathscr{X}_d$ if $D=1$; 
\item the $(k-2)/2$-fold product of $\mathscr{A}_d(M,D)$ over $\mathscr{X}_d(M,D)$ if $D>1$.
\end{itemize}
Therefore, for $\mathscr{X}_d(M,D)=X_d(M,D)$ we get the $\Q$-scheme $W_{k,d}(M,D)$, for $\mathscr{X}_d(M,D)=\mathcal{X}_{d}(M,D)$ we get the $\Z[1/MDd]$-scheme $\mathcal W_{k,d}(M,D)$ and for $\mathscr{X}_d(M,D)=\mathfrak{X}_d(M,D)$ we get the $\Z_{\ell^2}$-scheme $\mathfrak{W}_{k,d}(M,D)$. When $M$ and $D$ are understood, we omit them from the notation. 

To study the cohomology of Kuga--Sato varieties, we need to introduce projectors that will be used to identify the relevant cohomology groups with cohomology groups of the base Shimura curve. Fix an even integer $k\geq4$ and set $r\defeq k-2$. Fix also a $\Z_p$-algebra $A$; we assume that $A$ is either the valuation ring of a finite extension of $\Q_p$ or a quotient of it.   

We begin with $B=\M_2(\Q)$. In this case, define the sheaf 
\[ \mathcal{F}_A\defeq j_*\Sym^r\bigl(\R^1\pi_{k,d*}A\bigr)(r/2) \] 
on $\mathscr{X}_d(M,D)$, where $j:\mathscr{Y}_d(M)\hookrightarrow \mathscr{X}_d(M)$ is the canonical inclusion of the affine moduli space into its compactification.

When $B\not\simeq\M_2(\Q)$, we need to cut the dimension of the sheaf $\R^1\pi_{d*}\Z_p$ before taking symmetric powers. To do this, observe that $\R^1\pi_{d*}(\Z_p)$ is equipped with an action of $\mathcal{O}_B$; we fix an isomorphism of $\Z_p$-algebras $\mathcal{O}_B\otimes_\Z\Z_p\simeq \M_2(\Z_p)$ and let $e$ be the idempotent in $\mathcal{O}_B\otimes_\Z\Z_p$ that corresponds under this isomorphism to the matrix $\smallmat 1000$. 
Then define the sheaf 
\[\mathcal{F}_A\defeq\Sym^r\bigl(e\R^1\pi_{k,d*}A\bigr)(r/2)\] on $\mathscr{X}_d(M,D).$

\begin{remark} 
We use the same symbol $\mathcal{F}_A$ for sheaves on different Shimura curves, defined over different bases. However, no confusion is likely to arise from this convention. 
\end{remark}
 
\begin{remark} 
When $B\not\simeq \M_2(\Q$, there is the following alternative description of the sheaf $\mathcal{F}_A$. Let us define 
\[\mathbb{L}_2(A)\defeq\bigcap_{b\in B}\ker\Bigl(b-\Norm(b): \R^2\pi_{k,d*}A\longrightarrow \R^2\pi_{k,d*}A\Bigr), \]
which is a $3$-dimensional local system on $X_d$. Now consider the non-degenerate pairing 
\[ (\cdot,\cdot):\mathbb{L}_2(A)\otimes\mathbb{L}_2(A)\longmono \R^2\pi_{k,d*}\Q_p\otimes \R^2\pi_{k,d*}A
\overset\cup\longrightarrow\R^4\pi_{k,d*}A\underset\simeq{\overset{\Tr}\longrightarrow} A(-2), \]
where $\Tr$ is the trace map and the tensor products are taken over $A$. For $r>2$, there is a Laplace operator 
\[ \Delta_{r/2}:\Sym^{r/2}\bigl(\mathbb{L}_2(A)\bigr)\longrightarrow \Sym^{r/2-2}\bigl(\mathbb{L}_2(A)\bigr)(-2) \]
defined by 
\[ \Delta_r(x_1,\dots,x_{r/2})\defeq\sum_{1\leq i<j\leq r/2}(x_i,x_j)x_1\cdot\ldots\cdot\hat{x}_i\cdot\ldots\cdot\hat{x}_j\cdot\ldots\cdot x_{r/2}, \]
where $\mathbb{L}_{r}(A)\defeq\ker\bigl(\Delta_{r/2}\bigr)$ and $\hat{x}_\bullet$ means that the component $x_\bullet$ does not appear in the product. Then, by \cite[Theorem 5.8]{BesserCM} (see also \cite[Lemma 2.1]{wang}), the sheaves $\mathcal{F}_A$ and $\mathbb{L}_r(A)$ are isomorphic.
\end{remark}  

The Kuga--Sato variety $W_{k,d}$ can be equipped with several projectors, which we briefly introduce. First, there are a projector $\epsilon_k$ (see, \emph{e.g.}, \cite[\S2.1]{BDP} when $B=\M_2(\Q)$ and \cite[Theorem 5.8]{BesserCM}, \cite[\S6.1]{Brooks} when $B\not\simeq \M_2(\Q)$) and group isomorphisms 
\begin{equation} \label{projectors} 
\epsilon_k H^*_\et\bigl(\overline{W}_{k,d},\Z_p\bigr)\overset\simeq\longrightarrow \epsilon_k H^{k-1}_\et\bigl(\overline{W}_{k,d},\Z_p\bigr)\overset\simeq\longrightarrow H^1_\et\bigl(\overline{X}_d,\mathcal{F}_{\Z_p}\bigr) 
\end{equation}
(see, \emph{e.g.}, \cite[Lemma 2.2]{BDP} when $B=\M_2(\Q)$ and \cite[Theorem 6.1]{Brooks} when $B\not\simeq\M_2(\Q)$). Let $G_d$ be the Galois group of the covering $X_d\rightarrow X$; there are a projector $\epsilon_d\defeq\frac{1}{|G_d|}\cdot\sum_{g\in G_d}g$ and an isomorphism 
\[ \epsilon_dH^1_\et\bigr(\overline{X}_d,\mathcal{F}(\Z_p)\bigr)\overset\simeq\longrightarrow H^1_\et\bigr(\overline{X},\mathcal{F}_{\Z_p}\bigr) \]
of groups, where we still denote by $\mathcal{F}$ the push-forward sheaf of $\mathcal{F}_{\Z_p}$ to $X$. Finally, if $\epsilon\defeq\epsilon_d\epsilon_k$, then there is an isomorphism 
\[ \epsilon H^*_\et\bigl(\overline{W}_{k,d},\Z_p\bigr)\overset\simeq\longrightarrow H^1_\et\bigr(\overline{X},\mathcal{F}_{\Z_p}\bigr) \]
of groups.

%\subsection{Modular forms}Define the $A$-module  $S_k(M,D;A)$ of modular forms of weight $k$ and level $M$ on $B$ to be the cohomology group $\epsilon H^{k-1}_\et(\overline{W}_{k,d},A)$. This group is equipped with a canonical action of standard Hecke operators $T_q$ for $q\nmid MD$ and $U_q$ for $q\mid MD$ acting by correspondences, and we denote $\T_{M,D}(A)$ the Hecke algebra generated over $A$ by these operators. 
%
%\begin{remark} Alternatively, when $D>1$, we may define $S_k(M,D;A)$ to be the cohomology group $\epsilon_dH^1_\et\bigr(\overline{X}_d,\mathcal{F}_{A}\bigr)$, while if $D=1$ we denote $S_k(M,D;A)$ the parabolic cohomology of $\epsilon_dH^1_\mathrm{par}\bigr(\overline{X},\mathcal{F}_{A}\bigr)$, defined to be the image of the compactly supported cohomology $H^1_\mathrm{cpt}\bigr(\overline{Y},\mathcal{F}_{A}\bigr)$ in $H^1_\et\bigr(\overline{X},\mathcal{F}_{A}\bigr)$ (when $D=1$ we use a slight abuse of notation, identifying the \'etale cohomology with singular cohomology by the comparison theorem to make sense to compactly supported \'etale cohomology; see for example \cite[\S2.4]{LV-TNC} for a more clear discussion of this point.)\end{remark} 

\subsection{CM points on Shimura curves} \label{sec:CM} 

Shimura curves come equipped with a family of special points, whose description we briefly recall. Let $K$ be an imaginary quadratic field of discriminant $-D_K$ in which all the primes dividing $M$ split and all the primes dividing $D$ are inert. Define the set $\mathrm{CM}_K$ of \emph{CM points} of $\mathrm{Sh}_R$ by $K$ as 
\[ \mathrm{CM}_K\defeq
 B^\times\big\backslash\bigl(\widehat{ B}^\times\times\Hom(K, B)\bigr)\big/\widehat{ R}^\times, \] where $\Hom$ denotes homomorphisms of $\Q$-algebras, which we view in $\mathcal{H}^\pm$ by scalar extension to $\R$. 
Let $\cO_K$ be the ring of integers of $K$ and for any integer $c\geq 1$ denote $\mathcal{O}_c=\Z+c\mathcal{O}_K$ the order of $K$ of conductor $c$. If a point $x\in\mathrm{CM}_K$ is represented by a pair $(g,\varphi)$ such that $\varphi(K)\cap g^{-1}\widehat{ R}^\times g=\varphi(\cO_c)$, then we say that $\varphi$ is an \emph{optimal embedding} of $\cO_c$ into $ R$ and that $x$ is a \emph{Heegner point} of conductor $c$; in this case, by Shimura's reciprocity, $x$ is defined over the ring class field $H_c$ of $K$ conductor $c$. 

Let $K^\mathrm{ab}$ be the maximal abelian extension of $K$. The set $\mathrm{CM}_K$ is equipped with the following action of $\Gal(K^\mathrm{ab}/K)$: if $\sigma\in\Gal(K^\mathrm{ab}/K)$ is represented by $\mathfrak{a}\in\hat{K}^\times$ via 
the geometrically normalized Artin map and $x$ is represented by $(g,\varphi)$, then $x^\sigma\in\mathrm{CM}_K$ is the point of $\mathrm{CM}_K$ represented by $\bigl(\hat{\varphi}(\mathfrak{a})g,\varphi\bigr)$, where $\hat{\varphi}:\widehat K\rightarrow \widehat{ B}$ is the map obtained from $\varphi$ by extending scalars to $\widehat{\Z}$. By Shimura's reciprocity law, this (algebraic) action coincides with the usual (geometric) Galois action on points.

From now on, $x_n=[(g_n\tau,\psi_K)]$ will be a Heegner point of conductor $p^n$ for some integer $n\geq0$. Given $a\in \widehat{K}^\times$, we abbreviate $x_n^\sigma=\bigl[\bigl(\widehat{\psi}_K(a)g_n\tau,\psi_K\bigr)\bigl]$, where $\sigma$ corresponds to $a$ under the Artin map as before, by $x_n(a)=\bigl[(ag_n\tau,\psi_K)\bigr]$.

\subsection{Cycles in the N\'eron--Severi group} \label{sec:NS}

As at the end of \S\ref{sec:CM}, fix an odd prime number $p\nmid MD$ and an integer $n\geq 0$. Let $E_{p^n}$ be an elliptic curve with CM by $\cO_{p^n}$ equipped with a complex uniformization giving an identification $E_{p^n}(\C)=\C/\cO_{p^n}$. 
Recall the Heegner point $x_n$ of conductor $p^n$ defined in \S\ref{sec:CM} and 
let $\tilde{x}_n$ be a lift of $x_n$ to ${X}_d$. At the cost of suitably replacing the elements $g_n$ chosen before, we can assume that $\tilde{x}_n$ is represented by a triple $(E_{p^n},C_{p^n},\nu_d)$ if $D=1$ and by a quadruplet $(E_{p^n}\times E_{p^n},\iota_{p^n},C_{p^n},\nu_d)$ if $D>1$.

Set $D_{p^n}\defeq p^{2n}D_K$ and let
\[ \Gamma_{n}\defeq\Bigl\{\bigl(\sqrt{-D_{p^n}}\cdot z,z\bigr)\;\Big|\;z\in E_{p^n}\Bigr\} \]
be the transpose of the graph of $\sqrt{-D_{p^n}}$ in $E_{p^n}\times E_{p^n}$. Let $Z_n$ be the image of the divisor $\Gamma_{n}-(E_{p^n}\times\{0\})-{p^n}D_K(\{0\}\times E_{p^n})$ in the N\'eron--Severi group $\mathrm{NS}(E_{p^n}\times E_{p^n})$ of $E_{p^n}\times E_{p^n}$; recall that $\mathrm{NS}(E_{p^n}\times E_{p^n})$ is a free $\Z$-module of rank $4$ and that (by \cite[Ch. 2, (3.6)]{Nek2} if $D=1$ and \cite[Proposition 4.2]{E-dVP} if $D>1$) $Z_n$ is also equal to the image of the divisor $\Gamma_{n}$ in $\mathrm{NS}(E_{p^n}\times E_{p^n})$. The abelian group $\mathrm{NS}(E_{p^n}\times E_{p^n})$ is equipped with a canonical intersection pairing and $Z_n$ belongs to the orthogonal complement of the free $\Z$-submodule of rank $3$ generated by $E_{p^n}\times\{0\}$, $\{0\}\times E_{p^n}$ and the diagonal in $E_{p^n}\times E_{p^n}$. Furthermore, the $\Q$-vector space $\mathrm{NS}(E_{p^n}\times E_{p^n}{)}_\Q\defeq\mathrm{NS}(E_{p^n}\times E_{p^n})\otimes_\Z\Q$ is equipped with a right action of $B^\times$ defined by $\mathcal{L}\cdot b\defeq\iota(b)^*\mathcal{L}$ for all $\mathcal{L}\in\mathrm{NS}(E_{p^n}\times E_{p^n})_\Q$; here $\iota$ is induced (via extension of scalars from $\Z$ to $\Q$) by the quaternionic action $\iota_{p^n}:R\hookrightarrow \End(E_{p^n}\times E_{p^n})$ when $D>1$, while if $D=1$, then $\iota$ is induced (again via extension of scalars from $\Z$ to $\Q$) by the standard left action of $\M_2(\Z)$ on $E_{p^n}\times E_{p^n}$ given by $\smallmat abcd\cdot\binom{x}{y}=\binom {ax+by}{cx+dy}$. The element $Z_n$ is uniquely characterized (up to sign) in $\mathrm{NS}(E_{p^n}\times E_{p^n})$ by the following two conditions: 
\begin{enumerate}
\item the action of $b$ is through the norm map, \emph{i.e.}, $Z_n\cdot b=\Norm(b)Z_n$ where $\Norm:B\rightarrow \Q$ is the reduced norm map (equal to the determinant if $D=1$); 
\item the self-intersection number of $Z_n$ is $-2p^{2n}D_K$. 
\end{enumerate} 
To prove this result, one can use the fact that the self-intersection is non-degenerate on the line where $Z_n$ lives, which is characterized as the unique line on which $B^\times$ acts through the norm: see, \emph{e.g.}, \cite[Proposition 8.2]{IS} for details. 

\subsection{Heegner cycles} 

As before, fix an odd prime number $p\nmid MD$ and an integer $n\geq 0$. Choose also an integer $d\geq1$ such that $p\nmid d\phi(d)$. We use the symbol $i_{\tilde{x}_n}$ for the inclusion of the fiber over $\tilde{x}_n$ in the universal object; in particular, $i_{\tilde{x}_n}:E_{p^n}\rightarrow \mathcal{E}_d$ for $D=1$, while $i_{\tilde{x}_n}:E_{p^n}\times E_{p^n}\rightarrow \mathcal{A}_d$ for $D>1$. We consider the canonical inclusion $i_{\tilde{x}(n)}\Bigl(Z_n^{\frac{k-2}{2}}\Bigr)\hookrightarrow W_{k,d}$; by \eqref{projectors}, the cycle $\epsilon i_{\tilde{x}_n}(Z_n^\frac{k-2}{2})$ belongs to the kernel of the cycle class map.

\begin{definition} \label{defHeegnercycle} 
The \emph{Heegner cycle} of conductor $p^n$ is 
\[ \Delta_n\defeq\epsilon i_{\tilde{x}_n}\Bigl(Z_n^\frac{k-2}{2}\Bigr)\in\epsilon\mathrm{CH}^{k/2}_0(W_{k,d}/H_{p^n}{)}_{\Z_p}. \]
\end{definition}

We write $\Delta_{n,M,D} $ in place of $\Delta_n$ when we need to specify $M$ and $D$. Let us set $A_n\defeq E_{p^n}\times E_{p^n}$. For the integer $k\geq 4$ fixed before, we also sometimes put $r\defeq k-2$. 

\subsection{$p$-adic Abel--Jacobi map}

To simplify our notation, in the lines below we write $W$ for $W_{k,d}$ and $\Delta$ for $\Delta_n$; moreover, we understand that $H^\bullet=H^\bullet_\text{\'et}$. The image of $\Delta$ under the $p$-adic Abel--Jacobi map 
\[ \AJ:\epsilon \mathrm{CH}^{k/2}_0(W_{k,d}/H_{p^n}{)}_{\Z_p}\longrightarrow
H^1\Bigl(H_{p^n},\epsilon H^{k-1}_\text{\'et}\bigl(\overline{W}_{k,d},\Z_p(r/2)\bigr)\!\Bigr) \]
is represented by the extension class obtained by pull-back of the extension
\[
\begin{split}
0\longrightarrow\epsilon H^{k-1}\bigl(\overline{W},\Z_p(k/2)\bigr)&\longrightarrow\epsilon  H^{k-1}\bigl(\overline{W}-\overline{\Delta},\Z_p(k/2)\bigr)\\&\longrightarrow \epsilon  H^{k}_{\overline{\Delta}}\bigl(\overline{W},\Z_p(k/2)\bigr)\longrightarrow \epsilon H^{k}\bigl(\overline{W},\Z_p(k/2)\bigr)
\end{split}
\]
via the map $c\ell:\Z_p\rightarrow\epsilon H^{k}_{\overline{\Delta}}\bigl(\overline{W},\Z_ p (k/2)\bigr)$ that takes $1$ to the class $c\ell(\Delta)$ of $\Delta$. 

Following \cite[II, Proposition 2.14, (2)]{Nek2} for $D=1$ and \cite[\S8.1]{Chida} for $D>1$, the extension class above can be rewritten as follows. Using the notation introduced for varieties, set $\bar{\tilde{x}}_n\defeq\tilde{x}_n\otimes_{H_{p^n}}\overline\Q$; let $\overline{W}_{\bar{\tilde{x}}_n}$ be the fiber of $\overline{W}$ at $\bar{\tilde{x}}_n$. By purity, there is an isomorphism  
\begin{equation}\label{purity}
H^{k}_{\overline{\Delta}}\bigl(\overline{W},\Z_ p(k/2)\bigr)\simeq 
H^{k-2}\bigl(\overline{W}_{\bar{\tilde{x}}_n},\Z_ p(r/2)\bigr).
\end{equation}
Then we use Scholl's result in \cite{Scholl} (the argument is detailed in \cite[II, Proposition 2.14, (1)]{Nek2} in the modular curve case and extended in \cite[\S10.1]{IS} to a general quaternionic setting) to obtain isomorphisms (which will be viewed as identifications) 
\begin{equation} \label{pur}
\begin{split}
\epsilon H^{k-2}\bigl(\overline{W}_{\bar{\tilde{x}}_n},\Z_ p (r/2)\bigr)
&\simeq H^2_{\bar{\tilde{x}}_n}\bigl(\overline{X},\mathcal{F}_{\Z_p}(1)\bigr),\\
%\simeq H^0(\bar{\tilde{x}}_n,\mathcal{F})\simeq L_r(\Z_p),\\
\epsilon H^{k-1}\bigl(\overline{W},\Z_ p (k/2)\bigr)&\simeq H^1\bigl(\overline{X},\mathcal{F}_{\Z_p}(1)\bigr),\\
\epsilon H^{k-1}\bigl(\overline{W}-\overline{\Delta},\Z_ p (k/2)\bigr)&\simeq 
H^1\bigl(\overline{X} \smallsetminus \bar{\tilde{x}}_n,,\mathcal{F}_{\Z_p}(1)\bigr),
\end{split}
\end{equation}
where we have set $X\defeq X_{k,d}$. As a consequence, $\mathrm{AJ}(\Delta)$ is represented by the extension class obtained by pull-back of the extension 
\begin{equation}\label{extclass}
0\longrightarrow H^1\bigl(\overline{X},\mathcal{F}_{\Z_p}(1)\bigr)\longrightarrow H^1\bigl(\overline{X}\smallsetminus \bar{\tilde{x}}_n,,\mathcal{F}_{\Z_p}(1)\bigr)\longrightarrow 
H^2_{\bar{\tilde{x}}_n}\bigl(\overline{X} ,\mathcal{F}_{\Z_p}(1)\bigr)\longrightarrow 0
\end{equation}
via the map $c\ell:\Z_p\rightarrow H^2_{\bar{\tilde{x}}_n}\bigl(\overline{X} ,\mathcal{F}_{\Z_p}(1)\bigr)$ that, under the identifications in \eqref{purity} and in the first line of \eqref{pur}, takes $1$ to the class $c\ell(\Delta)$ of $\Delta$ (see again \cite[II, Proposition 2.14]{Nek2} for details). The fiber $\overline{W}_{\bar{\tilde{x}}_n}$ of $W$ over $\bar{\tilde{x}}_n$ is $(E_n\times E_n)^{r/2}$; applying the projector $\epsilon$, we obtain an isomorphism of Galois representations 
\[ \epsilon H^{k-2}\bigl(\overline{W}_{\bar{\tilde{x}}_n},\Z_ p (r/2)\bigr)\simeq \mathrm{Sym}^r \bigl(H^1(\overline{E}_n,\Z_p)\bigr)(r/2). \]
Set $K_p\defeq K\otimes_\Q\Q_p$. If $p$ splits in $K$ as $p\cO_K=\wp\cdot\bar\wp$, where $\wp$ is the prime ideal of $\cO_K$ corresponding to the fixed embedding $\bar\Q\hookrightarrow\bar\Q_p$, then $K_p=K_\wp\oplus K_{\bar\wp}\simeq\Q_p\oplus\Q_p$; in this case, we set $L\defeq K_\wp$. If $p$ is inert in $K$, then $K_p$ is the unramified quadratic extension of $\Q_p$; in this case, we set $L\defeq K_p$. Let
\begin{equation} \label{intersection-eq}
\smile\,:H^1\bigl(\overline{E}_n,L\bigr)\times H^1\bigl(\overline{E}_n,L\bigr)\longrightarrow L
\end{equation}
be the intersection pairing. Since $E_n$ has CM by $\mathcal{O}_{p^n}$, we can choose a basis $\{x_1,x_2\}$ of $H^1(\overline{E}_n,L)$ over $L$ such that
\begin{itemize}
\item the action of $\alpha\in K$ on $x_1$ is given by $x_1\mapsto \alpha x_1$ and the action of $\alpha\in K$ on $x_2$ is given by $x_2\mapsto \bar\alpha x_2$;
\item the pairing in \eqref{intersection-eq} is represented by the matrix $\smallmat 01{-1}0$ with respect to $\{x_1,x_2\}$. 
\end{itemize} 
Let us identify $\mathrm{Sym}^{k-2}\bigl(H^1(\overline{E}_n,L)\bigr)$ with $L_{k-2}(L)$ by sending $x_1$ to $X$ and $x_2$ to $Y$; then the action of $g\in\GL_2(L)$ on $\mathrm{Sym}^{k-2}\bigl(H^1(\overline{E}_n,L)\bigr)$ is given by $\det(g)\rho_k(g)$, while the intersection pairing is the symmetrization of the intersection pairing $2^{k-2}{\langle\cdot,\cdot\rangle}_k$ (see, \emph{e.g.}, \cite{Bry}). 

The next auxiliary result is well known, but we add some details for the convenience of the reader (see also \cite[Lemma 7.2]{Chida}, \cite[Lemma 8.4]{IS}, \cite[Lemma 3.3]{wang}). Recall that $\mathbf{w}_n=p^\frac{n(k-2)}{2}\mathbf{v}_0^*$. 

\begin{lemma} \label{ERLbase}
The equality $c\ell(\Delta_n)=\rho_k^{-1}(\gamma_\p)(\mathbf{w}_n)$ holds in $L_r(\Z_p)$ up to sign. 
\end{lemma}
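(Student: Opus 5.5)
The approach is to compute $c\ell(\Delta_n)$ fiberwise. By purity \eqref{purity} and the first identification in \eqref{pur}, $c\ell(\Delta_n)$ is the class of $\epsilon\bigl(Z_n^{r/2}\bigr)$ in $\epsilon H^{k-2}\bigl(\overline{W}_{\bar{\tilde x}_n},\Z_p(r/2)\bigr)\simeq\Sym^r\bigl(H^1(\overline{E}_n,\Z_p)\bigr)(r/2)$. So it suffices to do three things: compute the class of the single divisor $Z_n$ in $H^2$ of $A_n=E_{p^n}\times E_{p^n}$, take its $(r/2)$-th power inside $\Sym^r$, and then compare the CM eigenbasis $\{x_1,x_2\}$ of $H^1(\overline{E}_n,L)$ with the coordinates on $L_r$ used in the definition of $\rho_k^{-1}(\gamma_\p)(\mathbf{w}_n)$.

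\emph{Step 1: the class of $Z_n$.} I will use the characterization of $Z_n$ in \S\ref{sec:NS}. Since $B^\times$ (in particular $K^\times\subset B^\times$) acts on $Z_n$ through the reduced norm, its class, after applying $e$ (or, when $D=1$, inside $H^1\otimes H^1$ of the product), lies in the line of $\Sym^2 H^1(\overline{E}_n,L)$ on which $t\in K^\times$ acts by $t\bar t$. In the eigenbasis this line is spanned by $x_1x_2$, that is, by $XY$ under $x_1\mapsto X$, $x_2\mapsto Y$; the other weight vectors $x_1^2$ and $x_2^2$ transform by $t^2$ and $\bar t^2$. Hence $c\ell(Z_n)=c\cdot XY$ for some constant $c$. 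I will fix $c$ up to sign with the second condition: the self-intersection equals $-2p^{2n}D_K$. Computing $\langle XY,XY\rangle$ with the intersection pairing, which is the symmetrization of $2^{k-2}\langle\cdot,\cdot\rangle_k$ for the normalized matrix $\smallmat01{-1}0$, should give $c^2$ equal to $p^{2n}D_K$ up to a unit power of $2$. That pins down $c$ up to sign. The $p$-power part is $p^n$, which after raising to the power $r/2$ produces the factor $p^{n(k-2)/2}$ in $\mathbf{w}_n$, and the $D_K$-part combines with the pairing normalization to give $D_K^{(k-2)/2}$ in $\mathbf{v}_0^*$.

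\emph{Step 2: the power.} Since $\epsilon$ projects onto the $\Sym^r$-part, which is the harmonic or Laplace-kernel part as in the Remark above, the class of $\epsilon(Z_n^{r/2})$ is the image of $(c\,XY)^{r/2}$ in $L_r$. Up to sign this is $p^{n(k-2)/2}D_K^{(k-2)/2}(XY)^{(k-2)/2}=\mathbf{w}_n$, provided Step 1 gives exactly $D_K^{(k-2)/2}$ in $\mathbf{v}_0^*$. This holds in the coordinates $(X,Y)$ attached to $\{x_1,x_2\}$.

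\emph{Step 3: changing coordinates.} The basis $\{x_1,x_2\}$ diagonalizes the $K$-action. By contrast, the coordinates on $L_r(\Z_p)$ implicit in the Scholl identification \eqref{pur} come from $i_p$, in which $i_p(\theta)=\smallmat{\theta+\bar\theta}{-\theta\bar\theta}10$ is not diagonal. The matrix $\gamma_\p=\smallmat{\sqrt\beta}{-\sqrt\beta\bar\theta}{-1}{\theta}$ is exactly the element satisfying $\rho_{k,\p}(g)=\rho_k(\gamma_\p i_p(g)\gamma_\p^{-1})$. So I will check that $\gamma_\p i_p(t)\gamma_\p^{-1}$ is diagonal with entries $t$ and $\bar t$, matching the eigenvalues on $x_1$ and $x_2$. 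This shows that the eigenbasis coordinates are obtained from the $i_p$-coordinates by applying $\rho_k(\gamma_\p)$, up to a scalar. Transporting $\mathbf{w}_n$ back then gives $c\ell(\Delta_n)=\rho_k^{-1}(\gamma_\p)(\mathbf{w}_n)$.

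The main obstacle is the bookkeeping of normalizations. One must check that the scalar ambiguity in Step 3 (the determinant twist $\det(g)\rho_k(g)$, the factor $\sqrt\beta$, and the $\delta_K$ versus $D_K$ discrepancy) is a unit that cancels or reduces to a sign, and that the Scholl isomorphisms of \cite{Nek2} and \cite{IS} introduce no extra constants. For this I would follow \cite[Lemma 7.2]{Chida} and \cite[Lemma 3.3]{wang} closely.
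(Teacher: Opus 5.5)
Your route is the paper's. You put $c\ell(\Delta_n)$ on the line where $K^\times$ acts through the norm, using that $\gamma_\p$ conjugates $i_p(K)$ into the diagonal torus (indeed $\gamma_\p\, i_p(\theta)=\mathrm{diag}(\theta,\bar\theta)\,\gamma_\p$). You then fix the scalar by self-intersection. Treating $Z_n$ in $\Sym^2$ first and then taking the $(r/2)$-th power only reorganizes the paper's argument, which works directly in $L_r$ and uses the $\rho_k$-invariance of $\langle\cdot,\cdot\rangle_k$.

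\textbf{Where it breaks: the normalization in Steps 1--2.}
\begin{itemize}
\item The self-intersection $-2p^{2n}D_K$ gives $c^2=\pm p^{2n}D_K$. So $c^{r/2}$ carries $D_K^{(k-2)/4}$, not $D_K^{(k-2)/2}$.
\item The pairing $\langle\cdot,\cdot\rangle_k$ involves only $\Gamma$-factors and signs. Nothing in it ``combines'' with $c$ to double that exponent.
\item Step 3 cannot absorb the discrepancy. Since $\rho_k$ is normalized by $\det^{-r/2}$, $\rho_k(\gamma_\p)$ is an isometry for $\langle\cdot,\cdot\rangle_k$. Also $\rho_k$ kills scalar matrices and $\mathrm{diag}(a,a^{-1})$ fixes $(XY)^{r/2}$, so transport between symplectically normalized bases leaves no free scalar.
\end{itemize}

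Two concrete checks:
\begin{itemize}
\item For $k=4$, $2^{k-2}\langle\mathbf w_n,\mathbf w_n\rangle_k=-2p^{2n}D_K^2$, whereas the self-intersection of $Z_n$ is $-2p^{2n}D_K$.
\item In general, a direct computation gives $\rho_k^{-1}(\gamma_\p)(\mathbf v_0^*)=(-1)^{r/2}\delta_K^{r/2}Q^{r/2}$ with $Q=(\theta X+Y)(\bar\theta X+Y)\in\Z[X,Y]$. The norm-eigenline is spanned by $Q^{r/2}$, and the self-intersection forces $c\ell(\Delta_n)$ to be $Q^{r/2}$ times a scalar not involving $D_K$.
\end{itemize}
When $p$ is inert and $(k-2)/2$ is odd, $\rho_k^{-1}(\gamma_\p)(\mathbf w_n)$ is not even $\Q_p$-rational.

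So your argument, and equally the paper's, proves only
\[
c\ell(\Delta_n)=u\cdot\rho_k^{-1}(\gamma_\p)(\mathbf w_n),
\]
with $u$ a $p$-adic unit whose $D_K$-dependence is $\delta_K^{-(k-2)/2}$. It is a unit because $p\nmid D_K$ and $p>k+1$. The paper's last sentence likewise asserts that the self-intersections match up to sign without computing them.

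This weaker form is all that Theorems \ref{ERL1} and \ref{ERL2} use, since those hold only up to units. Still, ``up to sign'' is not what the computation yields. You should state the conclusion up to a $p$-adic unit rather than expect the $\sqrt\beta$, $\delta_K$ and $D_K$ bookkeeping to collapse to a sign.
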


\begin{proof} Given $t\in K$, the action of $\rho_{k,\infty}(t)$ on $\mathbf{v}_m$ is via $(t/\bar{t})^m$, and $\rho_{k,\infty}=\rho_{p,\infty}$ on $K^\times$, so the action of $\rho_{k,p}(t)$ on $\mathbf{v}_m$ is via $(t/\bar{t})^m$, where if $p$ splits in $K$, then $(t,\bar{t})$ is the diagonal $(t,t)$ under the isomorphism $K_p\simeq\Q_p\oplus\Q_p$, while if $p$ is inert in $K$, then $\bar{t}$ denotes the action of the non-trivial element of $\Gal(K_p/\Q_p)$. Now recall that $\rho_{k,p}(\gamma_{\mathfrak{p}}^{-1}g\gamma_\mathfrak{p})=\rho_k(g)$, so that, in particular, $\rho_{k,p}(\gamma_\p)=\rho_k(\gamma_\p)$ and $\rho_k(\gamma_\p^{-1})(\mathbf{v}_m)=\rho_{k,p}(\gamma_\p^{-1})(\mathbf{v}_m)$; thus, $\rho_k(\gamma_\p^{-1})(\mathbf{v}_m)$
generates the subspace on which $\rho_{k}(t)$ acts via $(t/\bar{t})^m$. For $t\in K$, we see that $\det(t)\rho_k(t)$ acts on $\rho_k^{-1}(\gamma_\p)(\mathbf{v}_0^*)$ as the norm map, and then the same is true for $\rho_k^{-1}(\gamma_\p)(\mathbf{w}_n)$; since $B^\times$ acts on $\Delta_n$ by the norm map, the comment before the statement of the lemma shows that $c\ell(\Delta_n)$ and $\rho_k^{-1}(\gamma_\p)(\mathbf{w}_n)$ belong to the same line. 
Now the self-intersection of $\Delta_n$ is, up to sign, $2^{k-2}\big\langle\rho_k^{-1}(\gamma_\p)(\mathbf{w}_n),\rho_k^{-1}(\gamma_\p)(\mathbf{w}_n)\big\rangle=2^{k-2}\langle\mathbf{w}_n,\mathbf{w}_n\rangle$, which concludes the proof of the lemma. \end{proof}

\subsection{Heegner classes} \label{classes} 

The Shimura curve $X(M,D)$ is naturally equipped with an action (via correspondences) of Hecke operators $T_q$ for primes $q\nmid MD$ and $U_q$ for primes $q\,|\,MD$ (see, \emph{e.g.}, \cite[\S2.4]{LV-MM} for details). Let $\T_{M,D}$ denote the $\Z$-algebra generated by these Hecke operators and for any ring $A$ set $\T_{M,D}(A)\defeq\T_{M,D}\otimes_\Z A$. Now let $A$ be either the valuation ring of a finite extension of $\Q_p$ or a quotient of such a ring. Let $\phi:\T_{M,D}(A)\twoheadrightarrow A$ be an $A$-algebra homomorphism; we assume that $\phi$ is \emph{ordinary at $p$}, \emph{i.e.}, $\phi(T_p)\in A^\times$. There is a natural action of $\T_{M,D}(A)$ on $\epsilon_d H^1_\et\bigl(\overline{X}_d,\mathcal{F}_A(k/2)\bigr)$: taking the $\phi$-eigenspace (which can be done for all rings $A$ as before, thanks to \cite[Lemma 2.2]{Nek}) allows us to define 
\[ T_\phi\defeq\epsilon_d H^1_\et\bigr(\overline{X}_d,\mathcal{F}_A(k/2)\bigr)[\phi]. \]
Thus, for any number field $L$ we obtain an Abel--Jacobi map
%\begin{equation} \label{AJ1}
\[ \mathrm{AJ}_{W,\phi,L}:\epsilon\CH^{k/2}_0(W_{k,d}\otimes_\Q L{)}_A\longrightarrow H^1(L,T_{\phi}). \]
%\end{equation}
If the field $L$ is clear from the context, then we omit it from the notation.

\begin{definition}\label{defHC} 
The \emph{Heegner class} of conductor $p^n$ attached to $\phi$ is 
\[ y_{n}(\phi)\defeq\mathrm{AJ}_{W,\phi,H_{p^n}}(\Delta_{n})\in H^1(H_{p^n},T_{\phi}). \]
\end{definition}

Again, when we need to specify $M$ and $D$, we use the complete notation $y _{n,M,D}(\phi)$ for $y _{n}(\phi)$. On the other hand, if $\phi$ is understood, then we simply write $y_n$ for $y_n(\phi)$. 

\subsection{Regularized Heegner classes} \label{reg-heeg}

As before, let $\phi:\T_{M,D}(A)\twoheadrightarrow A$ be an $A$-algebra homomorphism; let $\alpha_p\in A^\times$ be the unit root of the Hecke polynomial $X^2-\phi(T_p)X-p^{k-1}$ at $p$. For all integers $n\geq1$, define 
\[ 
z_{n}(\phi)\defeq\frac{1}{\alpha_p^n}\cdot y_{p^n}(\phi)-\frac{p^{k-2}}{\alpha_p^{n+1}}\cdot y_{p^{n-1}}(\phi)\in H^1(H_{p^n},T_\phi). 
\] 
Set $u\defeq\#\cO_K^\times/2$; if $p$ splits in $K$, then write $p\cO_K=\wp\cdot\bar\wp$ and denote by $\sigma_\star$ a Frobenius element at $\star\in\{\wp,\bar\wp\}$. Define
\[ 
z_0(\phi)\defeq\begin{cases}\displaystyle{\frac{1}{u}\cdot\Biggl( 1-\frac{p^\frac{k-2}{2}}{\alpha_p}\cdot\sigma_\wp\Biggr)\cdot\Biggl( 1-\frac{p^\frac{k-2}{2}}{\alpha_p}\cdot\sigma_{\bar\wp}\Biggr)\cdot y_1(\phi)}\quad &\text{if $p\cO_K=\wp\cdot\bar\wp$},\\[7mm]
\displaystyle{\frac{1}{u}\cdot\Biggl(1-\frac{p^{k-2}}{\alpha_p^2}\Biggr)\cdot y_1(\phi)}\quad &\text{if $p$ is inert in $K$}.
\end{cases}
\]
When $\phi$ is understood, we simply write $z_n$ for $z_n(\phi)$.

The classes $z_n$ enjoy the following trace-compatibility relations. 

\begin{proposition} \label{prop-compatibility}
The equality $\cores_{H_{p^n}/H_{p^{n-1}}}(z_n)=z_{n-1}$ holds for all $n\geq 1$. 
\end{proposition}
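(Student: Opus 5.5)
The argument reduces to the standard ``Euler system'' norm relations for the unregularized Heegner classes $y_n=y_{p^n}(\phi)$, combined with the Hecke eigenvalue $\phi(T_p)=\alpha_p+p^{k-1}/\alpha_p$. Write $a_p\defeq\phi(T_p)$, so that $a_p=\alpha_p+\beta_p$ with $\alpha_p\beta_p=p^{k-1}$. The first step is to establish, at the level of Heegner cycles and hence of classes via functoriality of $\mathrm{AJ}_{W,\phi}$ and the Hecke-equivariance of the Abel--Jacobi map, the relations
\[
\cores_{H_{p^n}/H_{p^{n-1}}}(y_n)=a_p\,y_{n-1}-p^{k-2}\,y_{n-2}\quad(n\geq2),
\]
together with the analogous relation at the bottom layer: $\cores_{H_p/H_1}(y_1)=\frac{1}{u}\bigl(a_p-p^{(k-2)/2}(\sigma_\wp+\sigma_{\bar\wp})\bigr)y_0$ in the split case, and $\cores_{H_p/H_1}(y_1)=\frac{1}{u}a_p\,y_0$ in the inert case, up to the usual normalizations.

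These relations come from the adelic description of the Heegner points $x_n(a)=[(ag_n\tau,\psi_K)]$. The Hecke correspondence $T_p$ sends $x_{n-1}$ to the sum of the $p+1$ points obtained by right-multiplying $g_{n-1}$ by the coset representatives of $\widehat{R}_p^\times\smallmat p001\widehat{R}_p^\times$. Using the explicit choice of $\xi_p^{(n)}$, one checks that $p$ of these points are the $\Gal(H_{p^n}/H_{p^{n-1}})$-conjugates of $x_n$, and the remaining one is $x_{n-2}$ (respectively, a Frobenius translate $x_0^{\sigma_\wp}$ or $x_0^{\sigma_{\bar\wp}}$, or an extra $x_0$ with the factor $u$ accounting for units, when $n=1$). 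On cycles one must also track the fibres: the isogenies $E_{p^n}\to E_{p^{n-1}}$ of degree $p$ pull back $Z_{n-1}^{r/2}$ to $p^{r/2}$-multiples compatible with the normalization $\mathbf w_n=p^{n(k-2)/2}\mathbf v_0^*$. This is where Lemma \ref{ERLbase} enters: it identifies the class $c\ell(\Delta_n)$ with $\rho_k^{-1}(\gamma_\p)(\mathbf w_n)$ in $L_r(\Z_p)$, and so controls the power of $p$ that appears, yielding the factor $p^{k-2}$. I would follow \cite[\S3]{wang} and \cite[II, \S3]{Nek2} closely, or equivalently compare with the theta elements of Remark~\ref{CHtheta}'s setting, where the identical computation gives norm-compatibility of $\tilde\theta_n$.

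Granting these relations, the case $n\geq2$ is a formal check:
\[
\cores(z_n)=\alpha_p^{-n}\bigl(a_py_{n-1}-p^{k-2}y_{n-2}\bigr)-p^{k-2}\alpha_p^{-n-1}\,p\,y_{n-1}.
\]
The last term uses $\cores\circ\res=[H_{p^n}:H_{p^{n-1}}]=p$ for $n\geq2$, since $y_{n-1}$ is already defined over $H_{p^{n-1}}$. Substituting $a_p=\alpha_p+p^{k-1}/\alpha_p$, the $p^{k-1}$-terms cancel and we obtain $\alpha_p^{-(n-1)}y_{n-1}-p^{k-2}\alpha_p^{-n}y_{n-2}=z_{n-1}$. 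The case $n=1$ is the same computation with the modified bottom relation and the degree $[H_p:H_1]=(p-\epsilon_p)/u$, where $\epsilon_p=\pm1$ according as $p$ splits or is inert. Expanding the product defining $z_0$ (note that $\sigma_\wp\sigma_{\bar\wp}$ acts trivially on $y_0$ because $\wp\bar\wp=(p)$ is principal) shows that $\cores(z_1)=z_0$.

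The main obstacle is the geometric norm relation for the cycles themselves, not just the points. One has to verify that the Hecke correspondence on $W_{k,d}$ matches the CM isogenies and transports $Z_n^{r/2}$ with exactly the predicted power of $p$, and that the $n=1$ bookkeeping (units, the ramified-versus-split degree, and the indexing mismatch between $y_{p^n}$ and $y_n$ in the definition of $z_n$) produces precisely the Euler factors in $z_0$. For this step I would first try to reduce, via Lemma \ref{ERLbase} and the purity identifications \eqref{pur}, to the corresponding statement for the vectors $\phi^\sharp(x_n(a))$, which is the theta-element computation.
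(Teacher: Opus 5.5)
Your route is the paper's. You reduce to the relations $T_p(y_{p^{n-1}})=p^{k-2}y_{p^{n-2}}+\cores_{H_{p^n}/H_{p^{n-1}}}(y_{p^n})$ for $n\geq2$, and $T_p(y_1)=p^{\frac{k-2}{2}}(\sigma_\wp+\sigma_{\bar\wp})y_1+u\cdot\cores_{H_p/H}(y_p)$ (resp.\ $T_p(y_1)=u\cdot\cores_{H_p/H}(y_p)$ when $p$ is inert). You then substitute $\phi(T_p)=\alpha_p+p^{k-1}/\alpha_p$ and the degrees $p$ and $(p\mp1)/u$. Your algebra for this last step is correct in all cases, including the use of $\sigma_\wp\sigma_{\bar\wp}=1$ on $H$ to recover the Euler factor in $z_0$. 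The paper dismisses this step as ``a direct computation''.

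The gap is that these Hecke relations, which are essentially the whole content of the paper's proof, are asserted rather than proved, and the mechanism you sketch for them is inaccurate. One has to compute $\psi_\mathcal{L}^*$ for each sublattice $\mathcal{L}\subset\cO_{p^{n-1}}\times\cO_{p^{n-1}}$ of index $p^2$:
\begin{itemize}
\item For the $p$ sublattices giving conductor-$p^n$ points, one gets $\psi_\mathcal{L}^*(y_{p^{n-1}})=y_{p^n}^{\sigma_\mathfrak{a}^{-1}}$ with multiplicity exactly one. The paper derives this from $[p]\circ\varphi_{p^{n-1}}\circ\psi_{p^n}=\psi_\mathfrak{a}\circ\lambda_\mathfrak{a}\circ\varphi_{p^n}$ and $E_\mathfrak{a}=E_{p^n}^{\sigma_\mathfrak{a}^{-1}}$.
\item For $\mathcal{L}=p\cO_{p^{n-2}}\times p\cO_{p^{n-2}}$, one gets $\psi_\mathcal{L}^*\Gamma_{p^{n-1}}=p^2\,\Gamma_{p^{n-2}}$ in the N\'eron--Severi group, hence the factor $(p^2)^{\frac{k-2}{2}}=p^{k-2}$.
\item In the split case at $n=1$, the lattices $\bar\wp\times\bar\wp$ and $\wp\times\wp$ give $p$ times the graph on $E_1^{\sigma_\wp}\times E_1^{\sigma_\wp}$ (resp.\ $E_1^{\sigma_{\bar\wp}}\times E_1^{\sigma_{\bar\wp}}$), hence the terms $p^{\frac{k-2}{2}}y_1^{\sigma_\wp}$ and $p^{\frac{k-2}{2}}y_1^{\sigma_{\bar\wp}}$.
\end{itemize}
So your statement that the degree-$p$ isogenies $E_{p^n}\to E_{p^{n-1}}$ pull $Z_{n-1}^{r/2}$ back to $p^{r/2}$-multiples is wrong: for the conjugate neighbours no power of $p$ appears. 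The factor $p^{r/2}$ occurs only in the split bottom case.

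Nor does Lemma~\ref{ERLbase} by itself produce the factor $p^{k-2}$. The normalizations $\mathbf{w}_{n-1}$ and $\mathbf{w}_{n-2}$ differ only by $p^{\frac{k-2}{2}}$, and the lemma holds only up to sign, in a copy of $L_r$ that depends on $n$ through the chosen basis of $H^1(\overline{E}_n,L)$. What the norm-line and self-intersection characterization behind the lemma does give, combined with the projection formula $(\psi_\mathcal{L}^*Z)^2=\deg(\psi_\mathcal{L})\,Z^2=p^2Z^2$, is the multiplicities $1$, $p^2$, $p$ up to sign. The exact identity $\cores(z_n)=z_{n-1}$ still needs the signs, and the explicit graph identities in the paper supply them.

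Finally, the detour through theta elements does not help in the indefinite setting, since there is no form $\phi^\sharp$ to evaluate at Heegner points. Transporting the fibre vectors under $T_p$ is exactly the isogeny computation above.
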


\begin{proof} With notation as above, class field theory gives the equality
\[  
[H_p:H]=\begin{cases}\displaystyle{\frac{p-1}{u}}&\text{if $p$ splits in $K$},\\[3mm]\displaystyle{\frac{p+1}{u}}&\text{if $p$ is inert in $K$}. \end{cases}
\]
Then a direct computation shows that to prove the proposition it suffices to prove that
\begin{equation} \label{formuletta}
T_p(y_{p^{n-1}})=p^{k-2}y_{p^{n-2}}+\cores_{H_{p^n}/H_{p^{n-1}}}(y_{p^n})
\end{equation}
for all $n\geq2$ and 
\begin{equation} \label{formuletta2}
T_p(y_1)=\begin{cases}p^{\frac{k-2}{2}}(\sigma_\wp+\sigma_{\bar\wp})y_1+u\cdot\cores_{H_p/H}(y_p)&\text{if $p$ splits in $K$},\\[3mm]u\cdot\cores_{H_p/H}(y_p)&\text{if $p$ is inert in $K$}.
\end{cases}
\end{equation}
For all $n\geq 1$, the action of $T_p$ on $y_{p^{n-1}}$ is given by
\begin{equation} \label{T_p-eq}
T_p(y_{p^{n-1}})=\sum_{\substack{\mathcal{L}\subset \cO_{p^{n-1}}\times \cO_{p^{n-1}}\\ [\cO_{p^{n-1}}\times \cO_{p^{n-1}}: \mathcal{L}]=p^2}} \psi_\mathcal{L}^*(y_{p^{n-1}}),
\end{equation}
where $\mathcal{L}$ runs over the sublattices of $\cO_{p^{n-1}}\times \cO_{p^{n-1}}$ of index $p^2$ and $\psi_\mathcal{L}\colon A_\mathcal{L}\defeq\C^2/\mathcal{L}\rightarrow A_{p^{n-1}}$ is the isogeny of degree $p^2$ that is induced by the inclusion $\mathcal{L}\hookrightarrow \cO_{p^{n-1}}\times \cO_{p^{n-1}}$. 

We first assume $n\geq2$. We distinguish two cases:
\begin{enumerate} 
\item $A_\mathcal{L}$ represents a Heegner point of conductor $p^n$;
\item $\mathcal{L}=p\cO_{p^{n-2}}\times p\cO_{p^{n-2}}$.
\end{enumerate}
In case (1), by \cite[Section 2.2]{E-dVP} we can write $\mathcal{L}$ as $\cO_{p^n}\times L$, where $L=e\mathfrak{a}$ for some $e\in \mathcal{B}$ and $\mathfrak{a}$ is a proper fractional $\cO_{p^n}$-ideal that represents the trivial class in $\Pic (\cO_{p^{n-1}})$. We can assume that $\mathfrak{a}^{-1}$ is an integral $\cO_{p^n}$-ideal. If we set $E_\mathfrak{a}\defeq\C/\cO_{p^n}\times\C/\mathfrak{a}$, then there are an isomorphism $A_\mathcal{L}\simeq E_{p^n}\times E_\mathfrak{a}$ and an equality $\psi_\mathcal{L}=\psi_{p^n}\times \psi_\mathfrak{a}$, where $\psi_{p^n}\colon E_{p^n}\rightarrow E_{p^{n-1}}$ and $\psi_\mathfrak{a}\colon E_\mathfrak{a}\rightarrow E_{p^{n-1}}$ are the isogenies induced by the inclusions $\cO_{p^n}\hookrightarrow\cO_{p^{n-1}}$ and $\mathfrak{a}\hookrightarrow\cO_{p^{n-1}}$, respectively; both maps have degree $p$. Let $\sigma_\mathfrak{a}\in\Gal(H_{p^n}/H_{p^{n-1}})$ be the image of $\mathfrak{a}$ under the geometrically normalized Artin map. By the theory of complex multiplication, one has $E_\mathfrak{a}=E_{p^n}^{\sigma_\mathfrak{a}^{-1}}$. Observing that $E_\mathfrak{a}=E_{p^n}/E_{p^n}[\mathfrak{a}^{-1}]$, there is an equality
\begin{equation} \label{comp}
[p]\circ\varphi_{p^{n-1}}\circ\psi_{p^n}=\psi_\mathfrak{a}\circ\lambda_\mathfrak{a}\circ \varphi_{p^n},
\end{equation}
where $\varphi_c$ denotes the multiplication-by-$\sqrt{-D_c}$ map on $E_c$ and $\lambda_\mathfrak{a}\colon E_{p^n}\rightarrow E_\mathfrak{a}$ is the quotient map. Now consider the graph
\[ 
\Gamma_\mathfrak{a}\defeq\Bigl\{\Bigl(x,\lambda_\mathfrak{a}\bigl(\varphi_{p^n}(x)\bigr)\!\Bigr)\;\Big|\;x\in E_{p^n}\Bigr\}\subset E_{p^n}\times E_\mathfrak{a}.
\]
If $T_x$ is the translation-by-$x$ map for $x\in E_{p^n}\times E_\mathfrak{a}$, then equality \eqref{comp} yields an equality
\[ \bigsqcup_{z\in\ker(\psi_\mathfrak{a})}T_{(0,z)}^*(\Gamma_\mathfrak{a})=([p]\times \mathrm{id})^*(\psi_{p^n}\times \psi_\mathfrak{a})^*(\Gamma_{p^{n-1}}) \]
in $E_{p^n}\times E_\mathfrak{a}$. In fact, there are equalities
\begin{equation*}
\begin{split}
\bigsqcup_{z\in\ker(\psi_\mathfrak{a})}T_{(0,z)}^*(\Gamma_\mathfrak{a})&=\Bigl\{(x,y)\in E_{p^n}\times E_\mathfrak{a}\;\Big|\;\psi_\mathfrak{a}(y)=\psi_\mathfrak{a}\Bigl(\lambda_\mathfrak{a}\bigl(\varphi_{p^n}(x)\bigr)\!\Bigr)\Bigr\}\\
& =\bigl\{(x,y)\in E_{p^n}\times E_\mathfrak{a}\;\big|\;\psi_\mathfrak{a}(y)=\varphi_{p^{n-1}}\bigl(p\psi_{p^n}(x)\bigr)\bigr\}\\
& = ([p]\times\mathrm{id})^*(\psi_{p^n}\times \psi_\mathfrak{a})^*(\Gamma_{p^{n-1}}).
\end{split}
\end{equation*}
Thus, the equality $p\cdot\Gamma_\mathfrak{a}=p\cdot(\psi_{p^n}\times \psi_\mathfrak{a})^*(\Gamma_{p^{n-1}})$ holds in the N\'eron--Severi group of the product $E_{p^n}\times E_\mathfrak{a}$, and hence
\begin{equation} \label{eq 1}
y_{p^n}^{\sigma_\mathfrak{a}^{-1}}=\psi_\mathcal{L}^*(y_{p^{n-1}}).
\end{equation}
In case (2), we have $A_\mathcal{L}\simeq \C/p\cO_{p^{n-2}}\times \C/p\cO_{p^{n-2}}\simeq E_{p^{n-2}}\times E_{p^{n-2}}$ and $\psi_\mathcal{L}=\psi_{p^{n-2}}\times \psi_{p^{n-2}}$. It follows that
\[ \psi_{p^{n-2}}\circ[p]\circ\varphi_{p^{n-2}}=\varphi_{p^{n-1}}\circ \psi_{p^{n-2}}. \]
As above, one can easily see that
\[ 
\bigsqcup_{z\in \ker(p\psi_{p^{n-2}})}T_{(0,z)}^*(\Gamma_{p^{n-2}})=(\psi_{p^{n-2}}\times\psi_{p^{n-2}})^*(\mathrm{id}\times [p])^*(\Gamma_{p^{n-1}}), 
\]
which gives the equality $p^3\cdot\Gamma_{p^{n-2}}=p\cdot\psi_\mathcal{L}^*\Gamma_{p^{n-1}}$ in the N\'eron--Severi group of the product $E_{p^{n-2}}\times E_{p^{n-2}}$. Then we get
\begin{equation} \label{eq 2}
(p^2)^{\frac{k-2}{2}} y_{p^{n-2}}=\psi_\mathcal{L}^*(y_{p^{n-1}}).
\end{equation}
Finally, combining \eqref{T_p-eq}, \eqref{eq 1} and \eqref{eq 2} we obtain \eqref{formuletta}.

Now assume $n=1$. Then 
\[ 
T_p(y_1)=\sum_{\substack{\mathcal{L}\subset \cO_K\times \cO_K\\ [\cO_K\times \cO_K:\mathcal{L}]=p^2}}\psi_\mathcal{L}^*(y_1). 
\]
If splits in $K$ as $p\cO_K=\wp\cdot\bar\wp$, we distinguish the following three cases:
\begin{itemize}
\item[(1')] $A_\mathcal{L}$ represents a Heegner point of conductor $p$;
\item[(2')] $\mathcal{L}=\bar\wp\times \bar\wp$;
\item[(3')] $\mathcal{L}=\wp\times \wp$.
\end{itemize}
Case (1') can be treated like case (1) above: now $\mathfrak{a}$ will vary over all fractional $\cO_p$-ideals that are trivial in $\Pic(\cO_K)$. By class field theory, every such $\mathfrak{a}$ is represented by an element of $\Gal(H_p/H)$ up to multiplication by elements of $\cO_K^\times/\cO_p^\times$, whose cardinality is $u$. Cases (2') and (3') can be dealt with in much the same way, so we see only what happens in (2'). In this case, there are splittings $A_\mathcal{L}\simeq \C/\bar\wp\times \C/\bar\wp\simeq E_1^{\sigma_\wp}\times E_1^{\sigma_\wp}$ and $\psi_\mathcal{L}=\psi_{\bar\wp}\times\psi_{\bar\wp}$, where $\psi_{\bar\wp}\colon \C/\bar\wp\rightarrow E_1$ is induced by the inclusion $\bar\wp\hookrightarrow \cO_K$ and has degree $p$. Here we have implicitly used the fact that $\bar\wp=p\wp^{-1}$ and hence $\C/\bar\wp\simeq\C/\wp^{-1}$; furthermore, complex multiplication gives an isomorphism $\C/\wp^{-1}\simeq E_1^{\sigma_\wp}$. It turns out that
\[ 
\psi_{\bar\wp}\circ \varphi_1^{\sigma_\wp}=\varphi_1\circ \psi_{\bar\wp},
\]
where $\varphi_1^{\sigma_\wp}$ is the multiplication-by-$\sqrt{-D_K}$ map on $E_1^{\sigma_\wp}$. Computations as above imply that $p\cdot\Gamma_{\varphi_1^{\sigma_\wp}}=(\psi_{\bar\wp}\times\psi_{\bar\wp})^*(\Gamma_{\varphi_1})$ in the N\'eron--Severi group of $E_1^{\sigma_\wp}\times E_1^{\sigma_\wp}$, so there is an equality
\[ 
p^{\frac{k-2}{2}}y_1^{\sigma_\wp}=\psi_\mathcal{L}^*(y_1). 
\]
Collecting all contributions from cases (1'), (2') and (3'), we get \eqref{formuletta2}.

Finally, assume that $p$ is inert in $K$. The only sublattices $\mathcal{L}\subset \cO_K\times\cO_K$ with the desired properties are those for which $A_\mathcal{L}$ represents a Heegner point of conductor $p$, and we end up in the same case as (1'). Thus, formula \eqref{formuletta2} follows. \end{proof}

%Thanks to Proposition \ref{prop-compatibility}, taking the inverse limit with respect to corestriction maps yields a class \[z_\infty(\phi)=\invlim_{n\geq 0}z_n(\phi)\in H^1_\mathrm{Iw}(H_{p^\infty}, T_\phi)\defeq\invlim_{n\geq 0}H^1(H_{p^n},T_\phi).\]
For all integers $n\geq0$, corestrictions give classes 
\[
\mathbf{z}_n(\phi)\defeq\cores_{H_{p^{n+1}}/K_n}\bigl(z_{n+1}(\phi)\bigr)\in H^1(K_n,T_\phi) 
\]
that, by Proposition \ref{prop-compatibility}, are compatible under corestrictions. Then we can define a class 
\begin{equation}\label{Heegnerclass} \mathbf{z}_\infty(\phi)\defeq\invlim_n\mathbf{z}_n(\phi)\in H^1_\mathrm{Iw}(K_\infty, T_\phi)\defeq\invlim_n H^1(K_n,T_\phi). \end{equation}
When we want to stress dependence on $M$ and $D$, we write $\mathbf{z}_{n,M,D}(\phi)$ and $\mathbf{z}_{\infty,M,D}(\phi)$ for $\mathbf{z}_n(\phi)$ and $\mathbf{z}_\infty(\phi)$, respectively.

\begin{remark} \label{shapiro}
Define $\mathbf{T}_\phi\defeq T_\phi\otimes A\llbracket G_\infty\rrbracket$. 
By Shapiro's lemma, there is an isomorphism
\[ H^1_\mathrm{Iw}(K_\infty, T_\phi)\simeq H^1(K,\mathbf{T}_\phi). \] 
Thus, we may view $\mathbf{z}_\infty(\phi)$ as an element of $H^1(K,\mathbf{T}_\phi)$. 
\end{remark}
%We view $z_\infty(\phi)$ as an element in $H^1(H,T_\phi\otimes R\llbracket\Gal(H_{p^\infty}/K)\rrbracket)$ via the map induced in cohomology by the inclusion $\Gal(H_{p^\infty}/H)\hookrightarrow\Gal(H_{p^\infty}/K)$.
%Let\[\tilde z_\infty(\phi)=\mathrm{cores}_{H/K}(z_\infty(\phi))\in H^1(K,T_\phi\otimes_A A\llbracket\Gal(H_{p^\infty}/K)\rrbracket),\]and define $\mathbf{z}(\phi)$ as the image of $\tilde z_\infty(\phi)$ under the map induced in cohomology by the projection $\Gal(H_{p^\infty}/K)\twoheadrightarrow\Gal(K_\infty/K)$, where $K_\infty$ is the anticyclotomic $\Z_p$-extension of $K$. Similarly, for any integer $n\geq 0$, again Shapiro's lemma we have 
%$H^1(H_{p^{n+1}}, T_\phi)\simeq H^1(H,T_\phi\otimes A[\Gal(H_{p^{n+1}}/H)])$, let 
%\[\tilde z_{n+1}(\phi)=\mathrm{cores}_{H/K}(z_{n+1}(\phi))\in H^1(K,T_\phi\otimes A[\Gal(H_{p^{n+1}}/K)]),\]
%and define $\mathbf{z}_n(\phi)$ as the image of $\tilde z_{n+1}(\phi)$ under the map induced in cohomology by the projection $\Gal(H_{p^{n+1}}/K)\twoheadrightarrow\Gal(K_{n}/K)$, where $K_n\subset K_\infty$ has Galois group $\Gal(K_{n}/K)\simeq\Z/p^n\Z$, for all $n\geq 0$. 

\section{Explicit reciprocity laws}

Our purpose in this section is to extend the explicit reciprocity laws of \cite{wang} to the case of Heegner cycles with $p$-power conductors.

\subsection{Galois representations} \label{Gal}

Fix square-free coprime integers $M,D\geq1$; let $B_D$ denote the quaternion algebra of discriminant $D$ and fix an Eichler order $R_M$ of $B_D$ of level $M$ (here 
$B_D$ can be either definite or indefinite). Fix a prime number $p\nmid MD$ and let $A$ be either the valuation ring of a finite extension of $\Q_p$ or a quotient of it. 
Let $\T_{M,D}(A)$ denote the Hecke algebra acting on modular forms $S_k(M,D;A)$ as in \S\ref{mod} if $B_D$ is definite, and acting as in \S\ref{classes} by correspondences on the \'etale cohomology of the Shimura curve $X(M,D)$ with coefficients in $\mathcal{F}_A$ if $B_D$ is indefinite. Fix a homomorphism $\phi:\T_{M,D}(A)\rightarrow A$ of $A$-algebras and assume that $\phi(T_p)\in A^\times$. We also assume  
\begin{itemize}
\item $p>k+1$ and $\#{(\F_p^\times})^{k-1}>5$. 
\end{itemize}
Let $\rho_\phi:G_\Q\rightarrow\GL_2(A)$ be the representation uniquely characterized by requiring that a geometric Frobenius $F_q$ at primes $q\nmid MDp$ satisfies 
\[ \det\bigl(1-\rho_\phi(F_q)X\bigr)=1-\frac{\phi(T_q)}{q^{k/2}}X+\frac{X^2}{q}. \] 
Let $T_\phi=T_\phi(A)$ be a free $A$-module of rank $2$ affording the representation $\rho_\phi$; when $B_D$ is indefinite, this representation is equivalent to the representation constructed in \S\ref{classes}. 

The $A$-module $T_\phi$ is \emph{ordinary at $p$} if there is a short exact sequence of $G_{\Q_p}$-modules 
\begin{equation}\label{ordinaryfil}
0\longrightarrow {T}_\phi ^{(p)}\longrightarrow {T}_\phi  \longrightarrow  {T}_\phi  ^{[p]}\longrightarrow 0 \end{equation}
such that ${T}_\phi^{(p)}$ and $T_\phi^{[p]}$ are both free $A$-modules of rank $1$ and there is an isomorphism of $G_{\Q_p}$-representations 
\begin{equation}\label{ordinaryfilrep}
{T}_\phi\simeq \begin{pmatrix}\eta^{-1}\chi_\cyc^{k/2}&*\\0&\eta\chi_\cyc^{-\frac{k-2}{2}}\end{pmatrix} \end{equation}
for an unramified character $\eta:G_{\Q_p}\rightarrow A^\times$ with $\eta(\Frob_p)=\alpha_p\in A^\times$; here $\Frob_p$ is an arithmetic Frobenius at $p$ and  $\chi_\cyc:G_{\Q_p}\rightarrow\Z_p^\times$ is the $p$-adic cyclotomic character.

Let $\pi$ be a generator of the maximal ideal of $A$; set $k\defeq A/\pi A$ and
$\overline{T}_\phi\defeq T_\phi\otimes_Ak$. In the list of assumptions below, let $p^*\defeq(-1)^{\frac{p-1}{2}}p$.

\begin{assumption} \label{ass} 
The morphism $\phi: \T_{M,D}(A)\rightarrow A$ satisfies the following conditions: 
\begin{enumerate} 
\item $T_\phi$ is ordinary at $p$;
\item the restriction of $\overline{T}_{\phi} $ to $\Gal\bigl(\bar\Q/\Q(\sqrt{p^*})\bigr)$ is absolutely irreducible;
\item $\overline{T}_{\phi} $ is ramified at all primes $q\,|\,M$ with $q\equiv 1\pmod{p}$;
\item $\overline{T}_{\phi} $ is ramified at all primes $q\,|\,D$ with $q\equiv\pm 1\pmod{p}$; 
\item There is a prime $q\,|\,MD$ such that $\overline{T}_{\phi}$ is ramified at $q$. 
\end{enumerate}
\end{assumption}

Observe that these conditions are imposed also in \cite{LPV}, whose main result is a proof of an analogue for $\rho_\phi$ of Kolyvagin's conjecture on the non-triviality of his $p$-adic system of derived Heegner points on elliptic curves, and are a higher weight counterpart of analogous ones appearing in \cite{Zhang}.

\subsection{Level raising} \label{levelraisingsec}

The goal of this section is to slightly extend the level raising results proved by Wang in \cite[Theorem 2.9]{wang} and \cite[Theorem 2.12]{wang} to the level of generality that is required in this paper. The proofs are variations on those by Wang in \cite{wang} and by Chida in \cite{Chida}; we include them because they will be relevant for the construction of the bipartite Euler system, and they seem not to be exposed in \emph{loc. cit.} to the level of generality required in our paper. Notice that these results are generalizations of results by Ribet (\cite{Ribet-Inv100}) and by Bertolini--Darmon (\cite{BD-IMC}), previously extended by Rajaei (\cite{rajaei}); see also \cite{ChHs2} and \cite{Longo-AIF}.

We start by recalling the notion of an \emph{$n$-admissible prime}, which was originally introduced by Bertolini--Darmon in their work on the Iwasawa main conjecture for elliptic curves over anticyclotomic $\Z_p$-extensions (\cite[p.~18]{BD-IMC}); more precisely, we review its extension to even weight modular forms provided by \cite[Definition 1.1]{ChHs2}. As before, let $K$ be an imaginary quadratic field of discriminant $-D_K$ not divisible by $p$ and assume that the coprime integers $M$ and $D$ satisfy the following condition: all the primes dividing $M$ split in $K$ and all the primes dividing $D$ are inert in $K$.   
 
\begin{definition} \label{admissible-def}
Let $\phi:\T_{M,D}(A)\rightarrow A$ be a morphism of $A$-algebras satisfying Assumption \ref{ass} and let $n\geq1$ be an integer. A prime number $\ell$ is \emph{$n$-admissible for $\phi$} if 
\begin{enumerate}
\item $\ell$ does not divide $MDp$;
\item $\ell$ is inert in $K$;
\item $p$ does not divide $\ell^2-1$;
\item $\pi^n$ divides the ideal generated by $\ell^{\frac{k}{2}}+\ell^{\frac{k-2}{2}}-\epsilon_\ell \phi(T_\ell)$ with $\epsilon_\ell\in\{\pm1\}$.
\end{enumerate}
\end{definition}

We denote by $\mathcal P_n(\phi)$ the set of square-free products of $n$-admissible primes for $\phi$. By convention, $1\in\mathcal P_n(\phi)$.

\begin{definition} Let $\phi:\T_{M,D}(A)\rightarrow A$ be a morphism of $A$-algebras and $S\in\mathcal P_n(\phi)$. A morphism $\xi:\T_{M,DS}(A)\rightarrow A$ of $A$-algebras is \emph{congruent to $\phi$ modulo $\pi^n$} if 
\begin{itemize}
\item $\phi(T_\ell)\equiv \xi(T_\ell)\pmod{\pi^n}$ for all primes $\ell\nmid MDSp$;
\item $\phi(U_\ell)=\xi(U_\ell)\pmod{\pi^n}$ for all primes $\ell\,|\,MD$.
\end{itemize} 
In this case, we write $\phi\equiv \xi\pmod{\pi^n}$. \end{definition} 

The first level raising result we are interested in, which is essentially \cite[Theorem 2.12]{wang}, deals with the definite case.

\begin{theorem}[Wang] \label{levelraising1} 
Suppose that 
\begin{itemize}
\item $A\simeq A/\pi^nA$;
\item $\phi$ satisfies Assumption $\ref{ass}$;
\item $D$ is a product of an odd number of primes.
\end{itemize}
Let $\ell$ be an $n$-admissible prime for $\phi$. There is a morphism of $A$-algebras $\phi_\ell:\T_{M,D\ell}(A)\rightarrow A$ such that $\phi\equiv \phi_\ell\pmod{\pi^n}$ and $\phi_\ell(U_\ell)=\epsilon_\ell \ell^\frac{k-2}{2}$ for all primes $\ell\,|\,S$. 
\end{theorem}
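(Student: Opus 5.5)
The plan is to follow Wang's proof of \cite[Theorem 2.12]{wang}, itself modelled on Bertolini--Darmon \cite{BD-IMC} and Chida \cite{Chida}. Since $D$ has an odd number of prime factors, $D\ell$ has an even number, so $B_{D\ell}$ is indefinite. The congruence is then produced geometrically, using the $\ell$-adic uniformization of the Shimura curve $X(M,D\ell)$ and the supersingular description of \S\ref{sec4.2}. Specifically, the special fiber of $\mathfrak{X}_d(M,D\ell)$ over $\F_{\ell^2}$ consists of two copies of a $\mathds{P}^1$-bundle over the definite set $B_D^\times\backslash\widehat{B}_D^\times/\widehat{R}_M^\times$. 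This lets us compare the component group (equivalently, the singular quotient of the local cohomology at $\ell$) of $H^1_\et\bigl(\overline{X}(M,D\ell),\mathcal{F}_A\bigr)$ with two copies of the definite space $S_k(M,D;A)$.

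Step one: identify, via the weight--monodromy / Rapoport--Zink spectral sequence for the semistable model (or the Cerednik--Drinfeld description in higher weight used by Wang), the character group $X_\ell$ attached to $\mathfrak{W}_{k,d}(M,D\ell)$. This is an exact sequence of $\T$-modules
\[ 0\longrightarrow X_\ell\longrightarrow S_k(M,D;A)^{\oplus 2}\longrightarrow S_k(M,D;A)\longrightarrow 0, \]
up to Eisenstein and degeneracy contributions. Here $X_\ell$ carries an action of $\T_{M,D\ell}(A)$, in which $U_\ell$ acts through the Frobenius-swapping involution twisted by $\ell^{\frac{k-2}{2}}$. On the $\ell$-new part, the quotient $S_k(M,D;A)^{\oplus2}/(\ell\text{-old})$ is then governed by the operator
\[ \begin{pmatrix} 1+\ell & T_\ell\\ T_\ell & 1+\ell\end{pmatrix}, \]
suitably normalized by powers $\ell^{\frac{k-2}{2}}$. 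Its determinant is $(\ell^{\frac{k}{2}}+\ell^{\frac{k-2}{2}})^2-T_\ell^2$.

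Step two: localize at the maximal ideal $\mathfrak{m}_\phi$ and work modulo $\pi^n$. Condition (4) of Definition~\ref{admissible-def} says that $\phi$ kills $\ell^{\frac{k}{2}}+\ell^{\frac{k-2}{2}}-\epsilon_\ell T_\ell$ modulo $\pi^n$. Hence the $\phi$-eigenvector in $S_k(M,D;A)$, which exists and is free of rank one by multiplicity one from Assumption~\ref{ass} (irreducibility together with the ramification conditions, as in \cite{ChHs2}), gives a nonzero $\ell$-new element of the component group, on which $U_\ell$ acts by $\epsilon_\ell\ell^{\frac{k-2}{2}}$. The action of $\T_{M,D\ell}(A)$ on this rank-one $A/\pi^n$-module then defines $\phi_\ell$. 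By construction it agrees with $\phi$ on $T_q$ for $q\nmid MD\ell p$ and on $U_q$ for $q\mid MD$, and it has $\phi_\ell(U_\ell)=\epsilon_\ell\ell^{\frac{k-2}{2}}$. Transfer to $\T_{M,D\ell}(A)$ acting on $H^1_\et$ uses the Jacquet--Langlands compatibility of Hecke operators away from $\ell$.

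The main obstacle is showing that the $\phi$-part of the component group is free of rank one over $A/\pi^n$, and not smaller. This is Ihara's lemma in the quaternionic higher-weight setting, namely surjectivity of the degeneracy map $S_k(M,D)^2\to S_k(M\ell,D)$ localized at $\mathfrak{m}_\phi$. I would invoke it as in Wang/Chida--Hsieh \cite{ChHs2}, where it is available under $p>k+1$, $\#(\F_p^\times)^{k-1}>5$ and Assumption~\ref{ass}(2)--(5). A second delicate point is the condition $p\nmid\ell^2-1$. It guarantees that the torsion in the $\mathds{P}^1$-bundle contributions and the finite stabilizer groups $\Gamma_g$ are prime to $p$, so that the spectral sequence degenerates integrally. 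I would check this directly against the stabilizer orders.
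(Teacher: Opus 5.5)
Your outline follows the same route as the paper, which is Wang's proof. The ingredients match: the \v{C}erednik--Drinfeld special fibre of $X(M,D\ell)$, and the identification of $H^0(\overline{\mathbf{X}}_\ell,a_{0*}\mathcal{F})$ and $H^2(\overline{\mathbf{X}}_\ell,a_{0*}\mathcal{F}(1))$ with $S_k(M,D;A)^{\oplus2}$. Then come the composite $\tau\circ\rho=\nabla$ (your matrix up to sign and normalization), admissibility, multiplicity one, and $\phi_\ell$ read off from the $\T_{M,D\ell}(A)$-action on the resulting quotient.

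However, the step you single out as the crux, freeness of rank one over $A/\pi^nA$, is justified by a false statement. Take the pull-back $S_k(M,D;A)^{\oplus2}\to S_k(M\ell,D;A)$ and localize at $\mathfrak{m}_\phi$ (a maximal ideal of the Hecke algebra away from $\ell$). Surjectivity there would mean that $S_k(M\ell,D)_{\mathfrak{m}_\phi}$ is entirely $\ell$-old, i.e.\ that no $\ell$-new form on $B_D$ is congruent to $\phi$. Via Deligne--Serre lifting and Jacquet--Langlands, this contradicts the very existence of $\phi_\ell$. Ihara's lemma is the injectivity of that map modulo $\mathfrak{m}_\phi$ (dually, the cokernel of $(\pi_{1*},\pi_{2*})$ is Eisenstein). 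The paper needs it only in the opposite direction, Theorem~\ref{levelraising2}.

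Relatedly, your Step one sequence confuses edges and vertices. The character group is $\ker\bigl(\tau\colon S_k(M\ell,D;A)\to S_k(M,D;A)^{\oplus2}\bigr)$. What you actually use is $\coker(\tau\circ\rho)$, a quotient of $S_k(M,D;A)^{\oplus2}$, not a subgroup of it.

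No Ihara-type input is needed here; freeness follows from what you already invoke. By multiplicity one, $S_k(M,D;A)/\mathcal{I}_\phi\simeq A/\pi^nA$. Right exactness then shows that $\coker(\nabla)/\mathcal{I}_\phi$ is the cokernel of $\nabla$ acting on $(A/\pi^nA)^{\oplus2}$. Admissibility gives $\nabla\equiv c\smallmat{-1}{\epsilon_\ell}{\epsilon_\ell}{-1}$ there, with $c\defeq\ell^{\frac{k-2}{2}}(\ell+1)$. As $p\nmid\ell(\ell+1)$, $c$ is a unit, so the image of $\nabla$ is the free direct summand spanned by $(-1,\epsilon_\ell)$, and the cokernel is free of rank one.

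The eigenvalue of $U_\ell$ also comes out directly. Its characteristic polynomial $X^2-T_\ell X+\ell^{k-1}$ now has roots $\epsilon_\ell\ell^{\frac{k}{2}}$ (on the image) and $\epsilon_\ell\ell^{\frac{k-2}{2}}$. These are distinct modulo $\pi$ because $p\nmid\ell-1$, so $U_\ell$ acts on the cokernel by $\epsilon_\ell\ell^{\frac{k-2}{2}}$. This is the content of \eqref{RL7bis}--\eqref{RL8bis}. It is also where $p\nmid\ell^2-1$ enters this proof, rather than through the stabilizers $\Gamma_g$ or integral degeneration as you suggest.
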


\begin{proof} We mimic the proof of \cite[Theorem 2.12]{wang}, which uses techniques from \cite{Liu}. Let $B=B_{D}$ (respectively, $B'=B_{D\ell}$) denote the definite (respectively, indefinite) quaternion algebra over $\Q$ of discriminant $D$ (respectively, $D\ell$). Attached to $B'$ we have the Shimura curves $\mathfrak{X}_\ell\defeq\mathfrak{X}_0(M,D\ell)$, $\mathfrak{X}_{d,\ell}\defeq\mathfrak{X}_d(M,D\ell)$ and the Kuga--Sato variety 
$\mathfrak{W}_{d,\ell}\defeq\mathfrak{W}_{k,d}(M,D\ell)$, which are all defined over the quadratic unramified extension $\Z_{\ell^2}$ of $\Z_\ell$. Recall that $\mathbf{X}_\ell$, $\mathbf{X}_{d,\ell}$ and $\mathbf{W}_{d,\ell}$ denote the special fibers of
$\mathfrak{X}_{\ell}$, $\mathfrak{X}_{d,\ell}$ and $\mathfrak{W}_{d,\ell}$, respectively, all defined over the field $\F_{\ell^2}$ with $\ell^2$ elements, and that $\overline{\mathbf{X}}_{\ell}$, $\overline{\mathbf{X}}_{d,\ell}$ and $\overline{\mathbf{W}}_{d,\ell}$ denote the base changes of $\mathbf{X}_{\ell}$, $\mathbf{X}_{d,\ell}$ and $\mathbf{W}_{d,\ell}$, respectively, to an algebraic closure $\overline{\F}_\ell$ of $\F_{\ell^2}$. Moreover, recall the sheaf $\mathcal{F}_A$ on $\mathfrak{X}_\ell$ which we denote by $\mathcal{F}$ to simplify our notation; writing $i:\mathbf{X}_\ell\rightarrow\mathfrak{X}_\ell$ for the natural map, we 
obtain a sheaf $i^*\mathcal{F}$, which we also denote by $\mathcal{F}$. Finally, recall the maps $a_0:\overline{\mathbf{X}}_\ell^{(0)}\rightarrow \overline{\mathbf{X}}_\ell$ and $a_1:\overline{\mathbf{X}}_\ell^{(1)}\rightarrow \overline{\mathbf{X}}_\ell$ from \S\ref{sec4.2}; we use the same symbol $\mathcal{F}$ for the restrictions of $\mathcal{F}$ to $\overline{\mathbf{X}}_\ell^{(0)}$ and 
$\overline{\mathbf{X}}_\ell^{(1)}$, so that $a_{i*}\mathcal{F}$ is a sheaf on $\overline{\mathbf{X}}_\ell$ supported on $\overline{\mathbf{X}}^{(i)}_\ell$.  
Then there are isomorphisms of $A$-modules
\begin{equation} \label{RL1}
S_k(M,D; A)^{\oplus 2}\simeq H^0\bigl(\overline{\mathbf{X}}_\ell,a_{0*}\mathcal{F}\bigr)
\end{equation}
and
\begin{equation} \label{RL2}S_k(M\ell,D;A)\simeq H^0\bigl(\overline{\mathbf{X}}_\ell,a_{1*}\mathcal{F}\bigr)
\end{equation}
(\emph{cf.} \cite[p. 2319]{wang}). While isomorphism \eqref{RL2} is equivariant with respect to the action of the Hecke algebra $\T_{M,D\ell}$, isomorphism \eqref{RL1} is equivariant for the action of Hecke operators outside $\ell$ but 
on $S_k(M,D; A)^{\oplus 2}$ there is an action of $T_\ell$ while on 
$H^0(\overline{\mathbf{X}}_\ell,a_{0*}\mathcal{F})$ there is an action of $U_\ell$, which we need to relate. As observed in \cite[p. 2319]{wang},
Poincar\'e duality induces an isomorphisms of $A$-modules 
\begin{equation} \label{RL3}
S_k(M,D; A)^{\oplus2}\simeq H^2\bigl(\overline{\mathbf{X}}_\ell,a_{0*}\mathcal{F}(1)\bigr).
\end{equation}
The composition 
\begin{equation} \label{rhotau}
H^0\bigl(\overline{\mathbf{X}}_\ell,a_{0*}\mathcal{F}\bigr)\overset{\rho}\longrightarrow H^0\bigl(\overline{\mathbf{X}}_\ell,a_{1*}\mathcal{F}\bigr)\overset{\tau}\longrightarrow H^2\bigl(\overline{\mathbf{X}}_\ell,a_{0*}\mathcal{F}(1)\bigr),
\end{equation}
where $\rho$ is restriction and $\tau$ is the Gysin map (see, \emph{e.g.}, \cite[p. 2309]{wang}), corresponds under isomorphisms \eqref{RL1} and \eqref{RL3} to the action of the matrix 
\[ \nabla\defeq\begin{pmatrix}-\ell^\frac{k-2}{2}(\ell+1)&T_\ell\\T_\ell&-\ell^\frac{k-2}{2}(\ell+1)\end{pmatrix} \]
on $S_k(M,D; A)^{\oplus 2}$. The action of the Hecke operator $U_\ell$ on $H^0\bigl(\overline{\mathbf{X}}_\ell,a_{0*}\mathcal{F}\bigr)$ is given by the rule $(x,y)\mapsto\bigl(-\ell^\frac{k}{2}y,\ell^\frac{k-2}{2}x+T_\ell y\bigr)$. A direct computation (see, \emph{e.g.}, \cite[p. 2320]{wang}) shows that 
\begin{equation} \label{RL7bis}
\mathrm{coker}(\nabla)=\frac{S_k(M,D;A)^{\oplus 2}}{\Bigl(\mathcal{I}^{(\ell)}_\phi,U_\ell^2-\ell^{k-2}\Bigr)},
\end{equation}
where $\mathcal{I}^{(\ell)}_\phi$ is the ideal of $\T_{M,D}(A)$ generated by all $(\phi-1)\cdot T$ for $T\neq T_\ell$. The definition of admissible primes implies that there is an isomorphism of $A$-modules
\begin{equation} \label{RL8bis}
\frac{S_k(M,D;A)^{\oplus 2}}{\Bigl(I_{\ell}^{(\ell)},U_\ell^2-\ell^{k-2}\Bigr)}\overset\simeq\longrightarrow
\frac{S_k(M,D;A)^{\oplus 2}}{\Bigl(I_{\ell}^{(\ell)},U_\ell-\epsilon_\ell\ell^\frac{k-2}{2}\Bigr)},
\end{equation}
and the right-hand side in \eqref{RL8bis} turns out to be free of rank $1$ (see, \emph{e.g.}, \cite[p. 2320]{wang}). The action of $\T_{M,D\ell}(A)$ on this $A$-module then gives the searched-for morphism $\phi_{\ell}$. 
\end{proof}

\begin{remark} \label{rem1}
Theorem \ref{levelraising1} is also obtained by Chida in \cite[Theorem 5.11]{Chida}. The proofs by Wang, which we sketched above, and by Chida are different: Wang works directly with the weight spectral sequence of the Kuga--Sato variety over $\mathcal{X}_d(M,D\ell)$, whereas Chida's proof follows closely the arguments in the proof of \cite[Theorem 5.15]{BD-IMC} (which, in turn, is inspired by the arguments in \cite[Section 3]{Ribet-Inv100}) and proceeds with an analysis of the character and cocharacter groups (in the sense of \cite{rajaei}) of $\mathcal{X}_d(Mm,D/m)$ with coefficients in $\mathcal{F}=\mathcal{F}_A$ for a prime divisor $m$ of $D$ (such a divisor exists because, by assumption, $D$ is a product of an odd number of primes). The proofs by Wang and by Chida of Theorem \ref{levelraising1} give also slightly different proofs of the first reciprocity law (which corresponds to Theorem \ref{ERL1}); \emph{cf.} also Remark \ref{rem2}. In this paper, we find it more convenient to follow Wang's strategy, but we wish to remark that there are no clear advantages in any of the two choices over the other, except perhaps for the fact that Wang's approach, which builds on work of Liu (\cite{Liu}), offers a shorter path to the desired result.  
\end{remark}

The second level raising result, which corresponds to \cite[Theorem 2.9]{wang}, takes care of the indefinite case.

\begin{theorem}[Wang] \label{levelraising2} 
Suppose that
\begin{itemize}
\item $A\simeq A/\pi^nA$;
\item $\phi$ satisfies Assumption $\ref{ass}$;
\item $D$ is a product of an even number of primes.
\end{itemize}
Let $\ell$ be an $n$-admissible prime for $\phi$. There is a morphism of $A$-algebras $\phi_\ell:\T_{M,D\ell}(A)\rightarrow A$ such that $\phi\equiv \phi_\ell\pmod{\pi^n}$ and $\phi_\ell(U_\ell)=\epsilon_\ell \ell^\frac{k-2}{2}$ for all primes $\ell\,|\,S$. 
\end{theorem}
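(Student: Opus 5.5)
In the indefinite case, $D$ has an even number of prime factors, so $D\ell$ has an odd number and $B_{D\ell}$ is \emph{definite}. The target Hecke algebra $\T_{M,D\ell}(A)$ therefore acts on the space of definite quaternionic forms $S_k(M,D\ell;A)$ of \S\ref{mod}, while $\phi$ lives on the étale cohomology of the indefinite Shimura curve $X(M,D)$ with coefficients in $\mathcal{F}_A$. I would prove Theorem \ref{levelraising2} with the same weight spectral sequence strategy as Theorem \ref{levelraising1}, following \cite[Theorem 2.9]{wang}. The difference is that one now uses the Drinfeld--Čerednik uniformization of the Shimura curve of discriminant $D\ell$... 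Actually the cleaner route is to use the semistable model $\mathfrak{X}_\ell=\mathfrak{X}_0(M\ell,D)$ of \S\ref{sec4.2}. By Buzzard's theorem, its special fibre $\mathbf{X}_\ell$ is two copies of $\mathbf{X}$ meeting transversally at the supersingular points. The supersingular locus of $\mathbf{X}$ is identified with the definite double coset $B_{D\ell}^\times\backslash\widehat{B}_{D\ell}^\times/\widehat{R}_M^\times$. Via Deuring--Serre, the restriction of $\mathcal{F}_A$ to it gives an isomorphism of $A$-modules
\[ H^0\bigl(\overline{\mathbf{X}}_\ell,a_{1*}\mathcal{F}\bigr)\simeq S_k(M,D\ell;A), \]
which I would check is Hecke equivariant away from $\ell$. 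This is the analogue of \eqref{RL2}.

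Next I would write down the weight spectral sequence (equivalently, the Rapoport--Zink/Liu description) for $H^1_{\et}(\overline{X}_0(M\ell,D),\mathcal{F}_A(k/2))$. It yields the monodromy filtration, and in particular the component group of the $\ell$-new part. That component group is computed as the cokernel of the composite
\[ H^0\bigl(\overline{\mathbf{X}}_\ell,a_{0*}\mathcal{F}\bigr)\xrightarrow{\rho}H^0\bigl(\overline{\mathbf{X}}_\ell,a_{1*}\mathcal{F}\bigr)\xrightarrow{\tau}H^2\bigl(\overline{\mathbf{X}}_\ell,a_{0*}\mathcal{F}(1)\bigr), \]
or dually as the cokernel of $\tau$ restricted to the definite forms. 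The key input is that $\tau\circ\rho$ again acts by the matrix $\nabla$ on two copies of the $\ell$-old forms. Here the roles are reversed compared with Theorem \ref{levelraising1}: the old forms are now the cohomology $H^1$ (or rather its ``$H^0$-type'' pieces) on the smooth indefinite curve $X_0(M,D)$. The relevant quotient is then
\[ \frac{S_k(M,D\ell;A)}{\bigl(\mathcal{I}^{(\ell)}_\phi,\,U_\ell-\epsilon_\ell\ell^{\frac{k-2}{2}}\bigr)}, \]
and the $n$-admissibility condition $\pi^n\mid \ell^{k/2}+\ell^{(k-2)/2}-\epsilon_\ell\phi(T_\ell)$ is used, exactly as in \eqref{RL7bis}--\eqref{RL8bis}, to replace $U_\ell^2-\ell^{k-2}$ by $U_\ell-\epsilon_\ell\ell^{(k-2)/2}$. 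The condition $p\nmid\ell^2-1$ makes the two eigenvalue factors coprime.

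The crux, and the step I expect to be hardest, is showing that this quotient is free of rank one over $A=A/\pi^n$, or at least nonzero with a cyclic $\phi$-eigen piece. I would get nonvanishing by relating it via Ihara's lemma, i.e.\ the surjectivity of the degeneracy map / injectivity of $\rho$ on the $\mathfrak{m}$-localization, to the $\phi$-part of the Tate module $T_\phi/\pi^n$. That module is free of rank $2$ by the multiplicity-one consequences of Assumption \ref{ass}, specifically the absolute irreducibility in (2) and the ramification hypotheses (3)--(5), which give freeness of the localized cohomology as in \cite{ChHs2}, \cite{wang}. On $T_\phi/\pi^n$, the Frobenius at $\ell$, which is inert in $K$ and acts via $\ell^2$-Frobenius over $\Z_{\ell^2}$, has an eigenvalue $\epsilon_\ell\ell^{\ldots}$ occurring with multiplicity one modulo $\pi^n$. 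Identifying the component group with the corresponding rank-one piece would give freeness of rank one. The action of $\T_{M,D\ell}(A)$ on this free rank-one module then defines $\phi_\ell$. By construction, $\phi_\ell$ agrees with $\phi$ modulo $\pi^n$ on $T_q$ for $q\nmid MD\ell p$ and on $U_q$ for $q\mid MD$, and $\phi_\ell(U_\ell)=\epsilon_\ell\ell^{(k-2)/2}$.
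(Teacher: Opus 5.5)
Your identification $H^0(\overline{\mathbf{X}}_\ell,a_{1*}\mathcal{F})\simeq S_k(M,D\ell;A)$, your choice of $\mathfrak{X}_0(M\ell,D)$ and your appeal to Ihara's lemma are the right ingredients. The gap is that your central computation is carried over from Theorem \ref{levelraising1}, and it fails in this setting.

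In Theorem \ref{levelraising1}, $\phi$ is definite, and its two copies sit in $H^0(\overline{\mathbf{X}}_\ell,a_{0*}\mathcal{F})$ for the \v{C}erednik--Drinfeld special fibre of $X(M,D\ell)$. There $\tau\circ\rho$ really is $\nabla$ on $S_k(M,D;A)^{\oplus 2}$.

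For $\mathfrak{X}_0(M\ell,D)$ the situation is different. The components are two copies of the smooth curve $\overline{\mathbf{X}}$, so $H^0(\overline{\mathbf{X}}_\ell,a_{0*}\mathcal{F})=H^0(\overline{\mathbf{X}},\mathcal{F})^{\oplus2}$ vanishes for $k\geq4$, and $H^2(\overline{\mathbf{X}}_\ell,a_{0*}\mathcal{F}(1))$ is Eisenstein. This has three consequences:
\begin{itemize}
\item $\coker(\tau\circ\rho)$, and likewise the cokernel of $\tau$, carries no information.
\item The $\ell$-old forms live in the weight-one piece $H^1(\overline{\mathbf{X}}_\ell,a_{0*}\mathcal{F})$, so there is no ``$H^0$-type piece'' on which $\nabla$ could act.
\item There is nothing to which the passage from $U_\ell^2-\ell^{k-2}$ to $U_\ell-\epsilon_\ell\ell^{\frac{k-2}{2}}$ ``exactly as in \eqref{RL7bis}--\eqref{RL8bis}'' could be applied.
\end{itemize}
In the paper, $\nabla$ acts on $H^1\bigl(\overline{X},\mathcal{F}(1)\bigr)^{\oplus2}$, as the composite of the old-forms map with the degeneracy maps $(\pi_{1*},\pi_{2*})$ in \eqref{diagram}, and $\coker(\nabla)\simeq H^1\bigl(\F_{\ell^2},H^1(\overline{\mathbf{X}},\mathcal{F}(1))\bigr)$.

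What you are missing is the map that connects definite forms to this unramified cohomology. In the paper it is the connecting homomorphism $\gamma$ of the excision sequence $0\rightarrow H^1(\overline{\mathbf{X}},\mathcal{F}(1))\rightarrow H^1(\overline{\mathbf{X}}^{\mathrm{ord}},\mathcal{F}(1))\rightarrow H^0(\overline{\mathbf{X}}^{\mathrm{ss}},\mathcal{F})\rightarrow 0$ on the good-reduction curve of level $M$; this is the map $X(\mathcal{F})\rightarrow\coker(\nabla)$ of \eqref{diagram}.
\begin{itemize}
\item Ihara's lemma, together with the Eisenstein property of $H^2(\overline{\mathbf{X}}_\ell,a_{0*}\mathcal{F})$, makes $\bar\gamma$ surjective modulo $\mathcal{I}_\phi$.
\item The relation $U_\ell=\epsilon_\ell\ell^{\frac{k-2}{2}}$ comes from Frobenius. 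It acts on supersingular points by $U_\ell/\ell^{\frac{k-2}{2}}$ (Ribet), and by $\epsilon_\ell$ on $H^1(\F_{\ell^2},T_\phi)\simeq A$, since $T_\phi\simeq A(1)\oplus A$ over $\Q_{\ell^2}$ (cf.\ \eqref{RL5}).
\item The resulting surjection of the definite quotient onto $A=A/\pi^nA$ is what yields a homomorphism with values in all of $A/\pi^nA$. Mere nonvanishing, which is what you aim for, would only give a congruence modulo a possibly smaller power of $\pi$.
\end{itemize}

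Finally, multiplicity one for $T_\phi$ at level $M$ bounds the target, not the definite quotient. So ``identifying'' the two is unjustified as an upper bound. The paper obtains that bound, and hence \eqref{RL11}, on the definite side. It reduces modulo $\mathfrak{m}$, lifts $\bar\phi_\ell$ to characteristic zero by Deligne--Serre, and combines cyclicity of the localized definite forms over the Hecke algebra (Chida) with Nakayama's lemma. It is this that gives $r=1$ in \eqref{BN}, not the other way round.
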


\begin{proof} We divide the proof into three steps. 

\subsubsection*{Step 1: Construction of the map $\gamma$}

Consider the Shimura curves $\mathfrak{X}=\mathfrak{X}(M,D)$ and  $\mathfrak{X}_d=\mathfrak{X}_d(M,D)$ and let
$\mathfrak{W}_d=\mathfrak{W}_{k,d}(M,D)$ be the Kuga--Sato variety over $\mathfrak{X}_d$, 
all defined over the quadratic unramified extension $\Z_{\ell^2}$ of $\Z_\ell$. Denote by $\mathbf{W}_d$, $\mathbf{X}_d$, $\mathbf{X}$ the special fibers of
$\mathfrak{W}_d$, $\mathfrak{X}_d$, $\mathfrak{X}$, defined over the field $\F_{\ell^2}$ with $\ell^2$ elements; let also  
$\overline{\mathbf{W}}_d$, $\overline{\mathbf{X}}_d$, $\overline{\mathbf{X}}$ denote the base change of $\mathbf{W}_d$, $\mathbf{X}_d$, $\mathbf{X}$ to the algebraic closure $\overline{\F}_\ell$ of $\F_{\ell^2}$. Let $\pi:\overline{\mathbf{W}}_d\rightarrow \overline{\mathbf{X}}_d$ be the canonical projection, write $\overline{\mathbf{X}}_d^\mathrm{ss}$ for the set of supersingular points in $\overline{\mathbf{X}}_d$, and define $\overline{\mathbf{W}}_d^\mathrm{ss}=\pi^{-1}(\overline{\mathbf{X}}_d^\mathrm{ss})$. Set also $\overline{\mathbf{W}}_d^\mathrm{ord}\defeq\overline{\mathbf{W}}_d\smallsetminus\overline{\mathbf{W}}_d^\mathrm{ss}$. Let us consider the exact sequence of cohomology with compact support
\[
\dots\longrightarrow 
H^{k-1}\bigl(\overline{\mathbf{W}}_d,A(k/2)\bigr)
\longrightarrow H^{k-1}\bigl(\overline{\mathbf{W}}_d^\mathrm{ord},A(k/2)\bigr)\longrightarrow
H^{k}_{\overline{\mathbf{W}}^\mathrm{ss}_d}\bigl(\overline{\mathbf{W}}_d,A(k/2)\bigr)\longrightarrow\cdots.
\]
Observe that, by purity, there is an isomorphism 
\[ H^{k}_{\overline{\mathbf{W}}_d^\mathrm{ss}}\bigl(\overline{\mathbf{W}}_d,A(k/2)\bigr)\simeq 
\bigoplus_{x\in \overline{\mathbf{X}}_d^\mathrm{ss}}H^{k-2}\bigl(\overline{\mathbf{W}}_{d,x},A(k/2)\bigr), 
\] 
where $\overline{\mathbf{W}}_{d,x}\defeq\pi^{-1}(x)$ is the fiber of $\pi$ over $x$. Moreover, applying the projector $\epsilon_k$ gives an isomorphism
\[ \epsilon_kH^{k-2}\bigl(\overline{\mathbf{W}}_{d,x},A(k/2)\bigr)\overset\simeq\longrightarrow L_r(A). \]
Since ${\mathfrak{W}}_d$ has good reduction at $\ell$, it follows from 
\eqref{projectors} that $\epsilon_kH^j\bigl(\overline{\mathbf{W}}_{k,d},A(k/2)\bigr)=0$ for $j\neq k-1$; furthermore, since also ${\mathfrak{X}}_d$ has good reduction at $\ell$, there is an isomorphism of $A$-modules $\epsilon_kH^{k-1}\bigl(\overline{\mathbf{W}}_{k,d},A(k/2)\bigr)\simeq H^1\bigl(\overline{\mathbf{X}}_d,\mathcal{F}(1)\bigr)$ where, as before, $\mathcal{F}=\mathcal{F}_A$ is viewed as a sheaf on $\overline{\mathbf{X}}_d$. Thus, setting $\overline{\mathbf{X}}_d^\mathrm{ord}\defeq\overline{\mathbf{X}}_d\smallsetminus\overline{\mathbf{X}}_d^\mathrm{ss}$, we get a short exact sequence 
\[ 0\longrightarrow H^1\bigl(\overline{\mathbf{X}}_d,\mathcal{F}(1)\bigr)\longrightarrow
H^1\bigl(\overline{\mathbf{X}}^\mathrm{ord}_d,\mathcal{F}(1)\bigr)\longrightarrow
H^0\bigl(\overline{\mathbf{X}}^\mathrm{ss}_d,\mathcal{F}\bigr)\longrightarrow 0. \]
Taking $G_{\F_{\!\ell^2}}$-cohomology, we obtain a connecting homomorphism 
\[ H^0\bigl(\overline{\mathbf{X}}^\mathrm{ss}_d,\mathcal{F}\bigr)^{G_{\F_{\!\ell^2}}}\longrightarrow H^1\Bigl(\F_{\ell^2},H^1\bigl(\overline{\mathbf{X}}_d,\mathcal{F}(1)\bigr)\!\Bigr).
\] 
Supersingular elliptic curves have models defined over $\F_{\ell^2}$, so $H^0\bigl(\overline{\mathbf{X}}^\mathrm{ss}_d,\mathcal{F}\bigr)^{G_{\F_{\!\ell^2}}}=H^0({\mathbf{X}}^\mathrm{ss}_d,\mathcal{F})$. Furthermore, using the projector $\epsilon_d$ we finally obtain a homomorphism 
\begin{equation} \label{RL3+1/2}
\gamma: 
H^0({\mathbf{X}}^\mathrm{ss},\mathcal{F} )\longrightarrow H^1\Bigl(\F_{\ell^2},H^1\bigl(\overline{\mathbf{X}},\mathcal{F}(1)\bigr)\!\Bigr),
\end{equation}
where $\mathcal{F}$ is viewed as a sheaf on $\overline{\mathbf{X}}$.

\subsubsection*{Step 2: Surjectivity of $\bar\gamma$}

The crucial input of the proof is that the cokernel of the map $\gamma$ in \eqref{RL3+1/2} is Eisenstein. This is shown in \cite[pages 2315-2316]{wang} by appealing 
to Ihara's Lemma, which we review. 
Consider the Shimura curve $\mathfrak{X}_\ell=\mathfrak{X}(M\ell,D)$ (defined over $\Z_{\ell^2})$, its special fiber $\mathbf{X}_\ell$ and the base change $\overline{\mathbf{X}}_\ell$ of ${\mathbf{X}}_\ell$ to $\overline{\F}_\ell$. Set ${X}\defeq\mathfrak{X}\otimes_{\Z_{\ell^2}}\Q_{\ell^2}$, ${X}_\ell\defeq\mathfrak{X}_\ell\otimes_{\Z_{\ell^2}}\Q_{\ell^2}$,
$\overline{{X}}\defeq{X}\otimes_{\Q_{\ell^2}}\overline{\Q}_\ell$, $\overline{{X}}_\ell\defeq{X}_\ell\otimes_{\Z_{\ell^2}}\overline{\Q}_\ell$.
We use the same symbol $\mathcal{F}=\mathcal{F}_A$ for the corresponding sheaf on $\mathfrak{X}_\ell$ and denote by $\widetilde{\mathfrak{W}}_\ell$ the semistable model
of $\mathfrak{W}_\ell\defeq\mathfrak{W}_{k,d}(M\ell,D)$ constructed in \cite[Lemma 7.1]{Liu}. 
Using results of Liu (\cite{Liu}), Wang describes in \cite[\S2.4]{wang} the 
weight spectral sequence for $H^1(\overline{\mathbf{X}}_\ell,\mathcal{F})$ in terms of the weight spectral sequence  
\[ H^{p+q}\bigl({\widetilde{\mathbf{W}}}_\ell\otimes_{\F_{\ell^2}}\overline{\F}_\ell,
\mathrm{gr}_{-p}\R\Psi(A)\bigr)\Longrightarrow H^{p+q}\bigl({\widetilde{W}}_\ell\otimes_{\Q_{\ell^2}}\overline{\Q}_{\ell},A\bigr) \] 
of $\widetilde{\mathfrak{W}}_\ell$ with trivial coefficients, ${\widetilde{\mathbf{W}}}_\ell$ (respectively, ${\widetilde{W}}_\ell$) being the special (respectively, generic) fiber of $\widetilde{\mathfrak{W}}_\ell$ and $\mathrm{gr}_{\bullet}\R\Psi(A)$ denotes the graded pieces of the monodromy filtration 
$\Fil^\bullet\R\Psi(A)$ of the complex of sheaves of nearby cycles $\R\Psi(A)$ of the constant sheaf $A$ over $\widetilde{\mathfrak{W}}_\ell$.
If we denote $\R\Phi(\mathcal{F})$ the complex of vanishing cycles of the sheaf $\mathcal{F}$ on $\mathfrak{X}_\ell$ (which is concentrated in degree 1 and supported on the set $\mathbf{X}^\mathrm{ss}$ of supersingular points of $\mathbf{X}$), then there is an isomorphism 
\[ 
\bigoplus_{x\in \mathbf{X}^\mathrm{ss}}\bigl(\R^1\Phi(\mathcal{F})(1)\bigr)_x\simeq H^0\bigl(\overline{\mathbf{X}}_\ell,a_{1*}\mathcal{F}\bigr)=\bigoplus_{x\in\mathbf{X}^\mathrm{ss}}L_k(A)
\] 
and the monodromy filtration on $H^1(\overline{X}_\ell,\mathcal{F})(1)$ is given by 
the short exact sequence 
\begin{equation}\label{MF}
0\longrightarrow H^1\bigl(\overline{\mathbf{X}}_\ell,a_{0*}\mathcal{F}\bigr)(1)\longrightarrow
H^1\bigl(\overline{X}_\ell,\mathcal{F}\bigr)(1)\longrightarrow X(\mathcal{F})\longrightarrow 0,
\end{equation}
where 
\[ X(\mathcal{F})\defeq\ker\Bigl(H^0\bigl(\overline{\mathbf{X}}_\ell,a_{1*}\mathcal{F}\bigr)\overset{\tau}\longrightarrow H^2\bigl(\overline{\mathbf{X}}_\ell,a_{0*}\mathcal{F}\bigr)\!\Bigr) \]
and $\tau$ is the Gysin map. Now consider the map 
\begin{equation} \label{MF1}
H^1\bigl(\overline{\mathbf{X}},\mathcal{F}\bigr)(1)^{\oplus2}\xrightarrow{(i_{1*},i_{2*})} H^1\bigl(\overline{\mathbf{X}}_\ell,a_{0*}\mathcal{F}\bigr)(1),
\end{equation}
 where $i_1:\mathbf{X}\rightarrow\mathbf{X}_\ell$ and $i_2:\mathbf{X}\rightarrow\mathbf{X}_\ell$ are closed immersions satisfying the matrix equality
\[ \begin{pmatrix}\pi_1\circ i_1&\pi_1\circ i_2\\ \pi_2\circ i_2&\pi_2\circ i_2\end{pmatrix}=\begin{pmatrix}\mathrm{id}&\Frob_\ell\\ \langle\ell\rangle^{-1}\Frob_\ell&\mathrm{id}\end{pmatrix} \] 
and $\langle\ell\rangle$ is the diamond operator at $\ell$ (\emph{cf.} \cite[p. 2308]{wang}).
We obtain a commutative diagram with exact rows
\begin{equation} \label{diagram}
\xymatrix{H^1\bigl(\overline{\mathbf{X}},\mathcal{F}\bigr)(1)^{\oplus2}\ar[r]\ar[d]^-\simeq & H^1\bigl(\overline{{X}}_\ell,\mathcal{F}\bigr)(1)
\ar[r] \ar[d]^-{(\pi_{1*},\pi_{2*})}& X(\mathcal{F})\ar[d]\ar[r]&0\\
H^1\bigl(\overline{{X}},\mathcal{F}(1)\bigr)^{\oplus2}\ar[r]^-\nabla & H^1\bigl(\overline{{X}},\mathcal{F}(1)\bigr)^2\ar[r]&\coker(\nabla)\ar[r]&0}
\end{equation}
in which
\begin{itemize}
\item the top horizontal line is obtained by composing \eqref{MF} and \eqref{MF1};
\item the leftmost vertical arrow is the canonical isomorphism;
\item $\pi_1$, $\pi_2$ are the natural degeneracy maps, ordered in such a way that $\pi_1$ corresponds to forgetting the isogeny of degree $\ell$ in the moduli problem;     
\item the rightmost vertical arrow is induced by $(\pi_{1*},\pi_{2*})$; 
\item $\nabla$ makes \eqref{diagram} commutative and $\coker(\nabla)\simeq H^1\Bigl(\F_{\ell^2},H^1\bigl(\overline{\mathbf{X}},\mathcal{L}_k(A)(1)\bigr)\!\Bigr)$. 
\end{itemize}
The cokernel of the map $(\pi_{1*},\pi_{2*})$ in \eqref{diagram}
is Eisenstein: this is due to Ihara's Lemma (\cite[Lemma 3.2]{Ihara}) when $D=1$ and to its generalization to the quaternionic case in \cite[Theorem 4]{DT} (see \cite[Theorem 2.5]{wang}; note also that in this passage the condition $p>k+1$ is required). Moreover, 
$H^2\bigl(\overline{\mathbf{X}}_\ell,a_{0*}\mathcal{F}\bigr)$ is also Eisenstein (\cite[p. 2312]{wang}). Now, the maps in \eqref{diagram} are equivariant for the action of the Hecke operators $T$ of $\T_{M,D}$ and of $\T_{M\ell,D}$ for $T\neq U_\ell$ (here we slightly abuse notation and adopt the same symbol $T$ for the Hecke operators $T_q$ or $U_q$, $q\neq \ell$, belonging to different Hecke algebras); this follows from \cite[Lemma 9.1]{BD-IMC} and standard properties of Hecke operators. Thus, if we set $\mathcal{I}_\phi\defeq\ker(\phi)$ 
and denote by $\mathcal{I}_\phi^{(\ell)}$ the ideal of $\T_{M\ell,D}(A)$ generated by $T-\phi(T)$ for all $T\neq U_\ell$, then the rightmost vertical arrow in \eqref{diagram} induces a surjection 
\begin{equation} \label{RL10}
\bar{\gamma}: H^0(\mathbf{X}^\mathrm{ss},\mathcal{F})\big/\mathcal{I}^{(\ell)}_\phi\longepi H^1\Bigl(\F_{\ell^2},H^1\bigl(\overline{\mathbf{X}},\mathcal{F}(1)\bigr)\big/\mathcal{I}_\phi\Bigr).
\end{equation}
As the notation suggests, $\bar\gamma$ is the map $\gamma$ in \eqref{RL3+1/2} composed with the canonical projections: this is detailed in \cite[p. 2316]{wang}. It follows from a result of Ribet (\cite[Proposition 3.8]{Ribet-Inv100}) that the action of the Frobenius automorphism on $H^0\bigl(\overline{\mathbf{X}}^\mathrm{ss},\mathcal{F}\bigr)$ is by $U_\ell\big/\ell^\frac{k-2}{2}$. On the other hand, it follows from \eqref{RL5} that Frobenius acts on the right-hand side of \eqref{RL10} by $\epsilon_\ell$; we conclude that $\bar\gamma(U_\ell x)=\epsilon_\ell \ell^\frac{k-2}{2}\bar\gamma(x)$ for all $x\in H^0({\mathbf{X}}^\mathrm{ss},\mathcal{F})$. Therefore, we obtain a surjection 
\begin{equation}\label{bargamma}
\bar\gamma:\frac{H^0(\mathbf{X}^\mathrm{ss},\mathcal{F})}{\bigl(\mathcal{I}^{(\ell)}_\phi,U_\ell-\epsilon_\ell\ell^{\frac{k-2}{2}}\bigr)}\longepi
H^1\Bigl(\F_{\ell^2},H^1\bigl(\overline{\mathbf{X}},\mathcal{F}(1)\bigr)\big/\mathcal{I}_\phi\Bigr).
\end{equation}

\subsubsection*{Step 3: Multiplicity one} 

As a preliminary observation, note that, since $\ell$ is $n$-admissible, we have $T_\phi\simeq A(1)\oplus A$ as $G_{\Q_{\ell^2}}$-modules; it follows that there are isomorphisms 
\begin{equation} \label{RL5}
H^1(\F_{\ell^2},T_\phi)\simeq H^1\bigl(\F_{\ell^2},A(1)\oplus A\bigr)\simeq A.
\end{equation}
Let $\mathfrak{m}_\phi$ be the maximal ideal of $\T_{M,D}\otimes_\Z A$ containing $\mathcal{I}_\phi$ and set $\overline{T}_\phi\defeq T_\phi/\mathfrak{m}_\phi$. By the Brauer--Nesbitt theorem, there is an isomorphism
\begin{equation} \label{BN}
H^1\bigl(\overline{{X}},\mathcal{F}(1)\bigr)\big/\mathfrak{m}_\phi\simeq \overline{T}_\phi^r \end{equation} 
for some integer $r\geq1$. Put $k\defeq A/\pi A$. Fixing \emph{any} of these $r$ components and using \eqref{RL5}, we obtain from \eqref{bargamma} a surjective map $\tilde\gamma$ given by the composition
\begin{equation}\label{bargamma1}
\xymatrix@C=30pt{\displaystyle{\frac{H^0(\mathbf{X}^\mathrm{ss},\mathcal{F})}{\bigl(\mathcal{I}^{(\ell)}_\phi,U_\ell-\epsilon_\ell\ell^{\frac{k-2}{2}}\bigr)}}\ar@{->>}[r]^-{\bar\gamma}\ar@{-->>}[rrdd]^-{\tilde\gamma}& H^1\Bigl(\F_{\ell^2},H^1\bigl(\overline{\mathbf{X}},\mathcal{F}(1)\bigr)\big/\mathcal{I}_\phi\Bigr)\ar@{->>}[r]&H^1\Bigl(\F_{\ell^2},H^1\bigl(\overline{\mathbf{X}},\mathcal{F}(1)\bigr)\big/\mathfrak{m}_\phi\Bigr)\ar@{->>}[d]^-{\eqref{BN}}\\
&&H^1\bigl(\F_{\ell^2},\overline{T}_\phi\bigr)\ar[d]_\simeq^-{\eqref{RL5}}\\
&&k,}
\end{equation}
where the right horizontal arrow is induced by reduction modulo $\mathfrak m_\phi$. Then one gets an isomorphism 
\[ H^0(\mathbf{X}^\mathrm{ss},\mathcal{F})/\ker(\tilde\gamma)\overset\simeq\longrightarrow k \] 
that, using the isomorphism of $k$-vector spaces $S_k(M,D\ell;A)\simeq H^0(\mathbf{X}^\mathrm{ss},\mathcal{F})$, corresponds to a modular form $\bar\phi_\ell$ as in the statement when $n=1$. This concludes the proof for $n=1$. 

Let $\xi_\ell:\T_{M\ell,D}(\cO)\rightarrow\cO$ be a lift of $\bar\phi_\ell$, where $\cO$ is the valuation ring of a finite extension of $\Q_\ell$, whose maximal ideal will be denoted by $\mathfrak{m}_\cO$, such that $\cO/\mathfrak{m}_\cO\simeq A/\pi A$: such a lift exists by \cite[Lemma 6.1]{DS}. Thanks to \cite[Proposition 6.1]{Chida} (see also \cite[Proposition 6.8]{ChHs1}), the localization $S_k(M\ell,D;\cO)_{\mathfrak{m}_{\xi_\ell}}$ of $S_k(M\ell,D;\cO)$ at the maximal ideal $\mathfrak{m}_{\xi_\ell}$ of $\T_{M\ell,D}(\cO)$ containing the kernel of $\xi_\ell$ is $\T_{\mathfrak{m}_{\xi_\ell}}$-cyclic, where $\T_{\mathfrak{m}_{\xi_\ell}}$ is the localization of $\T_{M\ell,D}(\cO)$ at $\mathfrak{m}_{\xi_\ell}$. By Nakayama's lemma, we conclude that there is an isomorphism
\begin{equation} \label{RL6} 
H^0(\mathbf{X}^\mathrm{ss},\mathcal{F})/\mathfrak{m}_{\xi_\ell}\simeq k.
\end{equation}
Let us consider the ideal $\mathcal{I}_{\phi_\ell}\defeq\bigl(\mathcal{I}^{(\ell)}_\phi,U_\ell-\epsilon_\ell\ell^{\frac{k-2}{2}}\bigr)$ of $\T_{M\ell,D}(A)$. Since $\mathfrak{m}_{\xi_\ell}$ is the maximal ideal containing $\mathcal{I}_{\phi_\ell}$, and $\pi^n\in \mathcal{I}_{\phi_\ell}$, it follows from \eqref{RL6} and Nakayama's lemma that there is a surjection 
\begin{equation} \label{RL7}
A/\pi^nA\longepi H^0(\mathbf{X}^\mathrm{ss},\mathcal{F})/\mathcal{I}_{\phi_\ell}.
\end{equation}
Moreover, it also follows from \eqref{RL6} that $r=1$ in \eqref{BN}, so, since $T_\phi$ is a subquotient of $H^1\bigl(\overline{X},\mathcal{F}\bigr)(1)/\mathcal{I}_\phi$, Nakayama's lemma shows that 
\begin{equation} \label{RL8}
H^1(\overline{X},\mathcal{F})(1)/\mathcal{I}_\phi\simeq T_\phi.
\end{equation}
Finally, combining \eqref{RL5}, \eqref{RL7} and \eqref{RL8} yields an isomorphism 
\begin{equation} \label{RL11}
\frac{H^0(\mathbf{X}^\mathrm{ss},\mathcal{F})}{\bigl(\mathcal{I}^{(\ell)}_\phi,U_\ell-\epsilon_\ell\ell^{\frac{k-2}{2}}\bigr)}\simeq A,
\end{equation}
and the result we are looking for follows from the isomorphism of $A$-modules $S_k(M,D\ell;A)\simeq H^0(\mathbf{X}^\mathrm{ss},\mathcal{F})$. \end{proof}

From here on, we write $\mathcal P_n(\phi)=P_n(\phi,\pi,K)$ for the set of square-free products of $n$-admissible primes relative to $(\phi,\pi,K)$, with the convention that $1\in\mathcal P_n(\phi)$. Decompose $\mathcal P_n(\phi)$ as a disjoint union 
\[ \mathcal{P} _n(\phi)=\mathcal{P}_n ^\mathrm{def}(\phi)\cup\mathcal{P}_n ^\mathrm{indef}(\phi) \] 
by requiring that $S$ belong to $\mathcal{P}^\mathrm{def}_n(\phi)$ (respectively, $\mathcal{P}_n^\mathrm{indef}(\phi)$) if and only if $\mu(DS)=-1$ (respectively, $\mu(DS)=1$), where $\mu$ is the usual M\"obius function.

\begin{theorem} \label{levelraising} 
Let $\phi:\T_{M,D}(A)\rightarrow A$ satisfy Assumption $\ref{ass}$ and let $S\in \mathcal{P}_n(\phi)$. There exists $\phi_S:\T_{M,DS}(A)\rightarrow A/\pi^nA$ such that $\phi\equiv \phi_S\pmod{\pi^n}$ and $\phi_S(U_\ell)=\epsilon_\ell \ell^\frac{k-2}{2}$ for all primes $\ell\,|\,S$. 
\end{theorem}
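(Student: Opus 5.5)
The plan is to argue by induction on the number $t$ of prime factors of $S\in\mathcal P_n(\phi)$, applying Theorem \ref{levelraising1} or Theorem \ref{levelraising2} one prime at a time. If $t=0$ we take $\phi_S$ to be the reduction of $\phi$ modulo $\pi^n$. First, reduce to the case $A=A/\pi^nA$: replace $\phi$ by $\bar\phi\defeq\phi\pmod{\pi^n}:\T_{M,D}(A/\pi^nA)\rightarrow A/\pi^nA$. Assumption \ref{ass} only concerns the residual representation and the ordinarity filtration, both of which are inherited by $T_{\bar\phi}=T_\phi/\pi^n T_\phi$. Moreover, the admissibility conditions in Definition \ref{admissible-def} only involve $\phi(T_\ell)$ modulo $\pi^n$, so $\mathcal P_n(\bar\phi)=\mathcal P_n(\phi)$.

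For the inductive step, write $S=S'\ell$ and assume that $\phi_{S'}:\T_{M,DS'}(A)\rightarrow A/\pi^nA$ has been constructed with $\phi_{S'}\equiv\phi\pmod{\pi^n}$ and $\phi_{S'}(U_{\ell'})=\epsilon_{\ell'}\ell'^{\frac{k-2}{2}}$ for $\ell'\,|\,S'$. We then check that $\phi_{S'}$ again satisfies the hypotheses needed to raise the level at $\ell$, with $D$ replaced by $DS'$.
\begin{enumerate}
\item $\ell$ is $n$-admissible for $\phi_{S'}$. Indeed $\ell\nmid MDS'p$, $\ell$ is inert in $K$ and $p\nmid\ell^2-1$, and $\phi_{S'}(T_\ell)\equiv\phi(T_\ell)\pmod{\pi^n}$, so condition (4) holds with the same sign $\epsilon_\ell$.
\item The primes dividing $DS'$ are inert in $K$ (since admissible primes are inert), so the running setup of \S\ref{levelraisingsec} applies.
\item Assumption \ref{ass} holds for $\phi_{S'}$. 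The Galois representation $T_{\phi_{S'}}$ is characterized by Frobenius traces at $q\nmid MDS'p$, which agree with those of $\phi$ modulo $\pi^n$. By Chebotarev and Brauer--Nesbitt (using the absolute irreducibility in (2)), $T_{\phi_{S'}}\simeq T_\phi/\pi^n$ as Galois modules, and this gives (1), (2) and (5).
\end{enumerate}
Conditions (3) and (4) need a little more care at the new primes $\ell'\,|\,S'$. There we have $p\nmid\ell'^2-1$, so $\ell'\not\equiv\pm1\pmod p$ and condition (4) imposes nothing at $\ell'$.

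Next, depending on the parity of the number of prime factors of $DS'$, apply Theorem \ref{levelraising1} (if $\mu(DS')=-1$) or Theorem \ref{levelraising2} (if $\mu(DS')=1$) to $\phi_{S'}$ and $\ell$. This produces $\phi_S:\T_{M,DS}(A)\rightarrow A$ with $\phi_S\equiv\phi_{S'}\pmod{\pi^n}$ and $\phi_S(U_\ell)=\epsilon_\ell\ell^{\frac{k-2}{2}}$. To conclude, we must check that the previously fixed values $\phi_S(U_{\ell'})=\epsilon_{\ell'}\ell'^{\frac{k-2}{2}}$ for $\ell'\,|\,S'$ persist. For this we use the construction in the proofs: the Hecke action of $U_{\ell'}$, for $\ell'\neq\ell$, is compatible with the isomorphisms \eqref{RL1}, \eqref{RL2} and $S_k(M,D\ell;A)\simeq H^0(\mathbf{X}^\mathrm{ss},\mathcal{F})$. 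Hence the quotient defining $\phi_S$ is taken modulo an ideal $\mathcal I^{(\ell)}_{\phi_{S'}}$ that contains $U_{\ell'}-\phi_{S'}(U_{\ell'})$, and the congruence for $U_{\ell'}$ is therefore built into the definition of $\phi_S$, not just the congruence relation of the Definition.

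I expect the main obstacle to be step (3) above: verifying that the level-raised form still satisfies Assumption \ref{ass}, especially condition (5) and the irreducibility hypotheses over the new, larger discriminant. Theorems \ref{levelraising1} and \ref{levelraising2} are stated only for forms satisfying these hypotheses, so this is the point where care is needed. The intended approach is to stress that every item is a condition on $\overline{T}_\phi$, or on $T_\phi$ modulo $\pi^n$ restricted to $G_{\Q_p}$, and that these coincide for $\phi$ and $\phi_{S'}$ by the Chebotarev/Brauer--Nesbitt identification in (3).
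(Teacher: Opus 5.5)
Your proposal is correct and follows the paper's proof. Both reduce $\phi$ modulo $\pi^n$ and raise the level one admissible prime at a time, using Theorem \ref{levelraising1} or Theorem \ref{levelraising2} according to the parity of the current discriminant. Both observe that Assumption \ref{ass} persists because the residual representation is unchanged and each new prime satisfies $\ell\not\equiv\pm1\pmod p$.

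Your extra checks only make explicit what the paper leaves implicit: admissibility of the remaining primes for $\phi_{S'}$, and persistence of the $U_{\ell'}$-eigenvalues. The latter already follows from the definition of congruence applied at level $(M,DS')$. One small point: the identification of $T_{\phi_{S'}}$ with $T_\phi/\pi^n$ over $A/\pi^nA$ needs Carayol's theorem rather than Brauer--Nesbitt, which only works over the residue field. The paper cites Chida for this identification.
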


\begin{proof} Consider the reduction $\bar\phi:\T_{M,D}(A)\rightarrow A/\pi^nA$ of $\phi$ modulo $\pi^n$. Pick a prime $\ell\,|\,S$. If $\ell\in\mathcal{P}_n^\mathrm{indef}(\phi)$, then apply Theorem \ref{levelraising1}, whereas if $\ell\in\mathcal{P}_n^\mathrm{def}(\phi)$, then apply Theorem \ref{levelraising2}: in both cases, we obtain a quaternionic eigenform $\phi_\ell:\T_{M,D\ell}(A)\rightarrow A/\pi^nA$ as in the statement, which satisfies Assumption \ref{ass} as well (all conditions are clearly the same except those at $\ell$, but $\ell\not\equiv \pm 1\pmod{p}$ since $\ell$ is $n$-admissible). We may now apply the same procedure to a prime $\ell'\,|\,(S/\ell)$ (observe that $\ell'\in\mathcal{P}_n^\mathrm{def}(\phi_\ell)$ if and only if $\ell\in\mathcal{P}_n^\mathrm{indef}(\phi)$ and $\ell'\in\mathcal{P}_n^\mathrm{indef}(\phi_\ell)$ if and only if $\ell\in\mathcal{P}_n^\mathrm{def}(\phi)$), obtaining a quaternionic eigenform $\phi_{\ell\ell'}:\T_{M,D\ell\ell'}(A)\rightarrow A/\pi^nA$ as in the statement, which also satisfies Assumption \ref{ass}. Applying recursively this procedure for all the primes dividing $S$ concludes the proof. \end{proof}

\subsection{Selmer groups} \label{Selmer-subsec}

In this subsection, $A$ denotes the valuation ring of a finite extension $F$ of $\Q_p$. Fix $\phi:\T_{M,D}(A)\rightarrow A$ and $S\in \mathcal{P}_n(\phi)$ as in Theorem \ref{levelraising}. For each prime $v$ of $K_m$, let $K_{m,v}$ be the completion of $K_m$ at $v$, put $G_{m,v}\defeq\Gal(\overline{K}_{m,v}/K_{m,v})$ and write $I_{m,v}$ for the inertia subgroup of $G_{m,v}$. Finally, $\ell$ is the residual characteristic at $v$.

For any prime $v$ of $K_m$, set
\[ 
H^1_\unr(K_{m,v},{T}_\phi )\defeq H^1\bigl(K_{m,v}^\unr/K_v,T_\phi^{I_{m,v}}\bigr)=\ker\Bigl(H^1(K_{m,v},{T}_\phi)\longrightarrow H^1\bigl(I_{m,v}, {T}_\phi)\Bigr).
\]
If $T_\phi$ is unramified at $v$, then set also 
\[ 
H^1_\mathrm{fin}(K_{m,v},T_\phi)\defeq H^1_\unr(K_{m,v},{T}_\phi)
\] 
and define $H^1_\mathrm{sing}(K_{m,v},{T}_\phi)$ via the exactness of the sequence 
\[ 0\longrightarrow H^1_\mathrm{fin}(K_{m,v}, {T}_\phi )\longrightarrow H^1(K_{m,v}, {T}_\phi)\longrightarrow H^1_\mathrm{sing}(K_{m,v}, {T}_\phi )\longrightarrow 0. \] 
Let $\loc_v:H^1(K_m, {T}_\phi)\rightarrow H^1(K_{m,v}, {T}_\phi)$ be the localization (\emph{i.e.}, restriction) map and if $T$ is unramified at $v$, then consider the composition 
\[ \partial_v:H^1(K_m,{T}_\phi)\longrightarrow H^1_\mathrm{sing}(K_{m,v},{T}_\phi) \] 
of $\loc_v$ and the canonical quotient map. If $\partial_\ell(x)=0$, then we write $v_\ell(x)\in H^1_\mathrm{fin}(K_{m,v},{T}_\phi )$ for the image of $x$ under restriction. 

If $v\,|\,p$, then recall the filtration \eqref{ordinaryfil} and set 
\[ H^1_\ord(K_{m,v},{T}_\phi)\defeq\im\Bigl(H^1(K_{m,v},{T}_\phi^{(p)})\longrightarrow H^1(K_{m,v},{T}_\phi)\Bigr). \] 
Now let $v\,|\,D$. As explained, \emph{e.g.}, in \cite{carayol}, there is a unique line $T_{\phi}^{(\ell)}\subset T_\phi$ on which $G_{\Q_\ell}$ acts either by $\chi_\cyc$ or by $\chi_\cyc\tau_K$, where $\tau_K$ is the non-trivial unramified quadratic character of $G_{\Q_\ell}$. Define $T_\phi^{(v)}$ to be the restriction of $T_\phi^{(\ell)}$ at $v$ and $T_{\phi}^{[v]}$ by the exactness of the sequence 
\[ 
0\longrightarrow T_{\phi}^{(v)}\longrightarrow T_\phi\longrightarrow T_{\phi}^{[v]}\longrightarrow 0. 
\]
Then set
\[ 
H^1_\ord(K_{m,v}, {T}_\phi)\defeq\im\Bigl(H^1(K_{m,v}, {T}_{\phi}^{(v)})\longrightarrow H^1(K_{m,v}, {T}_{\phi,})\Bigr). 
\] 
If $v\,|\,S$, then $v=\ell\cO_K$ for a prime number $\ell$ that is inert in $K$. In this case, $T_{\phi,n}\defeq T_\phi/\pi^nT_\phi$ contains a unique free $A/\pi^nA$-submodule $T_{\phi,n}^{(\ell)}$ of rank $1$ such that $G_v$ acts on $T_{\phi,n}^{(\ell)}$ as multiplication by $\epsilon_\ell\ell$ and on the quotient $T_{\phi,n}^{[\ell]}\defeq T_{\phi,n}/T_{\phi,n}^{(\ell)}$, which is free of rank $1$ over $A/\pi^nA$, as multiplication by $\epsilon_\ell$. Set
\[ H^1_\mathrm{sing}(K_{m,v},{T}_{\phi,n})\defeq \im\Bigl(H^1(K_{m,v}, {T}_{\phi,n}^{(\ell)})\longrightarrow H^1(K_{m,v}, {T}_{\phi,n})\Bigr).\]
It follows that there is a splitting 
\begin{equation} \label{split}
H^1(K_{m,v}, {T}_{\phi,n})\simeq H^1_\mathrm{fin}(K_{m,v}, {T}_{\phi,n})\oplus H^1_\mathrm{sing}(K_{m,v}, {T}_{\phi,n})
\end{equation}
(see, \emph{e.g.}, \cite[Lemma 1.5]{ChHs2}).

\begin{definition} \label{Selmer} 
Let $\phi:\T_{M,D}(A)\rightarrow A$ be an $A$-algebra homomorphism and let $S\in\mathcal{P}_n$.
\begin{enumerate}
\item 
The \emph{Selmer group} $\Sel(K_m, {T}_\phi)$ is the group of all $s\in H^1(K_m,T_\phi)$ such that 
\begin{itemize}
\item $\loc_v(s)\in H^1_\mathrm{fin}(K_{m,v}, {T}_\phi)$ for all $v\nmid Dp$;
\item $\loc_v(s)\in H^1_\mathrm{ord}(K_{m,v}, {T}_\phi)$ for all $v\,|\,Dp$.
%\item $\loc_v(s)\in H^1_\mathrm{sing}(K_{m,v}, {T}_\phi)$ for all $v\,|\,S$.
\end{itemize}
\item The \emph{Selmer group} $\Sel_S(K_m, {T}_{\phi,n})$ is the group of all $s\in H^1(K_m,T_{\phi,n})$ such that 
\begin{itemize}
\item $\loc_v(s)\in H^1_\mathrm{fin}(K_{m,v}, {T}_{\phi,n})$ for all $v\nmid DSp$;
\item $\loc_v(s)\in H^1_\mathrm{ord}(K_{m,v}, {T}_{\phi,n})$ for all $v\,|\,Dp$;
\item $\loc_v(s)\in H^1_\mathrm{sing}(K_{m,v}, {T}_{\phi,n})$ for all $v\,|\,S$.
\end{itemize}
\end{enumerate}
\end{definition}
We also introduce Iwasawa-theoretic Galois cohomology groups
\begin{equation} \label{Iwasawa-eq}
H^1_\mathrm{Iw}(K_\infty,T_\phi)\defeq\invlim_m H^1(K_m,T_\phi),\quad H^1_\mathrm{Iw}(K_\infty,T_{\phi,n})\defeq\invlim_m H^1(K_m,T_{\phi,n}), 
\end{equation}
the inverse limits being taken with respect to the corestriction maps. Correspondingly, with notation as in Definition \ref{Selmer}, there are Selmer groups
\begin{equation} \label{Selmer-Iwasawa-eq}
\Sel(K_\infty,T_\phi)\defeq\invlim_m\Sel(K_m,T_\phi),\quad\Sel_S(K_\infty,T_{\phi,n})\defeq\invlim_m\Sel_S(K_m,T_{\phi,n}) 
\end{equation}
that sit inside $H^1_\mathrm{Iw}(K_\infty,T_\phi)$ and $H^1_\mathrm{Iw}(K_\infty,T_{\phi,n})$, respectively.

Now let us set $A_\phi\defeq T_\phi\otimes_A(F/A)$ and $A_{\phi,n}\defeq A_\phi[\pi^n]$ for each integer $n\geq1$. There are canonical $G_\Q$-equivariant isomorphisms $A_{\phi,n}\simeq T_{\phi,n}$, which will be viewed as identifications. However, the groups $T_{\phi,n}$ form naturally an inverse system, whereas the groups $A_{\phi,n}$ fit into a direct system; in fact, $A_\phi=\sideset{}{_n}\dirlim A_{\phi,n}$. The isomorphisms above allow us to introduce $\Sel(K_m,A_\infty)$ and $\Sel_S(K_m,A_{\phi,n})$ as in Definition \ref{Selmer}. Finally, recipes analogous to those in \eqref{Iwasawa-eq} and \eqref{Selmer-Iwasawa-eq} lead us to define $\Sel(K_\infty,A_\phi)\subset H^1_\mathrm{Iw}(K_\infty,A_\phi)$ and $\Sel_S(K_\infty,A_{\phi,n})\subset H^1_\mathrm{Iw}(K_\infty,A_{\phi,n})$.

\subsection{Explicit reciprocity laws} \label{explicit-subsec}

We extend the explicit reciprocity laws of \cite{Chida} and \cite{wang} to generalized Heegner cycles of $p$-power conductor. Let us fix an $A$-algebra homomorphism $\phi:\T_{M,D}(A)\rightarrow A$, where, as before, $A$ is either the valuation ring of a finite extension of $\Q_p$ or a quotient of it, and write $\pi$ for a generator of the maximal ideal of $A$. Suppose throughout that Assumption \ref{ass} is satisfied. Let $\mathcal{P}_n(\phi)$ and let $\phi_{S}$ be as in Theorem \ref{levelraising}. If $S$ belongs to $\mathcal{P}^\mathrm{def}_n(\phi)$, then we denote by the same symbol $\phi_S$ a lift to $S_k(M,DS; A)$ of a generator of $S_k(M,DS; A)/\mathcal I_{\phi_S}\simeq A/\pi^n A$ (see isomorphism \eqref{RL11}). As before, for all integers $n\geq1$ set $T_{\phi,n}\defeq T_\phi/\pi^nT_\phi$. We introduce the following Iwasawa elements. 
\begin{itemize}
\item For $S\in\mathcal{P}_{n}^\mathrm{def}(\phi)$, recall the notation from \S \ref{theta} and set 
\[ 
\begin{split}
\lambda_{\phi}^{(m)}(S)&\defeq\theta_{m,M,DS}(\phi_S)\quad\text{for all $m\geq1$ prime to $MD$},\\
\lambda_{\phi}(S)&\defeq\invlim_m\lambda_{\phi}^{(m)}(S)=\theta_{\infty,M,DS}(\phi_S)\in A[\![G_\infty]\!]. 
\end{split}
\]
\item For $S\in \mathcal{P}_{n}^\mathrm{indef}(\phi)$, recall the notation from \S \ref{reg-heeg} and set 
\[
\begin{split}
\kappa_{\phi}^{(m)}{(S)}&\defeq\mathbf{z}_{m,M,DS}(\phi_S)\quad\text{for all $m\geq1$ prime to $MD$},\\
\kappa_{\phi}{(S)}&\defeq\invlim_m\kappa_{\phi}^{(m)}(S)=\mathbf{z}_{\infty,M,DS}(\phi_S)\in H^1_\mathrm{Iw}(K_\infty,T_{\phi_{S}})\simeq H^1_\mathrm{Iw}(K_\infty,T_{\phi,n}).
\end{split}
\]
\end{itemize}
For the equivalence of representations $T_{\phi_{S}}\simeq T_{\phi,n}$, see, \emph{e.g.}, \cite[Proposition 6.8]{Chida}. Moreover, by \cite[p. 2332]{wang} (\emph{cf.} also \cite[Lemma 4.12]{LV-kyoto}), it is known that
\[ \kappa_{\phi}(S)\in\Sel_{S}(K_\infty,T_{\phi,n})=\varprojlim_m\Sel_S(K_m,T_{\phi,n}). \] 
Put $A_n\defeq A/\pi^nA$. Let $\ell\in \mathcal{P}_n(\phi)$. Since $\ell$ is inert in $K$, it splits completely in $K_m$ for each $m\geq1$, so there is a natural map 
\[ \partial_\ell:H^1_\mathrm{Iw}(K_\infty,T_{\phi,n})\longrightarrow H^1_\mathrm{sing}(K_\ell,T_{\phi,n})\otimes_{A}A[\![G_\infty]\!]. \]
Upon fixing an isomorphism $H^1_\mathrm{sing}(K_\ell,T_{\phi,n})\simeq A_n$, which exists by \cite[Lemma 1.5]{ChHs2}, we obtain from $\partial_\ell$ a map $\partial_\ell:H^1_\mathrm{Iw}(K_\infty,T_{\phi})\rightarrow A_n[\![G_\infty]\!]$, still denoted by the same symbol. 

\begin{theorem}[First reciprocity law]\label{ERL1}
Let $S\in\mathcal{P}_{n}^\mathrm{def}$ and $\ell\nmid S$ be an $n$-admissible prime.
There is a congruence   
\begin{equation} \label{first-rec-eq} 
\partial_\ell\bigl(\kappa_{\phi}{(S\ell)}\bigr)\equiv u\cdot\lambda_{\phi}{(S)}\pmod{\pi^n}, \end{equation}
where $u\in A_n^\times$ or $u\in G_\infty$.
\end{theorem}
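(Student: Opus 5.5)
We reduce the congruence to the finite layers. For each $m\geq1$ we prove
\[
\partial_\ell\bigl(\kappa^{(m)}_\phi(S\ell)\bigr)\equiv u\cdot\lambda^{(m)}_\phi(S)\pmod{\pi^n}
\]
in $A_n[G_m]$, with a unit $u$ independent of $m$ or compatible along the tower, and then pass to the inverse limit. By definition $\kappa^{(m)}_\phi(S\ell)$ is a corestriction of $z_{m+1}(\phi_{S\ell})$, a combination with coefficients $\alpha_p^{-m-1},\,p^{k-2}\alpha_p^{-m-2}$ of the Heegner classes $y_{p^{m+1}},y_{p^m}$. Likewise $\tilde\theta_{m+1}(\phi_S^\sharp)$ is $\alpha_p^{-m-1}$ times a sum over $\mathcal G_{m+1}$ built from the $p$-stabilization. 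It therefore suffices to prove a reciprocity law for the unregularized objects:
\[
\partial_\ell\Bigl(\sum_{\sigma\in\mathcal G_{m}}y_{p^m}(\phi_{S\ell})^\sigma\,\sigma\Bigr)\equiv \sum_{\sigma\in\mathcal G_m}\big\langle\rho_k^{-1}(\gamma_\p)(\mathbf w_m),\phi_S\bigl(x_m(a)\bigr)\big\rangle_k\,\sigma .
\]
Here $\partial_\ell$ and $\sum\sigma$ are interpreted via Shapiro's lemma, and the Gross points $x_m(a)$ lie on $X_{M,DS}$. The $p$-stabilized versions then follow by linearity, once we check that the combination $\phi_S^\sharp$ matches the combination $y_{p^{m+1}}-p^{k-2}\alpha_p^{-1}y_{p^m}$. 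This check is the compatibility already encoded in Proposition~\ref{prop-compatibility} and the definition of $\phi^\sharp$. Projecting to $G_m$ by $\pi_{m+1}$ and corestricting from $H_{p^{m+1}}$ to $K_m$ commute, so this reduction is formal.

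For the unregularized law we follow \cite{wang}, extended to conductor $p^m$. Since $DS\ell$ has an even number of prime factors, the class $y_{p^m}$ lives on the Shimura curve $X(M,DS\ell)$. Because $\ell$ is inert, it splits completely in $H_{p^m}$, so the Heegner points reduce at primes above $\ell$. By Čerednik–Drinfeld uniformization, $\mathfrak X_d(M,DS\ell)$ has special fiber made of $\mathds P^1$-bundles over the definite set $B_{DS}^\times\backslash\widehat B_{DS}^\times/\widehat R_M^\times$. A CM point of conductor $p^m$ reduces to a point whose image in the component group/dual graph is exactly the Gross point $x_m(a)$ of conductor $p^m$ on $X_{M,DS}$; this is compatible with the Galois actions, since the adelic descriptions of \S\ref{sec:CM} and \S\ref{sets} match. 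The singular part of the local Abel–Jacobi image at $\ell$ is computed through the weight spectral sequence, using the dual of the map $\gamma$ from Step 1 of the proof of Theorem~\ref{levelraising2}, now with the roles of $D$ and $D\ell$ swapped as in Theorem~\ref{levelraising1}. It equals the specialization of the cycle class $c\ell(\Delta_m)$, placed at the component indexed by the reduction of $x_m$. The isomorphisms \eqref{RL11} and $S_k(M,DS\ell;A)/\mathcal I\simeq A_n$ identify the functional with pairing against $\phi_S$. Lemma~\ref{ERLbase} then gives $c\ell(\Delta_m)=\rho_k^{-1}(\gamma_\p)(\mathbf w_m)$ up to sign, which produces exactly the summand in the theta element. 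The ambiguity from the choice of isomorphism $H^1_{\rm sing}\simeq A_n$, the sign, and the choice of reduction component yields the factor $u\in A_n^\times$ or $u\in G_\infty$.

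The main obstacle is the middle step: identifying $\partial_\ell$ of the Abel–Jacobi image with evaluation of $\phi_S$ at the reduced points with coefficients $c\ell(\Delta_m)$, at the level of coefficient sheaves $\mathcal F$ rather than trivial coefficients. One must verify that Wang's computation, done for conductor prime to $p$, uses nothing about the conductor beyond its being prime to $\ell MD$. One must also confirm that the chosen $g_m$ (including the $\xi_p^{(m)}$ component) reduce compatibly to the definite side with the same normalization at $p$ that enters $\rho_k^{-1}(\gamma_\p)$. We would first attempt this by rewriting $y_{p^m}$ adelically as the image of $(g_m\tau,\psi_K)$ and checking that the uniformization map is $\widehat B^{(\ell)\times}$-equivariant, so that away from $\ell$ the $p$-component passes through unchanged.
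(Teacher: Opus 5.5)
Your proposal is essentially the paper's argument, which likewise adapts \cite[Theorem 3.6]{wang}: on the \v{C}erednik--Drinfeld special fiber of $X(M,DS\ell)$ the singular residue of the Heegner class is the function supported at the reduced Gross point with value $\rho_k^{-1}(\gamma_\p)(\mathbf{w}_m)$ (Lemma~\ref{ERLbase}), $H^1_\mathrm{sing}$ is identified with $\coker(\tau\circ\rho)\simeq A[G_m]$ by Petersson pairing with $\phi_S$, and the choice of that isomorphism is absorbed into $u$ when passing to the limit. Two corrections. First, the relevant objects are $\coker(\tau\circ\rho)$ from the proof of Theorem~\ref{levelraising1} (via \cite[(2.21)]{wang}) and the definite space $S_k(M,DS;A)$, not a dual of $\gamma$ or $S_k(M,DS\ell;A)$. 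Second, your reduction from $z_{m+1}$ and $\phi_S^\sharp$ to unregularized objects is a useful addition, since the paper glosses over it, but it rests on relating $x_{m+1}(a)\smallmat{p^{-1}}{0}{0}{1}$ to $x_m(a)$ and on $\mathbf{w}_{m+1}=p^{\frac{k-2}{2}}\mathbf{w}_m$, not on Proposition~\ref{prop-compatibility}.
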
 

\begin{proof} The proof is similar to that of \cite[Theorem 3.6]{wang}, with \cite[Lemma 3.3]{wang} replaced by Lemma \ref{ERLbase}. Set $\mathcal{X}_{d,\ell}\defeq\mathcal{X}_d(M,DS\ell)$ and $\mathcal{W}_{d,\ell}\defeq\mathcal{W}_d(M,DS\ell)$, both defined over $\Z[1/MSDd]$, with canonical map $\pi:\mathcal{W}_{d,\ell}\rightarrow \mathcal{X}_{d,\ell}$; we denote by $\mathfrak{X}_{d,\ell}$ the fiber of $\mathcal{X}_{d,\ell}$ at $\ell$ and by $\mathbf{X}_{d,\ell}$ the special fiber of $\mathfrak{X}_{d,\ell}$. Recall the Heegner cycle $i_{\tilde{x}_m}\Bigl(Z_m^\frac{k-2}{2}\Bigr)$ of codimension $k/2$ in the generic fiber ${W}_{d,\ell}$ of $\mathcal{W}_{d,\ell}$, which is canonically identified with the generic fiber $W_d(M\ell,D)$ of $\mathcal{W}_d(M\ell,D)$; to ease our notation, we write $c_m$ for this Heegner cycle. Let $C_m$ be the Zariski closure of the support of $c_m$ in $\mathcal{W}_{d,\ell}$, so that there is an inclusion $C_m\subset \mathcal{W}_{d,\ell}$. Let $c_m^\sharp$ be the unique cycle of the normalization $\widetilde{\mathcal{W}}_{d,\ell}$ of ${\mathcal{W}}_{d,\ell}$ supported on $C_m$ of codimension $k/2$ whose restriction to the generic fiber is $c_m$. Then $\epsilon c_m^\sharp$ is a cycle of codimension $k/2$ of $\widetilde{\mathcal{W}}_{d,\ell}$ and the cycle class map produces a class $[\epsilon c_m^\sharp]\in\epsilon H^{k}_{C_m}\bigl(\widetilde{\mathcal{W}}_{d,\ell},\Z_p(k/2)\bigr)$. 

Now we consider the special fiber $\widetilde{\mathbf{W}}_{d,\ell}$ of $\widetilde{\mathcal{W}}_{d,\ell}$ and set $\overline{\widetilde{\mathbf{W}}}_{d,\ell}\defeq\widetilde{\mathbf{W}}_{d,\ell}\otimes_{\F_{\ell^2}}\overline{\F}_\ell$. Let $\widetilde{\overline{\mathbf{W}}}_{d,\ell}^{(0)}$ be the disjoint union of the irreducible components of $\overline{\widetilde{\mathbf{W}}}_{d,\ell}$. By \cite[Proposition 2.19]{Liu}, the image of $[\epsilon c_m^\sharp]$ in $\epsilon H^{k}\Bigl(\,\overline{\widetilde{\mathbf{W}}}^{(0)}_{d,\ell},\Z_p(k/2)\!\Bigr)$ is the cycle class of $\epsilon c_m^\sharp\otimes_{\widetilde{\mathcal{W}}_{d,\ell}}\overline{\widetilde{\mathbf{W}}}_{d,\ell}$. As explained in \cite[p. 2319]{wang} with an argument based on Saito's computation of the weight spectral sequence of the Kuga--Sato variety (\cite{Saito0}), applying $\epsilon_d$ produces a cycle $\mathbf{c}\in H^2\bigl(\overline{\mathbf{X}}_\ell, a_{0*}\mathcal{F}_A(1)\bigr)$ that is supported on $\overline{\mathbf{X}}_\ell\times \bar{x}_m^\sharp$; here the notation is similar to before: $\mathcal{X}_\ell=\mathcal{X}(M,DS\ell)$, $\mathbf{X}_\ell=\mathcal{X}_\ell\otimes\Z_{\ell^2}$ (tensor product over $\Z[1/MDS]$), $\overline{\mathbf{X}}_\ell=\mathbf{X}_\ell\otimes_{\Z_{\ell^2}}\F_{\ell^2}$, $X_\ell=\mathcal{X}_\ell\otimes_{\Z_{\ell^2}}\Q_{\ell^2}$, and $\bar{x}_m^\sharp$ denotes the reduction in $\mathbf{X}_\ell$ of the extension ${x}_m^\sharp$ of $x_m\in X_\ell$ to $\mathcal{X}_\ell$. The cohomology group $H^2\bigl(\overline{\mathbf{X}}_\ell,a_{0*}\mathcal{F}_A(1)\bigr)$ is identified with two copies of $S_k(M,DS;A)$ (\emph{cf.} \eqref{RL3}). By the \v{C}erednik--Drinfeld uniformization theorem (see, \emph{e.g.}, \cite{BC}), $\overline{\mathbf{X}}_\ell$ is the disjoint union of two line bundles over $B^\times\backslash\widehat{B}^\times/\widehat{R}^\times$, where $B$ is the (definite) quaternion algebra over $\Q$ of discriminant $DS$ and $R\subset B$ is an Eichler order of level $M$; these line bundles are permuted by the non-trivial element $\tau\in \Gal(\F_{\ell^2}/\F_\ell)$, which acts via the Atkin--Lehner involution (see, \emph{e.g.}, \cite[Section 1]{BD96} for more details). Since $\ell$ is $n$-admissible, we conclude that the action of $\tau$ is given by multiplication by $\epsilon_\ell\ell^\frac{k-2}{2}$. By Lemma \ref{ERLbase}, the cycle  $\mathbf{c}$ corresponds (up to sign) to a pair of functions $\bigl(\varphi_{\mathbf{w}_m},\varphi_{\mathbf{w}_m}'\bigr)\in S_k(M,DS;A)^2$ that are supported on $\bigl\{\bar{x}_m^\sharp\bigr\}$ and take values $\rho_k^{-1}(\gamma_\p)(\mathbf{w}_m)$ and $\epsilon_\ell\rho_k^{-1}(\gamma_\p)(\mathbf{w}_m)$, respectively. It follows from \cite[Theorem 2.18]{Liu} that $\partial_\ell$ sends $\mathbf{z}_{m,M,DS}(\phi_S)$ to the function $\varphi_{\mathbf{w}_m}$ that is supported on $\{\bar{x}_m\}$ and has value $\rho_k^{-1}(\gamma_\p)(\mathbf{w}_m)$ (\emph{cf.} also the proof of \cite[Theorem 3.6]{wang}); more generally, $\partial_\ell$ sends $\mathbf{z}_{m,M,DS}(\phi_S)^\sigma$ to the function $\varphi_{\mathbf{w}_m}^\sigma$ that is supported on $\{\bar{x}_m^\sigma\}$ and has value $\rho_k^{-1}(\gamma_\p)(\mathbf{w}_m)$.

Now, by \cite[(2.21)]{wang}, there is an isomorphism 
\[ H^1_\mathrm{sing}\Bigl(K_{m,\ell},H^1\bigl(\overline{X}_\ell,\mathcal{F}_A(1)\bigr)\!\Bigr)\overset\simeq\longrightarrow \coker(\tau\circ\rho), \]
where $\tau\circ\rho$ is the composition of the maps in \eqref{rhotau} that appears in the proof of Theorem \ref{levelraising1} (when comparing with \cite{wang}, observe that we actually have $\coker(\tau\circ\rho)\simeq \coker(\tau\circ\rho)^{G_{\F_{\ell^2}}}$ because all singular points of $\overline{X}_\ell$ are defined over $\F_{\ell^2}$). Notice that the residue field of $K_{m,\ell}$ is $\F_{\ell^2}$, as $\ell$ splits completely in $K_m$. Now, $\coker(\tau\circ\rho)$ is described in \eqref{RL7bis} and \eqref{RL8bis}, and it follows from this description and the Hecke-equivariance of the Petersson product (\emph{cf.} \cite[Proposition 6.2]{Chida}) that the map $x\mapsto\sum_{\sigma\in G_m}{\langle \sigma\cdot x,\phi_S\rangle}_B$ induces an isomorphism between $\coker(\tau\circ\rho)$ and $A[G_m]$. With this isomorphism in hand, there are equalities
\[ \begin{split}
\sum_{\sigma\in G_m}\big\langle\sigma\cdot\partial_\ell\bigl(\kappa_\phi^{(m)}(S\ell)\bigr),\phi_{S}^\sharp\big\rangle_B&=\sum_{\sigma\in G_m}\big\langle\partial_\ell\bigl(\mathbf{z}_{m,M,DS\ell}(\phi_{S\ell})^\sigma\bigr),\phi_{S}^\sharp\big\rangle_B\\
&=\pm\sum_{\sigma\in G_m}\big\langle\rho_k^{-1}(\gamma_\p)(\mathbf{w}_m),\phi_S^\sharp(x_m^\sigma)\big\rangle_k\sigma=\pm\lambda_\phi^{m-1}(S).
\end{split}\]
Passing to the limit over $m$ and keeping in mind that any other isomorphism between the singular cohomology and $A[G_m]$ differs from the one chosen above by multiplication by a unit of $A[G_m]$, congruence \eqref{first-rec-eq} follows. \end{proof}

\begin{remark}\label{rem2} 
The same result is obtained by Chida in \cite[Theorem 8.4]{Chida} with a different method that is based on the proof of Theorem \ref{levelraising1} given in \cite{Chida}; see Remark \ref{rem1}. 
\end{remark}

Let $\ell\in \mathcal{P}_n(\phi)$. As before, since $\ell$ splits completely in $K_m$, using the splitting \eqref{split} we get a projection 
\[ v_\ell:H_\mathrm{Iw}^1(K_\infty,T_{\phi,n})\longrightarrow H^1_\mathrm{fin}(K_\ell,T_{\phi,n})\otimes_AA[\![G_\infty]\!]. \]
Fixing an isomorphism $H^1_\mathrm{fin}(K_\ell,T_{\phi,n})\simeq A_n$, which exists by \cite[Lemma 1.5]{ChHs2}, we thus obtain from $v_\ell$ a map 
\[ v_\ell:H^1_\mathrm{Iw}(K_\infty,T_{\phi,n})\longrightarrow A_n[\![G_\infty]\!], \]
still denoted by the same symbol.

The proof of the result below is similar to that of \cite[Theorem 3.4]{wang}, upon replacing \cite[Lemma 3.3]{wang} with Lemma \ref{ERLbase}.

\begin{theorem}[Second reciprocity law]\label{ERL2} 
Let $S\in\mathcal{P}_{n}^\mathrm{indef}$ and let $\ell\nmid S$ be an 
$n$-admissible prime. The congruence
\[ v_\ell\bigl(\kappa_\phi{(S)}\bigr)\equiv u\cdot\lambda_\phi(S\ell)\pmod{\pi^n} \]
holds for a suitable $u\in A^\times$.
\end{theorem}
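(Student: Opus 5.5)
The plan is to follow the structure of the proof of \cite[Theorem 3.4]{wang}, with the Heegner cycles of conductor $p^m$ in place of those of conductor $1$ and Lemma \ref{ERLbase} in place of \cite[Lemma 3.3]{wang}. Fix $S\in\mathcal{P}_n^\mathrm{indef}(\phi)$ and an $n$-admissible $\ell\nmid S$. We work on the Shimura curve $X=X(M,DS)$ attached to the indefinite algebra of discriminant $DS$, together with its Kuga--Sato variety $W_{k,d}(M,DS)$. Both have good reduction at $\ell$. We also use the definite algebra of discriminant $DS\ell$, whose quaternionic eigenform $\phi_{S\ell}$ is furnished by Theorem \ref{levelraising} (via Theorem \ref{levelraising2} applied to $\phi_S$).

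First we compute the finite part of the Heegner classes. Since $\ell$ is inert in $K$ and splits completely in $K_m$, the Heegner point $x_m$ and its Galois conjugates reduce modulo (a prime above) $\ell$ to \emph{supersingular} points of $\mathbf{X}$. The cycle $\Delta_m$ extends to a cycle on the smooth model $\mathcal{W}_{k,d}$, so $\mathrm{AJ}(\Delta_m)$ is unramified at $\ell$. Its image in
\[
H^1_\mathrm{fin}(K_{m,\ell},T_{\phi_S})\simeq H^1\bigl(\F_{\ell^2},H^1(\overline{\mathbf{X}},\mathcal{F}(1))\bigr)/\mathcal{I}_{\phi_S}
\]
can be computed via the specialization of the cycle class of the reduced cycle. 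Concretely, I would show that $v_\ell(\mathbf{z}_m(\phi_S)^\sigma)$ equals $\bar\gamma$ (the map of \eqref{RL3+1/2}/\eqref{bargamma} from Step 2 of Theorem \ref{levelraising2}) applied to the function in $H^0(\mathbf{X}^\mathrm{ss},\mathcal{F})$ supported on the reduction $\bar x_m^\sigma$. By purity and Lemma \ref{ERLbase}, this function takes the value $\rho_k^{-1}(\gamma_\p)(\mathbf{w}_m)$ up to sign. This is the Abel--Jacobi/connecting-homomorphism compatibility: the extension \eqref{extclass} restricted to the special fibre is the pullback of the sequence defining $\gamma$, which is where one invokes the argument of \cite[\S3]{wang} and \cite[II, Proposition 2.14]{Nek2}.

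Next we identify the target. The supersingular locus gives $H^0(\mathbf{X}^\mathrm{ss},\mathcal{F})\simeq S_k(M,DS\ell;A)$, equivariantly for Hecke operators away from $\ell$, with Frobenius acting through $U_\ell/\ell^{(k-2)/2}$. Moreover, \eqref{RL11} shows that $\bar\gamma$ induces an isomorphism
\[
S_k(M,DS\ell;A)/\mathcal{I}_{\phi_{S\ell}}\simeq A/\pi^n A\simeq H^1_\mathrm{fin}(K_\ell,T_{\phi,n}).
\]
Composing with the Petersson pairing against $\phi_{S\ell}$ gives an explicit isomorphism, well defined up to a unit, which is legitimate by Hecke-equivariance of ${\langle\cdot,\cdot\rangle}_B$ (\cite[Proposition 6.2]{Chida}). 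Under this isomorphism the group-ring valued map $x\mapsto\sum_{\sigma\in G_m}{\langle\sigma x,\phi_{S\ell}\rangle}_B\,\sigma$ sends $v_\ell(\kappa^{(m)}_\phi(S))$ to
\[
\pm\sum_{\sigma}\bigl\langle\rho_k^{-1}(\gamma_\p)(\mathbf{w}_m),\phi_{S\ell}^\sharp(x_m^\sigma)\bigr\rangle_k\,\sigma .
\]
After incorporating the $\alpha_p$-regularization (the definitions of $z_m$ and $\tilde\theta_m$ use the same normalization and the same $p$-stabilization) and projecting along $\Delta$, this is $\lambda_\phi^{(m)}(S\ell)$ up to a unit.

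Finally, we pass to the limit in $m$, using Proposition \ref{prop-compatibility} on one side and the norm-compatibility of theta elements on the other. A single unit $u\in A^\times$ works uniformly because the isomorphisms at each layer come from one fixed identification at the bottom. I expect the main obstacle to be the first step: verifying that the Abel--Jacobi image of the $p$-power-conductor cycle restricts at $\ell$ to exactly $\bar\gamma$ of the supersingular point function, with the correct vector $\rho_k^{-1}(\gamma_\p)(\mathbf{w}_m)$ rather than a twist of it. That step needs both the compatibility of $\gamma$ with the cycle class of the reduced fibre and Lemma \ref{ERLbase}, applied at the reduction. I would try first to copy Wang's argument verbatim, checking only that the fibre over $\bar x_m$ is still $(E\times E)^{r/2}$ with $E$ supersingular and that the line spanned by the cycle class is preserved under reduction.
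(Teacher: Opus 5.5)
Your proposal is correct and follows the paper's proof essentially step for step. Both arguments take the same four steps:
\begin{enumerate}
\item Realize $v_\ell$ of the Heegner class as the pull-back, along the sequence defining $\gamma$, of the supersingular-point function $\varphi_{\mathbf{w}_m}$, whose value is $\rho_k^{-1}(\gamma_\p)(\mathbf{w}_m)$ by Lemma \ref{ERLbase}.
\item Identify $H^1_{\mathrm{fin}}$ with $S_k(M,DS\ell;A)/\mathcal{I}_{\phi_{S\ell}}$ using the surjectivity of $\bar\gamma$ together with \eqref{RL11}.
\item Pair against $\phi_{S\ell}$, using the Hecke-equivariance of the Petersson product.
\item Pass to the limit in $m$.
\end{enumerate}

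Your remark that a single unit $u\in A^\times$ suffices, because every layer uses the one fixed identification $H^1_{\mathrm{fin}}(K_\ell,T_{\phi,n})\simeq A_n$, is slightly more careful than the paper. The paper only records that the isomorphism is unique up to a unit of $A_n[G_m]$.

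One small indexing point: with the paper's conventions, layer $m$ produces $\lambda^{(m-1)}_\phi(S\ell)$ rather than $\lambda^{(m)}_\phi(S\ell)$.
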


\begin{proof} Let $\mathcal{I}_{\phi_S}$ be the kernel of $\phi_S:\T_{M,DS}(A)\rightarrow A$. We use the same notation as in the proof of Theorem \ref{levelraising2}: $\mathcal{X}=\mathcal{X}(M,DS)$ over $\Z[1/MDS]$, $\mathfrak{X}=\mathcal{X}\otimes\Z_{\ell^2}$, the tensor product being taken over $\Z[1/MDS]$, $X=\mathfrak{X}\otimes_{\Z_{\ell^2}}\Q_{\ell^2}$, $\mathbf{X}=\mathfrak{X}\otimes_{\Z_{\ell^2}}\F_{\ell^2}$, $\overline{\mathbf{X}}=\mathbf{X}\otimes_{\F_{\ell^2}}\overline{\F}_{\ell^2}$; moreover, $\overline{\mathbf{X}}^\mathrm{ss}$ is the set of supersingular points of $\overline{\mathbf{X}}$. Note that, since $\ell$ is inert in $K$, the integral model of the Heegner point $x_m$ has supersingular reduction. Let $\varphi_{\mathbf{w}_m}\in S_k(M,D;\Z_p)$ take $g_m$ to $\rho_k^{-1}(\gamma_\p)(\mathbf{w}_m)$. As before, identify $S_k(M,D;\Z_p)$ with $H^0(\mathbf{X}^\mathrm{ss},\mathcal{F}_{\Z_p})$. The construction of the class $\mathrm{AJ}(\Delta_m)$ combined with Lemma \ref{ERLbase} shows that $\loc_\ell\bigl(\kappa_\phi^{(m)}(S)\bigr)$ is obtained (up to sign) by pulling back the function $\varphi_{\mathbf{w}_m}$ along the exact sequence \eqref{extclass}. On the other hand, the construction of the map $\gamma$ in \eqref{RL3+1/2} combined with the surjectivity of $\bar\gamma$ in \eqref{RL10} shows that every class in $H^1\Bigl(\F_{\ell^2},H^1\bigl(\overline{\mathbf{X}},\mathcal{F}_A(1)\bigr)\big/\mathcal{I}_{\phi_S}\Bigr)$ is obtained by pull-back from the short exact sequence 
\begin{equation} \label{ERL2-1}
0\longrightarrow H^1\bigl(\overline{\mathbf{X}},\mathcal{F}_A(1)\bigr)\big/\mathcal{I}_{\phi_S}\longrightarrow H^1\bigl(\overline{\mathbf{X}}^\mathrm{ord},\mathcal{F}_A(1)\bigr)\big/\mathcal{I}_{\phi_S}\longrightarrow
H^0\bigl(\overline{\mathbf{X}}^\mathrm{ss},\mathcal{F}_A\bigr)\big/\mathcal{I}_{\phi_S}\longrightarrow 0.
\end{equation}
It follows that $\loc_\ell\bigl(\kappa_\phi^{(m)}(S)\bigr)$ factors through \eqref{ERL2-1} and is given (up to sign) by pulling back $\varphi_{\mathbf{w}_m}$. Observe that $\partial_\ell\bigl(\kappa_\phi^{(m)}(S)\bigr)=0$ because $\ell\nmid S$, so $\loc_\ell\bigl(\kappa_\phi^{(m)}(S)\bigr)\in H^1_\mathrm{fin}(K_{m,\ell},T_{\phi,n})$. As before, the Hecke-equivariance of the Petersson inner product implies that the map $x\mapsto \sum_{\sigma\in G_m}{\langle \sigma\cdot x,\phi_{S\ell}\rangle}_B$ induces an isomorphism $S_k(M,DS\ell;A)/\mathcal{I}_{\phi_{S\ell}}\otimes_\Z A_n[G_m]\simeq A_n[G_m]$, where now $B$ is the (definite) quaternion algebra of discriminant $DS\ell$
(\emph{cf.} \cite[Proposition 6.2]{Chida}). It follows that the map $x\mapsto\sum_{\sigma\in G_m}{\langle \sigma\cdot x,\phi_{S\ell}\rangle}_B$ induces an isomorphism
\[ H^0\bigl(\overline{\mathbf{X}}^\mathrm{ss},\mathcal{F}_A\bigr)\big/\mathcal{I}_{\phi_{S\ell}}\otimes_\Z A_n[G_m]\simeq A_n[G_m]
\]
that, in light of \eqref{RL11}, yields an isomorphism
\begin{equation} \label{ERL2-2}
H^1\Bigl(\F_{\ell^2},H^1\bigl(\overline{\mathbf{X}},\mathcal{F}_A(1)\bigr)\big/\mathcal{I}_{\phi_S}\Bigr)\otimes_\Z A_n[G_m]\simeq A_n[G_m]. 
\end{equation}
On the other hand, the left-hand side of \eqref{ERL2-2} is isomorphic to $H^1_\mathrm{fin}(K_{m,\ell},T_{\phi,n})$, so the map $x\mapsto\sum_{\sigma\in G_m}{\langle\sigma\cdot x,\phi_{S\ell}\rangle}_B$ gives an isomorphism $H^1_\mathrm{fin}(K_{m,\ell},T_{\phi,n})\simeq A_n[G_m]$ upon identifying the class $\loc_\ell\bigl(\kappa_\phi^{(m)}(S)\bigr)\in H^1_\mathrm{fin}(K_{m,\ell},T_{\phi,n})$ with the function $\varphi_{\mathbf{w}_m}$ viewed as an element of $S_k(M,DS\ell;A)/\mathcal{I}_{\phi_{S\ell}}\otimes_\Z A_n[G_m]$. Therefore, there are equalities 
\[ \begin{split}
\sum_{\sigma\in G_m}\big\langle \sigma\cdot v_\ell\bigl(\kappa_\phi^{(m)}{(S)}\bigr),\phi_{S\ell}^\sharp\big\rangle_B&=\sum_{\sigma\in G_m}\big\langle v_\ell\bigl(\mathbf{z}_{m,M,DS}(\phi_S)^\sigma\bigr),\phi_{S\ell}^\sharp\big\rangle_B\\
&=\pm\sum_{\sigma\in G_m}\big\langle\rho_k^{-1}(\gamma_\p)(\mathbf{w}_m),\phi_{S\ell}^\sharp(x_m^\sigma)\big\rangle_k\cdot\sigma=\pm\lambda^{(m-1)}_\phi(S\ell).
\end{split}\]
Finally, the isomorphism $H^1_\mathrm{fin}(K_{m,\ell},T_{\phi,n})\simeq A_n[G_m]$ that was chosen above is unique up to multiplication by a unit of $A_n[G_m]$, so we conclude by passing to the limit over $m$. \end{proof}

\begin{corollary}\label{coro2.5} 
Let $S\in\mathcal{P}^\mathrm{indef}$ and let $\ell_1$, $\ell_2$ be distinct admissible primes not dividing $S$. Then 
\[ v_{\ell_1}\bigl(\kappa_{\phi}(S)\bigr)\neq 0\;\Longleftrightarrow\;\partial_{\ell_2}\bigl(\kappa_{\phi}(S\ell_1\ell_2)\bigr)\neq 0. \] 
\end{corollary}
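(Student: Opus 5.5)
The plan is to chain the two reciprocity laws through the intermediate definite set $S\ell_1$. Since $S\in\mathcal{P}^\mathrm{indef}$, we have $\mu(DS)=1$, hence $\mu(DS\ell_1)=-1$ and $\mu(DS\ell_1\ell_2)=1$. So $S\ell_1\in\mathcal{P}^\mathrm{def}$ and $S\ell_1\ell_2\in\mathcal{P}^\mathrm{indef}$; here $\ell_1,\ell_2$ are $n$-admissible for $\phi$, and hence also for the level-raised forms by Theorem \ref{levelraising}. The key intermediate object is the theta element $\lambda_\phi(S\ell_1)\in A_n[\![G_\infty]\!]$.

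First, I would apply Theorem \ref{ERL2} to the indefinite set $S$ and the admissible prime $\ell_1\nmid S$. This gives
\[ v_{\ell_1}\bigl(\kappa_\phi(S)\bigr)\equiv u_1\cdot\lambda_\phi(S\ell_1)\pmod{\pi^n} \]
with $u_1\in A^\times$. Second, I would apply Theorem \ref{ERL1} to the definite set $S\ell_1$ and the admissible prime $\ell_2\nmid S\ell_1$. This gives
\[ \partial_{\ell_2}\bigl(\kappa_\phi(S\ell_1\ell_2)\bigr)\equiv u_2\cdot\lambda_\phi(S\ell_1)\pmod{\pi^n}, \]
where $u_2$ is either a unit of $A_n$ or a group-like element of $G_\infty$. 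In both displays the left-hand sides are viewed in $A_n[\![G_\infty]\!]$ via the fixed isomorphisms $H^1_\mathrm{fin}(K_{\ell_1},T_{\phi,n})\simeq A_n$ and $H^1_\mathrm{sing}(K_{\ell_2},T_{\phi,n})\simeq A_n$. The congruences are therefore equalities in $A_n[\![G_\infty]\!]$.

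Third, I would note that in either case $u_1$ and $u_2$ are units of $A_n[\![G_\infty]\!]$, because group-like elements are invertible. Hence $v_{\ell_1}(\kappa_\phi(S))$ and $\partial_{\ell_2}(\kappa_\phi(S\ell_1\ell_2))$ both generate the same principal ideal as $\lambda_\phi(S\ell_1)$. In particular, each vanishes if and only if $\lambda_\phi(S\ell_1)=0$, which gives the equivalence.

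The only delicate point, and the one I expect to be the main obstacle, is compatibility of choices. The quaternionic form $\phi_{S\ell_1}$ whose theta element appears in Theorem \ref{ERL2} must be the same one, up to a unit, as the form appearing in Theorem \ref{ERL1} when that theorem is applied to $S\ell_1$. Both are defined as lifts of generators of the rank-one module $S_k(M,DS\ell_1;A)/\mathcal{I}_{\phi_{S\ell_1}}\simeq A_n$ (see \eqref{RL11}). So any two such choices differ by a unit of $A_n$ modulo $\pi^n$, and the corresponding theta elements differ by a unit. With this observation in place, the argument reduces to the formal manipulation above.
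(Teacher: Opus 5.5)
Your proposal is correct and follows the paper's own argument: Theorem \ref{ERL2} applied to $S$ and $\ell_1$, and Theorem \ref{ERL1} applied to the definite set $S\ell_1$ and $\ell_2$, both identify the relevant localization with $\lambda_\phi(S\ell_1)$ up to a factor, so the two localizations vanish simultaneously. Your observation that the factors $u_1,u_2$ (including group-like elements) are units of $A_n[\![G_\infty]\!]$ spells out a step that the paper's proof, which writes these relations as equalities, leaves implicit.
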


\begin{proof} By Theorem \ref{ERL2}, there is an equality $v_{\ell_1}\bigl(\kappa_\phi(S)\bigr)=\lambda_\phi(S\ell_1)$; on the other hand, $\lambda_\phi(S\ell_1)=\partial_{\ell_2}\bigl(\kappa_\phi(S\ell_1\ell_2)\bigr)$ by Theorem \ref{ERL1}, and the corollary follows. \end{proof}

\begin{remark} 
It would be interesting to explore applications of more general versions of explicit reciprocity laws for all integers $c\geq 1$, not necessarily in the context of Iwasawa theory, with a view towards the Tamagawa number conjecture for the motive of $f$; results in this direction can be found, \emph{e.g.}, in \cite{Chida} and \cite{LV-JNT}. 
\end{remark}

\section{Anticyclotomic Iwasawa main conjectures} \label{Iwasawa}

Let $f\in S_k(\Gamma_0(N))$ be a newform of even weight $k\geq4$ and square-free level $N$, with $q$-expansion $f(q)=\sum_{n\geq1}a_n(f)q^n$. As remarked, \emph{e.g.}, in \cite[p. 34]{ribet}, the square-freeness of $N$ guarantees that $f$ has no complex multiplication in the sense of \cite[p. 34, Definition]{ribet}. Write $N$ as $N=MD$ where $M$ is a product of primes that split in $K$ and $D$ is a product of primes that are inert in $K$. Let $F\defeq\Q\bigl(a_n(f)\mid n\geq1\bigr)$ be the Hecke field of $f$, which is a totally real number field, and write $\cO_F$ for its ring of integers. Finally, let $\cO_f\defeq\Z\bigl[a_n(f)\mid n\geq1\bigr]$ be the order of $\cO_F$ generated by the Fourier coefficients of $f$ and let $c_f\defeq[\cO_F:\cO_f]\in\N$ be the index of $\cO_f$ in $\cO_F$. Let $p$ be a prime number. Denote by
\begin{equation} \label{rho-p-eq} 
\rho_p:G_\Q\longrightarrow\GL_2(\cO_F\otimes_\Z\Z_p) 
\end{equation} 
the $p$-adic Galois representation attached to $f$ and $p$ by Deligne (\cite{Del-Bourbaki}). We say that $\rho_p$ has \emph{big image} if there is an inclusion
\[ 
\bigl\{g\in\GL_2(\cO_F\otimes_\Z\Z_p)\mid\det(g)\in(\Z_p^\times)^{k-1}\bigr\}\subset\im(\rho_p). 
\]
Since $f$ is not CM, a result of Ribet ensures that $\rho_p$ has big image for all but finitely many $p$ (\cite[Theorem 3.1]{ribet2}).

Let $\p$ be the prime of $F$ above $p$ induced by the fixed embedding $\overline{\Q}\hookrightarrow\overline\Q_p$. Moreover, denote by $\cO_\p$ the completion of $\cO_F$ at $\p$ and consider the Iwasawa algebra $\Lambda\defeq\cO_\p\llbracket G_\infty \rrbracket$. 

\begin{definition} \label{anomalous}
The prime number $p$ is \emph{anomalous} if 
\[
a_p(f)\equiv\begin{cases}1\pmod{\p} & \text{if $p$ splits in $K$},\\[3mm]\pm1\pmod{\p} & \text{if $p$ is inert in $K$}. \end{cases}
\]
The prime number $p$ is \emph{non-anomalous} if $p$ is not anomalous. 
\end{definition}

From now on, we assume the following conditions. 

\begin{assumption} \label{ass1}
\begin{enumerate}
\item $p\nmid ND_Kc_f$.
\item $p>k+1$ and $\#{(\F_p^\times})^{k-1}>5$.
\item $p$ is non-anomalous.
\item Assumption $\ref{ass}$ holds for $\phi=\phi_f$.
\item The representation $\rho_p$ in \eqref{rho-p-eq} has big image.
\end{enumerate}
\end{assumption}

Notice that the non-divisibility of $c_f$ by $p$ and the big image property of $\rho_p$ are imposed because they are assumed in \cite{LPV}, some of whose results will be used below.

We distinguish two cases, which we refer to as \emph{definite} and \emph{indefinite}, according to the parity of the number of the prime factors of $D$. Our purpose in this section is to prove an anticyclotomic Iwasawa main conjecture (AIMC, for short) for $f$, both in the definite case and in the indefinite case. Notice that all the Selmer groups in the statements below were defined in \S \ref{Selmer-subsec}.

\subsection{Indefinite setting} \label{indefinite-subsec}

Assume that $D$ is a square-free product of an  \emph{even} number of primes, so $\mu(D)=+1$. Let $\mathbb{T}_{M,D}(\cO_\p)$ be the Hecke algebra acting by correspondences on the \'etale cohomology of the Shimura curve $X(M,D)$ with coefficients in $\mathcal{F}_{\cO_\p}$, and denote by $\phi=\phi_f:\mathbb{T}_{M,D}(\cO_\p)\rightarrow \mathcal{O}_\p$ the $\cO_\p$-algebra homomorphism attached to $f$. Write $T_\phi=T_\phi(\cO_\p)$ for the integral Galois representation attached to $\phi$ as in \S \ref{Gal} and suppose that $\phi$ satisfies Assumption \ref{ass}. Note that, since $\mu(D)=+1$, we can add $1$ to the set $\mathcal{P}_n^\mathrm{indef}(\phi)$ for each $n\geq 1$, and to simplify the notation, we will just write $\Sel(K_\infty,T_{\phi,n})$ for $\Sel_1(K_\infty,T_{\phi,n})$. The collection of classes 
\[ \bigl\{\kappa_\phi(1)\in \Sel(K_\infty, T_{\phi,n})\bigr\}_{n\geq 1} \] 
is compatible with respect to the projections $T_{\phi,n+1}\rightarrow T_{\phi,n}$, so the inverse limit over $n$ gives rise to an element $\kappa_\infty\in\Sel(K_\infty,T_\phi)$.
Note that $\kappa_\infty$ is the element $\mathbf{z}_\infty(\phi)$ introduced in \eqref{Heegnerclass}. 

%Let 
%\[\kappa_\infty=\varprojlim_j \kappa_j(1)\in \varprojlim_j H^1_1(K,\mathbf{T}_j)\sim \Sel(K_\infty,T).\]
%{\footnote{Da controllare le relazioni fra i vari gruppi di Selmer}}
\begin{proposition}[Burungale] \label{non zero}
If $N$ is square-free and $p$ splits in $K$, then $\kappa_\infty$ is non-zero.
\end{proposition}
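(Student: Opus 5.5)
The plan is to reduce the non-vanishing of $\kappa_\infty$ to the non-vanishing of a $p$-adic $L$-value, via an explicit reciprocity law of Bertolini--Darmon--Prasanna type, and then to invoke Burungale's non-vanishing results for the BDP $p$-adic $L$-function.

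First, since $p$ splits in $K$, we localize $\kappa_\infty=\mathbf{z}_\infty(\phi)$ at the prime $\wp$ above $p$. We then apply the big logarithm (Perrin-Riou/Coleman map) attached to the ordinary filtration \eqref{ordinaryfil}, restricted to the unramified twist of $T_\phi^{[p]}$ over $K_{\infty,\wp}$. This gives an element of $\Lambda^\unr$.

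Second, we identify this element with $\mathcal{L}_\p^{\mathrm{BDP}}$ up to a unit and an explicit Euler-type factor. We would do this through the comparison between Heegner cycles and generalized Heegner cycles: the BDP formula expresses the values of $\mathcal{L}_\p^{\mathrm{BDP}}$ at characters of infinity type $(j,-j)$ with $|j|<k/2$, outside the interpolation range, as Bloch--Kato logarithms of generalized Heegner cycles. For the trivial-type character these cycles are the projections of our $\Delta_n$, and the normalization in Lemma \ref{ERLbase} matches them. Combined with the norm-compatibility of Proposition \ref{prop-compatibility}, this yields (up to the regularizing factors in $z_0$) an equality of the form $\mathcal{L}_\p(\mathrm{loc}_\wp\kappa_\infty)=\mathcal{L}_\p^{\mathrm{BDP}}$ in $\Lambda^\unr$, along the lines of Castella--Hsieh and Brooks in the quaternionic setting. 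The Euler factors here are non-zero as elements of $\Lambda^\unr$, since they are non-zero divisors.

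Third, we use Burungale's theorem on the non-vanishing modulo $p$ (indeed, the vanishing of the $\mu$-invariant) of $\mathcal{L}_\p^{\mathrm{BDP}}$ for $N$ square-free, which rests on Hida's methods for toric periods. This shows that $\mathcal{L}_\p^{\mathrm{BDP}}\neq0$. The image of $\kappa_\infty$ under a $\Lambda$-linear map is then non-zero, so $\kappa_\infty\neq0$, and indeed $\kappa_\infty$ is non-torsion, since $\Lambda^\unr$ is a domain.

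The main obstacle is the second step: matching the normalizations of $\kappa_\infty$, built from Heegner cycles on Kuga--Sato varieties over possibly quaternionic Shimura curves, with the generalized Heegner cycles used to define $\mathcal{L}_\p^{\mathrm{BDP}}$. In the indefinite quaternionic case ($D>1$) this requires Brooks' extension of the BDP formula and Magrone's description of the generalized Heegner classes. I would treat this comparison largely as an input from the literature, checking only that the choices of $g_n$ and of the $p$-stabilization agree.
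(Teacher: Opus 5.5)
Your argument is correct, but it is not how the paper proves this proposition. The paper obtains the statement by a direct citation of Burungale's Theorem~A, which already gives the non-triviality of the $p$-adic Abel--Jacobi images of the relevant (generalized) Heegner cycles along the anticyclotomic tower.

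You instead unpack the reduction yourself, combining three inputs: the $\Lambda$-adic reciprocity law \eqref{big-log} of Castella--Hsieh and Magrone, the comparison $\kappa_\infty=(2\sqrt{-D_K})^{\frac{k-2}{2}}\kappa_\infty^{\mathrm{BDP}}$ of Proposition~\ref{comparison}, and Burungale's non-vanishing of $\mathcal{L}_\p^{\mathrm{BDP}}$. This is precisely the argument the paper gives later for Corollary~\ref{loc-nontorsion-coro}. None of these inputs relies on Proposition~\ref{non zero}, so there is no circularity.

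The comparison between the two routes is as follows. The citation is lighter: it needs no big logarithm and no Iwasawa-theoretic matching of Heegner and generalized Heegner classes. Your route yields the stronger fact that $\loc_\p(\kappa_\infty)$ is not $\Lambda$-torsion, which is what Lemma~\ref{Sel_torsion} and Theorem~\ref{IG-thm} actually use. It would therefore make Proposition~\ref{non zero} a formal consequence of that corollary. For mere non-vanishing you only need the comparison up to a non-zero constant, so the normalization worries in your last paragraph are milder than you suggest.

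One correction to your first step. The big logarithm must be applied on $H^1(K_\p,\mathbf{T}_f^{(p)})$, the cohomology of the \emph{sub} in \eqref{ordinaryfil}, because that is where the ordinary class $\loc_\p(\kappa_\infty)$ lives (using $H^0(K_\p,\mathbf{T}_f^{[p]})=0$). The image of this class in the cohomology of the quotient $T_\phi^{[p]}$ is zero, so phrasing the map via $T_\phi^{[p]}$, as you did, would kill it.
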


\begin{proof}
This is a consequence of \cite[Theorem A]{Burungale}.
\end{proof}

\begin{lemma} \label{torsion-free}
The $\Lambda$-module $\Sel(K_\infty,T_\phi)$ is torsion-free.
\end{lemma}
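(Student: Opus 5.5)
The plan is to reduce torsion-freeness of $\Sel(K_\infty,T_\phi)$ to the vanishing of $H^0$ of the residual representation over the finite layers, using the standard argument for inverse limits of cohomology. Since $\Sel(K_\infty,T_\phi)\subset H^1_\mathrm{Iw}(K_\infty,T_\phi)$, it suffices to show that $H^1_\mathrm{Iw}(K_\infty,T_\phi)$ is $\Lambda$-torsion-free. By Remark \ref{shapiro} (Shapiro's lemma) we have $H^1_\mathrm{Iw}(K_\infty,T_\phi)\simeq H^1(K,\mathbf{T}_\phi)$ with $\mathbf{T}_\phi=T_\phi\otimes\Lambda$. It is more convenient to work with cohomology unramified outside a finite set $\Sigma$ of places containing those above $Np\infty$, which is harmless because all Selmer classes are unramified outside $\Sigma$.

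Let $x\in H^1(K_\Sigma/K,\mathbf{T}_\phi)$ and $0\neq\lambda\in\Lambda$ with $\lambda x=0$. Since $\Lambda$ is a two-dimensional regular local ring and $\Lambda$-torsion is detected by height-one primes, it is enough to handle $\lambda$ irreducible. In that case the exact sequence $0\to\mathbf{T}_\phi\xrightarrow{\lambda}\mathbf{T}_\phi\to\mathbf{T}_\phi/\lambda\to0$ (multiplication by $\lambda$ is injective because $\mathbf{T}_\phi$ is $\Lambda$-free) shows that the $\lambda$-torsion of $H^1(K,\mathbf{T}_\phi)$ is a quotient of $H^0(K,\mathbf{T}_\phi/\lambda\mathbf{T}_\phi)$. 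The goal is therefore $H^0(K,\mathbf{T}_\phi/\lambda)=0$ for every such $\lambda$. The cleanest route is through the reduction modulo the maximal ideal $\mathfrak{m}_\Lambda$. If $H^0(K,\mathbf{T}_\phi/\lambda)\neq0$, then, since it is a finitely generated $\Lambda/\lambda$-module, its socle contributes a nonzero $G_K$-fixed vector of $(\mathbf{T}_\phi/\lambda)[\mathfrak{m}_\Lambda]$. On that module $G_K$ acts through $\overline{T}_\phi\otimes(\text{a trivial-action module})$, because $G_\infty$ is pro-$p$ and acts unipotently modulo $\mathfrak{m}_\Lambda$. Filtering by $\mathfrak m_\Lambda$-powers then reduces everything to showing $\overline{T}_\phi^{G_{K_\infty}}=0$. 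Equivalently, using the pro-$p$ argument, it suffices to show $\overline{T}_\phi^{G_K}=0$, because a nonzero fixed vector for the pro-$p$ quotient $G_\infty$ on a finite $p$-group always exists once the module is nonzero.

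The final input is $\overline{T}_\phi^{G_K}=0$. This follows from Assumption \ref{ass}(2): the restriction of $\overline{T}_\phi$ to $G_{\Q(\sqrt{p^*})}$ is absolutely irreducible. Since $p\nmid D_K$, the field $K$ differs from $\Q(\sqrt{p^*})$, and $\overline{T}_\phi|_{G_K}$ is also irreducible. A two-dimensional representation that becomes reducible over a quadratic field is induced from that field, and the big image hypothesis of Assumption \ref{ass1}(5) rules this out anyway. Hence $\overline{T}_\phi$ has no nonzero $G_K$-invariants, because a fixed line would give a one-dimensional subrepresentation. Combining this with the previous paragraph gives $H^0(K,\mathbf{T}_\phi/\lambda)=0$, so $H^1_\mathrm{Iw}$, and therefore $\Sel(K_\infty,T_\phi)$, is torsion-free.

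The main obstacle is the middle step: passing rigorously from the residual vanishing $\overline{T}_\phi^{G_{K_\infty}}=0$ to the vanishing of $H^0(K,\mathbf{T}_\phi/\lambda)$ for all nonzero $\lambda$, including non-prime $\lambda$ and the case $\lambda=\pi$. A simpler way to organize it is to argue at finite level. The inflation–restriction sequence together with $T_\phi^{G_{K_m}}=0$ and $\overline{T}_\phi^{G_{K_m}}=0$ shows that each $H^1(K_m,T_\phi)$ is $\cO_\p$-torsion-free and that $H^1(K_m,T_\phi)\simeq H^1(K_n,T_\phi)^{\Gal(K_n/K_m)}$ for $n\geq m$. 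A $\Lambda$-torsion element $x=(x_m)$ of the inverse limit then has components killed by $\lambda$ acting through $\cO_\p[G_m]$. I would then use the standard lemma (as in Perrin-Riou, or Howard's treatment of $H^1_\mathrm{Iw}$) that an inverse limit of corestriction-compatible, torsion-free modules with this Galois-descent property is $\Lambda$-torsion-free. This is the version I would actually write down.
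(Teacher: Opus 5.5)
Your finite-level version is essentially the paper's proof. The paper shows $H^0(K_m,T_\phi)=0$ for all $m$, using the solvability of $K_m/\Q$ and the big-image lemma \cite[Lemma 3.10, (1)]{LV}; it deduces $H^0(K_\infty,T_\phi)=0$ and then applies \cite[Lemma 1.3.3]{PR-Ast}, which is the Perrin-Riou lemma on torsion in Iwasawa cohomology that you invoke. Your derivation of the vanishing of invariants from residual irreducibility plus the pro-$p$ fixed-point argument works just as well.

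Drop the socle sentence in your middle paragraph, though: it is false as stated and not needed. For irreducible $\lambda$, the module $\mathbf{T}_\phi/\lambda\mathbf{T}_\phi$ has no nonzero $\mathfrak m_\Lambda$-torsion, so its socle is always zero.
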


\begin{proof} For each $m\geq 0$ the extension $K_m/\Q$ is solvable, so $H^0(K_m,T_\phi)=0$ by \cite[Lemma 3.10, (1)]{LV}. Since $H^0(K_\infty,T_\phi)=\varprojlim_m H^0(K_m,T_\phi)$, it follows that $H^0(K_\infty,T_\phi)=0$ as well, and the claim of the lemma follows from \cite[Lemma 1.3.3]{PR-Ast}. \end{proof}
%We refer to the case in which $N^-$ is the product of an even (resp. odd) number of primes as the \emph{indefinite setting} (resp. \emph{definite setting}).

The following is our result on the AIMC in the indefinite setting.

\begin{theorem}[AIMC, indefinite case] \label{IMC}
Suppose that Assumption $\ref{ass1}$ holds and $p$ splits in $K$. Then
\begin{enumerate}
\item $\rank_\Lambda \Sel(K_\infty,T_\phi)=\rank_\Lambda\Sel(K_\infty,A_\phi)^\vee=1$;
\item the indefinite anticyclotomic Iwasawa main conjecture holds, i.e., there is an equality 
\begin{equation}
\cchar_\Lambda\bigl(\Sel(K_\infty,A_\phi)^\vee_\tor\bigr)=\cchar_\Lambda\bigl(\Sel(K_\infty,T_\phi)/\Lambda\cdot\kappa_\infty\bigr)^2 
\end{equation}
of ideals of $\Lambda$.
\end{enumerate}
%Moreover, equality in \eqref{1-inclus} holds if the following condition is satisfied: for any height one prime $\mathfrak{P}\subset \Lambda$, there exists $k=k(\mathfrak{P})$ such that for all $j\geq k$ the set
%\[ \{\theta_\infty(\phi)\in \Lambda/\p^j\Lambda\ \vert\ \phi\in \mathbf{S}_k^{B_m}(\cO_\p/\p^j), m\in\mathcal{N}_j^-\}\]
%contains an element with nontrivial image in $\Lambda/(\mathfrak{P},\p^k)$.
\end{theorem}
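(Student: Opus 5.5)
We follow the bipartite Euler system strategy of Bertolini--Darmon and Howard, in the refined form of \cite{BLV}. The classes $\kappa_\phi(S)$ for $S\in\mathcal{P}_n^\mathrm{indef}(\phi)$ and the theta elements $\lambda_\phi(S)$ for $S\in\mathcal{P}_n^\mathrm{def}(\phi)$, $n\geq1$, satisfy the reciprocity laws of Theorems \ref{ERL1} and \ref{ERL2}. Together with the Selmer conditions of Definition \ref{Selmer}, they form a bipartite Euler system for $T_\phi$ over $\Lambda$ in the sense of \cite{howard-bipartite}. Big image (Assumption \ref{ass1}(5)) supplies enough $n$-admissible primes via Chebotarev. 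Non-anomalousness and the ordinary condition give $H^0$-vanishing and control of the local conditions at $p$. We first check that $\kappa_\infty\neq0$; this is Proposition \ref{non zero}, since $p$ splits. By Lemma \ref{torsion-free}, $\kappa_\infty$ is then non-torsion in $\Sel(K_\infty,T_\phi)$.

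For the first step we use Howard's abstract machinery \cite[Theorems 3.2.3, 3.4.2]{howard-bipartite}, applied at every height-one prime $\mathfrak{P}$ of $\Lambda$ after specializing to the integral closure of $\Lambda/\mathfrak{P}$. Since $\kappa_\infty$ is non-torsion, this yields part (1): both $\Sel(K_\infty,T_\phi)$ and $\Sel(K_\infty,A_\phi)^\vee$ have $\Lambda$-rank $1$. It also gives the Euler system divisibility
\[
\cchar_\Lambda\bigl(\Sel(K_\infty,A_\phi)^\vee_\tor\bigr)\supseteq\cchar_\Lambda\bigl(\Sel(K_\infty,T_\phi)/\Lambda\cdot\kappa_\infty\bigr)^2,
\]
which is the divisibility already obtained in \cite{LV-kyoto}. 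The singular linear-algebra input behind this uses the self-duality of $T_\phi$ and the Cassels--Tate type pairing; both are available under our assumptions.

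For the reverse divisibility, Howard's criterion reduces equality to a non-vanishing statement. For every height-one prime $\mathfrak{P}$ there must exist $j_0$ such that, for all $j\geq j_0$, some $\lambda_\phi(S)$ with $S\in\mathcal{P}_j^\mathrm{def}$ has non-zero image in $\Lambda/(\mathfrak{P},\pi^{j_0})$. We treat $\mathfrak{P}=\pi\Lambda$ and $\mathfrak{P}\neq\pi\Lambda$ separately, in the manner of \cite{BLV}. By Corollary \ref{coro2.5}, such non-vanishing of $\lambda_\phi(S\ell_1)=v_{\ell_1}(\kappa_\phi(S))$ is equivalent to non-triviality of suitable classes in the indefinite family. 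Evaluating at $\mathfrak{P}$ and at finite-order characters reduces it further to a non-triviality statement for the system of Kolyvagin-type derived classes attached to $\{\kappa_\phi(S)\}$ modulo $\pi$, twisted by characters of $G_\infty$. This is precisely the higher weight Kolyvagin conjecture established in \cite{LPV}, which rests on Skinner--Urban and Fouquet--Wan and is why assumptions (1) and (5) are imposed. Concretely, we aim to mimic W. Zhang's argument \cite{Zhang}: use the reciprocity laws to move between definite and indefinite levels while keeping the Selmer rank at $\le1$ modulo $\pi$. This produces an $S$ with $\lambda_\phi(S)$ a unit modulo $(\mathfrak{P},\pi)$, invoking \cite{LPV} to start the induction.

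We expect this last step to be the main obstacle: transporting the non-triviality result of \cite{LPV}, which is formulated over $K$ and mod $\p$, to the $\Lambda$-adic setting uniformly in $\mathfrak{P}$. Especially delicate are the primes $\mathfrak{P}$ for which $\Lambda/\mathfrak{P}$ is ramified over $\cO_\p$. Here we would first try the specialization-and-twist argument of \cite[\S\S 5--6]{BLV}: replace $T_\phi$ by $T_\phi\otimes\chi$ for a finite-order character $\chi$ congruent to $\mathfrak{P}$. Then the Selmer structure stays of the same type, and the required rank-one result for the twist again follows from \cite{LPV} and the converse theorems. Combining both divisibilities yields part (2).
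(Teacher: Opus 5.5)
Your architecture matches the paper's. You use the bipartite Euler system, get non-torsionness of $\kappa_\infty$ from Proposition \ref{non zero} and Lemma \ref{torsion-free}, and apply Howard's machinery at each height-one $\mathfrak{P}$, with the reverse divisibility reduced to a non-vanishing condition on the $\lambda_\phi(S)$. Part (1) and the Euler-system divisibility follow as you say (you should also record that the system is free, as the paper checks in \S\ref{secbip}).

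The gap is the last step, which you leave open: you never verify the non-vanishing condition, and the route you propose would not go through with the available inputs. Your reduction via Corollary \ref{coro2.5} to ``Kolyvagin-type derived classes twisted by characters of $G_\infty$'' is not something \cite{LPV}, or anything else cited, proves. The twisting argument would need a Kolyvagin conjecture, a converse theorem and a rank-zero $\p$-part formula for $T_\phi\otimes\chi$ over $K$. The results actually available, namely \cite{LPV}, \cite{SU}, \cite{FW} and \cite[Theorem 4.5]{wang}, are statements about (level-raised) forms over $K$ itself, i.e.\ at the trivial character only.

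The missing observation is that there is no uniformity problem in $\mathfrak{P}$. For each $n$ it suffices to find one $S\in\mathcal{P}_n^{\mathrm{def}}(\phi)$ with $\lambda_\phi(S)$ non-zero modulo the maximal ideal $\mathfrak{m}_\Lambda=\mathfrak{P}_0+\p\Lambda$, i.e.\ $\lambda_\phi(S)\equiv a+Tb\pmod{\p}$ with $a\in\F^\times$.
\begin{itemize}
\item Every height-one $\mathfrak{P}\neq\p\Lambda$ lies in $\mathfrak{m}_\Lambda$. So under $\Lambda/\mathfrak{P}\hookrightarrow\cO_\mathfrak{P}$ the variable $T$ goes to some $\xi\in\mathfrak{m}_\mathfrak{P}$, and the image of $\lambda_\phi(S)$ in $k_\mathfrak{P}$ is $a\neq0$. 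This holds whether or not $\Lambda/\mathfrak{P}$ is ramified (Proposition \ref{maximal2}), and the case $\mathfrak{P}=\p\Lambda$ is immediate.
\item Such an $S$ is produced entirely at the trivial character over $K$, with no reciprocity law and no twisting (Proposition \ref{maximal1}). The integer $r=\dim\Sel(K,T_{\phi,1})$ is odd by \cite[Theorem 3.15]{LPV}. Repeated use of Lemma \ref{ranklowering} gives $S\ell\in\mathcal{P}_n^{\mathrm{def}}(\phi)$ with $\Sel(K,T_{\phi_{S\ell},1})=0$.
\item Lift $\phi_{S\ell}$ to a characteristic-zero form. Skinner--Urban/Fouquet--Wan together with \cite[Theorem 4.5]{wang} show that its algebraic central $L$-value over $K$ is a $\p$-adic unit. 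The interpolation formula of \S\ref{L-funct} at $\chi=\mathds{1}$ then shows that $\lambda_\phi(S\ell)$ has non-zero image in $\Lambda/\mathfrak{m}_\Lambda$.
\end{itemize}
This gives $\delta_\mathfrak{P}(k)=0$ for every $\mathfrak{P}$. Howard's Theorem 2.5.1 then yields exact length equalities at each $\mathfrak{P}$ (Proposition \ref{len-indef}), and the Mazur--Rubin argument with $\mathfrak{P}_m=(F+\pi^m)$ upgrades these to the equality of characteristic ideals.
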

This is Theorem \ref{ind-IMC-intro-thm} in the introduction. We will prove this result in \S \ref{proof}

\subsection{Definite setting}\label{definite-subsec}

Assume that $D$ is a square-free product of an \emph{odd} number of primes. Let $\phi\colon \mathbb{T}_{M,D}(\cO_\p)\rightarrow\cO_\p$ be the Jacquet--Langlands lift of $f$. Since $\mu(D)=-1$, for each $n\geq1$ we can add $1$ to the set $\mathcal{P}_n^\mathrm{def}(\phi)$. The resulting collection $\bigl\{\lambda_\phi(1)\in \Lambda/\p^n\Lambda\bigr\}_{n\geq 1}$ is compatible with respect to the natural maps $\Lambda/\p^{n+1}\Lambda\rightarrow \Lambda/\p^n\Lambda$ and so gives rise to an element $\lambda_\phi\in\Lambda$. Then the $\mathfrak{p}$-adic $L$-function $\mathcal{L}_\p(f)\in\Lambda$ of $f$, defined in \S \ref{L-funct} and denoted $\mathcal{L}(f)$ there, is just the product $\lambda_\phi\lambda_\phi^*$.

Now we state our result on the AIMC in the definite setting.

\begin{theorem}[AIMC, definite case] \label{IMC-def}
Suppose that
Assumption $\ref{ass1}$ holds. 
Then
\begin{enumerate}
\item $\rank_\Lambda \Sel(K_\infty,T_\phi)=\rank_\Lambda\Sel(K_\infty,A_\phi)^\vee=0$;
\item the definite anticyclotomic Iwasawa main conjecture holds, i.e., there is an equality 
\begin{equation}
\cchar_\Lambda\bigl(\Sel(K_\infty,A_\phi)^\vee\bigr)=\bigl(\mathcal{L}_\mathfrak{p}(f)\bigr)
\end{equation}
of ideals of $\Lambda$.
\end{enumerate}
\end{theorem}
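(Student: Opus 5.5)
We prove Theorem \ref{IMC-def} with the bipartite Euler system formalism of Howard (\cite{howard-bipartite}), in the version of Bertolini--Darmon (\cite{BD-IMC}) and Bertolini--Longo--Venerucci (\cite{BLV}). The input is the pair of families $\{\lambda_\phi(S)\}_{S\in\mathcal{P}_n^\mathrm{def}(\phi)}$ and $\{\kappa_\phi(S)\}_{S\in\mathcal{P}_n^\mathrm{indef}(\phi)}$ of \S\ref{explicit-subsec}, which are linked by the first and second reciprocity laws (Theorems \ref{ERL1} and \ref{ERL2}). Here $1\in\mathcal{P}_n^\mathrm{def}(\phi)$ and $\lambda_\phi(1)\equiv\lambda_\phi\pmod{\p^n}$.

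The first step is to check the Euler system axioms. The families satisfy the two reciprocity congruences. The Selmer groups $\Sel_S(K_\infty,T_{\phi,n})$ satisfy Howard's hypotheses: self-duality, the splitting \eqref{split} at admissible primes with both pieces free of rank one over $A_n$, and the existence of enough admissible primes via Chebotarev. The last point uses the big image and irreducibility assumptions in Assumption \ref{ass1}. Non-anomalousness of $p$ gives control of the ordinary local conditions in the anticyclotomic tower, so that the Selmer groups over $K_\infty$ are controlled by their finite-level and residual versions.

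Howard's Euler system argument then gives the divisibility $\cchar_\Lambda\bigl(\Sel(K_\infty,A_\phi)^\vee\bigr)\supseteq(\lambda_\phi\lambda_\phi^*)=(\mathcal{L}_\p(f))$. This is the divisibility already known from \cite{ChHs2}. It is conditional on $\lambda_\phi\neq0$, which holds by Vatsal/Chida--Hsieh nonvanishing (and is implied anyway by the freeness argument below). Torsionness of $\Sel(K_\infty,A_\phi)^\vee$ follows from this, and Lemma \ref{torsion-free} together with the rank equality gives $\rank_\Lambda\Sel(K_\infty,T_\phi)=0$, which proves (1).

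For the reverse divisibility I would use Howard's criterion (\cite[Theorem 3.2.3(c)]{howard-bipartite}, as used in \cite[\S5]{BLV}). Equality holds provided the family $\{\lambda_\phi(S)\}$ is non-degenerate at every height one prime $\mathfrak{P}\subset\Lambda$. Concretely, for each $\mathfrak{P}$ there should be $j_0$ such that for all $j\geq j_0$ some $S\in\mathcal{P}_j^\mathrm{def}(\phi)$ has $\lambda_\phi(S)$ with nonzero image in $\Lambda/(\mathfrak{P},\p^{j_0})$.

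I expect this non-degeneracy to be the main obstacle, and I would reduce it to the analogue of Kolyvagin's conjecture in \cite{LPV}. Choose an indefinite $S'$ and an admissible $\ell$. By Theorem \ref{ERL2}, $v_\ell(\kappa_\phi(S'))$ equals $\lambda_\phi(S'\ell)$ up to a unit. Via Corollary \ref{coro2.5}, it therefore suffices to find an indefinite level, which after one level-raising step can be taken as $1$ in an auxiliary indefinite setting, together with an admissible prime at which the localization of the Heegner class is nonzero modulo $(\mathfrak{P},\p^{j_0})$. \cite{LPV}, building on Skinner--Urban and Fouquet--Wan, shows that the mod $\p$ Kolyvagin-type system of derived classes is nontrivial. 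I would transport this to the $\Lambda$-adic classes $\kappa_\phi(S)$ specialized at $\mathfrak{P}$ by twisting by a character of $G_\infty$ and running the argument of \cite[\S5--6]{BLV}. The twisting step needs a uniform bound on the $\p$-adic valuation of the specialization of the Selmer structure at $\mathfrak{P}$, and this is where the work lies.

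Once non-degeneracy is established, Howard's rigidity gives equality of characteristic ideals, which is (2).
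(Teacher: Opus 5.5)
Your overall architecture matches the paper's: the bipartite Euler system built from Theorems \ref{ERL1} and \ref{ERL2}, Howard's criterion for equality, then specialization at height-one primes. But the step that carries the new content is not proved, namely non-degeneracy of $\{\lambda_\phi(S)\}$ at every height-one prime $\mathfrak{P}$. You propose to extract it from the Kolyvagin-type result of \cite{LPV} by twisting by characters of $G_\infty$ and rerunning \cite[\S5--6]{BLV}, and you concede that the required uniform bound is not available. As written, nothing produces an $S$ with $\lambda_\phi(S)\neq0$ in $\Lambda/(\mathfrak{P},\p^{j_0})$. So you only recover the divisibility of \cite{ChHs2}, and part (2) remains open. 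The twisting route is also not routine: \cite{LPV} controls derived classes for $T_\phi$ itself (the trivial character) and says nothing directly about the twisted representations $T_\mathfrak{P}$.

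The missing idea is that uniformity in $\mathfrak{P}$ comes for free. Suppose a single $\lambda_\phi(S)$ with $S\in\mathcal{P}_n^{\mathrm{def}}(\phi)$ lies outside $\mathfrak{m}_\Lambda=\mathfrak{P}_0+\p\Lambda$. Then it is a unit of the local ring $\Lambda$, so it has non-zero image in $k_\mathfrak{P}$ for every $\mathfrak{P}\neq\p\Lambda$ and in $\Lambda/\p\Lambda$ (Proposition \ref{maximal2}, Corollary \ref{coro(c)}). This gives $\delta_\mathfrak{P}(k)=0$ for all $\mathfrak{P}$ at once, so one only has to work at the trivial character modulo $\p$; even your reduction through Theorem \ref{ERL2} would only need to be carried out there.

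The paper finds such an $S$ as follows. Since $\mu(D)=-1$, the parity results of Skinner--Urban and Fouquet--Wan force $\dim\Sel(K,T_{\phi,1})$ to be even. Iterating Lemma \ref{ranklowering} then yields $S\in\mathcal{P}_n^{\mathrm{def}}(\phi)$ with $\Sel(K,T_{\phi_S,1})=0$. Lifting $\phi_S$ to a characteristic-zero form $h$ of level $NS$, \cite[Theorem 4.5]{wang} shows that $L^{\mathrm{alg}}(h/K)$ is a $\p$-adic unit. The interpolation formula at the trivial character, whose constant is a $p$-adic unit under Assumption \ref{ass1}, then gives $\lambda_\phi(S)\notin\mathfrak{m}_\Lambda$.

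With $\delta_\mathfrak{P}=0$, Howard's Theorem 2.5.1 gives Proposition \ref{len-def} at each specialization. The control theorem together with the primes $\mathfrak{P}_m=(F+\pi^m)$ (and $\p\Lambda+(T^m)$ for $\mathfrak{P}=\p\Lambda$) then yields equality of $\ord_\mathfrak{P}$ for every height-one $\mathfrak{P}$.
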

This is Theorem \ref{A-thm} in the introduction. Like its indefinite counterpart, this result will be proved in \S \ref{proof}.

\subsection{Proof of the anticyclotomic IMCs} \label{proof}

In this section, we prove the definite and the indefinite anticyclotomic Iwasawa main conjectures for our newform $f$, following Mazur--Rubin (\cite{MR}) and Howard (\cite{howard-bipartite}. We assume throughout that Assumption \ref{ass1} is satisfied, and that $p$ splits in $K$ in the indefinite case.

\subsubsection{Selmer groups over $\Lambda$} \label{selmerlambda}

It is convenient to use the following alternative description of Selmer groups. Set 
\[ 
\mathbf{T}_\phi\defeq\varprojlim_n\mathrm{Ind}^{G_K}_{G_{K_n}}(T_\phi),
\] 
the inverse limit being taken with respect to the corestriction maps. As before, for any prime $v\nmid p$ of $K$ define
\[ H^1_\unr(K_v,\mathbf{T}_\phi)\defeq H^1\bigl(K_v^\unr/K_v,\mathbf{T}_\phi^{I_v}\bigr)=\ker\Bigl(H^1(K_v,\mathbf{T}_\phi)\longrightarrow H^1\bigl(I_v,\mathbf{T}_\phi)\Bigr). \]
If $v\,|\,p$, then set 
\[
\mathbf{T}_\phi^{(v)}\defeq \invlim_n\mathrm{Ind}^{G_K}_{G_{K_n}}\bigl(T_\phi^{(v)}\bigr)
\] 
and define 
\[ H^1_\ord(K_v,\mathbf{T}_\phi)\defeq\im\Bigl(H^1(K_v,\mathbf{T}_\phi^+)\longrightarrow H^1(K_v,\mathbf{T}_\phi)\Bigr). \] 
If $v\,|\,D$, then set 
\[
\mathbf{T}_\phi^{(v)}\defeq \invlim_n\mathrm{Ind}_{K_n/K}\bigl(T_\phi^{(v)}\bigr)
\] 
and define 
\[ H^1_\ord(K_v,\mathbf{T}_\phi)\defeq\im\Bigl(H^1(K_v,\mathbf{T}_\phi^{(v)})\longrightarrow H^1(K_v,\mathbf{T}_\phi)\Bigr). \] 

\begin{definition}
The \emph{Selmer group} $\Sel(K,\mathbf{T}_\phi)$ is the group of all $s\in H^1(K,\mathbf{T}_\phi)$ such that 
\begin{itemize}
\item $\loc_v(s)\in H^1_\unr(K_v,\mathbf{T}_\phi)$ for all $v\nmid Dp$;
\item $\loc_v(s)\in H^1_\mathrm{ord}(K_v,\mathbf{T}_\phi)$ for all $v\,|\,Dp$.
\end{itemize} 
\end{definition}

Likewise, set 
\[
\mathbf{A}_\phi\defeq\varinjlim_n\mathrm{Ind}^{G_K}_{G_{K_n}}(A_\phi).
\]
Here and in the following, the direct limit is taken with respect to the canonical restriction maps. 
As before, define 
\[ H^1_\unr(K_v,\mathbf{A}_\phi)\defeq H^1\bigl(K_v^\unr/K_v,\mathbf{A}_\phi^{I_v}\bigr)=\ker\Bigl(H^1(K_v,\mathbf{A}_\phi)\longrightarrow H^1\bigl(I_v,\mathbf{A}_\phi)\Bigr). \] 
If $v\,|\,Dp$, then set 
\[
\mathbf{A}_\phi^{(v)}\defeq\dirlim_n\mathrm{Ind}^{G_K}_{G_{K_n}}\bigl(A_\phi^{(v)}\bigr)
\] 
and define 
\[ H^1_\ord(K_v,\mathbf{A}_\phi)\defeq\im\Bigl(H^1(K_v,\mathbf{A}_\phi^+)\longrightarrow H^1(K_v,\mathbf{A}_\phi)\Bigr). \]

\begin{definition}
The \emph{Selmer group} $\Sel(K,\mathbf{A}_\phi)$ is the group of all $s\in H^1(K,\mathbf{A}_\phi)$ such that 
\begin{itemize}
\item $\loc_v(s)\in H^1_\mathrm{fin}(K_v,\mathbf{A}_\phi)$ for all $v\nmid Dp$;
\item $\loc_v(s)\in H^1_\mathrm{ord}(K_v,\mathbf{A}_\phi)$ for all $v\,|\,Dp$.
\end{itemize} 
\end{definition}
Before going on, we discuss semilocal cohomology groups. Fix a Galois extension $F/K_m$, then set $\mathcal{G}\defeq\Gal(F/K)$ and $\mathcal{G}_m\defeq\Gal(F/K_m)$. For each prime $w$ of $K_m$ above a prime $v$ of $K$, consider the decomposition groups $\mathcal{D}_v\subset \mathcal{G}$ and $\mathcal{D}_{w}\subset \mathcal{G}_m$, and let $g_w\in \mathcal{G}$ 
be such that $\mathcal{D}_{w}=g_w^{-1}\mathcal{D}_{w_0}g_w$, where $w_0$ is the prime ideal of $K_m$ over $v$ corresponding to the fixed embedding $\overline\Q\hookrightarrow\overline{\Q}_p$. Then $M_w\defeq g_w^{-1}M$ becomes a $D_{w}$-module. We also define the $\mathcal{D}_{w_0}$-module $M^{(w)}$ to be $M$ equipped 
with the action of $\mathcal{D}_{w_0}$ that is obtained by combining the obvious map $\mathcal{D}_{w_0}\rightarrow \mathcal{D}_w$ with the action of $\mathcal{D}_w$; there is an isomorphism 
\[
H^1(\mathcal{D}_w,M_w)\overset\simeq\longrightarrow H^1\bigl(\mathcal{D}_{w_0},M^{(w)}\bigr)
\]
that sends the class of a $1$-cocycle $c:\mathcal{D}\rightarrow M_w$ to the class of the $1$-cocycle $c':\mathcal{D}_{w_0}\rightarrow M^{(w)}$ such that $c'(g)\defeq g_wc(g_w^{-1}gg_w)$. By \cite[8.1.7.1]{Nek-Selmer}, there is a canonical isomorphism of $\mathcal{D}_v$-modules 
\[ 
\Ind^{\mathcal{G}}_{\mathcal{G}_m}(M)\overset\simeq\longrightarrow\bigoplus_{w\mid v}\Ind^{\mathcal{D}_v}_{\mathcal{D}_{w_0}}\bigl(M^{(w)}\bigr), 
\]
where $\mathcal{D}_v$ acts on the left-hand side by restriction from $G_K$ to $\mathcal{D}_v$ and on the right-hand side diagonally. We also have $\Ind^{\mathcal{D}_v}_{\mathcal{D}_{w_0}}(M)^{(w)}\simeq\Ind^{\mathcal{D}_v}_{\mathcal{D}_{w}}(M{)}_{w}$, where induction is taken with respect to the composition $D_w\rightarrow D_{w_0}\hookrightarrow D_v$ of conjugation by $g_w$ and the canonical inclusion. Then there are isomorphisms 
\begin{equation} \label{Sh00}
\begin{split}
H^1\bigl(\mathcal{D}_v,\Ind^{\mathcal{G}}_{\mathcal{G}_m}(M)\bigr)&\overset\simeq\longrightarrow
H^1\Bigl(\mathcal{D}_v,\bigoplus_{w\mid v}\Ind^{\mathcal{D}_v}_{\mathcal{D}_{w_0}}\bigl(M^{(w)}\bigr)\!\Bigr)\\
&\overset\simeq\longrightarrow\bigoplus_{w\mid v}H^1\bigl(\mathcal{D}_{w_0},M^{(w)}\bigr)\overset\simeq\longrightarrow\bigoplus_{w\mid v}H^1(\mathcal{D}_w,M_w)
\end{split}
\end{equation}
(\emph{cf.} also \cite[\S5]{Ru}). Identifying $\Gal(\overline{K_{m,w}}/K_{m,w})$ with $\mathcal{D}_w$ when $F=\overline{K}$, and defining the semilocal cohomology groups $H^1(K_{m,v},M)\defeq\prod_{w\mid v}H^1(K_{m,w},M_w)$, we get an isomorphism 
\begin{equation} \label{Sh0}
H^1\bigl(K_v,\Ind^{G_K}_{G_{K_m}}(M)\bigr)\simeq H^1(K_{m,v},M). 
\end{equation}
Direct limits commute with cohomology, so Shapiro's lemma gives canonical isomorphisms of $\Lambda$-modules 
\begin{equation} \label{ShapiroA} 
\begin{split}
\mathrm{Sh}:H^1(K,\mathbf{A}_\phi)&=H^1\bigl(K,\dirlim_n\Ind^{G_K}_{G_{K_n}}({A}_\phi)\bigr)\\&\overset\simeq\longrightarrow\dirlim_nH^1\bigl(K,\Ind^{G_K}_{G_{K_n}}({A}_\phi)\bigr)\overset\simeq\longrightarrow\varinjlim_n H^1(K_n,A_\phi),
\end{split}
\end{equation}
where the direct limit is taken with respect to restrictions. In fact, a result on Selmer groups is also true: this is well known, but we add a proof for lack of precise references. 

\begin{proposition} \label{comparison1}
Isomorphism $\eqref{ShapiroA}$ induces an isomorphism of $\Lambda$-modules 
\[ 
\mathrm{Sh}:\Sel(K_\infty,A_\phi)\overset\simeq\longrightarrow\Sel(K,\mathbf{A}_\phi).
\]
\end{proposition}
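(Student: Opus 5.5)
The plan is to check that the global isomorphism \eqref{ShapiroA} matches the local conditions place by place, using the semilocal Shapiro isomorphisms \eqref{Sh00} and \eqref{Sh0}. First, $\mathrm{Sh}$ commutes with localization: for each prime $v$ of $K$ and each $n$, the square formed by $H^1\bigl(K,\Ind^{G_K}_{G_{K_n}}(A_\phi)\bigr)\simeq H^1(K_n,A_\phi)$, the restriction to $G_{K_v}$ on the left, the product of the localizations $\prod_{w\mid v}\loc_w$ on the right, and \eqref{Sh0} commutes. This is the standard compatibility of Shapiro's lemma with restriction to a subgroup (Mackey decomposition over the double cosets $G_{K_v}\backslash G_K/G_{K_n}$, which correspond to the primes $w\mid v$). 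It is also compatible with the transition maps, since restriction $H^1(K_n,A_\phi)\to H^1(K_{n+1},A_\phi)$ corresponds to the map induced by the natural inclusion $\Ind^{G_K}_{G_{K_n}}\hookrightarrow\Ind^{G_K}_{G_{K_{n+1}}}$. Passing to direct limits, which commute with cohomology and with finite products, gives a commutative square at the level of $\mathbf{A}_\phi$ and $\varinjlim_n\prod_{w\mid v}H^1(K_{n,w},A_\phi)$.

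It then remains to show that, under \eqref{Sh0}, the local subgroup defining $\Sel(K,\mathbf{A}_\phi)$ at $v$ corresponds to $\prod_{w\mid v}$ of the local subgroups defining $\Sel(K_n,A_\phi)$.

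\emph{Places $v\nmid Dp$.} Inertia $I_v$ is normal in $G_{K_v}$, and $\Ind^{G_{K_v}}_{D_{w}}(M)$ restricted to $I_v$ is a sum of copies of $M$ restricted to $I_w=I_v$ (since $K_n/K$ is unramified outside $p$). Applying Shapiro's lemma compatibly to the inflation–restriction sequences for $I_v\subset G_{K_v}$ and $I_w\subset G_{K_{n,w}}$ identifies the kernels of restriction to inertia. Hence $H^1_\unr(K_v,\Ind(A_\phi))$ corresponds to $\prod_w H^1_\unr(K_{n,w},A_\phi)$.

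\emph{Places $v\mid Dp$.} The ordinary condition is the image of $H^1$ of the sub-line $A_\phi^{(v)}$. Induction is exact, and \eqref{Sh0} is functorial in $M$, so the square relating $H^1\bigl(K_v,\Ind(A_\phi^{(v)})\bigr)\to H^1\bigl(K_v,\Ind(A_\phi)\bigr)$ to $\prod_w H^1(K_{n,w},A_\phi^{(w)})\to\prod_w H^1(K_{n,w},A_\phi)$ commutes. Therefore the images correspond. One small point to check is that the twisted modules $M_w=g_w^{-1}M$ carry the correct sub-line. This holds because $A_\phi^{(v)}$ is a $G_{\Q_\ell}$-stable line, so conjugating by $g_w\in G_K$ transports it to the corresponding line at $w$.

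Taking direct limits of these identifications over $n$ gives equality of local conditions at every $v$. Since both Selmer groups are defined as the preimages of these local conditions, $\mathrm{Sh}$ restricts to a bijection $\Sel(K_\infty,A_\phi)\simeq\Sel(K,\mathbf{A}_\phi)$. It is $\Lambda$-linear because \eqref{ShapiroA} is. Here $\Sel(K_\infty,A_\phi)$ is read as the direct limit of the $\Sel(K_n,A_\phi)$ under restriction, and exactness of direct limits gives that the limit of the kernels is the kernel of the limit.

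The main obstacle is the local bookkeeping at $v\mid Dp$ with the conjugated decomposition groups, in particular making \eqref{Sh00} explicitly functorial in the coefficient module. I would handle this by writing the Shapiro map at cochain level, as in \cite[8.1.7.1]{Nek-Selmer}, so that functoriality in $M$ is evident.
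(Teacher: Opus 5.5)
Your proposal is correct and follows essentially the same route as the paper. Both arguments show that the Shapiro isomorphism is compatible with localization via the semilocal decomposition \eqref{Sh0}. Both then match the unramified conditions at $v\nmid Dp$ using that $K_\infty/K$ is unramified away from $p$, so that $I_v$ acts trivially on $\cO_\p[G_m]$, and match the ordinary conditions at $v\mid Dp$ by functoriality of \eqref{Sh0} applied to $A_\phi^{(v)}\subset A_\phi$, before passing to direct limits. Your extra remarks on the transition maps and on the conjugated sub-lines at the primes $w\mid v$ make explicit points that the paper leaves implicit.
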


\begin{proof} As in \eqref{ShapiroA}, Shapiro's lemma induces canonical isomorphisms of semilocal cohomology groups 
\[
\mathrm{Sh}_v:H^1(K_v,\mathbf{A}_\phi)\overset\simeq\longrightarrow
\dirlim_mH^1\bigl(K_v,\Ind^{G_K}_{G_{K_m}}({A}_\phi)\bigr)\overset\simeq\longrightarrow\varinjlim_m H^1(K_{m,v},A_\phi),\]
where the first isomorphism follows from the commutation of direct limits and cohomology, 
while the second follows from \eqref{Sh0}. Then there is a commutative square 
\[
\xymatrix@C=35pt{
H^1(K,\mathbf{A}_\phi)\ar[r]^-{\mathrm{Sh}}\ar[d]&\dirlim_m H^1(K_m,A_\phi)\ar[d]\\
H^1(K_v,\mathbf{A}_\phi)\ar[r]^-{\mathrm{Sh}_v}&\dirlim_m H^1(K_{m,v},A_\phi)
}
\]
in which the left vertical map is the local restriction map at $v$, while the right vertical map is the limit of the products of the local restriction maps at all $w\mid v$. Then we need to show that 
$\mathrm{Sh}_v$ induces an isomorphism 
\begin{equation} \label{Shv}
\mathrm{Sh}_v:H^1_\bullet(K_v,\mathbf{A}_\phi)\overset\simeq\longrightarrow\dirlim_mH^1_\bullet(K_{m,v},A_\phi)  
\end{equation} 
for $\bullet=\unr$ if $v\nmid Dp$ and $\bullet=\ord$ if $v\,|\,Dp$, 
where we adopt the same notation as before for semilocal cohomology.

If $v\,|\,Dp$, then, by \eqref{Sh0} and the commutation betweeen direct limits and cohomology, there are canonical isomorphisms of semilocal cohomology groups 
\[ 
\mathrm{Sh}_v:H^1\bigl(K_v,\mathbf{A}_\phi^{(v)}\bigr)\overset\simeq\longrightarrow 
\dirlim_mH^1\Bigl(K_v,\Ind^{G_K}_{G_{K_m}}\bigl({A}_\phi^{(v)}\bigr)\!\Bigr)\overset\simeq\longrightarrow\varinjlim_m H^1\bigl(K_{m,v},A_\phi^{(v)}\bigr). 
\]
This shows isomorphism \eqref{Shv} in this case.  

If $v\nmid Dp$, then, using again the commutativity of direct limits with cohomology, we get canonical isomorphisms
\[ 
\begin{split}
H^1_\unr(K_v,\mathbf{A}_\phi)&=H^1\bigl(K_v^\unr/K_v,\mathbf{A}_\phi^{I_v}\bigr)=H^1\Bigl(K_v^\unr/K_v,\bigl(\dirlim_m\mathrm{Ind}^{G_K}_{G_{K_m}}(A_\phi)\bigr)^{I_v}\Bigr)\\
&\overset\simeq\longrightarrow H^1\Bigl(K_v^\unr/K_v,\dirlim_m\bigl(\mathrm{Ind}^{G_K}_{G_{K_m}}(A_\phi)^{I_v}\bigr)\!\Bigr)\overset\simeq\longrightarrow\dirlim_mH^1\bigl(K_v^\unr/K_v,\mathrm{Ind}^{G_K}_{G_{K_m}}(A_\phi)^{I_v}\bigr).
\end{split}
\]
Set $G_{v}\defeq G_{K_v}$. Since $v\nmid p$ and $K_\infty/K$ is unramified at the residual characteristic $\ell$ of $v$, the action of $I_v$ on the $G_{v}$-module $\cO_\p[G_m]$ is trivial; thus, we get 
\[ 
\Ind^{G_K}_{G_{K_m}}(A_\phi)^{I_v}=A_\phi^{I_v}\otimes_{\cO_\p}\cO_\p[G_m]=\mathrm{Ind}^{G_K}_{G_{K_m}}\bigl(A_\phi^{I_v}\bigr) 
\] 
as $G_{v}$-modules. By \eqref{Sh00} with $F$ the maximal extension of $K$ unramified outside $v$, and using the fact that the inertia subgroup $I_{m,w}$ of $G_{K_{m,w}}$ coincides with that of $G_{K_v}$ because the extension $G_{K_{m,w}}/G_{K_v}$ is unramified, we obtain canonical isomorphisms 
\[ 
H^1_\unr(K_v,\mathbf{A}_\phi)\overset\simeq\longrightarrow\dirlim_m\prod_{w\mid v}H^1\bigl(K_{m,w}^\unr/K_{m,w},A_\phi^{I_{m,w}}\bigr)\overset\simeq\longrightarrow\dirlim_mH^1_\unr(K_v,A_\phi).
\]
This proves isomorphism \eqref{Shv} in this case as well, as desired. \end{proof}

Now we discuss the relation between the compact $\Lambda$-modules $\Sel(K_\infty,T_\phi)$ 
and $\Sel(K,\mathbf{T}_\phi)$. 
%Let $S$ be the set of prime ideals of $K$ dividing $p$ and where $T$ is ramified, and denote 
%$K_S$ the maximal extension of $K$ which unramified outside $S$. Let also $G_S=\Gal(K_S/K)$.  
%By definition, 
%\[\Sel(K,\mathbf{T}_\phi)\subset H^1(G_S,\mathbf{T}_\phi).\]
Since $T_\phi=\sideset{}{_n}\invlim T_{\phi,n}$, there are isomorphisms
\[
\mathbf{T}_\phi=\invlim_m\Ind^{G_K}_{G_m}(T_{\phi})
\simeq\invlim_m\Ind^{G_K}_{G_m}\bigl(\sideset{}{_n}\invlim T_{\phi,n}\bigr)\simeq
\invlim_{m}\Bigl(\!\bigl(\sideset{}{_n}\invlim T_{\phi,n}\bigr)\otimes_{\cO_\p}\cO_\p[G_m]\Bigr).
\]
Since the $\Lambda$-modules $T_{\phi,n}$ are all finite and the transition maps are surjective, the inverse system ${\{T_{\phi,n}\}}_{n\geq1}$ satisfies the Mittag-Leffler condition; therefore, there are isomorphisms 
\[ 
\begin{split}
\invlim_{m}\Bigl(\!\bigl(\sideset{}{_n}\invlim T_{\phi,n}\bigr)\otimes_{\cO_\p}\cO_\p[G_m]\Bigr)&\simeq 
\invlim_{m}\sideset{}{_n}\invlim\bigl(T_{\phi,n}\otimes_{\cO_\p}\cO_\p[G_m]\bigr)\\&\simeq 
\bigl(\sideset{}{_n}\invlim T_{\phi,n}\bigr)\otimes_{\cO_\p}\cO_\p[G_n]=
\invlim_{n}\Ind^{G_K}_{G_{K_n}}(T_{\phi,n}),
\end{split}
\]
the second of which follows, as before, from the inverse system  
$\bigl\{T_{\phi,n}\otimes_{\cO_\p}\cO_\p[G_n]\bigr\}_{n\geq1}$ being cofinal to the inverse 
system $\bigl\{T_{\phi,n}\otimes_{\cO_\p}\cO_\p[G_m]\bigr\}_{n\geq1,m\geq1}$. Thus, we obtain an isomorphism
\begin{equation} \label{T}
\mathbf{T}_\phi \overset\simeq\longrightarrow \invlim_{n}\Ind^{G_K}_{G_{K_n}}(T_{\phi,n}).
\end{equation}
On the other hand, each $\Ind^{G_K}_{G_{K_n}}(T_{\phi,n})$ is a finite $\Lambda$-module, so \cite[Proposition B.2.3]{Ru} ensures that isomorphism \eqref{T} induces an isomorphism of $\Lambda$-modules 
\begin{equation} \label{T2}
H^1(K,\mathbf{T}_\phi)\overset\simeq\longrightarrow\invlim_n H^1\bigl(K,\Ind^{G_K}_{G_n}(T_{\phi,n})\bigr). 
\end{equation}
Finally, since $\sideset{}{_n}\invlim H^1(K_n,T_{\phi,n})\simeq H^1_\mathrm{Iw}(K_\infty,T_\phi)$ by \cite[Lemma B.3.1]{Ru}, Shapiro's lemma and isomorphism \eqref{T2} yield an isomorphism of $\Lambda$-modules 
\begin{equation} \label{Sh-eq}
\mathrm{Sh}:H^1(K,\mathbf{T}_\phi)\overset\simeq\longrightarrow H^1_\mathrm{Iw}(K_\infty,T_\phi).
\end{equation}
It turns out that $\mathrm{Sh}$ induces an isomorphism of $\Lambda$-module, as is shown by the following proposition (\emph{cf.} \cite[\S II.1]{Colmez} and \cite[Lemma 5.3.1]{MR} for related results).

\begin{proposition} \label{comparison2}
Isomorphism $\mathrm{Sh}$ in \eqref{Sh-eq} induces an isomorphism of $\Lambda$-modules 
\[
\mathrm{Sh}:\Sel(K,\mathbf{T}_\phi)\overset\simeq\longrightarrow\Sel(K_\infty,T_\phi).
\]
\end{proposition}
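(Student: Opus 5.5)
The plan follows the proof of Proposition \ref{comparison1}, with direct limits replaced by inverse limits. Compactness and finiteness make the limits exact.

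First I will set up the semilocal version of \eqref{Sh-eq}. For each prime $v$ of $K$, combine \eqref{Sh0} at each finite level with the isomorphism \eqref{T} and \cite[Proposition B.2.3]{Ru}. This gives a canonical isomorphism
\[
\mathrm{Sh}_v:H^1(K_v,\mathbf{T}_\phi)\overset\simeq\longrightarrow\invlim_m H^1(K_{m,v},T_\phi),
\]
where $H^1(K_{m,v},T_\phi)$ is the semilocal group $\prod_{w\mid v}H^1(K_{m,w},T_\phi)$ and the limit is taken along corestrictions. By construction, $\mathrm{Sh}_v$ is compatible with $\mathrm{Sh}$ and the localization maps, as in the commutative square of Proposition \ref{comparison1}. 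Since $\Sel(K_\infty,T_\phi)$ is the inverse limit of the finite-level Selmer groups, and inverse limits are left exact, the proposition reduces to a local claim. For each $v$, $\mathrm{Sh}_v$ must identify $H^1_\bullet(K_v,\mathbf{T}_\phi)$ with $\invlim_m H^1_\bullet(K_{m,v},T_\phi)$, with $\bullet=\unr$ for $v\nmid Dp$ and $\bullet=\ord$ for $v\mid Dp$.

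For $v\mid Dp$, apply the same Shapiro argument to the rank one submodule $T_\phi^{(v)}$. This gives $H^1(K_v,\mathbf{T}^{(v)}_\phi)\simeq\invlim_m H^1(K_{m,v},T^{(v)}_\phi)$, compatibly with the maps induced by $T^{(v)}_\phi\hookrightarrow T_\phi$. The ordinary subgroups are images of these maps. Each $H^1(K_{m,v},T^{(v)}_\phi)$ is a compact $\cO_\p$-module, so taking the inverse limit of images equals the image of the inverse limit. This proves the claim in this case.

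For $v\nmid Dp$, the residue characteristic $\ell\neq p$, so $K_\infty/K$ is unramified at $v$. Inertia $I_v$ therefore acts trivially on $\cO_\p[G_m]$, which gives $\Ind^{G_K}_{G_{K_m}}(T_\phi)^{I_v}=\Ind^{G_K}_{G_{K_m}}(T_\phi^{I_v})$. Using \eqref{Sh00} with $F$ the maximal extension unramified outside $v$, together with the equality of inertia groups used in Proposition \ref{comparison1}, I will get the isomorphism at each level $m$. I will then pass to the inverse limit, justifying the commutation of $I_v$-invariants and $H^1(K_v^\unr/K_v,-)$ with $\invlim$ as follows. Invariants commute with inverse limits. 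For $H^1$ of the procyclic group $\Gal(K_v^\unr/K_v)$, every term is compact, so \cite[Proposition B.2.3]{Ru} applies, or equivalently one uses $H^1=$ coinvariants of Frobenius and $\varprojlim^1=0$ for compact modules.

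I expect the main obstacle to be this unramified case. Unlike the direct-limit setting, the inverse limit of $\mathbf{T}_\phi^{I_v}$ must be shown to equal $\invlim_m$ of the invariants at finite level, and the finite/singular quotient must behave well in the limit. If the direct argument fails there, the fallback is to use that $\ell$ is finitely decomposed in $K_\infty$. Then $H^1_\unr(K_{m,v},T_\phi)$ stabilizes up to torsion under corestriction: corestriction multiplies by the degree on unramified classes. This lets me check equality after reducing modulo $\p^n$ via \eqref{T}, where everything is finite and the limits are exact.
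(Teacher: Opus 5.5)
Your proof is correct. For $v\mid Dp$ and for $v\nmid MDp$ it is the paper's argument. The only cosmetic difference is that the paper passes to the limit through the finite modules $\Ind^{G_K}_{G_{K_n}}(T_{\phi,n})$ of \eqref{T}, so that \cite[Proposition B.2.3]{Ru} applies verbatim. You instead work with $\Ind^{G_K}_{G_{K_m}}(T_\phi)$ and use exactness of $\varprojlim$ on compact modules.

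The genuine difference is at the primes $v\mid M$. The paper treats them separately. Such $v$ lie over primes split in $K$, so they are finitely decomposed in $K_\infty$, with $K_{\infty,w}/K_v$ the unramified $\Z_p$-extension. Then \cite[Proposition B.3.3]{Ru} gives $H^1_\unr(K_v,\mathbf{T}_\phi)=H^1(K_v,\mathbf{T}_\phi)$ and $\varprojlim_m H^1_\unr(K_{m,v},T_\phi)=\varprojlim_m H^1(K_{m,v},T_\phi)$, so the local condition at $v$ disappears on both sides.

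You instead handle all $v\nmid Dp$ uniformly, as the paper itself does in Proposition \ref{comparison1}. The only input is that $K_m/K$ is unramified at $v$. Then $I_v$ acts trivially on $\cO_\p[G_m]$, and restriction to inertia is compatible with Shapiro's isomorphism. Nothing there needs $T_\phi$ to be unramified at $v$, so the argument is valid at $v\mid M$ as well.

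Your route is more uniform. The paper's route proves the stronger fact that nothing is imposed at $v\mid M$ in the Iwasawa limit. That makes the comparison insensitive to which finite-level condition is used at these ramified primes. This matters because the paper's $H^1_{\mathrm{fin}}$ is only defined where $T_\phi$ is unramified, and $H^1_\unr$ and the condition propagated from $V_\phi$ can differ there by Tamagawa factors.

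The fallback you sketch is not needed, and it could not replace the direct argument in general. Primes of $K$ above rational primes inert in $K$ split completely in the anticyclotomic $\Z_p$-extension, so they are not finitely decomposed. Also, on $H^1(\widehat{\Z},Y)=Y/(\Frob-1)Y$, corestriction from the degree-$p$ unramified extension is the natural projection $Y/(\Frob^p-1)Y\to Y/(\Frob-1)Y$. It is restriction followed by corestriction that multiplies by the degree.
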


\begin{proof} Let $v$ be a prime of $K$. Isomorphism \eqref{T} gives isomorphisms of $\Lambda$-modules 
\begin{equation}\label{T-iso0}
H^1(K_v,\mathbf{T}_\phi)\overset\simeq\longrightarrow\invlim_n H^1\bigl(K_v,\Ind^{G_K}_{G_n}(T_{\phi,n})\bigr)\overset\simeq\longrightarrow\invlim_n H^1(K_{n,v},T_{\phi,n}), 
\end{equation} 
the first following from \cite[Proposition B.2.3]{Ru} and the second from \eqref{Sh0}. We have then to check that this isomorphism induces an isomorphism 
\begin{equation} \label{Tproof}
H^1_\bullet(K_v,\mathbf{T}_\phi)\overset\simeq\longrightarrow\invlim_n H^1_\bullet(K_{n,v},T_{\phi,n})
\end{equation}
for $\bullet\in\{\ord,\unr\}$. 

As before, $\mathbf{T}_\phi^{(v)}=\sideset{}{_n}\invlim\Ind^{G_K}_{G_n}\bigl(T_{\phi,n}^{(v)}\bigr)$ for each prime $v\,|\,Dp$,
where $T_{\phi,n}^{(v)}=T_\phi^{(v)}\big/\mathfrak{p}^nT_\phi^{(v)}$. Therefore, since $\Ind^{G_K}_{G_n}\bigl(T_{\phi,n}^{(v)}\bigr)$ is a finite $\Lambda$-module, we get isomorphisms 
\begin{equation} \label{Tv-iso0}
H^1\bigl(K_v,\mathbf{T}_\phi^{(v)}\bigr)\overset\simeq\longrightarrow\invlim_n H^1\Bigl(K_v,\Ind^{G_K}_{G_n}\bigl(T_{\phi,n}^{(v)}\bigr)\!\Bigr)\overset\simeq\longrightarrow\invlim_n H^1\bigl(K_{n,v},T_{\phi,n}^{(v)}\bigr), 
\end{equation}
the first of which is a consequence of \cite[Proposition B.2.3]{Ru}, while the second stems from \eqref{Sh0}. It follows that there is a commutative square
\[ 
\xymatrix@C=35pt{H^1\bigl(K_v,\mathbf{T}_\phi^{(v)}\bigr)\ar[d]^-\simeq\ar[r]& H^1(K_v,\mathbf{T}_\phi)\ar[d]^-\simeq\\\invlim_n H^1\bigl(K_{n,v},T_{\phi,n}^{(v)}\bigr)\ar[r]&\invlim_n H^1(K_{n,v},T_{\phi,n}),}
\]
and we deduce that \eqref{Tproof} holds true for $\bullet=\ord$. 

For $v\nmid MDp$, by \eqref{Sh00} applied with $\mathcal{G}$ the maximal extension of $K$ unramified outside $v$, there is a canonical isomorphism 
\begin{equation} \label{T-iso2}
H^1_\unr\bigl(K_v,\mathrm{Ind}^{G_K}_{G_{K_n}}(T_{\phi,n})\bigr)\overset\simeq\longrightarrow H^1_\unr(K_{n,v},T_{\phi,n})\defeq\prod_{w\mid v}H^1_\unr\bigl(K_{n,w},(T_{\phi,n}{)}_w\bigr),
\end{equation}
so there are isomorphisms
\[ 
H^1_\unr(K_v,\mathbf{T}_\phi)\overset\simeq\longrightarrow\invlim_nH^1_\unr\bigl(K_v,\mathrm{Ind}^{G_K}_{G_{K_n}}(T_{\phi,n})\bigr)\overset\simeq\longrightarrow\invlim_nH^1_\unr(K_{n,v},T_{\phi,n}),    
\] 
where the first one is, as before, a combination of \eqref{T} and \cite[Proposition B.2.3]{Ru}. This establishes \eqref{Tproof} for $\bullet=\unr$ in this case. 

Finally, suppose $v\,|\,M$. 
If $w_\infty$ is a prime of $K_\infty$ above $w$, then $K_{\infty,w_\infty}/K_v$ is the unique $\Z_p$-extension of $K_v$, so by \cite[Proposition B.3.3]{Ru} there is an isomorphism
\begin{equation} \label{T-iso3}
\invlim_m H^1_\unr(K_{m,v},T_\phi)\overset\simeq\longrightarrow\invlim_m H^1(K_{m,v},T_\phi).
\end{equation}
By restriction to the inertia subgroup, the isomorphism \eqref{Sh00} 
induces an isomorphism 
\begin{equation} \label{Sh000}
H^1\bigl(I_v,\mathrm{Ind}^{G_K}_{G_{K_m}}(T_\phi)\bigr)\overset\simeq\longrightarrow\prod_{w\mid v}H^1\bigl(I_{m,w},(T_\phi{)}_w\bigr),
\end{equation}
where $I_{m,w}$ is the inertia subgroup of $\mathcal{D}_w$. By \eqref{T} and \cite[Proposition B.2.3]{Ru} (respectively, \eqref{Sh000}) for the first (respectively, second) isomorphism below, we have 
\[ 
H^1(I_v,\mathbf{T}_\phi)^{G_v/I_v}\overset\simeq\longrightarrow\biggl(\invlim_nH^1\bigl(I_v,\Ind^{G_K}_{G_{K_n}}(T_{\phi,n})\bigr)\!\biggr)^{G_v/I_v}\overset\simeq\longrightarrow\Biggl(\prod_{w\mid v}H^1\bigl(I_{m,w},(T_\phi{)}_w\bigr)\!\Biggr)^{G_v/I_v}.
\]
The rightmost group is contained in $\prod_{w\mid v}H^1\bigl(I_{m,w},(T_\phi{)}_w\bigr)^{G_{m,w}/I_{m,w}}$; notice that each of the local cohomology groups $H^1\bigl(I_{m,w},(T_\phi{)}_w\bigr)^{G_{m,w}/I_{m,w}}$ is trivial by \cite[Proposition B.3.3]{Ru}. Thus, $H^1(I_v,\mathbf{T}_\phi)^{G_v/I_v}$ is trivial and, in light of the equality 
\[
H^1_\unr(K_v,\mathbf{T}_\phi)=\ker\Bigl(H^1(K_v,\mathbf{T}_\phi)\longrightarrow 
H^1(I_v,\mathbf{T}_\phi)^{G_v/I_v}\!\Bigr),
\]
we conclude that 
\begin{equation} \label{T-iso5}
H^1_\unr(K_v,\mathbf{T}_\phi)=H^1(K_v,\mathbf{T}_\phi).
\end{equation} 
Finally, combining \eqref{T-iso0}, \eqref{T-iso3} and \eqref{T-iso5}, we obtain isomorphisms 
\[
H^1_\unr(K_v,\mathbf{T}_\phi)\overset\simeq\longrightarrow H^1 (K_v,\mathbf{T}_\phi)\overset\simeq\longrightarrow\invlim_mH^1(K_{v,m},T_\phi)\overset\simeq\longrightarrow\invlim_m
H^1_\unr(K_{v,m},T_\phi).
\]
This proves \eqref{Tproof} for $\bullet=\unr$ in this case as well, as desired. \end{proof}   

\subsubsection{Selmer groups over DVRs}\label{selcondsec} 

We introduce Selmer groups over discrete valuation rings and artinian local rings, following again Mazur--Rubin (\cite{MR}) and Howard (\cite{Ho1}, \cite{howard-bipartite}). 

Let $\mathfrak{P}\neq \p\Lambda$ be a height $1$ prime ideal of $\Lambda$ and denote by $\cO_\mathfrak{P}$ the integral closure of $\Lambda/\mathfrak{P}$ in $\overline{\mathbb{Q}}_p$. It follows that $\cO_\mathfrak{P}$ is the valuation ring of a finite extension of $F_\p$, say $\Phi_\mathfrak{P}$. Equip $T_\mathfrak{P}\defeq T_\phi\otimes_{\cO_\p}\cO_\mathfrak{P}$ with the diagonal action of $G_K$, where $G_K$ acts on $\cO_\mathfrak{P}$ via the canonical character $G_K\hookrightarrow\Lambda^\times$. Then define $V_\mathfrak{P}\defeq T_\mathfrak{P}\otimes_{\cO_\mathfrak{P}}\Phi_\mathfrak{P}$ and $A_\mathfrak{P}\defeq V_\mathfrak{P}/T_\mathfrak{P}$. For each prime $v\,|\,Dp$ of $K$, set 
$V_\mathfrak{P}^{(v)}\defeq T_\phi^{(v)}\otimes_{\cO_\p} \Phi_\mathfrak{P}$ and 
$V_\mathfrak{P}^{[v]}\defeq V_\phi/V_\phi^{(v)}$. Now we introduce the following subgroups of $H^1(K_v,V_\mathfrak{P})$: 
\begin{itemize}
\item if $v\nmid Dp$, then set
\[
H^1_\unr(K_v,V_\mathfrak{P})\defeq H^1\bigl(K_v^\unr/K_v,T_\mathfrak{P}^{I_v}\bigr)=\ker\Bigl(H^1(K_v,V_\mathfrak{P})\longrightarrow 
H^1(I_v,V_\mathfrak{P})\Bigr);
\]
\item if $v\,|\,Dp$, then set 
\[
\begin{split}
H^1_\ord(K_v,V_\mathfrak{P})&\defeq\im\Bigl(H^1(K_v,V_\mathfrak{P}^+)\longrightarrow H^1(K_v,V_\mathfrak{P})\Bigr)\\&\,\,=\ker\Bigl(H^1(K_v,V_\mathfrak{P})\longrightarrow H^1(K_v,{V}^-_\mathfrak{P})\Bigr).
\end{split}
\]
\end{itemize}
Define $H^1_\bullet(K_v,T_\mathfrak{P})$ and $H^1_\bullet(K_v,A_\mathfrak{P})$ for $\bullet\in\{\ord,\unr\}$ by propagating local conditions; namely, we let $H^1_\bullet(K_v,T_\mathfrak{P})$ (respectively, $H^1_\bullet(K_v,A_\mathfrak{P})$) be the inverse image (respectively, the image) of $H^1_\bullet(K_v,V_\mathfrak{P})$ in $H^1(K_v,T_\mathfrak{P})$ (respectively, in $H^1(K_v,A_\mathfrak{P})$) under the map induced in (local) cohomology by the natural injection $T_\mathfrak{P}\hookrightarrow V_\mathfrak{P}$ (respectively, surjection $V_\mathfrak{P}\twoheadrightarrow A_\mathfrak{P}$). 

\begin{definition}
Let $M\in\{V_{\mathfrak P},T_{\mathfrak P}, A_{\mathfrak P}\}$. The \emph{Selmer group} $\Sel(K,M)$ is the group of all $s\in H^1(K,M)$ such that
\begin{itemize}
\item $\loc_v(s)\in H^1_\unr(K_v,M)$ for all $v\nmid Dp$;
\item $\loc_v(s)\in H^1_\ord(K_v,M)$ for all $v\,|\,Dp$. 
\end{itemize}
\end{definition} 

\begin{remark}
Let $\mathfrak{P}_0$ be the height $1$ prime ideal of $\Lambda$ corresponding to the trivial character $\mathds{1}$ of $\Gal(K_\infty/K)$, so that $\mathfrak{P}_0+\p\Lambda$ is the maximal ideal of $\Lambda$. Equivalently, $\mathfrak P_0=(\gamma-1)$ is the augmentation ideal of $\Lambda$. Fix $M\in \{A_\phi,T_\phi,V_\phi\}$. Then there is an isomorphism $\Lambda/{\mathfrak{P}_0}\simeq\cO_\p$, so $\cO_{\mathfrak{P}_0}=\cO_\mathfrak{p}$ and $F_{\mathfrak{P}_0}=F_\mathfrak{p}$, the fraction field of $\cO_\mathfrak{p}$; therefore, the equality $M_{\mathfrak{P}_0}=M$ holds in all cases. Then we have defined two subgroups $\Sel(K,T_\phi)$ and $\Sel(K,T_{\mathfrak{P}_0})$ of $H^1(K,T_\phi)$ and two subgroups $\Sel(K,A_\phi)$ and $\Sel(K,A_{\mathfrak{P}_0})$ of $H^1(K,A_\phi)$; 
under Assumption \ref{ass}, these two groups can be shown to be equal 
(and a similar result could also be obtained for the pairs given by $\Sel(K_n,A_\phi)$ and $\Sel(K,\mathbf{A}[{\mathfrak{P}_n}])$ and by $\Sel(K_n,T_\phi)$ and $\Sel(K,\mathbf{T}/{\mathfrak{P}_n}\mathbf{T})$, where $\mathfrak{P}_n=(\gamma^n-1)$ for some integer $n\geq 1$ as in \cite[Section 5]{BLV}). 
\end{remark}

For all integers $j\geq1$, set $T_{\mathfrak{P},j}\defeq T_\mathfrak{P}/\mathfrak{p}^jT_\mathfrak{P}$ and $A_{\mathfrak{P},j}\defeq A_\mathfrak{P}[\mathfrak{p}^j]$. Define the groups $H^1_\bullet(K_v,T_{\mathfrak{P},j})$ and $H^1_\bullet(K_v,A_{\mathfrak{P},j})$ for $\bullet\in\{\ord,\unr\}$ by propagating local conditions; namely, we let $H^1_\bullet(K_v,T_{\mathfrak{P},j})$ (respectively, $H^1_\bullet(K_v,A_{\mathfrak{P},j})$) be the image (respectively, the inverse image) of $H^1_\bullet(K_v,T_\mathfrak{P})$ in $H^1(K_v,T_{\mathfrak{P},j})$ (respectively, in $H^1(K_v,A_{\mathfrak{P},j})$) under the map induced in cohomology by the natural surjection  $T_\mathfrak{P}\twoheadrightarrow T_{\mathfrak{P},j}$ (respectively, injection 
$A_{\mathfrak{P},j}\hookrightarrow A_\mathfrak{P}$).

\begin{definition} 
Let $S\in \mathcal{P}_n$ and $M=T_\mathfrak{P}/\mathfrak{p}^nT_\mathfrak{P}$ or 
$M=A_\mathfrak{P}[\mathfrak{p}^n]$. The \emph{Selmer group} $\Sel_S(K,M)$ is the group of all $s\in H^1(K,M)$ such that
\begin{itemize}
\item $\loc_v(s)\in H^1_\unr(K_v,M)$ for all $v\nmid SDp$;
\item $\loc_v(s)\in H^1_\ord(K_v,M)$ for all $v\,|\,SDp$. 
\end{itemize}
\end{definition} 

For $S=1$, we just write $\Sel(K,M)$ for $\Sel_1(K,M)$. These groups satisfy the conditions listed by Mazur--Rubin in \cite{MR} and by Howard in \cite{Ho1} and \cite{howard-bipartite}. More precisely, one can use the pairing in \cite[Proposition 3.1]{Nek} to construct a perfect, symmetric $\cO_\p$-linear pairing 
\[
{(\cdot,\cdot)}_\mathfrak{P}:T_\mathfrak{P}\times T_\mathfrak{P}\longrightarrow \cO_\p(1)
\]
which satisfies $(x^g,y^{\tau g\tau^{-1}})_\mathfrak{P}=(x,y)^g$ for all $g\in G_K$ and all
$x,t\in T_\mathfrak{P}$ (see, \emph{e.g.}, \cite[Lemma 2.1.1]{Ho1} and \cite[\S5.1]{LV-kyoto}). The Selmer conditions are cartesian for this pairing, since they are defined by propagation (see, \emph{e.g.}, \cite[Lemma 3.7.1]{MR}), while the self-duality condition follows by combining the self-duality for the Selmer conditions on $V_\mathfrak{P}$ and the fact that the local conditions on $T_\mathfrak{P}$ are defined by propagation.  

\subsubsection{Bipartite Euler systems} \label{secbip}

For any integer $n\geq 1$ and any height $1$ prime ideal $\mathfrak{P}$ of $\Lambda$ 
with $\mathfrak{P}\neq\p\Lambda$, there are families 
\begin{equation} \label{Bipartite}
\Bigl\{{\kappa}_{\mathfrak{P}}(S)\in \Sel_S(K,T_{\mathfrak{P},n})\;\big|\; S\in \mathcal{P}_{2n}^\mathrm{indef}\Bigr\},\quad\Bigl\{{\lambda}_{\mathfrak{P}}(S)\in \cO_\mathfrak{P}/\p^n\cO_\mathfrak{P}\;\big|\; S\in\mathcal{P}_{2n}^\mathrm{def}\Bigr\}
\end{equation}
obtained by applying the reduction map $\cO/\p^{2n}[\![G_\infty]\!]\rightarrow \cO_\mathfrak{P}/\p^n\cO_\mathfrak{P}$ to the elements in \S \ref{explicit-subsec}. 
These classes satisfy the first and second reciprocity laws; therefore, in light of
\cite[Theorem 6.3]{ChHs2}, which proves the validity of \cite[Hypothesis 2.3.1]{howard-bipartite}, they form a bipartite Euler system of odd type for $T_{\mathfrak{P},n}$ in the sense of \cite[Definition 2.3.2]{howard-bipartite} (explicitly, this condition means that for all $c\in H^1(K,T_{\phi,1})\smallsetminus\{0\}$ and all integers $n\geq1$ 
there are infinitely many $n$-admissible primes $\ell$ such that $\loc_\ell(c)\neq0$). A generalization of \cite[Lemma 3.3.6]{howard-bipartite} shows that this bipartite Euler system is also free in the sense of \cite[Definition 2.3.6]{howard-bipartite}. We provide the details of this freeness result. In accord with \cite[Definition 2.3.6]{howard-bipartite}, we must show that there is a free $\cO_\mathfrak{P}/\mathfrak{p}^n\cO_\mathfrak{P}$-submodule of $\Sel_{S}(K,T_{\mathfrak{P},n})$ containing $\kappa_\mathfrak{P}(S)$. Let $e_\mathfrak{P}$ be the ramification index of $\cO_\mathfrak{P}$ over $\mathcal{O}_\mathfrak{p}$, let $\mathfrak{m}_\mathfrak{P}=(\pi_\mathfrak{P})$ be the maximal ideal of $\cO_\mathfrak{P}$ and let $k_\mathfrak{P}\defeq\cO_\mathfrak{P}/\mathfrak{m}_\mathfrak{P}$ be its residue field. Moreover, for all integers $j\geq0$ set $R_j\defeq\mathcal{O}_\mathfrak{P}/\mathfrak{p}^j\mathcal{O}_\mathfrak{P}$; then the length of $R_j$ as an $\cO_\mathfrak{P}$-module is $je_\mathfrak{P}$. By \cite[Proposition 2.2.7]{howard-bipartite} and the properties of Selmer conditions recalled in \S \ref{selcondsec}, there are isomorphisms $\Sel_{S}(K,T_{\mathfrak{P},2n})\simeq R_{2n}\oplus N\oplus N$ and $\Sel_{S}(K,T_{\mathfrak{P},k})\simeq R_n\oplus M\oplus M$ for finite $\mathcal{O}_\mathfrak{P}$-modules $N$ and $M$ (in the second case, of course, $S$ is viewed as an element in $\mathcal{P}_n^\mathrm{indef}$). Since $R_n$ has length $ne_\mathfrak{P}$, if $\mathfrak{m}^{ne_\mathfrak{P}-1}M\neq0$, then $\kappa_\mathfrak{P}(S)=0$ by \cite[Proposition 2.3.5]{howard-bipartite}, and there is nothing to prove. Therefore, assume $\mathfrak{m}^{ne_\mathfrak{P}-1}M=0$. By \cite[Lemma 2.2.6]{howard-bipartite}, there is a commutative triangle
\begin{equation} \label{triangle-eq}
\xymatrix@C=45pt@R=45pt{\Sel_{S}(K,T_{\mathfrak{P},2n})\ar[r]^-{\pi_\mathfrak{P}^{ne_\mathfrak{P}}\cdot} \ar[d]& \Sel_{S}(K,T_{\mathfrak{P},2n})\bigl[\mathfrak{m}_\mathfrak{P}^{ne_\mathfrak{P}}\bigr]\\
\Sel_{S}(K,T_{\mathfrak{P},n})\ar[ru]^-\simeq}
\end{equation}
in which the horizontal map is multiplication by $\pi_\mathfrak{P}^{ne_\mathfrak{P}}$, the vertical map is induced by the canonical projection and the diagonal map is an isomorphism by \cite[Lemma 2.2.6]{howard-bipartite}. Since $\mathfrak{m}^{ne_\mathfrak{P}-1}M=0$, the module $\mathfrak{m}^{ne_\mathfrak{P}-1}\Sel_{S}(K,T_{\mathfrak{P},n})$ is cyclic, isomorphic to $k_{\mathfrak P}$, and the diagonal isomorphism in \eqref{triangle-eq} shows that $\mathfrak{m}_\mathfrak{P}^{ne_\mathfrak{P}-1}\Sel_{S}(K,T_{\mathfrak{P},2n})[\mathfrak{m}_\mathfrak{P}^{ne_\mathfrak{P}}]$ is also cyclic. In particular, it follows that $\mathfrak{m}_\mathfrak{P}^{ne_\mathfrak{P}}N=0$, so the image of $N$ under the vertical arrow in \eqref{triangle-eq} is trivial and the image of this vertical map is free of rank $1$. Since $\kappa_\mathfrak{P}(S)$ is contained in this image, this concludes the discussion on the freeness of the bipartite Euler system \eqref{Bipartite}.  

The next result, which is commonly referred to as ``level raising and rank lowering'', is well known (see, \emph{e.g.}, \cite[\S4.2]{BD-IMC}, \cite[\S7.2.6]{BLV}, \cite[Proposition 5.4]{Zhang}).  

\begin{lemma}\label{ranklowering}
If $\Sel_S(K,T_{\mathfrak{P},1})\neq 0$, then for any integer $n\geq1$ there are infinitely many $n$-admissible primes $\ell\in\mathcal{P}_n$ such that $\dim_{k_\mathfrak{P}}\bigl(\Sel_{S\ell}(K,T_{\mathfrak{P},1})\bigr)=\dim_{k_\mathfrak{P}}\bigl(\Sel_{S}(K,T_{\mathfrak{P},1})\bigr)-1$.
\end{lemma}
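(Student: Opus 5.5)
The plan is the standard Chebotarev argument of \cite[\S4.2]{BD-IMC} and \cite[Proposition 5.4]{Zhang}, applied to the residual Selmer groups of $T_{\mathfrak{P},1}$.

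\textbf{Step 1: a prime with nonzero localization.} Fix a nonzero class $c\in\Sel_S(K,T_{\mathfrak{P},1})$. Since $T_{\mathfrak{P},1}\simeq \overline{T}_\phi\otimes k_\mathfrak{P}$ as $G_K$-modules (the twisting character is trivial modulo $\mathfrak{m}_\mathfrak{P}$ on the pro-$p$ group $G_\infty$), the residual hypotheses of Assumption~\ref{ass} apply. By \cite[Theorem 6.3]{ChHs2} (Howard's Hypothesis 2.3.1 as recalled in \S\ref{secbip}), there are infinitely many $n$-admissible primes $\ell\nmid S$ with $\loc_\ell(c)\neq0$. As $\ell\nmid S$, the class $c$ is unramified at $\ell$, so in fact $v_\ell(c)\neq0$ in $H^1_\mathrm{fin}(K_\ell,T_{\mathfrak{P},1})\simeq k_\mathfrak{P}$. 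This one-dimensionality comes from the splitting \eqref{split} together with \cite[Lemma 1.5]{ChHs2}.

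\textbf{Step 2: compare $\Sel_S$ and $\Sel_{S\ell}$ through the relaxed group.} Let $\Sel^{\ell}$ denote the Selmer group with no condition at $\ell$ and the usual conditions elsewhere. Let $\Sel_{\ell\text{-str}}$ denote the group with the strict (zero) condition at $\ell$. Then
\[
\Sel_{\ell\text{-str}}\subset \Sel_S(K,T_{\mathfrak{P},1}),\ \Sel_{S\ell}(K,T_{\mathfrak{P},1})\subset \Sel^{\ell},
\]
and each of $\Sel_S$ and $\Sel_{S\ell}$ has codimension at most one in $\Sel^\ell$ over $\Sel_{\ell\text{-str}}$, because $H^1(K_\ell,T_{\mathfrak{P},1})$ is two-dimensional with the finite and singular lines as complementary Lagrangians. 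The local conditions are self-dual for the pairing $(\cdot,\cdot)_\mathfrak{P}$ of \S\ref{selcondsec}. Global Poitou--Tate duality, in the form of \cite[Theorem 4.1.7]{MR}, then gives that the image of $\Sel^\ell$ in $H^1(K_\ell,T_{\mathfrak{P},1})$ is a Lagrangian, hence one-dimensional. Since $v_\ell(c)\neq 0$, this image is exactly the finite line $H^1_\mathrm{fin}$. Consequently $\Sel^\ell=\Sel_S$, and the image of $\Sel^\ell$ meets the singular line trivially. It follows that $\Sel_{S\ell}=\Sel_{\ell\text{-str}}=\ker\bigl(v_\ell\colon\Sel_S\to k_\mathfrak{P}\bigr)$. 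This kernel has codimension exactly one, since $v_\ell(c)\neq0$, which gives the stated dimension drop.

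\textbf{Main obstacle.} The delicate step is the Poitou--Tate parity argument: the image of the relaxed Selmer group must be exactly a Lagrangian. This requires the cartesian and self-dual properties of the propagated local conditions, which were recorded in \S\ref{selcondsec}. It also requires checking that at $\ell$ the finite and singular subspaces are each maximal isotropic; this follows because $T_{\mathfrak{P},1}\simeq k(1)\oplus k$ as a module over the decomposition group at $\ell$, by the admissibility condition.

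Finally, to obtain infinitely many such $\ell$, run the Chebotarev argument of Step 1 while excluding any given finite set of primes.
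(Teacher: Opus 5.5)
Your proposal is correct and matches the paper's argument. The paper likewise picks $c\neq0$ in $\Sel_S(K,T_{\mathfrak{P},1})$ and uses the hypothesis verified in \cite[Theorem 6.3]{ChHs2} to choose an admissible $\ell$ with $\loc_\ell(c)\neq0$; it then simply cites \cite[Corollary 2.2.10]{howard-bipartite}, and your Step 2 Poitou--Tate argument (the image of the relaxed Selmer group at $\ell$ is self-orthogonal, hence one-dimensional, hence equal to the finite line) reproves that corollary's content.
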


\begin{proof} Pick $c\in\Sel_{S}(K,T_{\mathfrak{P},1})\smallsetminus\{0\}$ and choose an admissible prime $\ell$ such that $\loc_\ell(c)\neq 0$, then apply \cite[Corollary 2.2.10]{howard-bipartite}. \end{proof}

Recall that $\mathfrak{P}_0$ is the augmentation ideal of $\Lambda$. What follows (Propositions \ref{maximal}, \ref{maximal1}, \ref{maximal2} and Corollary \ref{coro(c)}) contains the proof of the validity of the analogue of condition (c) in \cite[Theorem 3.2.3]{howard-bipartite} in our setting; the verification of this property for elliptic curves is \cite[Lemma 3.6]{BCK} (however, notice the typo in the proof, where $\wp$ should be
replaced by the maximal ideal of the Iwasawa algebra; see also \cite[Theorem 7.5]{CHKLL}). In our proofs, we adapt arguments from \cite[\S7.2.4]{BLV}. 

\begin{proposition} \label{maximal}
Assume $\mu(D)=-1$. For any integer $n\geq 1$, the set 
\[\bigl\{\lambda_\phi(S)\in\Lambda/\p\Lambda\mid S\in\mathcal{P}_n^\mathrm{def}(\phi)\bigr\}\] contains an element whose image in $\Lambda/(\mathfrak{P}_0+\p\Lambda)$ is non-zero. 
\end{proposition}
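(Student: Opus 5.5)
The plan is to reduce the statement, via the second reciprocity law (Theorem \ref{ERL2}), to the non-vanishing modulo $\p$ of a single bottom-layer class in the indefinite family, and to obtain that non-vanishing from \cite{LPV} together with ``rank lowering'' (Lemma \ref{ranklowering}). The approach follows \cite[\S7.2.4]{BLV} and \cite[Lemma 3.6]{BCK}.

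First I would interpret the image of $\lambda_\phi(S)$ in $\Lambda/(\mathfrak{P}_0+\p\Lambda)\simeq k$. It is the specialization at the trivial character, reduced modulo $\p$. By the construction of $\theta_\infty$ in \S\ref{theta}, this is the mod-$\p$ value at $\mathds{1}$ of the $p$-stabilized theta element $\theta_0(\phi_S)$. Likewise, the specialization of $\kappa_\phi(S)$ at $\mathfrak{P}_0$ modulo $\p$ is the class $\mathbf{z}_{0,M,DS}(\phi_S)\in\Sel_S(K,T_{\phi,1})$. By \S\ref{reg-heeg}, this class equals the Euler-factor multiple of the corestriction of $y_1(\phi_S)$, with factor $(1-p^{\frac{k-2}{2}}\sigma_\wp/\alpha_p)(1-p^{\frac{k-2}{2}}\sigma_{\bar\wp}/\alpha_p)$. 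On the $K$-level the $\sigma$'s act trivially, so modulo $\p$ the factor is $(1-a_p(f)^{-1})^2$ up to units, using $p^{\frac{k-2}{2}}\alpha_p^{-1}\equiv a_p(f)^{-1}$ in the relevant normalization; the inert case is analogous. By Assumption \ref{ass1}(3) ($p$ non-anomalous) this factor is a unit. The second reciprocity law, specialized at $\mathfrak{P}_0$ modulo $\p$, then says: for $S\in\mathcal{P}_n^\mathrm{indef}$ and $\ell\nmid S$ $n$-admissible, $\lambda_\phi(S\ell)\not\equiv0$ in $\Lambda/(\mathfrak{P}_0+\p\Lambda)$ if and only if $v_\ell$ of the bottom class of $\kappa_\phi(S)$ modulo $\p$ is non-zero. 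Since $S\ell\in\mathcal{P}_n^\mathrm{def}$, it therefore suffices to find $S\in\mathcal{P}_n^\mathrm{indef}$ whose bottom class $c_S\in\Sel_S(K,T_{\phi,1})$ is non-zero. Given such $S$, \cite[Theorem 6.3]{ChHs2} (Howard's Hypothesis 2.3.1) provides infinitely many $n$-admissible $\ell$ with $\loc_\ell(c_S)\neq0$. Because $\partial_\ell(c_S)=0$, that localization equals $v_\ell(c_S)\neq0$, which finishes the argument.

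The main obstacle is producing such an $S$, and here I would use \cite{LPV}. Its main theorem (the higher-weight Kolyvagin conjecture, valid under Assumption \ref{ass1}) says that Kolyvagin's derived system built from Heegner cycles is non-trivial modulo $\p$. As in Zhang's argument \cite[\S5]{Zhang}, I would combine this with level raising (Theorem \ref{levelraising}) and the parity/structure results of \cite[Proposition 2.2.7]{howard-bipartite}. Starting from $S=1$, apply Lemma \ref{ranklowering} repeatedly, moving within $\mathcal{P}_n^\mathrm{indef}$ by pairs of primes and using Corollary \ref{coro2.5} to pass between parities. This lowers $\dim_k\Sel_S(K,T_{\phi,1})$ (odd in the indefinite case) until it equals $1$. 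At that minimal rank, the non-triviality of the Kolyvagin system from \cite{LPV}, transported through the congruence $T_{\phi_S}\simeq T_{\phi,n}$ and the reciprocity laws, forces $c_S$ to generate $\Sel_S(K,T_{\phi,1})$. The point to check carefully is that the bottom-layer class is genuinely the one controlled by \cite{LPV}, which uses $p$-power conductor $1$ rather than the $p$-stabilized $z_0$; this is exactly where non-anomaly enters, through the unit Euler factor above.
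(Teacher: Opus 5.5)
Your reduction is sound. If Theorem~\ref{ERL2} is specialized at $\mathfrak{P}_0$ and reduced modulo $\p$, then a non-zero bottom class $c_S\in\Sel_S(K,T_{\phi,1})$ for some $S\in\mathcal{P}_n^{\mathrm{indef}}(\phi)$, combined with \cite[Theorem 6.3]{ChHs2}, does give $S\ell\in\mathcal{P}_n^{\mathrm{def}}(\phi)$ with $\lambda_\phi(S\ell)\neq0$ in $\Lambda/(\mathfrak{P}_0+\p\Lambda)$. One correction on the Euler factor relating $\mathbf{z}_0$ to the conductor-$1$ Heegner class: it is $\equiv1\pmod{\p}$ simply because $k\geq4$ and $\alpha_p$ is a unit, so $p^{\frac{k-2}{2}}\alpha_p^{-1}\in p\cO_\p$. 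It is not $(1-a_p(f)^{-1})^2$, and non-anomaly plays no role at that step.

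The gap is the existence of such an $S$, which carries the whole content of the proposition.
\begin{itemize}
\item Since $\mu(D)=-1$, $\phi$ lives on a definite quaternion algebra. Thus $1\in\mathcal{P}_n^{\mathrm{def}}(\phi)$, $\Sel(K,T_{\phi,1})$ has even dimension, and $\phi$ has no Heegner cycles. So you cannot ``start from $S=1$'' inside $\mathcal{P}_n^{\mathrm{indef}}(\phi)$.
\item More seriously, \cite{LPV} concerns derived Heegner classes of a characteristic-zero newform in the indefinite setting satisfying Assumption~\ref{ass1}. Your $c_S$ comes instead from the mod $\p^n$ level-raised eigenform $\phi_S$ on another Shimura curve.
\item To transfer \cite{LPV}, you would need three things. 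First, a characteristic-zero lift $h$ of $\phi_S$ satisfying all hypotheses of \cite{LPV}; big image and $p\nmid c_h$ are not inherited from $f$. Second, an identification of $c_S$ with the reduction of the basic Heegner class of $h$. Third, a Kolyvagin-system structure theorem: a system non-zero modulo $\p$ has non-zero bottom class when the mod-$\p$ Selmer group is one-dimensional.
\item ``The reciprocity laws'' do not supply the third point, and none of the three is provided.
\item The rank-one non-vanishing you need is itself proved, along the lines of \cite{Zhang}, by passing to a definite level with vanishing Selmer group and using the rank-zero input. That input already gives the proposition directly, so the detour is circular in spirit.
\end{itemize}

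The paper's argument uses this rank-zero input directly:
\begin{itemize}
\item A parity argument via \cite{SU}, \cite{FW} shows the mod-$\p$ rank is even.
\item Rank lowering (Lemma~\ref{ranklowering}) by pairs of primes then stays in $\mathcal{P}_n^{\mathrm{def}}(\phi)$ and reaches $S$ with $\Sel_S(K,T_{\phi,1})=0$; if the rank is already $0$, take $S=1$.
\item One lifts $\phi_S$ to $h\in S_k(NS)$ via \cite{Serre-Deligne}. Skinner--Urban and Fouquet--Wan give $L(h/K,k/2)\neq0$, and \cite[Theorem 4.5]{wang} gives $v_\p\bigl(L^{\mathrm{alg}}(h/K)\bigr)=0$.
\item The interpolation formula of \S\ref{L-funct} then shows $\lambda_\phi(S)$ is a unit at the trivial character.
\end{itemize}
No reciprocity law or Kolyvagin conjecture is needed.
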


\begin{proof} 
First of all, observe that the integer $r\defeq\dim_{\kappa_\p}\Sel(K,T_{\phi,1})$ is always even. 
To show this, assume by contradiction that $r$ is odd. Applying recursively 
Lemma \ref{ranklowering}, we can find a sequence of $r$ distinct $1$-admissible primes 
$\ell_1,\dots,\ell_r$ such that if $S=\prod_{i=1}^r\ell_i$ and $\phi_S$ is the level raising of $\phi$ at $S$, 
then $\Sel_S(K,T_{\phi,1})=\Sel(K,T_{\phi_S,1})=0$. However, since $\epsilon_K(NS)=-1$ because $r$ is odd, $\Sel(K,T_{\phi_S,1})\neq 0$ by a result of Skinner--Urban (\cite[Theorem 3.35]{SU}) and Fouquet--Wan (\cite[Corollary 1.9]{FW}), which is a contradiction. 

If $r=0$, then $\Sel(K,A_\phi)$ is trivial, and then $L(f/K,k/2)\neq 0$ by the results of Skinner--Urban and Fouquet--Wan referred to above. By \cite[Theorem 4.5]{wang}, one has $v_\p\bigl(L^\mathrm{alg}(f/K)\bigr)=0$, so $L^\mathrm{alg}(f/K)$ is a  $\p$-adic unit. Comparing with the interpolation formulas in \S\ref{L-funct}, we conclude that $\lambda_\phi$ is also a $\mathfrak{p}$-adic unit, and the proposition follows with $S=1$. 
%$L^\mathrm{alg}(f/K)\doteq\mathds{1}(\lambda_\phi\lambda_\phi^*)$ is a $\p$-adic unit (here the symbol $\doteq$ indicates that equality holds up to comparatively less important factors that can be explicitly described). Thus, 

If $r\geq 2$, then, by Lemma \ref{ranklowering}  there exist $n$-admissible primes $\ell_1$ and $\ell_2$ and such that $\dim_{\kappa_{\p}}\Sel(K,T_{\phi_{\ell_1\ell_2},1})=r-2$. Iterating this process if necessary, one shows that there is an $n$-admissible integer $S\in\mathcal{P}_n^\mathrm{def}$ such that $\Sel_S(K,T_{\phi,1})=\Sel(K,T_{\phi_S,1})=0$. 
The quaternionic form $\phi_S$ modulo $\mathfrak{p}$ can be lifted in characteristic zero 
to a modular form $h\in S_k(NS)$ of level $NS$ and weight $k$ by \cite[Lemma 6.10]{Serre-Deligne}. As in the $r=0$ case, $L(h/K,k/2)\neq0$ and $v_\p\bigl(L^\mathrm{alg}(h/K)\bigr)=0$ and the interpolation formulas it follows that $\lambda_\phi (S)$ has non-zero image in $\Lambda/(\mathfrak{P}_0+\p\Lambda)$, concluding the proof. \end{proof}

\begin{proposition}\label{maximal1} Assume $\mu(D)=1$. 
For any integer $n\geq 1$, the set \[\bigl\{\lambda_\phi(S)\in \Lambda/\p\Lambda\mid S\in\mathcal{P}_n^\mathrm{def}(\phi)\bigr\}\] contains an element whose image in $\Lambda\big/(\mathfrak{P}_0+\p\Lambda)$ is non-zero.
\end{proposition}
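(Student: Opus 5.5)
The plan is to reduce to the definite case already handled in Proposition \ref{maximal}, by raising the level at one admissible prime. Since $\mu(D)=1$, the Selmer group $\Sel(K,T_{\phi,1})$ has a parity opposite to the one in Proposition \ref{maximal}. Let $r\defeq\dim_{k_\p}\Sel(K,T_{\phi,1})$.

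First I will show that $r$ is odd, by the same contradiction argument as before. If $r$ were even, Lemma \ref{ranklowering} applied $r$ times would give $S=\ell_1\cdots\ell_r$ with $\Sel(K,T_{\phi_S,1})=0$. But $\epsilon_K(NS)=-1$, so the results of Skinner--Urban and Fouquet--Wan force this group to be non-zero, which is a contradiction. In particular $r\geq1$.

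Next I apply Lemma \ref{ranklowering} with $S=1$. This gives an $n$-admissible prime $\ell_1$ (take $n$ large, or at least the given $n$) such that $\dim\Sel_{\ell_1}(K,T_{\phi,1})=r-1$, which is even. If $r-1\geq2$, I apply the lemma twice more at a time, exactly as in the $r\geq2$ step of Proposition \ref{maximal}. Each time I adjoin two $n$-admissible primes and drop the dimension by $2$. This produces $S\in\mathcal{P}_n^\mathrm{def}(\phi)$, with an odd number of prime factors so that $\mu(DS)=-1$, such that $\Sel_S(K,T_{\phi,1})=\Sel(K,T_{\phi_S,1})=0$. Here Theorem \ref{levelraising} identifies $\Sel_S(K,T_{\phi,1})$ with $\Sel(K,T_{\phi_S,1})$, via $T_{\phi_S}\simeq T_{\phi,n}$.

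Then I conclude as in Proposition \ref{maximal}. The form $\phi_S$ modulo $\p$ lifts to a characteristic-zero newform $h$ of level $NS$, which is definite relative to $K$, by \cite[Lemma 6.10]{Serre-Deligne}. The vanishing of the residual Selmer group gives $\Sel(K,A_h)=0$. Skinner--Urban and Fouquet--Wan then give $L(h/K,k/2)\neq0$, and \cite[Theorem 4.5]{wang} gives that $L^\mathrm{alg}(h/K)$ is a $\p$-adic unit. By the interpolation formula of \S\ref{L-funct} at the trivial character, the image of $\lambda_\phi(S)\lambda_\phi(S)^*$ in $\Lambda/(\mathfrak{P}_0+\p\Lambda)$ is non-zero. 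The Euler-type factors $E_p$ and $C_0$ are units, since $p$ is non-anomalous and $p\nmid D_K$, and $p>k+1$ handles the factor $\Gamma(k/2)^2$. Hence $\lambda_\phi(S)$ itself has non-zero image.

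The step I expect to be the main obstacle is justifying the identification of the image of $\lambda_\phi(S)$ in the residue field with the algebraic $L$-value of $h$ up to units. This requires that $\lambda_\phi(S)$ mod $\p$ agree up to a unit with the theta element of $h$ reduced mod $\p$. That agreement should follow from multiplicity one, i.e. the freeness of rank one in \eqref{RL11}, together with the Gross period normalization. I will also need to check that the parity/sign bookkeeping $\epsilon_K(NS)$ matches for an odd number of added primes.
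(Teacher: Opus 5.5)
Your proposal is correct and is essentially the paper's proof: lower the residual Selmer rank with Lemma \ref{ranklowering} until $\Sel_S(K,T_{\phi,1})=0$ for some $S\in\mathcal{P}_n^{\mathrm{def}}(\phi)$ with an odd number of prime factors, then conclude via \cite[Theorem 4.5]{wang} and the interpolation formula of \S\ref{L-funct}. The paper adds the even block of primes first and the single prime last, which is immaterial.

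The remaining differences are minor. The paper gets the oddness of $r$ by citing \cite[Theorem 3.15]{LPV}, whereas you rerun the Skinner--Urban/Fouquet--Wan contradiction from Proposition \ref{maximal}; your version also works, since $\epsilon_K(NS)=-1$ when $S$ has an even number of prime factors. Also, $E_p$ at the trivial character is a unit simply because $k\geq 4$ gives $p^{\frac{k-2}{2}}/\alpha_p\equiv 0\pmod{p}$, not because $p$ is non-anomalous.
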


\begin{proof} We want to show that there exists a level raising $\phi_S$ of $f$ at a suitable $S\in \mathcal{P}_n^\mathrm{def}$ such that the reduction $\overline{\lambda_\phi(S)}\in\Lambda/(\mathfrak{P}_0+\p\Lambda)$ is non-zero. We argue by induction on the integer $r\defeq\dim_{\kappa_\p}\Sel(K,T_{\phi,1})$, with an argument similar to the one in the proof of Proposition \ref{maximal}. By \cite[Theorem 3.15]{LPV}, we can assume that $r$ is odd. 

Suppose $r=1$. Choose an $n$-admissible prime $\ell$ as in Lemma \ref{ranklowering}, 
so that $\Sel_\ell(K,T_{\phi,1})=0$. Then, as before, \cite[Theorem 4.5]{wang} 
show that $v_\p\bigl(L^\mathrm{alg}(f/K)\bigr)=0$, so $L^\mathrm{alg}(f/K)$ is a  $\p$-adic unit and comparing with the interpolation formulas in \S\ref{L-funct}, we conclude that $\lambda_\phi(\ell)$ is also a $\mathfrak{p}$-adic unit.

Now suppose $r\geq3$. Applying Lemma \ref{ranklowering} recursively, one finds $S\in \mathcal{P}_{n}$ that is a product of an even number of distinct primes such that $\Sel_S(K,T_{\phi,1})=\Sel(K,T_{\phi_S,1})$ has dimension $1$. Choose $\ell$ as in the case $r=1$, and then $\lambda_\phi(S\ell)$ does the job. \end{proof}

Let $\F\defeq\cO_\p/\p\cO_\p$ be the residue field of $\cO_\p$ and let $\pi$ be a uniformizer of $\cO_\p$. Fix an isomorphism of $\mathcal{O}_\mathfrak{p}$-algebras $\Lambda\simeq\mathcal{O}_\mathfrak{p}[\![T]\!]$ by choosing a topological generator $\gamma$ of $G_\infty$ and sending $\gamma$ to $1+T$. Propositions \ref{maximal1} and \ref{maximal} show that there exists $\lambda_\phi(S)\in \F\llbracket T\rrbracket$ with $S\in\mathcal{P}_1^\mathrm{def}$ such that $\lambda_\phi(S)=a+Tb$ for some $a\in \F^\times$ and $b\in \F\llbracket T\rrbracket$.

Now let $\mathfrak{P}$ be a height $1$ prime ideal of $\Lambda$, with residue field $k_\mathfrak{P}\defeq\cO_\mathfrak{P}/\mathfrak{m}_\mathfrak{P}$. Since $\mathfrak{P}+\p\Lambda$ is contained in the maximal ideal $\mathfrak{m}_\Lambda=\mathfrak{P}_0+\p\Lambda$ of $\Lambda$, the class $\lambda_\phi(S)$ is clearly non-zero in $\Lambda\big/(\mathfrak{P}+\p\Lambda)$. However, a stronger statement holds. 

\begin{proposition}\label{maximal2}
Let $\mathfrak{P}\neq\p\Lambda$ be a height $1$ prime ideal of $\Lambda$. The image of $\lambda_\phi(S)\in\F\llbracket T\rrbracket$ in $k_\mathfrak{P}$ is non-zero.   
\end{proposition}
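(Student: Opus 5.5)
The plan is to use the fact that $\lambda_\phi(S)=a+Tb$ with $a\in\F^\times$, which was obtained just before the statement from Propositions \ref{maximal} and \ref{maximal1}: such an element is a unit in $\F\llbracket T\rrbracket$. I will then show that the map $\Lambda\to\cO_\mathfrak{P}\to k_\mathfrak{P}$ factors through $\Lambda/\p\Lambda\simeq\F\llbracket T\rrbracket$, so that the image of $\lambda_\phi(S)$ is the image of a unit under a ring homomorphism.

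First step: describe the map $\Lambda\to k_\mathfrak{P}$. Since $\mathfrak{P}\neq\p\Lambda$ is a height $1$ prime, $\Lambda/\mathfrak{P}$ is a one-dimensional local domain, finite over $\cO_\p$. Its integral closure $\cO_\mathfrak{P}$ is a DVR, and $\Lambda\to\cO_\mathfrak{P}$ is a local homomorphism. Indeed, the image of $\mathfrak{m}_\Lambda=(\pi,T)$ lands in $\mathfrak{m}_\mathfrak{P}$: $\pi$ clearly does, and $T\mapsto\gamma-1$ is topologically nilpotent in $\cO_\mathfrak{P}$, because $\gamma$ maps to a $1$-unit, namely a $p$-power-order-type character value in a pro-$p$ group. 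Hence the composite $\Lambda\to k_\mathfrak{P}$ kills $\mathfrak{m}_\Lambda$, and in particular kills $\p\Lambda$. It therefore factors as $\Lambda/\p\Lambda=\F\llbracket T\rrbracket\to\F\to k_\mathfrak{P}$, where the middle map is reduction modulo $T$.

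Second step: compute the image. It is simply the image of $a$ under the field embedding $\F\hookrightarrow k_\mathfrak{P}$, and this is non-zero because $a\in\F^\times$. To be careful, I would make the statement precise in the following sense. The proposition concerns the image of $\lambda_\phi(S)$ regarded in $\cO_\mathfrak{P}/\mathfrak{m}_\mathfrak{P}$ via reduction of a lift in $\Lambda$ (or in $\Lambda/\p^{2n}$). Any two lifts differ by an element of $\p\Lambda$, which maps into $\mathfrak{m}_\mathfrak{P}$, so the image is well defined.

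Main obstacle: justifying that $T$ maps into $\mathfrak{m}_\mathfrak{P}$, that is, that the homomorphism is local. I would argue via compactness: $\cO_\mathfrak{P}$ is finite over the image of $\Lambda$, the image of $\Lambda$ is local because it is a quotient of a local ring, and an integral extension of a local ring maps its maximal ideal into every maximal ideal lying over it. Equivalently, the maximal ideal of $\cO_\mathfrak{P}$ contracts to a maximal ideal of $\Lambda/\mathfrak{P}$, which must be $\mathfrak{m}_\Lambda/\mathfrak{P}$.
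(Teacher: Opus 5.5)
Your argument is correct and is essentially the paper's. The paper shows that $T$ lands in $\mathfrak{m}_\mathfrak{P}$ by sending $T$ to a root $\xi$ of the distinguished polynomial generating $\mathfrak{P}$, so the image is $a \bmod \mathfrak{m}_\mathfrak{P}\neq 0$; you get the same locality from integrality of $\cO_\mathfrak{P}$ over the local ring $\Lambda/\mathfrak{P}$, which is fine, though your aside about $\gamma$ having ``$p$-power-order-type'' image is imprecise. In fact your opening observation already suffices without locality: $\Lambda\to k_\mathfrak{P}$ kills $\pi$, so it factors through $\F\llbracket T\rrbracket$, where $a+Tb$ is a unit, and a unit cannot map to $0$ in a field.
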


\begin{proof} The ideal $\mathfrak{P}$ corresponds under the isomorphism $\Lambda\simeq\cO_\p\llbracket T\rrbracket$ to an ideal of $\cO_\p\llbracket T\rrbracket$ generated by a distinguished polynomial, say $G$. Thus, all the roots of $G$ belong to the maximal ideal of the ring of integers of $\overline{\Q}_p$; choose any such root $\xi$, so that the character $\chi:\Lambda\rightarrow\overline\Q_p$ with $\chi(T)\defeq\xi$ induces an isomorphism between $\Lambda/\mathfrak{P}$ and a subring of $\Q_p(\xi)$ whose integral closure is $\cO_\mathfrak{P}$. The map $\chi$ induces an injection $\bar\chi:\Lambda/(\mathfrak{P}+\p\Lambda)\hookrightarrow\cO_\mathfrak{P}/\p\cO_\mathfrak{P}$ and $\lambda_\phi(S)=a+Tb$ is taken to $\bar\chi\bigl(\lambda_\phi(S)\bigr)=a+\xi\bar\chi(b)\pmod{\p}$. Since $\xi$ belongs to the maximal ideal $\mathfrak{m}_\mathfrak{P}$ of $\cO_\mathfrak{P}$, we see that $\chi\bigl(\lambda_\phi(S)\bigr)\equiv a\pmod{\mathfrak{m}_\mathfrak{P}}$ is non-zero in $k_\mathfrak{P}$. \end{proof}

\begin{corollary}\label{coro(c)}
For any integer $n\geq 1$ and any height $1$ prime ideal $\mathfrak{P}$ of $\Lambda$, the set 
\[
\bigl\{\lambda_\phi(S)\mid S\in \mathcal{P}_n^\mathrm{def}(\phi)\bigr\}
\] 
contains an element having non-trivial image in $k_\mathfrak{P}$ if $\mathfrak{P}\neq\p\Lambda$ or in $\Lambda/\mathfrak P$ if $\mathfrak{P}=\p\Lambda$.
\end{corollary}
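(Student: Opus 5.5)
The corollary splits into the two cases $\mathfrak{P}\neq\p\Lambda$ and $\mathfrak{P}=\p\Lambda$, and in both I will use the element supplied by Propositions \ref{maximal} and \ref{maximal1}. Fix $n\geq1$. Apply Proposition \ref{maximal} if $\mu(D)=-1$, and Proposition \ref{maximal1} if $\mu(D)=1$. In either case this gives $S\in\mathcal{P}_n^\mathrm{def}(\phi)$ such that the reduction of $\lambda_\phi(S)$ to $\Lambda/\p\Lambda\simeq\F\llbracket T\rrbracket$ has non-zero image in $\Lambda/(\mathfrak{P}_0+\p\Lambda)=\F$. In other words, $\lambda_\phi(S)\equiv a+Tb\pmod{\p}$ with $a\in\F^\times$ and $b\in\F\llbracket T\rrbracket$.

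Now take $\mathfrak{P}\neq\p\Lambda$. I will run the argument of Proposition \ref{maximal2} with this $S$; note that it only uses the shape $a+Tb$ with $a\neq0$, not any particular choice of $S$. Write $\mathfrak{P}=(G)$ with $G$ a distinguished polynomial (Weierstrass preparation). Let $\xi$ be a root of $G$, which lies in the maximal ideal of $\overline{\Z}_p$, and let $\chi$ be the associated character $T\mapsto\xi$, which identifies $\Lambda/\mathfrak{P}$ with an order in $\cO_\mathfrak{P}$. Then
\[
\chi\bigl(\lambda_\phi(S)\bigr)\equiv a+\xi\,\chi(b)\equiv a\not\equiv0\pmod{\mathfrak{m}_\mathfrak{P}},
\]
so the image of $\lambda_\phi(S)$ in $k_\mathfrak{P}$ is non-zero. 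One point needs checking here: $\lambda_\phi(S)$ is a priori only defined modulo $\p^n$, that is, in $\Lambda/\p^n\Lambda$. Its image in $k_\mathfrak{P}$ is nevertheless well defined, because $\p\cO_\mathfrak{P}\subset\mathfrak{m}_\mathfrak{P}$.

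Next take $\mathfrak{P}=\p\Lambda$, so $\Lambda/\mathfrak{P}=\F\llbracket T\rrbracket$. The image of $\lambda_\phi(S)$ there is $a+Tb$, and this is non-zero since $a\neq0$. In fact it is a unit of $\F\llbracket T\rrbracket$, which is more than is needed.

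I expect no real obstacle, since everything reduces to Propositions \ref{maximal}, \ref{maximal1} and \ref{maximal2}. The only care required is bookkeeping: the level of admissibility. The bipartite Euler system of \eqref{Bipartite} is indexed by $\mathcal{P}_{2n}$, so one should apply the propositions with $2n$ (or any $m\geq n$) in place of $n$. This costs nothing, because those propositions hold for every $n$ and $\mathcal{P}_{m}^\mathrm{def}(\phi)\subset\mathcal{P}_n^\mathrm{def}(\phi)$ for $m\geq n$.
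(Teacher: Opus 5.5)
Your proposal is correct and follows the paper's own argument. The case $\mathfrak{P}\neq\p\Lambda$ is exactly Proposition~\ref{maximal2}, applied to the element $a+Tb$ with $a\in\F^\times$ produced at level $n$ by Propositions~\ref{maximal} and~\ref{maximal1}, and the case $\mathfrak{P}=\p\Lambda$ is read off directly from those two propositions, which together cover both parities of $\mu(D)$. Your further checks are correct bookkeeping that the paper leaves implicit: the image in $k_\mathfrak{P}$ is well defined even though $\lambda_\phi(S)$ is only known modulo $\p^n$, and $\mathcal{P}_m^{\mathrm{def}}(\phi)\subset\mathcal{P}_n^{\mathrm{def}}(\phi)$ for $m\geq n$.
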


\begin{proof} For $\mathfrak{P}\neq\p\Lambda$ the claim follows from Proposition \ref{maximal2}, while the $\mathfrak{P}=\mathfrak{p}\Lambda$ case is clear from Proposition \ref{maximal}. \end{proof}

We recall the following algebraic notion, which was introduced in \cite[Section 2]{howard-bipartite}.
\begin{definition} \label{index-def}
Let $(R,\mathfrak m)$ be a principal artinian local ring, let $B$ be an $R$-module and let $b\in B$. The \emph{index of divisibility of $b$ in $B$} is 
\[ \mathrm{ind}(b,B)\defeq\sup\bigl\{i\in\N\mid b\in\mathfrak m^iB\bigr\}\in\N\cup\{\infty\}. \]
\end{definition}
For each pair of integers $k,j$ with $1\leq k\leq j$ and each height $1$ prime 
ideal $\mathfrak P$ of $\Lambda$ with $\mathfrak{P}\neq\mathfrak{p}\Lambda$, set 
\[
\delta_\mathfrak{P}(k,j)\defeq\min\Bigl\{\mathrm{ind}\bigl(\lambda_\phi(S),\cO_\mathfrak{P}/\mathfrak{p}^k\cO_\mathfrak{P}\bigr)\mid S\in\mathcal{P}_j^\mathrm{indef}\Bigr\}\leq \infty;
\]
moreover, using the fact that the sequence $\bigl(\delta_\mathfrak{P}(k,j)\bigr)_{j\geq1}$ is non-decreasing, define 
\begin{equation}\label{deltaP}
\delta_\mathfrak{P}(k)\defeq\lim_{j\rightarrow\infty}\delta_\mathfrak{P}(k,j).
\end{equation}

\begin{corollary} \label{corodelta}
$\delta_\mathfrak{P}(k,j)=0$ for all $j\geq k$ and all $k\geq 1$, whence $\delta_\mathfrak{P}(k)=0$ for all $k\geq 1$.
\end{corollary}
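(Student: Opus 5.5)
The plan is to deduce this directly from Corollary \ref{coro(c)} and the definition of the index of divisibility (Definition \ref{index-def}). Since $\mathrm{ind}(\cdot,\cdot)\geq0$, it suffices, for fixed $k\geq1$ and $j\geq k$, to produce one $S$ in the index set of $\delta_\mathfrak{P}(k,j)$ with $\mathrm{ind}\bigl(\lambda_\phi(S),\cO_\mathfrak{P}/\p^k\cO_\mathfrak{P}\bigr)=0$. Here $\lambda_\phi(S)$ is the specialization at $\mathfrak{P}$, i.e.\ the element $\lambda_\mathfrak{P}(S)$ of \eqref{Bipartite}, reduced modulo $\p^k$; this reduction is legitimate for $j\geq k$ because $j$-admissible primes are $k$-admissible and the level-raised forms $\phi_S$ modulo $\pi^j$ reduce to those modulo $\pi^k$.

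First, apply Corollary \ref{coro(c)} with $n=j$ to get $S\in\mathcal{P}_j^{\mathrm{def}}(\phi)$ whose $\lambda_\phi(S)$ has non-zero image in $k_\mathfrak{P}$ (as $\mathfrak{P}\neq\p\Lambda$). Note that the index set in the definition of $\delta_\mathfrak{P}(k,j)$ must be read as the set of $S$ for which $\lambda_\phi(S)$ is defined, namely $\mathcal{P}^{\mathrm{def}}_j$; the superscript ``indef'' is only a labeling convention, since in the paper's bipartite setup the $\lambda$'s are indexed by $\mathcal{P}^{\mathrm{def}}$. Second, observe that $\cO_\mathfrak{P}/\p^k\cO_\mathfrak{P}$ is a principal artinian local ring with maximal ideal $\mathfrak{m}_\mathfrak{P}$. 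An element $b$ of this ring lies in $\mathfrak{m}_\mathfrak{P}\cdot(\cO_\mathfrak{P}/\p^k)$ exactly when its image in $k_\mathfrak{P}$ vanishes. The non-vanishing obtained in the first step therefore gives $\lambda_\phi(S)\notin\mathfrak{m}_\mathfrak{P}(\cO_\mathfrak{P}/\p^k)$, hence $\mathrm{ind}=0$ and $\delta_\mathfrak{P}(k,j)=0$. Taking the limit in \eqref{deltaP} then gives $\delta_\mathfrak{P}(k)=0$.

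The one point needing care is compatibility: the image in $k_\mathfrak{P}$ of the element $\lambda_\phi(S)\in\Lambda/\p\Lambda$ from Corollary \ref{coro(c)} must agree with the residue of the specialization of $\lambda_\phi(S)$ in $\cO_\mathfrak{P}/\p^k\cO_\mathfrak{P}$. I expect this to follow by checking that the reduction maps $\Lambda/\p^{j}\Lambda\to\cO_\mathfrak{P}/\p^k\cO_\mathfrak{P}\to k_\mathfrak{P}$ commute with $\Lambda\to\Lambda/(\mathfrak{P}+\p\Lambda)\hookrightarrow\cO_\mathfrak{P}/\p\cO_\mathfrak{P}\to k_\mathfrak{P}$. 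This is exactly the factorization through $\bar\chi$ used in the proof of Proposition \ref{maximal2}, so no new input is needed.
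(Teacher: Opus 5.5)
Your argument is correct and matches the paper's, which simply says the corollary is an immediate consequence of Corollary \ref{coro(c)}. Your added details are all sound: the reading of the index set as $\mathcal{P}_j^{\mathrm{def}}$ (where the $\lambda_\phi(S)$ actually live) is the right interpretation of the paper's superscript, and an element of the local ring $\cO_\mathfrak{P}/\p^k\cO_\mathfrak{P}$ with non-zero image in $k_\mathfrak{P}$ is a unit, so its index of divisibility is $0$.
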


\begin{proof} This is an immediate consequence of Corollary \ref{coro(c)}. \end{proof}

\subsubsection{Results on Selmer groups}

We collect some results on the Selmer groups introduced before that will be used in our proof of the main conjectures. 

\begin{proposition}\label{len-def}
If $\mu(D)=-1$ and $\mathcal{L}_\p(f)$ is non-zero in $\cO_\mathfrak{P}$, then
\[ \length_{\cO_\mathfrak{P}}\bigl(\Sel(K,A_\mathfrak{P})\bigr)=\length_{\cO_\mathfrak{P}}\bigl(\cO_\mathfrak{P}/\mathcal{L}_\p(f)\cO_\mathfrak{P}\bigr). 
\]
\end{proposition}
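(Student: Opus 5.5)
We prove the statement with Howard's bipartite Euler system machinery. Since $\mathcal{L}_\p(f)=\lambda_\phi\lambda_\phi^*$, and the involution $*$ preserves the length of the image in $\cO_\mathfrak{P}$ up to replacing $\mathfrak{P}$ by $\mathfrak{P}^*$ (a harmless symmetry, since $\mathfrak{P}$ and its twist give Selmer groups of equal length by the pairing $(\cdot,\cdot)_\mathfrak{P}$, which is $\tau$-twisted), it suffices to show
\[ \length_{\cO_\mathfrak{P}}\bigl(\Sel(K,A_\mathfrak{P})\bigr)=2\,\length_{\cO_\mathfrak{P}}\bigl(\cO_\mathfrak{P}/\lambda_\phi\cO_\mathfrak{P}\bigr), \]
where $\lambda_\phi$ also denotes its image in $\cO_\mathfrak{P}$.

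First, since $\mathcal{L}_\p(f)\neq0$ in $\cO_\mathfrak{P}$, $\lambda_\phi$ has finite index of divisibility $t$ in $\cO_\mathfrak{P}$. For $n$ large compared with $t$ (say $ne_\mathfrak{P}>t$), the reduction of $\lambda_\phi=\lambda_\phi(1)$ in $\cO_\mathfrak{P}/\p^n\cO_\mathfrak{P}$ is the value at $S=1\in\mathcal{P}_{2n}^\mathrm{def}$ of the bipartite Euler system \eqref{Bipartite}. We have already verified the hypotheses of \cite[Theorem 2.5.1 / Theorem 3.2.3]{howard-bipartite} for $T_{\mathfrak{P},n}$. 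These are the following.
\begin{itemize}
\item The Selmer structure is self-dual and cartesian, by \S\ref{selcondsec}.
\item \cite[Hypothesis 2.3.1]{howard-bipartite} holds, by \cite[Theorem 6.3]{ChHs2}.
\item The reciprocity laws hold, by Theorems \ref{ERL1} and \ref{ERL2}.
\item The system is free.
\item The non-vanishing condition (c) holds, by Corollaries \ref{coro(c)} and \ref{corodelta}, which give $\delta_\mathfrak{P}(k)=0$.
\end{itemize}
Howard's structure theorem for the definite (``even'') part then gives, for $S\in\mathcal{P}^\mathrm{def}$, that $\Sel_S(K,T_{\mathfrak{P},n})\simeq M_S\oplus M_S$ with
\[ \length(M_S)=\mathrm{ind}\bigl(\lambda_\mathfrak{P}(S),\cO_\mathfrak{P}/\p^n\cO_\mathfrak{P}\bigr)\qquad\text{whenever this index is } <ne_\mathfrak{P}. \]
The equality, and not merely the inequality, uses the vanishing $\delta_\mathfrak{P}=0$. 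Applying this with $S=1$ yields
\[ \length\bigl(\Sel(K,T_{\mathfrak{P},n})\bigr)=2t. \]

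Next, we pass from $T_{\mathfrak{P},n}$ to $A_\mathfrak{P}$. Since the index is $<ne_\mathfrak{P}$, the module $\Sel(K,T_{\mathfrak{P},n})$ is killed by $\mathfrak{m}^{t}$ and has no free $R_n$-summand. By \cite[Lemma 2.2.6/Proposition 2.2.7]{howard-bipartite}, combined with the identification $T_{\mathfrak{P},n}\simeq A_\mathfrak{P}[\p^n]$ and the propagation of local conditions (the Mazur--Rubin comparison \cite[Lemma 3.5.3]{MR}), we get
\[ \Sel(K,A_\mathfrak{P}[\p^n])\simeq\Sel(K,A_\mathfrak{P})[\p^n]. \]
This holds because $H^0(K,A_\mathfrak{P})$ is trivial by the residual irreducibility in Assumption \ref{ass}. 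Letting $n\to\infty$, the modules $\Sel(K,A_\mathfrak{P})[\p^n]$ stabilize at length $2t$. Hence $\Sel(K,A_\mathfrak{P})$ is finite of length $2t=\length_{\cO_\mathfrak{P}}(\cO_\mathfrak{P}/\mathcal{L}_\p(f)\cO_\mathfrak{P})$.

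The main obstacle is the second step: checking carefully that the Selmer conditions on $T_{\mathfrak{P},n}$ defined by propagation from $V_\mathfrak{P}$ agree with those on $A_\mathfrak{P}[\p^n]$ obtained as inverse images, so that the comparison with $\Sel(K,A_\mathfrak{P})[\p^n]$ is exact up to bounded error independent of $n$. Any error term must either vanish or be handled by Howard's ``bounded error'' version of the argument. A secondary point is justifying that $\lambda_\phi$ and $\lambda_\phi^*$ have the same valuation in $\cO_\mathfrak{P}$, which I would deduce from the symmetry $(x^g,y^{\tau g\tau^{-1}})_\mathfrak{P}=(x,y)^g$ applied to the level-raised forms.
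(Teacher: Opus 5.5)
Your proposal is correct and follows essentially the paper's argument. Both apply \cite[Theorem 2.5.1]{howard-bipartite} to the free bipartite Euler system over $\cO_\mathfrak{P}/\p^n\cO_\mathfrak{P}$ with $\delta_\mathfrak{P}=0$ from Corollary \ref{corodelta}, use $\Sel(K,T_{\mathfrak{P},n})\simeq M\oplus M\simeq\Sel(K,A_\mathfrak{P})[\p^n]$, and conclude $\Sel(K,A_\mathfrak{P})\simeq M\oplus M$ because the index stays below $ne_\mathfrak{P}$; the paper does this at the single minimal level $k$ where $\mathcal{L}_\p(f)$ becomes non-zero, rather than letting $n\to\infty$. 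The one place you go beyond the paper is the explicit passage from $\lambda_\phi$ to $\mathcal{L}_\p(f)=\lambda_\phi\lambda_\phi^*$, which the paper leaves implicit when it writes $\mathrm{ind}\bigl(\overline{\mathcal{L}_\p(f)}\bigr)=2\length(M)$; the cleanest justification is that complex conjugation identifies $\Sel(K,A_\mathfrak{P})$ with $\Sel(K,A_{\mathfrak{P}^\iota})$, so running your argument at both $\mathfrak{P}$ and $\mathfrak{P}^\iota$ gives $\mathrm{ind}(\lambda_\phi^*)=\mathrm{ind}(\lambda_\phi)$ in $\cO_\mathfrak{P}$.
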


\begin{proof} Let $k\geq1$ be the smallest positive integer such that $\mathcal{L}_\p(f)$ has non-zero image in $\cO_\mathfrak{P}/\p^k\cO_\mathfrak{P}$ and set 
$T_{\mathfrak{P},k}\defeq T_\mathfrak{P}/\p^kT_\mathfrak{P}$. 
The results recalled in \S\ref{secbip} show that the families \eqref{Bipartite} form a free bipartite Euler system for $T_{\mathfrak{P},k}$.
By \cite[Proposition 2.2.7]{howard-bipartite}, we know that
\begin{equation}\label{structure}\Sel(K,T_{\mathfrak{P},k})\simeq M\oplus M\simeq \Sel(K,A_\mathfrak{P})[\p^k]
\end{equation}
for some finite $\cO_\mathfrak{P}/\p^k\cO_\mathfrak{P}$-module $M$.
%(notice that $\mathfrak{N}^\mathrm{even}$ in \cite{howard-bipartite} corresponds to  $\mathcal{P}_n^\mathrm{def}(\phi)$ in the present paper). 
Let $\overline{\mathcal{L}_\p(f)}$ denotes the image of $\mathcal{L}_\p(f)$ in $\cO_\mathfrak{P}/\p^k\cO_\mathfrak{P}$. 
Applying \cite[Theorem 2.5.1]{howard-bipartite} with $R=\cO_\mathfrak{P}/\p^k\cO_\mathfrak{P}$, $T=T_{\mathfrak{P},k}$ and $\mathfrak{n}=1$, and noticing that the integer $\delta $ appearing in \emph{loc. cit.} is $\delta_\mathfrak{P}(k)$ introduced in \eqref{deltaP}, which is zero by Corollary \ref{corodelta}, 
gives the equality
\begin{equation} \label{lengths}
\mathrm{ind}\bigl(\overline{\mathcal{L}_\p(f)}, \cO_\mathfrak{P}/\p^k\cO_\mathfrak{P}\bigr)=2\cdot\length_{\cO_\mathfrak{P}}(M).
\end{equation}
Since $\overline{\mathcal{L}_\p(f)}\not=0$, the left hand side of \eqref{lengths} is strictly smaller than $k$, and hence by \eqref{structure} we must have 
\begin{equation}\label{selstructure}
M\oplus M\simeq \Sel(K,A_\mathfrak{P}).
\end{equation}
It is clear from Definition \ref{index-def} that $\mathrm{ind}(b,R)=\length_R(R/bR)$ for every non zero 
$b\in R$. Therefore, there is an equality
\begin{equation}\label{index}
\mathrm{ind}\bigl(\overline{\mathcal{L}_\p(f)}, \cO_\mathfrak{P}/\p^k\cO_\mathfrak{P}\bigr)=\length_{\cO_\mathfrak{P}}\bigl(\cO_\mathfrak{P}/\mathcal{L}_\p(f)\cO_\mathfrak{P}\bigr).
\end{equation} 
Combining \eqref{lengths}, \eqref{selstructure} and \eqref{index} completes the proof of the proposition. \end{proof}

The next result deals with the indefinite case.

\begin{proposition}\label{len-indef}
Assume that $\mu(D)=1$ and $\kappa_\infty$ is non-zero in $\Sel(K,T_\mathfrak{P})$.
\begin{enumerate}
\item $\Sel(K,T_\mathfrak{P})$ is a free $\cO_\mathfrak{P}$-module of rank $1$;
\item $\Sel(K,A_\mathfrak{P})$ has $\cO_\mathfrak{P}$-corank $1$;
\item If $\Sel(K,A_\mathfrak{P}{)}_\divv$ is the maximal $\cO_\mathfrak{P}$-divisible submodule of $\Sel(K,A_\mathfrak{P})$, then there is an equality
\[ 
\length_{\cO_\mathfrak{P}}\bigl(\Sel(K,A_\mathfrak{P})/\Sel(K,A_\mathfrak{P}{)}_\divv\bigr)=2\cdot\length_{\cO_\mathfrak{P}}\bigl(\Sel(K,T_\mathfrak{P})/\cO_\mathfrak{P}\cdot\kappa_\infty\bigr).
\]
\end{enumerate}
\end{proposition}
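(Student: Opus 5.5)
This is the indefinite analogue of Proposition \ref{len-def}, and I would prove it the same way, using Howard's structure theory for free bipartite Euler systems of odd type. The relevant result is now \cite[Theorem 2.5.1]{howard-bipartite} in the indefinite case, with $\mathfrak{n}=1$ and the class $\kappa_\mathfrak{P}(1)$ in place of $\lambda_\mathfrak{P}(1)$.

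First I fix $\mathfrak{P}\neq\p\Lambda$ as in the statement. Since $\kappa_\infty$ has non-zero image in $\Sel(K,T_\mathfrak{P})$, I choose $k\geq1$ larger than the index of divisibility of that image, so that its reduction $\kappa_\mathfrak{P}(1)\in\Sel(K,T_{\mathfrak{P},k})$ is non-zero. By \S\ref{secbip}, the families \eqref{Bipartite} form a free bipartite Euler system of odd type for $T_{\mathfrak{P},k}$, and the self-duality of the Selmer structure holds. Then \cite[Proposition 2.2.7]{howard-bipartite}, in the case $\mu(D)=1$ (indefinite, odd parity), gives
\[
\Sel(K,T_{\mathfrak{P},k})\simeq R_k\oplus M_k\oplus M_k\simeq\Sel(K,A_\mathfrak{P})[\p^k],
\]
with $R_k=\cO_\mathfrak{P}/\p^k\cO_\mathfrak{P}$ and $M_k$ a finite $R_k$-module.

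Next I apply \cite[Theorem 2.5.1]{howard-bipartite}. The $\delta$ appearing there is $\delta_\mathfrak{P}(k)$ from \eqref{deltaP}, which vanishes by Corollary \ref{corodelta}. The theorem then gives
\[
\mathrm{ind}\bigl(\kappa_\mathfrak{P}(1),\Sel(K,T_{\mathfrak{P},k})\bigr)=\length_{\cO_\mathfrak{P}}(M_k).
\]
Letting $k\to\infty$, $\length(M_k)$ stays bounded by the index of $\kappa_\infty$, so $M_k$ stabilises to a fixed finite module $M$. Passing to the direct limit, $\Sel(K,A_\mathfrak{P})\simeq(\Phi_\mathfrak{P}/\cO_\mathfrak{P})\oplus M\oplus M$, which proves (2) and gives $\Sel(K,A_\mathfrak{P})/\Sel(K,A_\mathfrak{P})_\divv\simeq M\oplus M$.

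Passing to the inverse limit instead, $\Sel(K,T_\mathfrak{P})=\invlim_k\Sel(K,T_{\mathfrak{P},k})$; this uses the propagation of local conditions and $H^0(K,A_\mathfrak{P})=0$, as in Lemma \ref{torsion-free}. The bounded finite parts die in the limit, so $\Sel(K,T_\mathfrak{P})$ is free of rank $1$, proving (1). Moreover the index of $\kappa_\mathfrak{P}(1)$ in $\Sel(K,T_{\mathfrak{P},k})$ equals $\length\bigl(\Sel(K,T_\mathfrak{P})/\cO_\mathfrak{P}\kappa_\infty\bigr)$ for $k$ large. Combining this with the displayed identity yields (3).

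The main obstacle is this compatibility of indices under the limit in $k$: one must check that the image of $\kappa_\infty$ in $\Sel(K,T_{\mathfrak{P},k})$ has the same divisibility index as in $\Sel(K,T_\mathfrak{P})$ once $k$ exceeds that index. I would handle it via the injectivity of $\Sel(K,T_\mathfrak{P})/\p^k\to\Sel(K,T_{\mathfrak{P},k})$, which follows from $H^0(K,A_\mathfrak{P})=0$ and the residual irreducibility in Assumption \ref{ass}, together with the freeness just established. The divisibility by $\p$ is then read off in the $R_k$-summand.
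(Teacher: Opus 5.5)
Your proposal follows the paper's proof. Both use Howard's Proposition 2.2.7 to get $\Sel(K,T_{\mathfrak{P},k})\simeq R_k\oplus M_k\oplus M_k\simeq\Sel(K,A_\mathfrak{P})[\p^k]$. Both then use Theorem 2.5.1 with $\delta_\mathfrak{P}(k)=0$ (Corollary \ref{corodelta}) to get $\mathrm{ind}(\bar\kappa_k,\Sel(K,T_{\mathfrak{P},k}))=\length_{\cO_\mathfrak{P}}(M_k)$, where $\bar\kappa_k$ is the image of $\kappa_\infty$, and pass to the limit in $k$.

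One step, however, is not justified as you wrote it: the claim that $\length(M_k)$ ``stays bounded by the index of $\kappa_\infty$''. This needs the upper bound $\mathrm{ind}(\bar\kappa_k,\Sel(K,T_{\mathfrak{P},k}))\le\mathrm{ind}(\kappa_\infty,\Sel(K,T_\mathfrak{P}))$. Functoriality and the injectivity of $\Sel(K,T_\mathfrak{P})/\p^k\to\Sel(K,T_{\mathfrak{P},k})$ only give the reverse inequality. The reason is that $\Sel(K,T_{\mathfrak{P},k})$ may contain classes not coming from $\Sel(K,T_\mathfrak{P})$, so $\bar\kappa_k$ could a priori be more divisible there. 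Your remedy in the last paragraph appeals to the freeness of $\Sel(K,T_\mathfrak{P})$. But you derived that freeness from the stabilisation of $M_k$, which rested on this very bound, so the argument is circular.

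The paper avoids this: no bound in terms of $\kappa_\infty$ is needed, only $\bar\kappa_k\neq0$.
\begin{enumerate}
\item Since $\bar\kappa_k\neq0$, $\length(M_k)=\mathrm{ind}(\bar\kappa_k)<\length_{\cO_\mathfrak{P}}(R_k)$, so $M_k$ has no direct summand isomorphic to $R_k$.
\item Write $\Sel(K,A_\mathfrak{P})\simeq(\Phi_\mathfrak{P}/\cO_\mathfrak{P})^r\oplus F$ with $F$ finite. For $k\gg0$ this gives $R_k\oplus M_k\oplus M_k\simeq R_k^{r}\oplus F$.
\item Counting summands of full length forces $r=1$, and cancellation gives $M_k\oplus M_k\simeq F$. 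This yields (1), (2) and the stabilisation of $M_k$.
\item Only then does your comparison of indices give $\mathrm{ind}(\bar\kappa_k)=\length(\Sel(K,T_\mathfrak{P})/\cO_\mathfrak{P}\kappa_\infty)$ and hence (3). Here the image of $\Sel(K,T_\mathfrak{P})/\p^k$ is a copy of $R_k$ complementing $F$.
\end{enumerate}
Alternatively, you can prove the upper bound directly by compactness. Suppose $\bar\kappa_k\in\mathfrak{m}_\mathfrak{P}^j\Sel(K,T_{\mathfrak{P},k})$ for all $k$. The nonempty finite sets of solutions of $\pi_\mathfrak{P}^jy=\bar\kappa_k$ form an inverse system. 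Its limit produces $y\in\invlim_k\Sel(K,T_{\mathfrak{P},k})=\Sel(K,T_\mathfrak{P})$ with $\pi_\mathfrak{P}^jy=\kappa_\infty$.
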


\begin{proof} 
Let $k\geq1$ be the smallest integer such that $\kappa_\infty$ has non-zero image via the composition  
\[ \Sel(K,T_\mathfrak{P})\longepi\Sel(K,T_\mathfrak{P})/\p^k\longmono\Sel(K,T_{\mathfrak{P},k}), \] 
where
$T_{\mathfrak{P},j}\defeq T_\mathfrak{P}/\p^jT_\mathfrak{P}$ for all integers $j\geq1$, the first map is the canonical projection and the second map, induced by the quotient map $T_\mathfrak{P}\rightarrow T_{\mathfrak{P},k}$, is injective by \cite[Lemma 3.3.2]{howard-bipartite}; also, sometimes, 
as done for $\Sel(K,T_\mathfrak{P})/\p^k$ above, for a $\mathcal{O}_\mathfrak{p}$-module
$M$ and an integer $j\geq 1$, we abbreviate $M/\mathfrak{p}^jM$ by $M/\mathfrak{p}^j$). As in the proof of Proposition \ref{len-def}, the families \eqref{Bipartite} form a free bipartite Euler system for $T_{\mathfrak{P},k}$. 
Since $1$ belongs to $\mathcal{P}^\mathrm{indef}$, by \cite[Proposition 2.2.7]{howard-bipartite} we know that
\begin{equation}\label{structure2}
\Sel(K,T_{\mathfrak{P},j})\simeq\cO_\mathfrak{P}/\p^j\cO_\mathfrak{P}\oplus M\oplus M\simeq \Sel(K,A_\mathfrak{P})[\p^j]
\end{equation}
for all $j\geq1$ and a certain $\cO_\mathfrak{P}/\p^j\cO_\mathfrak{P}$-module $M$, where the isomorphism on the right is induced, in light of \cite[Lemma 2.2.6]{howard-bipartite}, by the isomorphism $T_{\mathfrak{P},j}\simeq A_\mathfrak{P}[\p^j]$; moreover, these isomorphisms are compatible as $j$ varies over all positive integers. By applying \cite[Theorem 2.5.1]{howard-bipartite} with $R=\cO_\mathfrak{P}/\p^k$ and $\mathfrak{n}=1$, 
and using Corollary \ref{corodelta} as in the proof of Proposition 
\ref{len-def}, we get the equality
\begin{equation} \label{lengths-2}
\mathrm{ind}\bigl({\kappa}_\mathfrak{P}(1),\Sel(K,T_{\mathfrak{P},k})\bigr)=\length_{\cO_\mathfrak{P}}(M).
\end{equation}
%Note that if $\mathfrak{P}=\mathfrak{P}_0$, then $k=1$ and the left-hand side of \eqref{lengths-2} is $0$ by Kolyvagin's conjecture in rank $1$ (see \cite[Theorem 3.16]{LPV}).
Since $\kappa_\infty$ has non-zero image in $\Sel(K,T_\mathfrak{P})/\p^k$, the left-hand side of \eqref{lengths-2} is strictly smaller than $k$. This together with \eqref{structure2} implies that 
\[
\Sel(K,T_\mathfrak{P})\simeq\varprojlim_j \Sel(K,A_\mathfrak{P})[\p^j]
\] 
is a torsion-free $\cO_\mathfrak{P}$-module of rank $1$, which is the content of parts (1) and (2). As for part (3), note that \eqref{structure2} also implies that
\[
\length_{\cO_\mathfrak{P}}\bigl(\Sel(K,A_\mathfrak{P})/\Sel(K,A_\mathfrak{P}{)}_\divv\bigr)=\length_{\cO_\mathfrak{P}}(M\oplus M),
\]
and \eqref{lengths-2} implies that
\[ \length_{\cO_\mathfrak{P}}\bigl(\Sel(K,T_\mathfrak{P})/\cO_\mathfrak{P}\cdot\kappa_\infty\bigr)=\length_{\cO_\mathfrak{P}}(M).
\]
This completes the proof.
\end{proof}

\begin{theorem}[Control theorem]\label{control theorem}
For all but finitely many height $1$ prime ideals $\mathfrak P$ of $\Lambda$, the natural map $\mathbf{T}_\phi\rightarrow T_\mathfrak{P}$ and its dual $A_\mathfrak{P}\rightarrow \mathbf{A}_\phi [\mathfrak{P}]$ induce maps
\[ \Sel(K,\mathbf{T}_\phi)\big/\mathfrak{P}\Sel(K,\mathbf{T}_\phi)\longrightarrow\Sel(K,T_\mathfrak{P}),\quad\Sel(K,A_\mathfrak{P})\longrightarrow\Sel(K,\mathbf{A}_\phi)[\mathfrak{P}] \]
with finite kernels and cokernels, whose orders are bounded by a constant that depends on $[\cO_\mathfrak{P}:\Lambda/\mathfrak{P}]$ only. The first map is always injective.
\end{theorem}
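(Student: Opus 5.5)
This is a standard control theorem, and I would prove it following Mazur--Rubin \cite[Lemma 5.3.13, Proposition 5.3.14]{MR} and Howard \cite[Proposition 2.4.15]{howard-bipartite}, with Propositions \ref{comparison1} and \ref{comparison2} used to pass between $\Sel(K,\mathbf{T}_\phi)$ and $\Sel(K_\infty,T_\phi)$.

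\emph{Global comparison.} Let $\Sigma$ be the finite set of primes dividing $NDp$. I would work with $G_\Sigma=\Gal(K_\Sigma/K)$-cohomology. Choose a generator $g_\mathfrak{P}$ of $\mathfrak{P}$ and take the $G_\Sigma$-cohomology of
\[
0\rightarrow\mathbf{T}_\phi\xrightarrow{g_\mathfrak{P}}\mathbf{T}_\phi\rightarrow\mathbf{T}_\phi/\mathfrak{P}\mathbf{T}_\phi\rightarrow0 .
\]
This gives an injection $H^1(K_\Sigma/K,\mathbf{T}_\phi)/\mathfrak{P}\hookrightarrow H^1(K_\Sigma/K,\mathbf{T}_\phi/\mathfrak{P})$. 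Its cokernel injects into $H^2(K_\Sigma/K,\mathbf{T}_\phi)[\mathfrak{P}]$.

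Next, $\mathbf{T}_\phi/\mathfrak{P}\cong T_\phi\otimes\Lambda/\mathfrak{P}$ embeds into $T_\mathfrak{P}$ with cokernel killed by the conductor of $\Lambda/\mathfrak{P}$ in $\cO_\mathfrak{P}$. The cohomology of that cokernel is therefore finite, and its order is bounded in terms of $[\cO_\mathfrak{P}:\Lambda/\mathfrak{P}]$. The group $H^0(K,\overline{T}_\phi)$ vanishes by the irreducibility in Assumption \ref{ass}, so $H^0(K,T_\mathfrak{P})=0$. Combining these facts gives the injectivity of the global map. The remaining global error term is $H^2(K_\Sigma/K,\mathbf{T}_\phi)[\mathfrak{P}]$. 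This is the torsion of a finitely generated $\Lambda$-module, so it is finite, and in fact zero, for all but finitely many $\mathfrak{P}$: we exclude the finitely many height-one primes in the support of its torsion submodule. The dual statement for $A_\mathfrak{P}\rightarrow\mathbf{A}_\phi[\mathfrak{P}]$ follows in the same way from $0\rightarrow\mathbf{A}_\phi[\mathfrak{P}]\rightarrow\mathbf{A}_\phi\xrightarrow{g_\mathfrak{P}}\mathbf{A}_\phi\rightarrow0$, using that $H^0(K,\mathbf{A}_\phi)=0$.

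\emph{Local conditions.} Next I would compare the local conditions place by place. For $v\nmid Dp$, the module $H^1_\unr(K_v,\mathbf{T}_\phi)$ is all of $H^1$ when $v\mid M$, by \eqref{T-iso5}. For $v\nmid NDp$, the comparison with $H^1_\unr(K_v,T_\mathfrak{P})$ is controlled by $H^0(K_v,\,\cdot\,)$ of the cokernel. For the primes $v$ dividing $N$ that are not handled this way, the discrepancy is bounded by Tamagawa-type groups $H^1(I_v,T_\mathfrak{P})^{\mathrm{Frob}}_{\tor}$. These are finite and, as $\mathfrak{P}$ varies, bounded apart from finitely many $\mathfrak{P}$.

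For $v\mid Dp$ I would use the ordinary filtration, so that $H^1_\ord$ is the kernel of the map to $H^1(K_v,\mathbf{T}^{[v]})$. The comparison then reduces to bounding $H^0(K_v,T^{[v]}_\mathfrak{P}\otimes\Phi/\cO)$ and $H^1(K_v,\mathbf{T}^{[v]}_\phi)[\mathfrak{P}]$. The first is finite for $\mathfrak{P}$ outside a finite set, namely those where the character on $T^{[v]}$ twisted by $\mathfrak{P}$ becomes trivial. This is where the non-anomalous hypothesis enters, to make the bounds uniform.

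\emph{Main obstacle.} The hard part is the comparison at $v\mid p$, together with checking that the propagated conditions on $T_\mathfrak{P}$ agree with the images of the $\Lambda$-adic conditions up to bounded error. I would handle this with Mazur--Rubin's argument that the error is controlled by $H^0(K_v,A^{[v]}_\mathfrak{P})$, plus a snake-lemma assembly of the global and local pieces as in \cite[Proposition 5.3.14]{MR}.
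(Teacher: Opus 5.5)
Your proposal follows the same route as the paper, whose proof invokes \cite[Proposition 5.3.14]{MR} to reduce to a prime-by-prime comparison of local conditions, handled by \cite[Lemma 5.3.13]{MR} for $v\nmid Dp$ and by the argument of \cite[Proposition 3.3.1]{howard-bipartite} for $v\mid Dp$. Two details need correcting when you write it out: $H^2(K_\Sigma/K,\mathbf{T}_\phi)[\mathfrak{P}]$ need not vanish for almost all $\mathfrak{P}$, but it lies in the maximal finite $\Lambda$-submodule, which still gives the uniform bound you need; and injectivity does not follow from $H^0(K,T_\mathfrak{P})=0$, but from the vanishing of $H^0$ of the finite cokernel of $\mathbf{T}_\phi/\mathfrak{P}\mathbf{T}_\phi\hookrightarrow T_\mathfrak{P}$ (by d\'evissage from $H^0(K,\overline{T}_\phi)=0$), together with the absence of $\mathfrak{P}$-torsion in the local quotients $H^1(K_v,\mathbf{T}_\phi)/H^1_\bullet(K_v,\mathbf{T}_\phi)$.
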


\begin{proof} As is shown in \cite[Proposition 5.3.14]{MR}, it is enough to check the result locally at each prime $v$ of $K$. For primes $v\nmid Dp$, the relevant local results are due to Mazur--Rubin (\cite[Lemma 5.3.13]{MR}), while for primes dividing $Dp$ the proof goes as in \cite[Proposition 3.3.1]{howard-bipartite}. See also \cite[Proposition 3.4]{LV-kyoto} for details. \end{proof}

\subsubsection{Proof of Theorem $\ref{IMC-def}$}

We follow an argument due to Mazur--Rubin (\cite{MR}) and Howard (\cite{Ho1}). Let $\mathfrak{P}\neq\p\Lambda$ be any height $1$ prime ideal of $\Lambda$ such that
\begin{itemize}
\item $\mathfrak P$ is not a prime divisor of $\Sel(K_\infty,A_\phi)^\vee$;
\item Theorem \ref{control theorem} holds for $\mathfrak{P}$;
\item $\mathcal{L}_\p(f)$ has non-zero image in $\Lambda/\mathfrak{P}$.    
\end{itemize}
These three conditions exclude only finitely many height $1$ prime ideals of $\Lambda$. By the proof of Proposition \ref{len-def}, $\Sel(K,A_\mathfrak{P})$ has corank $0$ and $\Sel(K,T_\mathfrak{P})$ has rank $0$ over $\cO_\mathfrak{P}$, and then Theorem \ref{control theorem} ensures that
\[ 0=\rank_{\cO_\mathfrak{P}}\bigl(\Sel(K,A_\mathfrak{P})^\vee\bigr)=\rank_\Lambda\bigl(\Sel(K,\mathbf{A}_\phi)^\vee\bigr)
=\rank_\Lambda\bigl(\Sel(K_\infty,A_\phi)^\vee\bigr),
\]
and
\[ 0=\rank_{\cO_\mathfrak{P}}\Sel(K,T_\mathfrak{P})=\rank_\Lambda\Sel(K,\mathbf{T}_\phi)=
\rank_\Lambda\Sel(K_\infty,T_\phi). 
\]
This proves part (1) of Theorem \ref{IMC-def}.

Now fix a height $1$ prime ideal $\mathfrak{P}$ of $\Lambda$ with $\mathfrak{P}\neq \p\Lambda$, say $\mathfrak{P}=(F) 
$ for some distinguished polynomial $F$. For each integer $m\geq1$, denote by $\mathfrak{P}_m$ the ideal of $\Lambda$ generated by $F+\pi^m$, where $\pi$ is a uniformizer of $\cO_\p$. If $m$ is taken to be sufficiently large, then
\begin{itemize}
\item $\mathfrak{P}_m$ is not a prime divisor of $\Sel(K,\mathbf{A}_\phi)^\vee$;
\item Theorem \ref{control theorem} holds for $\mathfrak{P}_m$;
\item $\mathcal{L}_\p(f)$ has non-zero image in $\Lambda/\mathfrak{P}_m$;
\item $\Lambda/\mathfrak{P}\simeq \Lambda/\mathfrak{P}_m$ as rings (by Hensel's lemma).
\end{itemize}
By Proposition \ref{len-def}, there is an equality
\[ \length_{\cO_\p}\Sel(K,A_{\mathfrak{P}_m})=\length_{\cO_\p}\bigl(\cO_{\mathfrak{P}_m}/\mathcal{L}_\p(f)\bigr). 
\]
On the other hand, arguments in the proof of \cite[Theorem 5.3.10]{MR} show that
\begin{equation} \label{1}
\length_{\cO_\p}\Sel(K,A_{\mathfrak{P}_m})=m\cdot\rank_{\cO_\p}(\cO_{\mathfrak{P}_m})\cdot\ord_\mathfrak{P}\bigl(\cchar_\Lambda\Sel(K,\mathbf{A}_\phi)^\vee\bigr)+O(1),
\end{equation}
where $O(1)$ denotes an integer bounded independently of $m$; moreover, a calculation gives
\begin{equation}\label{2}
\length_{\cO_\p}\bigl(\cO_{\mathfrak{P}_m}/\mathcal{L}_\p(f)\bigr)=m\cdot\rank_{\cO_\p}(\cO_{\mathfrak{P}_m})\cdot\mathrm{ord}_\mathfrak{P}\bigl(\mathcal{L}_\p(f)\bigr).
\end{equation}
Therefore, equating \eqref{1} and \eqref{2} and letting $m$ go to infinity yields the equality
\[ \ord_\mathfrak{P}\bigl(\cchar_\Lambda\Sel(K_\infty,A_\phi)^\vee\bigr)=
\ord_\mathfrak{P}\bigl(\cchar_\Lambda\Sel(K,\mathbf{A}_\phi)^\vee\bigr)=
\ord_\mathfrak{P}\bigl(\mathcal{L}_\p(f)\bigr). \]
As in \cite{Ho1} and \cite{MR}, the case $\mathfrak{P}=\p\Lambda$ can be treated similarly, now considering the ideals $\mathfrak{P}_m\defeq\p\Lambda+(T^m)$. This completes the proof of part (2) of Theorem \ref{IMC-def}.

\subsubsection{Proof of Theorem $\ref{IMC}$}

By Lemma \ref{torsion-free}, the $\Lambda$-module $\Sel(K_\infty,T_\phi)\simeq\Sel(K,\mathbf{T}_\phi)$ is torsion-free and, by Proposition \ref{non zero}, $\kappa_\infty$ is non-zero, so there are only finitely many height $1$ prime ideals $\mathfrak{P}$ of $\Lambda$ such that the image of $\kappa_\infty$ in $\Sel(K,\mathbf{T}_\phi)/\mathfrak{P}$ is trivial. Let $\mathfrak{P}\neq\p\Lambda$ be any height $1$ prime ideal of $\Lambda$ such that
\begin{itemize}
\item $\mathfrak P$ is not a prime divisor of $\Sel(K_\infty, A_\phi)^\vee$;
\item Theorem \ref{control theorem} holds for $\mathfrak{P}$;
\item $\kappa_\infty$ has non-zero image in $\Sel(K,\mathbf{T}_\phi)/\mathfrak{P}$.
\end{itemize}
By the injectivity of the first map in Theorem \ref{control theorem}, the class $\kappa_\infty$ is non-zero in $\Sel(K,T_\mathfrak{P})$; thus, by Proposition \ref{len-indef}, $\Sel(K,T_\mathfrak{P})$ and $\Sel(K,A_\mathfrak{P})$ have rank $1$ and corank $1$ over $\cO_\mathfrak{P}$, respectively. Then there are equalities
\[ \begin{split}
   \rank_\Lambda\Sel(K_\infty,T_\phi)&=\rank_\Lambda \Sel(K,\mathbf{T}_\phi)\\
   & =\rank_{\cO_\mathfrak{P}}\bigl(\Sel(K,\mathbf{T}_\phi)/\mathfrak{P}\bigr)=\rank_{\cO_\mathfrak{P}}\Sel(K,T_\mathfrak{P})=1
   \end{split} \]
and
\[ \begin{split}
   \rank_\Lambda \Sel(K_\infty,A_\phi)^\vee&=\rank_\Lambda \Sel(K,\mathbf{A}_\phi)^\vee\\ &=\corank_\Lambda\Sel(K,\mathbf{A}_\phi)=\corank_{\cO_\mathfrak{P}}\Sel(K,A_\mathfrak{P})=1.
   \end{split} \]
This proves part (1) of Theorem \ref{IMC}.

Now fix a height $1$ prime ideal $\mathfrak{P}=(F)$ as in the proof of Theorem \ref{IMC-def}, where $F$ is a distinguished polynomial. For a sufficiently large integer $m\geq1$, the height $1$ prime ideal $\mathfrak{P}_m\defeq(F+\pi^m)$ satisfies the three conditions of the first part of this proof. Moreover, by Hensel's lemma we can also assume that $\Lambda/\mathfrak{P}\simeq\Lambda/\mathfrak{P}_m$. By part (3) of Proposition \ref{len-indef}, we know that
\[ \length_{\cO_\p}\bigl(\Sel(K,A_{\mathfrak{P}_m})/\Sel(K,A_{\mathfrak{P}_m}{)}_\divv\bigr)=2\cdot\length_{\cO_\p}\bigl(\Sel(K,T_{\mathfrak{P}_m})/\kappa_\infty\cO_{\mathfrak{P}_m}\bigr). \]
On the other hand, arguments in the proof of \cite[Theorem 5.3.10]{MR} show that
\begin{equation} \label{3}
\begin{split}
\length_{\cO_\p}\bigl(\Sel(K,A_{\mathfrak{P}_m})/\Sel(K,A_{\mathfrak{P}_m}{)}_\divv\bigr)=&\;m\cdot\rank_{\cO_\p}(\cO_{\mathfrak{P}_m})\\&\cdot\ord_\mathfrak{P}\bigl(\cchar_\Lambda\Sel(K,\mathbf{A}_\phi)^\vee_\tor\bigr)+O(1),
\end{split} 
\end{equation}
where $O(1)$ denotes an integer bounded independently of $m$. Since one also has
\begin{equation} \label{4}
\length_{\cO_\p}\bigl(\Sel(K,T_{\mathfrak{P}_m})/\kappa_\infty\cO_{\mathfrak{P}_m}\bigr)=m\cdot\rank_{\cO_\p}(\cO_{\mathfrak{P}_m})\cdot\ord_\mathfrak{P}\bigl(\Sel(K,\mathbf{T}_\phi)/\kappa_\infty\Lambda\bigr),
\end{equation}
combining equalities \eqref{3} and \eqref{4} and letting $m$ go to infinity yields the equality
\[
\begin{split}
2\cdot\ord_\mathfrak{P}\bigl(\Sel(K_\infty,{T}_\phi)/\kappa_\infty\Lambda\bigr)&=
2\cdot\ord_\mathfrak{P}\bigl(\Sel(K,\mathbf{T}_\phi)/\kappa_\infty\Lambda\bigr)\\&=\ord_\mathfrak{P}\bigl(\cchar_\Lambda\Sel(K,\mathbf{A}_\phi)^\vee_\tor\bigr)\\
&=\ord_\mathfrak{P}\bigl(\cchar_\Lambda\Sel(K_\infty,{A}_\phi)^\vee_\tor\bigr).
\end{split}
\]
As in \cite{Ho1} and \cite{MR}, the case $\mathfrak{P}=\p\Lambda$ can be treated similarly, now considering the ideals $\mathfrak{P}_m\defeq\p\Lambda+(T^m)$. This completes the proof of part (2) of Theorem \ref{IMC}.

\section{Iwasawa--Greenberg main conjecture for $\mathcal{L}_\wp^{\mathrm{BDP}}$}

Let $f\in S_k(\Gamma_0(N))$ be a newform, let $K$ be an imaginary quadratic field and let $p$ be a prime number as in Section \ref{Iwasawa}. Assume that we are in an indefinite setting, \emph{i.e.}, in the notation of previous sections, the integer $D$ in the splitting $N=MD$ is a square-free product of an \emph{even} number of primes. In addition, assume that $p$ splits in $K$ and write, as before, $p\cO_K=\p\cdot\bar\p$. The goal of this section is to replace the term $\cchar_\Lambda\bigl(\Sel(K_\infty,T_\phi)/\Lambda\cdot\kappa_\infty\bigr)^2$ in the indefinite anticyclotomic Iwasawa main conjecture with a $p$-adic $L$-function; once this replacement is made, the formulation of the indefinite anticyclotomic Iwasawa main conjecture looks more similar to the definite one. 

\subsection{The $p$-adic $L$-function $\mathcal{L}_\p^\mathrm{BDP}$} \label{BDP function}

We start with a short discussion of the $p$-adic $L$-function that will play a role in our subsequent arguments. This $p$-adic $L$-function was introduced (independently) by Bertolini--Darmon--Prasanna (\cite{BDP}) and Brako\v{c}evi\'{c} (\cite{Brako}) when $D=1$; the construction of \cite{BDP} was generalized soon after to $D>1$ by Brooks (\cite{Brooks}). These $p$-adic $L$-functions interpolate central critical values of Hecke characters of imaginary quadratic fields of fixed conductor $c$ not divisible by $p$ and infinity type $(k+j,-j)$ for an integer $j\geq0$. In order to develop Iwasawa theory in this setting, Castella--Hsieh removed when $D=1$ the assumption $p\nmid c$ in the interpolation formula (\cite{CH}); the case $D>1$ was finally developed by Magrone (\cite{Magrone}). Castella--Hsieh and Magrone define the $p$-adic $L$-function that we denote by $\mathcal{L}_\p^\mathrm{BDP}$ and is relevant for our arguments; we shall now discuss the interpolation properties of $\mathcal{L}_\p^\mathrm{BDP}$ and its relation with Heegner cycles. 

Denote by $\cO_\p^\unr $ the tensor product $\cO_\p\otimes_{\Z_p}\Z_p^\unr $, where $\Z_p^\unr $ is the valuation ring of the maximal unramified extension of $\Q_p$. 
The $p$-adic $L$-function $\mathcal{L}_\mathfrak{p}^\mathrm{BDP}$ in \cite[Definition 3.7]{CH} and \cite[Definition 4.3]{Magrone} lives the extended Iwasawa algebra $\Lambda^\unr \defeq\cO_\p^\unr \llbracket\Gal(K_\infty/K)\rrbracket$ and is characterized by the following interpolation property: there exists a $p$-adic period $\Omega_p\in\C_p^\times$ such that for all $p$-adic avatars $\hat\phi$ (in the sense of \cite[Definition 3.4]{CH}) of unramified Hecke characters $\phi$ of $K$ of infinity type $(k/2+t,-(k/2+t))$ with $t\geq 0$ there is an equality
\[ 
\left(\frac{\hat\phi(\mathcal{L}^\mathrm{BDP}_\p)}{\Omega_p^{k+2t}}\right)^2=C_{\phi}\cdot L^\alg(f,\phi,k/2),
\]
where $L^\alg(f,\phi,k/2)$ is the \emph{algebraic part} of the complex $L$-function $L(f,\phi,k/2)$ defined in \cite[p. 584]{CH} and $C_{\phi}$ is a \emph{non-zero} algebraic number that, at least when $D=1$, can be made explicit. The reader is referred to \cite[Proposition 3.8]{CH} and \cite[Theorem 4.6]{Magrone} for further discussion.
%which interpolates special values of the Rankin--Selberg convolutions of $f$ and theta series of Hecke characters of $K$ of infinity type $(k+j,-j)$ with $j\geq 0$. See \cite{BDP} and \cite{CH} for the construction of $\mathcal{L}_\p^\mathrm{BDP}$ in the case $D=1$, and \cite{Brooks} and \cite{Magrone} for our more general setting with $D>1$. More precisely, in \cite{CH} and \cite{Magrone} the authors construct a $p$-adic $L$-function $\mathcal{L}_{\p,\psi}(f)$ that depends on the choice of an auxiliary Hecke character $\psi$ of infinity type $(k/2,-k/2)$ and is closely related to $\mathcal{L}_{\p}^\mathrm{BDP}$, since, for every character $\phi$ of $\Gal(K_\infty,K)$, the equality 

%Roughly speaking, as explained in \cite[Section 4.6]{Magrone}, $\mathcal{L}_{\p,\psi}(f)$ is the projection to $\Lambda^\unr $ of the $p$-adic measure over $\Gal(H_{p^\infty}/K)$ that is attached to the Serre--Tate expansion of (the $p$-depletion of) the automorphic form on the quaternion algebra over $\Q$ of discriminant $D$ associated with $f$ by the Jacquet--Langlands correspondence. In this section, we prove the Iwasawa--Greenberg main conjecture for $\mathcal{L}_\p^\mathrm{BDP}$, which will essentially be a consequence of Theorem \ref{IMC}.

Let $\kappa_\infty^{\mathrm{BDP}}\in H^1(K,\mathbf{T}_f)$ be the element defined in \cite[(5.5)]{CH} for $D=1$ and in \cite[(7.1)]{Magrone} for $D>1$, which is the inverse limit of the compatible sequence of (the corestriction from $H_{p^{n+1}}$ to $K_n$ of) the \emph{generalized} Heegner classes of $p$-power conductor. It is known that $\kappa_\infty^{\mathrm{BDP}}\in\Sel(K,\mathbf{T}_f)$, so (since $H^0(K_\p,\mathbf{T}_f^{[p]})=0$) the localization $\loc_\p(\kappa_\infty^{\mathrm{BDP}})$ belongs to $H^2(K_\p,\mathbf{T}_f^{(p)})$. By \cite[Theorem 5.7]{CH} when $D=1$ and \cite[Theorem 7.2]{Magrone} when $D>1$, there is an injective map of $\Lambda^\unr $-modules 
\[
\mathrm{Log}_\p\colon H^1(K_\p,\mathbf{T}_f^{(p)})\longmono \Lambda^\unr 
\] 
such that the following equality of ideals of $\Lambda^\unr{}$ holds: 
\begin{equation} \label{big-log}
\Bigl(\mathrm{Log}_\p\bigl(\loc_\p(\kappa_\infty^\mathrm{BDP})\bigr)\!\Bigr)=
\bigl(\mathcal{L}_\p^\mathrm{BDP}\bigr).
\end{equation} 
The map $\mathrm{Log}_\p$ is the composition of the big Perrin-Riou map 
(see, \emph{e.g.}, \cite[Theorem 4.7]{LZ-Iwasawa} and \cite[Theorem 5.7]{CH}) with de Rham pairing on the crystalline Dieudonn\'e module of the relevant Galois representations; the injectivity of $\mathrm{Log}_\p$ follows from \cite[Proposition 4.11]{LZ-Iwasawa}. See, \emph{e.g.}, \cite[Theorem A.1]{Castella-London} for this result stated in the current form (in the weight $2$ case). 

The elements $\kappa_\infty$ and $\kappa_\infty^\mathrm{BDP}$ are closely linked, as described by the next result.

\begin{proposition} \label{comparison}
$\kappa_\infty=(2\sqrt{-D_K})^\frac{k-2}{2}\cdot\kappa_\infty^\mathrm{BDP}.$
\end{proposition}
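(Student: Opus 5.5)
Both classes are inverse limits of corestrictions of classes at $p$-power conductor, built by the same regularization. So I would compare them at each finite layer, at the level of cycles, before taking Abel--Jacobi images. Concretely, I want to show that for every $n\geq0$ the Heegner class $y_n(\phi)=\mathrm{AJ}(\Delta_n)$ of Definition \ref{defHC} and the generalized Heegner class $z^{\mathrm{BDP}}_{n}$ of conductor $p^n$ (as in \cite[\S4]{CH} for $D=1$ and \cite[\S6--7]{Magrone} for $D>1$) satisfy
\[ y_n(\phi)=(2\sqrt{-D_K})^{\frac{k-2}{2}}\cdot z^{\mathrm{BDP}}_{n}. \]
Both families are then regularized by the same $\alpha_p$-stabilization recipe (the $z_n$ of \S\ref{reg-heeg}, corresponding to \cite[(5.5)]{CH} and \cite[(7.1)]{Magrone}). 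Granting the identity, one checks directly that the two regularizations agree term by term, including the Euler-factor normalization of $z_0$, perhaps up to the factor $u$, which I would track. Corestricting to $K_n$ and passing to the limit then gives the proposition.

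For the cycle-level identity I would argue through the projectors, following the strategy of \cite[Proposition 4.1.1]{BDP}. The generalized Heegner cycle lives on $W_{k,d}\times A^{k-2}$, where $A$ is a fixed CM elliptic curve. It is the graph of an isogeny $\varphi\colon A\to E_{p^n}$ raised to the $(k-2)$-th power, cut by $\epsilon=\epsilon_W\epsilon_A$. Its Abel--Jacobi image lands in $H^1\bigl(H_{p^n},\epsilon_W H^{k-1}(\overline W)\otimes\epsilon_A H^{k-2}(\overline{A}^{k-2})\bigr)$. The class used in \cite{CH} and \cite{Magrone} is obtained by pairing against a fixed vector, namely the $(k-2)/2$-th power of the $(1,1)$-Hodge-type class $\omega_A\eta_A$ (or its étale analogue $x_1x_2$). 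On the other hand, Lemma \ref{ERLbase} identifies $c\ell(\Delta_n)$ in $\mathrm{Sym}^{k-2}H^1(\overline E_{p^n})$ with the vector $\mathbf w_n$, which is proportional to $(XY)^{\frac{k-2}{2}}$, i.e.\ to $(x_1x_2)^{\frac{k-2}{2}}$. So both classes are obtained by pulling back the extension \eqref{extclass} along a vector on the same norm-eigenline of the fibre. Since that line is one-dimensional, the two classes are proportional, and it remains to compute the constant.

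The constant comes from comparing the class of $Z_n$, i.e.\ of the graph $\Gamma_n$ of $\sqrt{-D_{p^n}}$, with $x_1\otimes x_2\pm x_2\otimes x_1$. Multiplication by $\sqrt{-D_K}$ acts by $\pm\sqrt{-D_K}$ on $x_1$ and $x_2$. Symmetrizing the pairing $2^{k-2}\langle\cdot,\cdot\rangle_k$ gives a factor $2$ per tensor slot. The $p^n$ factors in $\sqrt{-D_{p^n}}=p^n\sqrt{-D_K}$ cancel against the normalization $\mathbf w_n=p^{n(k-2)/2}\mathbf v_0^*$ and the degree of $\varphi$ built into the BDP normalization. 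Taking the $(k-2)/2$-th power then yields $(2\sqrt{-D_K})^{\frac{k-2}{2}}$.

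I expect the main obstacle to be this bookkeeping of normalizations: how $\epsilon_A$ and the chosen basis of $H^1(\overline A)$ enter \cite{CH} and \cite{Magrone}, and, for $D>1$, the extra idempotent $e$ and the factor coming from $A_n=E_{p^n}\times E_{p^n}$. There I would use \cite[Proposition 4.2]{E-dVP} to identify $Z_n$ with $\Gamma_n$ before computing. I would carry out the computation first for $D=1$, where \cite[Proposition 4.1.1]{BDP} already relates classical and generalized Heegner cycles. I would then transport it to $D>1$ via Brooks' comparison \cite[\S6]{Brooks}, which identifies $\epsilon H^*$ with $H^1(\overline X,\mathcal F)$ compatibly with \eqref{projectors}.
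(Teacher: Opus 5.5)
Your plan is correct and follows essentially the paper's route. The paper's proof just invokes the finite-layer comparison between classical and generalized Heegner cycles from \cite[\S4.1]{BDPconiveau}, which is the reference you want rather than \cite{BDP}, and then passes to the limit; your norm-eigenline argument plus constant computation reconstructs exactly that, and you need not track $z_0$, since $\kappa_\infty$ is built only from $\mathbf z_n=\cores_{H_{p^{n+1}}/K_n}(z_{n+1})$ with $n+1\geq1$.
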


\begin{proof} This follows from the arguments in \cite[\S4.1]{BDPconiveau}. \end{proof}

\begin{corollary} \label{loc-nontorsion-coro}
The local class $\loc_\p(\kappa_\infty)$ is not $\Lambda$-torsion.
\end{corollary}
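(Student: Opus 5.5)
The argument combines the explicit reciprocity law \eqref{big-log} with the comparison in Proposition \ref{comparison}. By Proposition \ref{comparison}, $\kappa_\infty=(2\sqrt{-D_K})^{\frac{k-2}{2}}\kappa_\infty^{\mathrm{BDP}}$. The scalar $(2\sqrt{-D_K})^{\frac{k-2}{2}}$ is a non-zero constant: it is a unit at $\p$ because $p\nmid 2D_K$. Hence $\loc_\p(\kappa_\infty)$ is torsion if and only if $\loc_\p(\kappa_\infty^{\mathrm{BDP}})$ is torsion. It therefore suffices to treat $\kappa_\infty^{\mathrm{BDP}}$, after the harmless base change from $\Lambda$ to $\Lambda^\unr$. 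This base change is faithfully flat, so torsion-ness is detected equally over $\Lambda$ and over $\Lambda^\unr$.

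Next, recall that $\loc_\p(\kappa_\infty^{\mathrm{BDP}})$ lies in the image of $H^1(K_\p,\mathbf{T}_f^{(p)})$; this is the ordinary condition, and the lift is unique since $H^0(K_\p,\mathbf{T}_f^{[p]})=0$. Suppose $\lambda\cdot\loc_\p(\kappa_\infty^{\mathrm{BDP}})=0$ for some non-zero $\lambda\in\Lambda$. Injectivity of $H^1(K_\p,\mathbf{T}_f^{(p)})\to H^1(K_\p,\mathbf{T}_f)$ shows that the lifted class is also killed by $\lambda$. Applying the $\Lambda^\unr$-linear map $\mathrm{Log}_\p$ then gives
\[
\lambda\cdot\mathrm{Log}_\p\bigl(\loc_\p(\kappa_\infty^{\mathrm{BDP}})\bigr)=0 \quad\text{in }\Lambda^\unr.
\]
Since $\Lambda^\unr\simeq\cO_\p^\unr[\![T]\!]$ is an integral domain, this forces $\mathrm{Log}_\p(\loc_\p(\kappa_\infty^{\mathrm{BDP}}))=0$. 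By \eqref{big-log}, that would mean $\mathcal{L}_\p^{\mathrm{BDP}}=0$.

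The main point, and really the only non-formal step, is the non-vanishing of $\mathcal{L}_\p^{\mathrm{BDP}}$. I would deduce it from the interpolation formula. For infinitely many characters $\phi$ of infinity type $(k/2+t,-(k/2+t))$, the central values $L(f,\phi,k/2)$ are non-zero; this is the non-vanishing result of Hsieh / Burungale for twists by anticyclotomic characters of growing infinity type. The same input is what underlies Proposition \ref{non zero}, so one may simply cite \cite{Burungale}. Alternatively, one can argue directly: since $\kappa_\infty\neq 0$ by Proposition \ref{non zero} and $\mathrm{Log}_\p\circ\loc_\p$ generates the same ideal as $\mathcal{L}_\p^{\mathrm{BDP}}$, citing Burungale's Theorem A, which asserts that $\mathcal{L}_\p^{\mathrm{BDP}}$ (equivalently its $\mu$-invariant statement) is non-zero, finishes the proof.

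In either form the non-vanishing gives a contradiction, so $\loc_\p(\kappa_\infty)$ is not $\Lambda$-torsion.
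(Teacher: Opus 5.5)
Your argument is essentially the paper's: you reduce to $\kappa_\infty^{\mathrm{BDP}}$ via Proposition \ref{comparison}, then combine \eqref{big-log} with the non-vanishing of $\mathcal{L}_\p^{\mathrm{BDP}}$ to rule out torsion. The paper gets that non-vanishing from \cite[Theorem 5.7]{Burungale} (non-vanishing at infinitely many characters). One small difference is in your favour: $\Lambda$-linearity of $\mathrm{Log}_\p$ together with the absence of $\Lambda$-torsion in $\Lambda^\unr$ already suffices, so the injectivity of $\mathrm{Log}_\p$ that the paper invokes is not actually needed. Your aside about $\kappa_\infty\neq0$ in the last paragraph plays no role and can be dropped.
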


\begin{proof} By \cite[Theorem 5.7]{Burungale}, $\mathcal{L}_\p^\mathrm{BDP}$ is non-zero at infinitely many characters; in particular, $\mathcal{L}_\p^\mathrm{BDP}\in\Lambda^\unr\smallsetminus\{0\}$. By the injectivity of $\mathrm{Log}_\p$, this fact and equality \eqref{big-log} imply that $\loc_\p(\kappa_\infty^\mathrm{BDP})$ is not $\Lambda$-torsion. The corollary now follows from Proposition \ref{comparison}. \end{proof}

\subsection{Modified Selmer groups}\label{modifiedsection} 

We introduce certain modified Selmer groups that differ from $\Sel(K,\mathbf{T}_f)$ and $\Sel(K,\mathbf{A}_f)$ only in the local conditions at the primes $\p$ and $\bar\p$ above $p$: namely, we remove any condition at $\p$ and impose the vanishing condition at $\bar\p$. More precisely, for $M\in\{\mathbf{T}_f,\mathbf{A}_f\}$ we write $\Sel_{\emptyset,0}(K,M)$ for the set of all classes $s\in H^1(K,M)$ such that
\begin{itemize}
\item $\loc_v(s)\in H^1_\mathrm{unr}(K_v,M)$ for all $v\nmid Dp$;
\item $\loc_v(s)\in H^1_\mathrm{ord}(K_v,M)$ for all $v\,|\,D$;
\item $\loc_{\bar\p}(s)=0$.
\end{itemize}
The following lemma, which is the counterpart in our higher weight setting of results from \cite[Appendix A]{Castella-London}, asserts that $\Sel_{\emptyset,0}(K,\mathbf{A}_f)$ is $\Lambda$-cotorsion.

\begin{lemma}\label{Sel_torsion}
Under the hypotheses of Theorem $\ref{IMC}$, the $\Lambda$-module $\Sel_{\emptyset,0}(K,\mathbf{A}_f)^\vee$ is torsion.
\end{lemma}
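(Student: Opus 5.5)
We follow the strategy of \cite[Appendix A]{Castella-London} and compare $\Sel_{\emptyset,0}(K,\mathbf{A}_f)$ with $\Sel(K,\mathbf{A}_f)$ and the compact groups through Poitou--Tate duality. The first step is to introduce the intermediate Selmer groups $\Sel_{\emptyset,\ord}$ (no condition at $\p$, ordinary condition at $\bar\p$) and $\Sel_{\ord,0}$ (ordinary at $\p$, zero at $\bar\p$), both for $\mathbf{T}_f$ and for $\mathbf{A}_f$. Since the local conditions at $v\nmid p$ are self-dual and cartesian (\S\ref{selcondsec}), and the ordinary conditions at $\p$ and $\bar\p$ are exact annihilators of each other under local Tate duality, the conditions $(\emptyset,0)$ for $\mathbf{A}_f$ and $(0,\emptyset)$ for $\mathbf{T}_f$ are dual. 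Poitou--Tate then gives the exact sequence
\[
0\longrightarrow\Sel_{0,\ord}(K,\mathbf{T}_f)\longrightarrow\Sel(K,\mathbf{T}_f)\xrightarrow{\loc_\p}\frac{H^1_\ord(K_\p,\mathbf{T}_f)}{H^1_0}\longrightarrow\Sel_{\emptyset,\ord}(K,\mathbf{A}_f)^\vee\longrightarrow\Sel(K,\mathbf{A}_f)^\vee\longrightarrow0 .
\]
It has an analogous counterpart with the roles of $\p$ and $\bar\p$ interchanged, and it relates $\Sel_{\emptyset,0}(K,\mathbf{A}_f)$ to $\Sel_{\emptyset,\ord}(K,\mathbf{A}_f)$ by imposing the zero condition at $\bar\p$.

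The second step is a rank count. By Theorem \ref{IMC}(1), $\Sel(K,\mathbf{T}_f)$ and $\Sel(K,\mathbf{A}_f)^\vee$ both have $\Lambda$-rank $1$. Since $p$ splits, $H^1_\ord(K_\p,\mathbf{T}_f)$ has $\Lambda$-rank $[K_\p:\Q_p]=1$. Its torsion vanishes because $H^0(K_\p,\mathbf{T}_f^{[p]})=0$, which follows from non-anomalousness. By Corollary \ref{loc-nontorsion-coro}, $\loc_\p(\kappa_\infty)$ is non-torsion, so $\loc_\p:\Sel(K,\mathbf{T}_f)\to H^1_\ord(K_\p,\mathbf{T}_f)$ is injective after tensoring with the fraction field; here we use that $\Sel(K,\mathbf{T}_f)$ is torsion-free of rank $1$ by Lemma \ref{torsion-free}. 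Feeding this into the exact sequence, the cokernel of $\loc_\p$ is torsion, and hence $\Sel_{\emptyset,\ord}(K,\mathbf{A}_f)^\vee$ has rank $1$. Moreover, $\Sel_{0,\ord}(K,\mathbf{T}_f)$ is torsion, hence $0$, since it sits inside the torsion-free module $\Sel(K,\mathbf{T}_f)$.

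The third step imposes the zero condition at $\bar\p$. The relevant sequence is
\[
0\longrightarrow\Sel_{\emptyset,0}(K,\mathbf{A}_f)\longrightarrow\Sel_{\emptyset,\ord}(K,\mathbf{A}_f)\xrightarrow{\loc_{\bar\p}}H^1_\ord(K_{\bar\p},\mathbf{A}_f),
\]
whose target has corank $1$. It suffices to show that $\loc_{\bar\p}$ on $\Sel_{\emptyset,\ord}(K,\mathbf{A}_f)$ has image of corank $1$. Dually, by the $\bar\p$-version of Poitou--Tate, this amounts to showing that $\Sel_{\emptyset,0}(K,\mathbf{T}_f)$ is torsion, or equivalently, by torsion-freeness, zero. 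The complex conjugation symmetry $\tau$, which swaps $\p$ and $\bar\p$ and preserves the pairing of \S\ref{selcondsec}, identifies $\Sel_{\emptyset,0}(K,\mathbf{T}_f)$ with $\Sel_{0,\emptyset}(K,\mathbf{T}_f)^\iota$. That module is contained in the kernel of $\loc_\p$ restricted to $\Sel_{\emptyset,\ord}(K,\mathbf{T}_f)$. Since the latter has rank $1$ (by the same count with $\mathbf{T}$ and $\mathbf{A}$ swapped) and contains $\kappa_\infty$ with non-torsion $\loc_\p$, the kernel is torsion, hence $0$.

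The main obstacle is the precise rank bookkeeping in the Poitou--Tate sequences. In particular, we must verify that the global Euler characteristic gives $\rank\Sel_{\emptyset,\ord}(K,\mathbf{T}_f)=\rank\Sel_{\emptyset,\ord}(K,\mathbf{A}_f)^\vee=1$, and that the relevant $H^2$ and $H^0$ terms are torsion, using the big image assumption together with $H^0(K_\p,\mathbf{T}_f^{[p]})=0$. Once these are settled, the non-torsionness of $\loc_\p(\kappa_\infty)$ forces $\Sel_{\emptyset,0}(K,\mathbf{A}_f)^\vee$ to have rank $1-1=0$.
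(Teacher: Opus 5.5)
There is a genuine gap in Step 3, and an index slip hides it.

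First, the slip. $\Sel_{0,\emptyset}(K,\mathbf{T}_f)$ is the kernel of $\loc_\p$ on $\Sel_{\ord,\emptyset}(K,\mathbf{T}_f)$, not on $\Sel_{\emptyset,\ord}(K,\mathbf{T}_f)$. The kernel of $\loc_\p$ on the latter is $\Sel_{0,\ord}(K,\mathbf{T}_f)$, which you already showed vanishes in Step 2, so as written the step proves nothing. Conjugation ($\Sel_{\ord,\emptyset}\simeq\Sel_{\emptyset,\ord}^\iota$) repairs this, but only if $\rank_\Lambda\Sel_{\emptyset,\ord}(K,\mathbf{T}_f)=1$, and that is where the argument breaks.

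Your justification, ``the same count with $\mathbf{T}$ and $\mathbf{A}$ swapped'', is circular. Poitou--Tate for $\Sel(K,\mathbf{T}_f)\subset\Sel_{\emptyset,\ord}(K,\mathbf{T}_f)$ reads
\[
0\to\Sel(K,\mathbf{T}_f)\to\Sel_{\emptyset,\ord}(K,\mathbf{T}_f)\to\frac{H^1(K_\p,\mathbf{T}_f)}{H^1_\ord(K_\p,\mathbf{T}_f)}\to\Sel(K,\mathbf{A}_f)^\vee\to\Sel_{0,\ord}(K,\mathbf{A}_f)^\vee\to0 .
\]
This only yields $\rank_\Lambda\Sel_{\emptyset,\ord}(K,\mathbf{T}_f)=1+\rank_\Lambda\Sel_{0,\ord}(K,\mathbf{A}_f)^\vee$. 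There is no discrete-side analogue of $\kappa_\infty$ forcing $H^1(K_\p,\mathbf{T}_f)/H^1_\ord(K_\p,\mathbf{T}_f)\to\Sel(K,\mathbf{A}_f)^\vee$ to have rank-one image. The cotorsionness of $\Sel_{0,\ord}(K,\mathbf{A}_f)$ (equivalently, by conjugation, of $\Sel_{\ord,0}(K,\mathbf{A}_f)$) is exactly the nontrivial half of the lemma. So what your last paragraph calls bookkeeping is the main input, and it is left unproved.

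The paper supplies this input as follows.
\begin{itemize}
\item It starts from $\rank_\Lambda\Sel_{\ord,\emptyset}(K,\mathbf{A}_f)^\vee=1$, which is your Step 2 plus conjugation.
\item It uses a Mazur--Rubin decomposition $\Sel_{\ord,\emptyset}(K,A_\mathfrak{P})\simeq F_\mathfrak{P}/\cO_\mathfrak{P}\oplus\Sel_{\ord,0}(K,A_{\mathfrak{P}^\iota})$, with the exponent $r=1$ computed from the global Euler characteristic formula.
\item With control theorems for these non-self-dual Selmer groups, it deduces that $\Sel_{\ord,0}(K,\mathbf{A}_f)^\vee$ is torsion.
\item The sequence $0\to\coker(\loc_\p|_{\Sel_{\ord,\emptyset}(K,\mathbf{T}_f)})\to\Sel_{\emptyset,0}(K,\mathbf{A}_f)^\vee\to\Sel_{\ord,0}(K,\mathbf{A}_f)^\vee\to0$ then finishes the proof.
\end{itemize}

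Alternatively, you could close your gap inside your own framework:
\begin{itemize}
\item Prove $\rank_\Lambda\Sel_{\emptyset,\ord}(K,\mathbf{T}_f)=\corank_\Lambda\Sel_{\emptyset,\ord}(K,\mathbf{A}_f)$ by specializing at a generic height-one $\mathfrak{P}$, where both sides equal $\dim\Sel_{\emptyset,\ord}(K,V_\mathfrak{P})$. This requires control theorems for the $(\emptyset,\ord)$ structure on both $\mathbf{T}_f$ and $\mathbf{A}_f$, the same kind of input the paper invokes.
\item Your corrected conjugation argument then gives $\Sel_{0,\emptyset}(K,\mathbf{T}_f)=0$.
\item The Poitou--Tate sequence for $\Sel_{0,\ord}(K,\mathbf{T}_f)\subset\Sel_{0,\emptyset}(K,\mathbf{T}_f)$ gives $\rank_\Lambda\Sel_{\emptyset,0}(K,\mathbf{A}_f)^\vee=\rank_\Lambda\Sel_{0,\emptyset}(K,\mathbf{T}_f)=0$.
\end{itemize}
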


\begin{proof} To begin with, global duality gives a short exact sequence of $\Lambda$-modules
\begin{equation} \label{duality-selmer-eq}
0\longrightarrow\coker\Bigl({\loc_{\p}|}_{\Sel_{\mathrm{ord},\emptyset}(K,\mathbf{T}_f)}\Bigr)\longrightarrow\Sel_{\emptyset,0}(K,\mathbf{A}_f)^\vee\longrightarrow\Sel_{\mathrm{ord,0}}(K,\mathbf{A}_f)^\vee\longrightarrow 0,
\end{equation}
where $\Sel_{\ord,\emptyset}(K,\mathbf{T}_f)$ is defined by imposing the same local conditions as for $\Sel(K,\mathbf{T}_f)$ at all primes of $K$ except $\bar{\p}$, 
where no condition is required (thus, $\Sel_{\ord,\emptyset}(K,\mathbf{T}_f)$ contains $\Sel(K,\mathbf{T}_f)$), and $\Sel_{\ord,0}(K,\mathbf{A}_f)$ consists of those classes in $\Sel(K,\mathbf{A}_f)$ that vanish when localized at $\bar\p$.
%; also, 
%\[{\loc_\p|}_{\Sel_{\mathrm{ord},\emptyset}(K,\mathbf{T}_f)}: \Sel_{\ord,\emptyset}(K,\mathbf{T}_f)\longrightarrow H^1_\ord(K_\p,\mathbf{T}_f)\] 
%is the restriction of the localization map to $\Sel_{\mathrm{ord},\emptyset}$ (taking values in the local ordinary cohomology group).
In order to prove the lemma, we show that the second and fourth terms in \eqref{duality-selmer-eq} are $\Lambda$-torsion. 

We first consider the $\Lambda$-module
\[
\coker\Bigl({\loc_\p|}_{\Sel_{\ord,\emptyset}(K,\mathbf{T}_f)}\Bigr)=\frac{H^1_\ord(K_\p,\mathbf{T}_f)}{\im\Bigl({\loc_\p|}_{\Sel_{\ord,\emptyset}(K,\mathbf{T}_f)}\Bigr)}.
\]
The $\Lambda$-module $H^1_\ord(K_\p,\mathbf{T}_f)$ has rank $1$, since it can be identified with $H^1(K_\p,\mathbf{T}_f^{(p)})$ as $H^0(K_\p,\mathbf{T}_f^{[p]})=0$, 
and $H^1(K_\p,\mathbf{T}_f^{(p)})$ is a $\Lambda$-module of rank $1$ (apply 
\cite[Lemma 2.3]{greenberg-cetraro} and take inverse limits). Furthermore, the image of ${\loc_\p|}_{\Sel_{\ord,\emptyset}(K,\mathbf{T}_f)}$ contains $\loc_\p(\kappa_\infty)$, which generates a $\Lambda$-submodule of rank $1$ since, by Corollary \ref{loc-nontorsion-coro}, $\loc_\p(\kappa_\infty)$ is not $\Lambda$-torsion. 
It follows that the $\Lambda$-module $\coker\Bigl({\loc_\p|}_{\Sel_{\ord,\emptyset}(K,\mathbf{T}_f)}\Bigr)$ is torsion. 

Now we show that the $\Lambda$-corank of $\Sel_{\ord,\emptyset}(K,\mathbf{A}_f)$ is $1$. Consider the short exact sequence of $\Lambda$-modules
\begin{equation} \label{duality-selmer-eq2}
0\longrightarrow\coker\Bigl({\loc_\p|}_{\Sel(K,\mathbf{T}_f)}\Bigr)\longrightarrow\Sel_{\emptyset,\ord}(K,\mathbf{A}_f)^\vee\longrightarrow\Sel(K,\mathbf{A}_f)^\vee\longrightarrow 0
\end{equation}
(see again \cite[Theorem 2.3.4]{MR}). The rightmost non-trivial term in \eqref{duality-selmer-eq2} has rank $1$ by Theorem \ref{IMC}, while $\coker\Bigl({\loc_\p|}_{\mathrm{Sel}(K,\mathbf{T}_f)}\Bigr)$ has rank $0$ for the same reason as above. Thus, $\rank_\Lambda\Sel_{\emptyset,\ord}(K,\mathbf{A}_f)^\vee=1$. Complex conjugation, which interchanges $\p$ and $\bar{\p}$, induces an isomorphism of $\Lambda$-modules $\Sel_{\emptyset,\ord}(K,\mathbf{A}_f)\simeq \Sel_{\ord,\emptyset}(K,\mathbf{A}_f)$; it follows that 
\begin{equation} \label{Lambdarank}
\rank_\Lambda\Sel_{\ord,\emptyset}(K,\mathbf{A}_f)^\vee=1.
\end{equation}
Similarly as before, for a height $1$ prime ideal $\mathfrak{P}$ of $\Lambda$, let $\Sel_{\ord,\emptyset}(K,A_\mathfrak{P})$ be the submodule of $H^1(K,A_\mathfrak{P})$ consisting of classes defined by the same local conditions as $\Sel(K,\mathbf{A}_\mathfrak{P})$ at all primes different from $\bar{\p}$ and 
whose restriction at $\bar{\p}$ lies in the maximal divisible subgroup $H^1(K_{\bar{\p}},A_\mathfrak{P})_\mathrm{div}$ of $H^1(K_{\bar{\p}},A_\mathfrak{P})$ (thus, this is the Selmer group defined by propagation from $\Sel_{\ord,\emptyset}(K,V_\mathfrak{P})$ defined by taking the same local conditions as $\Sel(K,V_\mathfrak{P})$ outside $\bar{\p}$ and the local condition $H^1(K_{\bar{\p}},V_\mathfrak{P})$ at $\bar{\p}$). We also define $\Sel_{\ord,0}(K,A_\mathfrak{P})$ to be the submodule of $\Sel(K,A_\mathfrak{P})$ consisting of those classes whose restriction at $\bar{\p}$ is trivial. Finally, for each integer $i\geq 0$ 
define $\Sel_{\ord,\emptyset}(K,A_\mathfrak{P}[\mathfrak{P}^i])$ and $\Sel_{\ord,0}(K,A_\mathfrak{P}[\mathfrak{P}^i])$ by propagation from $\Sel_{\ord,\emptyset}(K,A_\mathfrak{P})$ and $\Sel_{\ord,0}(K,A_\mathfrak{P})$ via the inclusion $A_\mathfrak{P}[\mathfrak{P}^i]\subset A_\mathfrak{P}$. 
Let $F_\mathfrak{P}$ be the fraction field of $\cO_\mathfrak{P}$. A generalization of \cite[Theorem 4.1.13]{MR} (see also \cite[Proposition 1.2.3]{Agboola-Howard}) shows that there is an isomorphism
\begin{equation} \label{MR4.1.13}
\Sel_{\ord,\emptyset}(K,A_\mathfrak{P}[\mathfrak{P}^i])\simeq (F_\mathfrak{P}/\mathcal{O}_\mathfrak{P})^r[\mathfrak{P}^i]\oplus 
\Sel_{\ord,0}\bigl(K,A_{{\mathfrak{P}}^\iota}\bigl[(\mathfrak{P}^\iota)^i\bigr]\bigr)
\end{equation} 
for some integer $r\geq0$, where $\iota:\Lambda\rightarrow\Lambda$ is the standard involution acting as inversion on group-like elements.
%We have then an inclusion $\Sel_{\mathrm{ord},0}(K,A_\mathfrak{P})\subset \Sel_{\mathrm{ord},\emptyset}(K,A_\mathfrak{P})$ and an exact sequence 
%\[0\longrightarrow \Sel_{\mathrm{ord},0}(K,A_\mathfrak{P})\longrightarrow 
%\Sel_{\mathrm{ord},\emptyset}(K,A_\mathfrak{P})\xrightarrow{\loc_{\bar{\p}}}H^1(K_{\bar{\p}},A_\mathfrak{P}).
%\]
By \cite[Lemma 1.3.3]{Ho1}, there are isomorphisms  
\begin{equation} \label{CTMR}
\begin{split}
\Sel_{\ord,0}\bigl(K,A_\mathfrak{P}[\mathfrak{P}^i]\bigr)\simeq\Sel_{\ord,0}(K,A_\mathfrak{P})[\mathfrak{P}^i],\\
\Sel_{\ord,\emptyset}\bigl(K,A_\mathfrak{P}[\mathfrak{P}^i]\bigr)\simeq\Sel_{\ord,\emptyset}(K,A_\mathfrak{P})[\mathfrak{P}^i].
\end{split}
\end{equation}
Combining \eqref{MR4.1.13} and \eqref{CTMR} yields an isomorphism 
\begin{equation}\label{rank1}\Sel_{\ord,\emptyset}(K,A_\mathfrak{P})[\mathfrak{P}^i]\simeq (F_\mathfrak{P}/\mathcal{O}_\mathfrak{P})^r[\mathfrak{P}^i]\oplus 
\Sel_{\ord,0}(K,A_{\mathfrak{P}^\iota})\bigl[(\mathfrak{P}^\iota)^i\bigr].
\end{equation}
Using the Poitou--Tate exact sequence and the global Euler characteristic formula for $A_\mathfrak{P}[\mathfrak{P}^i]$ (see, \emph{e.g.}, \cite[Theorem 2.18]{DDT}), we find
\[
\begin{split}
r=\length_{\cO_\mathfrak{P}}\Bigl(H^1_\ord\bigl(K_\p,A_\mathfrak{P}[\mathfrak{P}^i]\bigr)\!\Bigr)&+
\length_{\cO_\mathfrak{P}}\Bigl(H^1\bigl(K_{\bar{\p}},A_\mathfrak{P}[\mathfrak{P}^i]\bigr)\!\Bigr)\\&-\length_{\cO_\mathfrak{P}}\Bigl(H^0\bigl(\C,A_\mathfrak{P}[\mathfrak{P}^i]\bigr)\!\Bigr).
\end{split}
\]
Now $\length_{\cO_\mathfrak{P}}\bigl(H^0(\C,A_\mathfrak{P}[\mathfrak{P}^i])\bigr)=2$, while the other two lengths are $1$ at $\p$ (as an application of \cite[Lemma 2.3]{greenberg-cetraro}) and $2$ at $\bar\p$ (by local Tate duality). It follows that $r=1$. Thus, taking direct limits over $i$ in \eqref{rank1} gives
\begin{equation} \label{rank2}
\Sel_{\ord,\emptyset}(K,A_\mathfrak{P}) \simeq (F_\mathfrak{P}/\mathcal{O}_\mathfrak{P}) \oplus\Sel_{\ord,0}(K,A_{\mathfrak{P}^\iota}).
\end{equation}
By an analogue of Theorem \ref{control theorem} for the Selmer groups $\Sel_{\ord,\emptyset}(K,\mathbf{A}_f)$ and $\Sel_{\ord,0}(K,\mathbf{A}_f)$, 
the canonical maps 
\[
\Sel_{\ord,\emptyset}(K,A_\mathfrak{P})\longrightarrow \Sel_{\ord,\emptyset}(K,\mathbf{A}_f)[\mathfrak{P}],\quad\Sel_{\ord,0}(K,A_\mathfrak{P})\longrightarrow \Sel_{\ord,0}(K,\mathbf{A}_f)[\mathfrak{P}]
\]
induced by the inclusion $A_\mathfrak{P}=\mathbf{A}_f[\mathfrak{P}]\subset \mathbf{A}_f$ are injective and have finite cokernels that are bounded independently of $\mathfrak{P}$. Therefore, from \eqref{rank2} we obtain 
\[ \rank_\Lambda\Sel_{\ord,\emptyset}(K,A_\mathfrak{P})^\vee=1+\rank_\Lambda\Sel_{\ord,0}(K,A_\mathfrak{P})^\vee, \]
and the conclusion follows from \eqref{Lambdarank}. \end{proof}

\subsection{The Iwasawa--Greenberg main conjecture for $\mathcal{L}_\p^\mathrm{BDP}$} 

We are ready to prove our result on the Iwasawa--Greenberg main conjecture for the $p$-adic $L$-function $\mathcal{L}_\p^\mathrm{BDP}$.

\begin{theorem} \label{IG-thm}
Under the hypotheses of Theorem $\ref{IMC}$, there is an equality
\[
\cchar_\Lambda\bigl(\Sel_{\emptyset,0}(K,\mathbf{A}_f)^\vee\bigr)\otimes_\Lambda \Lambda^\unr=\bigl(\mathcal{L}_\p^\mathrm{BDP}\bigr)^2
\]
of ideals of $\Lambda^\unr $.
\end{theorem}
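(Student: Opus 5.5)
The plan is to deduce Theorem \ref{IG-thm} from part (2) of Theorem \ref{IMC}, in the style of Castella's argument in \cite[Appendix A]{Castella-London}. The algebra is done over $\Lambda$ and the result is then tensored up to $\Lambda^\unr$ at the end. The key input is the two Poitou--Tate exact sequences already used in the proof of Lemma \ref{Sel_torsion}. These are \eqref{duality-selmer-eq}, which relates $\Sel_{\emptyset,0}(K,\mathbf{A}_f)^\vee$ to $\coker(\loc_\p|_{\Sel_{\ord,\emptyset}(K,\mathbf{T}_f)})$ and to $\Sel_{\ord,0}(K,\mathbf{A}_f)^\vee$, and \eqref{duality-selmer-eq2}, which plays the analogous role for $\Sel(K,\mathbf{A}_f)^\vee$.

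First, I will identify the ``compact'' Selmer groups. Since $\loc_\p(\kappa_\infty)$ is non-torsion (Corollary \ref{loc-nontorsion-coro}) and $\Sel(K,\mathbf{T}_f)$ has rank $1$ by Theorem \ref{IMC}(1), we get $\Sel_{\ord,0}(K,\mathbf{T}_f)=0$. The same rank count, using \eqref{Lambdarank} and its dual, should give $\Sel_{\ord,\emptyset}(K,\mathbf{T}_f)=\Sel(K,\mathbf{T}_f)$, at least up to pseudo-null error. Here one has to argue that the rank cannot jump, using the complex-conjugation symmetry and torsion-freeness as in Lemma \ref{torsion-free}. Then \eqref{duality-selmer-eq} gives
\[
\cchar_\Lambda\bigl(\Sel_{\emptyset,0}(K,\mathbf{A}_f)^\vee\bigr)=\cchar_\Lambda\Bigl(\frac{H^1_\ord(K_\p,\mathbf{T}_f)}{\loc_\p\Sel(K,\mathbf{T}_f)}\Bigr)\cdot\cchar_\Lambda\bigl(\Sel_{\ord,0}(K,\mathbf{A}_f)^\vee\bigr).
\]
Similarly, \eqref{duality-selmer-eq2} together with \eqref{rank2} taken over all $\mathfrak{P}$ (the Mazur--Rubin structure theorem) should yield
\[
\cchar_\Lambda\bigl(\Sel(K,\mathbf{A}_f)^\vee_\tor\bigr)=\cchar_\Lambda\bigl(\Sel_{\ord,0}(K,\mathbf{A}_f)^\vee\bigr)\cdot\cchar_\Lambda\Bigl(\frac{H^1_\ord(K_\p,\mathbf{T}_f)}{\loc_\p\Sel(K,\mathbf{T}_f)}\Bigr)^{\iota},
\]
where the involution $\iota$ is harmless because conjugation swaps $\p$ and $\bar\p$ and the involution fixes the relevant characteristic ideals up to units.

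Second, I will compute the local cokernel in terms of $\kappa_\infty$. Because $\loc_\p$ is injective on $\Sel(K,\mathbf{T}_f)$ (its kernel is $\Sel_{0,\ord}=0$), we have
\[
\cchar\Bigl(\frac{H^1_\ord(K_\p,\mathbf{T}_f)}{\Lambda\loc_\p\kappa_\infty}\Bigr)=\cchar\Bigl(\frac{\Sel(K,\mathbf{T}_f)}{\Lambda\kappa_\infty}\Bigr)\cdot\cchar\Bigl(\frac{H^1_\ord(K_\p,\mathbf{T}_f)}{\loc_\p\Sel(K,\mathbf{T}_f)}\Bigr).
\]
Combining this with the two displays and with Theorem \ref{IMC}(2), namely $\cchar(\Sel^\vee_\tor)=\cchar(\Sel(K,\mathbf{T}_f)/\Lambda\kappa_\infty)^2$, gives
\[
\cchar\bigl(\Sel_{\emptyset,0}(K,\mathbf{A}_f)^\vee\bigr)=\cchar\Bigl(\frac{H^1_\ord(K_\p,\mathbf{T}_f)}{\Lambda\loc_\p\kappa_\infty}\Bigr)^2.
\]

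Third, I will pass to $\Lambda^\unr$ using $\mathrm{Log}_\p$. By \eqref{big-log} and Proposition \ref{comparison}, $\mathrm{Log}_\p$ sends $\loc_\p(\kappa_\infty)$ to a generator of $(\mathcal{L}_\p^\mathrm{BDP})$. The factor $(2\sqrt{-D_K})^{(k-2)/2}$ is a $p$-adic unit since $p\nmid D_K$ and $p$ is odd. So it suffices to show that $\mathrm{Log}_\p\otimes\Lambda^\unr$ is an isomorphism $H^1(K_\p,\mathbf{T}_f^{(p)})\hat\otimes\Lambda^\unr\simeq\Lambda^\unr$. Then the characteristic ideal of the quotient by $\loc_\p\kappa_\infty$ becomes $(\mathcal{L}_\p^\mathrm{BDP})$, and squaring gives the theorem. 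This step uses that $H^1(K_\p,\mathbf{T}_f^{(p)})$ is $\Lambda$-free of rank one, which holds because $H^0$ and $H^2$ vanish thanks to non-anomalousness, Assumption \ref{ass1}(3). It also uses that the cokernel of the big Perrin-Riou map vanishes under the same hypothesis, by \cite{LZ-Iwasawa} and \cite[Theorem 5.7]{CH}.

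I expect the main obstacle to be the first step: making the identifications $\Sel_{\ord,\emptyset}(K,\mathbf{T}_f)=\Sel(K,\mathbf{T}_f)$ and the characteristic-ideal relation between $\Sel_{\ord,0}^\vee$ and $\Sel^\vee_\tor$ precise, rather than merely up to pseudo-null or rank-level statements. My first attempt would be to run the height-one specialization argument of the proof of Theorem \ref{IMC} for $\Sel_{\ord,0}$, using \eqref{rank2} and the control theorem.
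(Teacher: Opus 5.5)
Your strategy is the paper's, which is Castella's argument:
\begin{itemize}
\item split $\Sel_{\emptyset,0}(K,\mathbf{A}_f)^\vee$ via \eqref{duality-selmer-eq};
\item identify $\Sel_{\ord,\emptyset}(K,\mathbf{T}_f)=\Sel(K,\mathbf{T}_f)$;
\item relate $\Sel_{\ord,0}$ to $\Sel^\vee_\tor$;
\item insert Theorem \ref{IMC}(2) and use injectivity of $\loc_\p$;
\item conclude with $\mathrm{Log}_\p$.
\end{itemize}
The identification in the second item is an honest equality, not one up to pseudo-null modules. The quotient injects into the torsion-free module $H^1(K_{\bar\p},\mathbf{T}_f)/H^1_\ord(K_{\bar\p},\mathbf{T}_f)$, and it has rank $0$ because $\Sel(K,\mathbf{A}_f)^\vee$ has rank $1$ and $\Sel_{\ord,0}(K,\mathbf{A}_f)^\vee$ is torsion. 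Deriving the comparison of $\Sel_{\ord,0}(K,\mathbf{A}_f)^\vee$ with $\Sel_{\ord,\emptyset}(K,\mathbf{A}_f)^\vee_\tor$ from \eqref{rank2} by specialization, instead of citing \cite[Lemma 2.3, (3)]{Castella-London} as the paper does, is fine. However, your second display has the divisibility backwards, and with it the argument does not produce the square.

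In \eqref{duality-selmer-eq2}, set $C\defeq\coker\bigl({\loc_\p|}_{\Sel(K,\mathbf{T}_f)}\bigr)$ and $X\defeq\Sel_{\emptyset,\ord}(K,\mathbf{A}_f)^\vee$. The torsion module $C$ is a \emph{submodule} of $X$ with quotient $\Sel(K,\mathbf{A}_f)^\vee$. Hence $C\subset X_\tor$ and $X_\tor/C\simeq\Sel(K,\mathbf{A}_f)^\vee_\tor$, so $\cchar(X_\tor)=\cchar(C)\cdot\cchar\bigl(\Sel(K,\mathbf{A}_f)^\vee_\tor\bigr)$. Complex conjugation carries $X_\tor$ to $\Sel_{\ord,\emptyset}(K,\mathbf{A}_f)^\vee_\tor$ up to $\iota$. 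At the level of characteristic ideals, \eqref{rank2} relates the latter to $\Sel_{\ord,0}(K,\mathbf{A}_f)^\vee$ up to another $\iota$. The two involutions cancel, giving
\[
\cchar\bigl(\Sel_{\ord,0}(K,\mathbf{A}_f)^\vee\bigr)=\cchar\bigl(\Sel(K,\mathbf{A}_f)^\vee_\tor\bigr)\cdot\cchar(C).
\]
So $C$ enlarges $\Sel_{\ord,0}$, not $\Sel^\vee_\tor$. This direction is also visible directly. Since $\Sel_{\ord,\emptyset}(K,\mathbf{T}_f)=\Sel(K,\mathbf{T}_f)$, the kernel of $\Sel(K,\mathbf{A}_f)^\vee\twoheadrightarrow\Sel_{\ord,0}(K,\mathbf{A}_f)^\vee$ is torsion-free, so $\Sel(K,\mathbf{A}_f)^\vee_\tor$ embeds in $\Sel_{\ord,0}(K,\mathbf{A}_f)^\vee$. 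Your display would then force $\cchar(C)=\Lambda$.

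If you substitute your display into your first one, the two copies of $\cchar(C)$ cancel and you get $\cchar\bigl(\Sel_{\emptyset,0}(K,\mathbf{A}_f)^\vee\bigr)=\cchar\bigl(\Sel(K,\mathbf{A}_f)^\vee_\tor\bigr)$. Only the corrected relation gives $\cchar(C)^2\cdot\cchar\bigl(\Sel(K,\mathbf{A}_f)^\vee_\tor\bigr)$, which equals $\cchar\bigl(H^1_\ord(K_\p,\mathbf{T}_f)/\Lambda\loc_\p(\kappa_\infty)\bigr)^2$ as in the paper's \eqref{step2}. No $\iota$-stability of $\cchar(C)$ is needed, and none is evident, so drop that justification.

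Your third step is more careful than the paper's write-up. Injectivity of $\mathrm{Log}_\p$ together with \eqref{big-log} determines $\cchar\bigl(H^1_\ord(K_\p,\mathbf{T}_f)/\Lambda\loc_\p(\kappa_\infty)\bigr)\Lambda^\unr$ only up to the characteristic ideal of the cokernel of $\mathrm{Log}_\p$. So the pseudo-null (or trivial) cokernel statement and the unit factor $(2\sqrt{-D_K})^{(k-2)/2}$ that you invoke are genuinely needed.
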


This is Theorem \ref{C-thm} in the introduction. In the proof below, for a torsion $\Lambda$-module $M$ and a prime ideal $\mathfrak P$ of $\Lambda$, write $\ell_{\mathfrak P}(M)$ as a shorthand for $\length_{\Lambda_{\mathfrak P}}(M\otimes_\Lambda\Lambda_{\mathfrak P})$.

\begin{proof} By the exact sequence \eqref{duality-selmer-eq}, for every height $1$ prime ideal $\mathfrak{P}$ of $\Lambda$ there is an equality
\begin{equation} \label{leng_1}
%\begin{split}
\ell_\mathfrak{P}\bigl(\Sel_{\emptyset,0}(K,\mathbf{A}_f)^\vee\bigr)=\ell_\mathfrak{P}\bigl(\Sel_{\ord,0}(K,\mathbf{A}_f)^\vee\bigr)+\ell_\mathfrak{P}\Bigl(\coker\bigl({\loc_\p |}_{\Sel_{\ord,\emptyset}(K,\mathbf{T}_f)}\bigr)\!\Bigr).
%\end{split}
\end{equation}
By \cite[Lemma 2.3, (3)]{Castella-London}, there is an equality
\[
\ell_\mathfrak{P}\bigl(\Sel_{\ord,0}(K,\mathbf{A}_f)^\vee\bigr)=\ell_\mathfrak{P}\bigl(\Sel_{\ord,\emptyset}(K,\mathbf{A}_f)^\vee_\mathrm{tors}\bigr).
\]
The action of complex conjugation and short exact sequence \eqref{duality-selmer-eq2} imply that
\begin{equation}\label{Cast-2.3}
\Scale[0.92]
{\begin{split}
\ell_\mathfrak{P}\bigl(\Sel_{\ord,\emptyset}(K,\mathbf{A}_f)^\vee_\tor\bigr)&=\ell_\mathfrak{P}\bigl(\Sel_{\emptyset,\ord}(K,\mathbf{A}_f)^\vee_\tor\bigr)\\
&=\ell_\mathfrak{P}\Bigl(\coker\bigl({\loc_\p |}_{\Sel(K,\mathbf{T}_f)}\bigr)\!\Bigr)+\ell_\mathfrak{P}\bigl(\Sel(K,\mathbf{A}_f)^\vee_\tor\bigr).
\end{split}}
\end{equation}
Moreover, counting $\Lambda$-ranks in the exact sequence 
\[ 
0\longrightarrow\frac{\Sel_{\ord,\emptyset}(K,\mathbf{T}_f)}{\Sel(K,\mathbf{T}_f)}\longrightarrow \frac{H^1(K_{\bar\p},\mathbf{T}_f)}{H^1_\ord(K_{\bar\p},\mathbf{T}_f)}\longrightarrow \Sel(K,\mathbf{A}_f)^\vee\longrightarrow \Sel_{\ord,0}(K,\mathbf{A}_f)^\vee\longrightarrow 0,
\]
which is a consequence of global duality, and using the fact that $H^1(K_{\bar\p},\mathbf{T}_f)\big/H^1_\ord(K_{\bar\p},\mathbf{T}_f)$ is torsion-free of rank $1$ over $\Lambda$, one gets the equality
\begin{equation}\label{equality-Selmer}
\Sel_{\ord,\emptyset}(K,\mathbf{T}_f)=\Sel(K,\mathbf{T}_f).
\end{equation}
Thus, combining \eqref{leng_1}, \eqref{Cast-2.3} and \eqref{equality-Selmer} gives
\begin{equation} \label{step2}
\Scale[0.92]
{\begin{aligned}
\ell_\mathfrak{P}\bigl(\Sel_{\emptyset,0}(K,\mathbf{A}_f)^\vee\bigr)&=\ell_\mathfrak{P}\bigl(\Sel(K,\mathbf{A}_f)^\vee_\tor\bigr)+2\cdot\ell_\mathfrak{P}\Bigl(\coker\bigl({\loc_\p|}_{\Sel(K,\mathbf{T}_f)}\bigr)\!\Bigr)\\
&=2\cdot\biggl(\ell_\mathfrak{P}\bigl(\Sel(K,\mathbf{T}_f)/\Lambda\cdot \kappa_\infty\bigr)+\ell_\mathfrak{P}\Bigl(\coker\bigl({\loc_\p|}_{\Sel(K,\mathbf{T}_f)}\bigr)\!\Bigr)\!\biggr), 
\end{aligned}}
%\raisetag{-25pt}
\end{equation}
where the second equality follows from Theorem \ref{IMC}.

On the other hand, $\Sel(K,\mathbf{T}_f)$ and $H^1_\ord(K_\p,\mathbf{T}_f)$ have both rank $1$ and $\coker\bigl({\loc_\p |}_{\Sel(K,\mathbf{T}_f)}\bigr)$ has rank $0$ (as $\loc_\p(\kappa_\infty)$ is not $\Lambda$-torsion), so $\loc_\p\colon\Sel(K,\mathbf{T}_f)\rightarrow H^1_\ord(K_\p, \mathbf{T}_f)$ is injective; in fact, the kernel of this map is $\Sel_{0,\ord}(K,\mathbf{T}_f)$, which is trivial since it is $\Lambda$-torsion and contained in $\Sel(K,\mathbf{T}_f)$, whose freeness over $\Lambda$ is ensured by Lemma \ref{torsion-free}.

This implies that there is a short exact sequence 
\[
0\longrightarrow\frac{\Sel(K,\mathbf{T}_f)}{\Lambda\cdot\kappa_\infty}\longrightarrow \frac{H^1_\ord(K,\mathbf{T}_f)}{\Lambda\cdot\loc_\p(\kappa_\infty)}\longrightarrow\coker\bigl({\loc_\p |}_{\Sel(K,\mathbf{T}_f)}\bigr)\longrightarrow 0
\]
of $\Lambda$-modules of rank $0$, and hence \eqref{step2} becomes
\[
\ell_\mathfrak{P}\bigl(\Sel_{\emptyset,0}(K,\mathbf{A}_f)^\vee\bigr)=2\cdot \ell_\mathfrak{P}\biggl(\frac{H^1_\ord(K_\p,\mathbf{T}_f)}{\Lambda\cdot \loc_\p(\kappa_\infty)}\biggr).
\]
Let $\mathfrak{P}'$ be a prime ideal of $\Lambda^\mathrm{unr}$ above $\mathfrak{P}$ and write $\ell_{\mathfrak P'}$ in place of $\length_{\Lambda^{\mathrm{unr}}_{\mathfrak P'}}$. The injectivity of the $\mathrm{Log}_\p$ map in \S \ref{BDP function}, equality \eqref{big-log} and Proposition \ref{comparison} yield the equality
\[ 
\ell_{\mathfrak{P}'}\Bigl(\Sel_{\emptyset,0}(K,\mathbf{A}_f)^\vee\otimes_\Lambda\Lambda^\mathrm{unr}\Bigr)=2\cdot \ell_{\mathfrak{P}'}\biggl(\frac{\Lambda^\mathrm{unr}}{\Lambda^\mathrm{unr}\cdot \mathcal{L}_\p^\mathrm{BDP}}\biggr).
\]
The conclusion follows as $\mathfrak{P}$ varies over all the height $1$ prime ideals of $\Lambda$.
\end{proof}

\bibliographystyle{amsplain}
\bibliography{Perrin-Riou}

\end{document}